\documentclass[11pt, reqno]{amsart}
\usepackage[final]{showkeys}
\setlength{\textheight}{8.00in} \setlength{\oddsidemargin}{0.0in}
\setlength{\evensidemargin}{0.0in} \setlength{\textwidth}{6.4in}
\setlength{\topmargin}{0.18in} \setlength{\headheight}{0.18in}
\setlength{\marginparwidth}{1.0in}
\setlength{\abovedisplayskip}{0.2in}

\setlength{\belowdisplayskip}{0.2in}

\setlength{\parskip}{0.05in}

\usepackage{amssymb}
\usepackage{amsmath}
\usepackage{amsthm}
\usepackage{verbatim}
\usepackage{color}
\usepackage{url}
\usepackage[all]{xy}

\usepackage{graphicx,amsmath,amsfonts,latexsym,amssymb,amsthm,color}
\usepackage{latexsym}
\usepackage{verbatim}
\usepackage{enumerate}
\usepackage{enumitem}
\usepackage{tikz-cd,mathtools}

\usepackage{tikz}
\usepackage{pgfplotstable}
\usepackage[makeroom,thicklines]{cancel}
\usetikzlibrary{shapes,arrows,calendar,matrix,backgrounds,folding,hobby,calc,positioning,patterns,fadings,shadows,spy,hobby,fit}
\usepackage{tkz-euclide}
\usepackage[font=small,labelfont=bf]{caption}
\usetikzlibrary{patterns.meta}

\definecolor{blau}{rgb}{0.05,0.2,0.7}
\definecolor{auchblau}{rgb}{0.03,0.3,0.7}

\newtheorem{theorem}{Theorem}[section]
\newtheorem*{theorem*}{Theorem}
\newtheorem{prop}[theorem]{Proposition}
\newtheorem{lem}[theorem]{Lemma}
\newtheorem{cor}[theorem]{Corollary}
\newtheorem{defn}[theorem]{Definition}

\newtheorem{rem}[theorem]{Remark}
\definecolor{light-gray}{RGB}{176, 176, 176}

\newcommand{\R}{\mathbb{R}}

\newcommand{\N}{\mathbb{N}}

\renewcommand{\Re}{\operatorname{Re}}
\renewcommand{\Im}{\operatorname{Im}}

\newcommand{\Op}{\operatorname{Op}}
\newcommand{\WF}{\operatorname{WF}_h}
\newcommand{\Ellip}{\operatorname{ell}_h}

\DeclareMathOperator{\supp}{supp}
\newcommand{\der}{\mathrm{d}}
\newcommand{\rmi}{\mathrm{i}}

\newcommand{\tr}{\mathrm{Tr}}

\newcommand{\sgn}{\operatorname{sgn}}

\newcommand{\inner}{{_{\mathcal{I}}}}
\newcommand{\exterior}{{_{\mathcal{O}}}}
\newcommand{\Eucl}{\exterior}
\newcommand{\tangent}{\mathrm{t}}

\newcommand{\vol}{{\mathrm{vol}}}
\newcommand{\comp}{\mathrm{comp}}

\newcommand{\loc}{\mathrm{loc}}

\newcommand{\calG}{\mathcal{G}}

\newcommand{\calR}{\mathcal{R}}

\renewcommand{\div}{\operatorname{div}}
\newcommand{\e}{\varepsilon}

\bibliographystyle{plain}

\numberwithin{equation}{section}

\title[Surface Plasmonis in Metamaterials]{Surface plasmons in metamaterial cavities:\\\smallskip {\small Scattering by obstacles with negative wave speed}}
\author{Yan-Long Fang}
\address{Department of Mathematics, University College London, 25 Gordon Street, London, WC1H 0AY, UK}
\email{yanlong.fang@ucl.ac.uk} 
\author{Jeffrey Galkowski} \address{Department of Mathematics, University College London, 25 Gordon Street, London, WC1H 0AY, UK}
\email{j.galkowski@ucl.ac.uk} 

\begin{document}

	\begin{abstract}
    We study scattering by metamaterials with negative indices of refraction, which are known to support \emph{surface plasmons} -- long-lived states that are highly localized at the boundary of the cavity. This type of states has found uses in a variety of modern technologies. In this article, we study surface plasmons in the setting of non-trapping cavities; i.e. when all billiard trajectories outside the cavity escape to infinity. We characterize the indices of refraction which support surface plasmons, show that the corresponding resonances lie super-polynomially close to the real axis, describe the localization properties of the corresponding resonant states, and give an asymptotic formula for their number. 
		
	\end{abstract}
    	\maketitle
	
	
	\vspace{-1cm}
	\section{Introduction}

	We consider resonance phenomena for metamaterial cavities which exhibit a negative index of refraction or negative wave speed. These structures are known, in some contexts, to support surface plasmons -- long lived states that are highly localized to the surface of the metamaterial~\cite{Maier}. These surface plasmons offer strong light enhancement and are central to a range of modern technologies~\cite{Sanno}. Although negative index of refraction metamaterials have attracted some mathematical interest (see~\cite{CaMo:23,CdV:25,DaBaCaMo:24,BoChCi:12,BoChCl:13,BoChCi:14,BoHaMo:21}) their asymptotic behavior has remained largely unexplored. Under a relatively mild assumption on the metamaterial scatterer, we study plasmon resonances in a scalar model (i.e. in the Transverse Electric or Transverse Magnetic polarization). In this article, we give an accurate description of the asymptotic behavior of surface plasmons. We characterize the existence and absence of surface plasmons, accurately describe their localization properties, and provide an asymptotic formula for their number. 
	
	Let $d\ge 2$ and $\Omega_\inner\subset\mathbb{R}^d$ be a bounded open domain with smooth boundary and connected complement. 
    Define $\Omega_\exterior:=\R^d\backslash\overline{\Omega_\inner}$. We denote the shared boundary of $\Omega_{\inner}$ and $\Omega_{\exterior}$ by $\partial \Omega$ and the outward pointing normal of $\Omega_\inner$ by $\nu$. Although we work in the more general setting of negative wave speeds below (see section~\ref{s:general}), we state first a simple consequence of our main theorem. Let the \emph{index of refraction} $n\in C^\infty(\overline{\Omega_{\inner}};(0,\infty))$ with $|n|_{\partial\Omega}-1|>0$. We call $\lambda\in \mathbb{C}\setminus \rmi (-\infty,\infty)$ a \emph{resonance} if there is a non-zero solution $(u_{\inner},u_{\exterior})\in H^2(\Omega_{\inner})\oplus H^2_{\loc}(\Omega_{\exterior})$ to
    \begin{equation}
    \label{e:nProblem}
    \begin{cases}
    (\div n^{-1}\nabla -\lambda^2)u_{\inner}=0&\text{in }\Omega_{\inner},\\
    (-\Delta-\lambda^2)u_{\exterior}=0&\text{in }\Omega_{\exterior},\\
    u_{\exterior}=u_{\inner}&\text{on }\partial\Omega,\\
    \partial_{\nu}u_{\exterior}=-n^{-1}\partial_{\nu}u_{\inner}&\text{on }\partial\Omega,\\
    u_{\exterior}\text{ is }\lambda\text{-outgoing}.
    \end{cases}
    \end{equation}
    Here, we say that $u$ is $\lambda$-outgoing if there is $g\in L^2_{\comp}(\mathbb{R}^d)$ such that $u(x)=[R_0(\lambda)g](x)$ for $|x|\gg 1$, where $R_0(\lambda)$ is the free, outgoing resolvent -- when $\lambda$ is real, this outgoing condition becomes the usual Sommerfel radiation condition and $R_0(\lambda)$ is $L^2$-bounded for $\Im(\lambda)>0$.
    
    The system \eqref{e:nProblem} was studied for $\lambda\in\mathbb{R}$ in \cite{CaMo:23} and is a special case of our more general setting below (see \eqref{e:mainProblem} and Remark \ref{special case}). We call $u_{\lambda}=u_{\inner}1_{\Omega_{\inner}}+u_{\exterior}1_{\Omega_{\exterior}}$ a resonant state for $\lambda$ and write $\mathcal{R}(n,\Omega_{\inner})$ for the set of resonances. 
    We call a sequence of resonances $\{\lambda_j\}_{j=1}^\infty\subset\mathcal{R}$ with $|\lambda_j|\to \infty$ a \emph{plasmon resonance} if  for any $\psi\in C_c^\infty(\mathbb{R}^d)$ with $\supp \psi\cap \partial\Omega=\emptyset$, any $N>0$, and any sequence of resonant states $u_{\lambda_j}$, we have
    \begin{equation}
    \label{e:plasmonic}
    \lim_{j\to \infty}\frac{|\lambda_j|^N\|\psi u_{\lambda_j}\|_{L^2(\mathbb{R}^d)}}{\|u_{\lambda_j}\|_{L^2(\partial\Omega)}}=0.
    \end{equation}
    That is, any sequence of resonant states associated to $\lambda_j$ concentrates asymptotically at $\partial\Omega$.

    Throughout the text, we will assume that $\Omega_{\inner}$ is \emph{non-trapping} (See Figure~\ref{f:nt}). That is, all billiard trajectories (or more precisely generalized broken bicharacteristics; see e.g.~\cite[Section 24.3]{hor3} for a definition) escape any compact set in finite time. This condition guarantees that resonant states corresponding to propagating modes cannot approach the real axis. 

    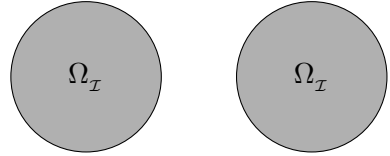
\begin{figure}[htbp]
\begin{tikzpicture}
\begin{scope}[scale=.5]
 \fill[fill=white] (-3,0) circle (2cm);
    \fill[fill=white] (3,0) circle (2cm);
  \draw[fill=light-gray]
      plot[domain=0:360, samples=500, variable=\t]
      ({cos(\t)*(3+sin(3*\t))}, {sin(\t)*(3+sin(3*\t))})
      -- cycle;
      \node at (0,0){$\Omega_{\inner}$};
      \node at (0,4){Non-trapping domain};
      \end{scope}
\end{tikzpicture}
\hspace{3cm}
\begin{tikzpicture}
\begin{scope}[scale=.5]
\fill[fill=white]
      plot[domain=0:360, samples=500, variable=\t]
      ({cos(\t)*(3+sin(3*\t))}, {sin(\t)*(3+sin(3*\t))})
      -- cycle;
  \draw[fill=light-gray] (-3,-1) circle (2cm);
  \node at(-3,-1){$\Omega_{\inner}$};
    \draw[fill=light-gray] (3,-1) circle (2cm);
      \node at(3,-1){$\Omega_{\inner}$};
      \node at (0,4){Trapping domain};
      \end{scope}
\end{tikzpicture}
\caption{\label{f:nt}Examples of trapping and non-trapping domains.}
\end{figure}
    
    We first determine conditions on the index of refraction, $n$, such that there are no resonances close to the real axis.
    \begin{theorem}
    \label{t:simple}
     Suppose that $\Omega_{\inner}$ is non-trapping and $n\in C^\infty(\overline{\Omega}_{\inner};(0,\infty))$ satisfies $n|_{\partial\Omega}<1$. Then for all $M>0$ there is $C>0$ such that 
     $$
     \mathcal{R}(n,\Omega_{\inner})\cap \{ |\Re \lambda|>C\}\subset \{\Im \lambda<-M\}.
     $$
    \end{theorem}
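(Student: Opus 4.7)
The plan is to reduce the resonance problem to a pseudodifferential equation on $\partial\Omega$, then to verify that, under $n|_{\partial\Omega}<1$, the associated boundary operator is semiclassically elliptic.

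\emph{Boundary reduction.} Since $\div n^{-1}\nabla$ is negative semi-definite on $\Omega_\inner$, for $\lambda\in\mathbb{C}\setminus i\mathbb{R}$ the interior Dirichlet problem is uniquely solvable and defines the interior Dirichlet-to-Neumann map $N_\inner(\lambda)\colon f\mapsto \partial_\nu u_\inner|_{\partial\Omega}$, and the outgoing exterior Dirichlet problem defines $N_\exterior(\lambda)$ meromorphically in $\lambda$ (with poles confined to $\{\Im\lambda<-c\log|\Re\lambda|\}$ under non-trapping, by the Vainberg--Sj\"ostrand--Zworski theory). The transmission conditions in \eqref{e:nProblem} are equivalent to $\mathcal{T}(\lambda) f = 0$ for $f = u|_{\partial\Omega}$, where
$$ \mathcal{T}(\lambda) := N_\exterior(\lambda) + (n|_{\partial\Omega})^{-1}\, N_\inner(\lambda). $$

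\emph{Semiclassical symbols.} Set $h = 1/|\Re\lambda|$ and $z = h\lambda$; the region of interest becomes $|\Re z|=1$, $\Im z\geq -Mh$. The interior operator has semiclassical principal symbol $-n^{-1}|\xi|^2-z^2$, which is elliptic for $\Re z\neq 0$. A WKB argument (selecting the branch that decays into $\Omega_\inner$) identifies $h N_\inner(\lambda)$ as a semiclassical pseudodifferential operator of order zero on $\partial\Omega$ with principal symbol $\sqrt{n(x')\,z^2+|\xi'|_g^2}$. Under non-trapping, $h N_\exterior(\lambda)$ admits the standard semiclassical microlocal description on $\{\Im z\geq -Mh\}$, with principal symbol $i\sqrt{z^2-|\xi'|_g^2}$ in the hyperbolic region $|\xi'|_g^2<|\Re z|^2$ (outgoing branch) and $-\sqrt{|\xi'|_g^2-z^2}$ in the elliptic region.

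\emph{Ellipticity and conclusion.} In the hyperbolic region, $\Im \sigma(h\mathcal{T}) = \sqrt{z^2-|\xi'|_g^2}>0$, so the principal symbol of $h\mathcal{T}(\lambda)$ is nonzero. In the elliptic region, $\sigma(h\mathcal{T})=0$ forces $n^{-1}\sqrt{nz^2+|\xi'|_g^2}=\sqrt{|\xi'|_g^2-z^2}$, which squares to $|\xi'|_g^2 = n(x')z^2/(n(x')-1)$; the hypothesis $n|_{\partial\Omega}<1$ makes this value negative and hence unattainable. Thus $h\mathcal{T}(\lambda)$ is a semiclassically elliptic operator of order zero, uniformly in the admissible region of $z$, and the semiclassical calculus yields a uniformly bounded inverse for $h$ small. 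Therefore no resonances exist with $|\Re\lambda|>C$ and $\Im\lambda\geq -M$, once $C$ is taken large enough depending on $M$.

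\emph{Main obstacle.} The ellipticity computation itself is essentially algebraic; the technical heart of the proof lies in establishing the uniform microlocal description of $N_\exterior(\lambda)$ down to $\{\Im\lambda\geq -M\}$, which uses the non-trapping hypothesis through propagation of singularities and Morawetz-type semiclassical resolvent estimates for the exterior Laplacian, together with a careful matching of the interior and exterior WKB parametrices across $\partial\Omega$ in the transmission setting.
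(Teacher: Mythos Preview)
Your reduction to a boundary operator $\mathcal{T}(\lambda)=N_\exterior(\lambda)+n^{-1}N_\inner(\lambda)$ and the ellipticity computation in the elliptic region $|\xi'|_g>|\Re z|$ are correct and match the paper's analysis in that region. The genuine gap is in your treatment of the hyperbolic and glancing regions.

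The exterior Dirichlet-to-Neumann map $N_\exterior(\lambda)$ is \emph{not} a semiclassical pseudodifferential operator on $\{|\xi'|_g\leq |\Re z|\}$. Its microlocal structure there involves Fourier integral operators associated with the billiard map, and the expression $i\sqrt{z^2-|\xi'|_g^2}$ is not a principal symbol in any calculus for which a parametrix argument would invert $\mathcal{T}$. Your sentence ``the semiclassical calculus yields a uniformly bounded inverse'' therefore fails precisely in this region: there is no remainder estimate behind the asserted symbol. Non-trapping gives resolvent bounds for the exterior Dirichlet problem, but it does not convert $N_\exterior$ into a pseudodifferential operator near the characteristic set. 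The glancing set $|\xi'|_g=|\Re z|$, where both of your symbol formulas degenerate, is not addressed at all.

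The paper avoids this by never reducing fully to a boundary pseudodifferential equation. It keeps the exterior PDE $(P_\exterior-z^2)v=0$ with the Robin-type condition $\rho_\exterior h\partial_{\nu_\exterior}v-\tau\Lambda_\inner(z)v=g$, in which only the \emph{interior} DtN (a genuine elliptic pseudo) appears. An integration-by-parts identity (Lemma~\ref{l:intByPartsEstimate}) controls the boundary traces by the interior $L^2$ norm, using ellipticity of $R+\Lambda^*\Lambda$ with $\Lambda=i\tau\rho_\exterior^{-1}\Lambda_\inner$; this is where $n|_{\partial\Omega}<1$ enters, and it holds uniformly across the hyperbolic, glancing, and elliptic regions (Lemma~\ref{thm easy}). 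The interior $L^2$ norm is then controlled by a defect-measure propagation argument (Lemma~\ref{l:basicPropagation}, Theorem~\ref{boundary defect measure thm n<1}), which is exactly where the non-trapping hypothesis and the outgoing condition are used. The idea you are missing is that the hyperbolic and glancing regions must be handled by propagation for the full PDE, not by a boundary symbol calculus.
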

    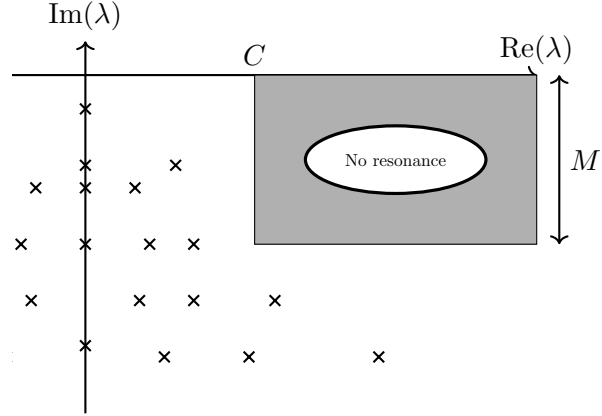
\begin{figure}[hbtp]
    \begin{center}
\begin{tikzpicture}
\begin{scope}[scale=1.5]
\clip (-.65,-3) rectangle (4.8,.9);
\fill[white] (-4.8,-.3) rectangle(4.8,.9);
		\draw[->,thick] (-4, 0) -- (4, 0) node[above] {$\operatorname{Re}(\lambda)$};
	\draw[->,thick] (0, -3) -- (0, 0.3) node[above] {$\operatorname{Im}(\lambda)$};
	\draw(0,-1) node[solid,thick, cross out,draw=black, scale=0.5] {};
	\draw(0.8,-0.8) node[solid,thick, cross out,draw=black, scale=0.5] {};
	\draw(-0.8,-0.8) node[solid,thick, cross out,draw=black, scale=0.5] {};
	\foreach \x in {1,...,4} \draw(0,-0.1*\x^2-0.2*\x) node[solid,thick, cross out,draw=black, scale=0.5] {};
	\foreach \x in {2,...,5} \draw(0.1*\x^2+0.02*\x,-0.5*\x) node[solid,thick, cross out,draw=black, scale=0.5] {};
	\foreach \x in {2,...,5} \draw(-0.1*\x^2-0.02*\x,-0.5*\x) node[solid,thick, cross out,draw=black, scale=0.5] {};
	\foreach \x in {3,...,5} \draw(0.05*\x^2+0.04*\x,-0.5*\x) node[solid,thick, cross out,draw=black, scale=0.5] {};
	\foreach \x in {3,...,5} \draw(-0.05*\x^2-0.04*\x,-0.5*\x) node[solid,thick, cross out,draw=black, scale=0.5] {};
	\foreach \x in {4,...,5} \draw(0.02*\x^2+0.04*\x,-0.5*\x) node[solid, thick,cross out,draw=black, scale=0.5] {};
	\foreach \x in {4,...,5} \draw(-0.02*\x^2-0.04*\x,-0.5*\x) node[solid,thick, cross out,draw=black, scale=0.5] {};
	\filldraw[fill=light-gray] (1.5,-1.5) rectangle ++(2.5 ,1.5);
	\filldraw[color=black, fill=white, very thick] (2.75,-0.75) ellipse (0.8 cm and 0.3 cm) node[scale=0.6]{No resonance};
	\draw (1.5,0) node[above] {$C$};
	\draw[<->,thick] (4.2, 0) -- (4.2, -1.5) node[midway, right] {$M$};
    \end{scope}
	\end{tikzpicture}
    \end{center}
    \caption{
\label{f:1}The figure shows the resonances for~\eqref{e:nProblem} with $n|_{\partial\Omega}<1$ as x's. The resonance free region is determined by Theorem~\ref{t:simple} or~\ref{t:noStates}.}
    \end{figure}
    
        Next, in the complementary case, we describe the region in which resonances may lie and show that any such resonances are plasmonic.
    \begin{theorem}
    \label{t:simple2}
     Suppose that $\Omega_{\inner}$ is non-trapping and $n\in C^\infty(\overline{\Omega}_{\inner};(0,\infty))$ satisfies $n|_{\partial\Omega}>1$. Then for all $M>0$, $N>0$ there is $C$ such that 
     $$
     \mathcal{R}(n,\Omega_{\inner})\cap\{|\Re \lambda|>C\} \subset \{\Im \lambda<-M\text{ or }-|\Re\lambda|^{-N}<\Im \lambda<0\}.
     $$
     Moreover, any sequence $\{\lambda_j\}_{j=1}^\infty \subset \mathcal{R}(n,\Omega_{\inner})$ with $|\Re \lambda_j|\to \infty$ and $|\Im \lambda_j|$ bounded is a plasmon resonance.
    \end{theorem}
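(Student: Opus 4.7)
The plan is to reduce the resonance problem to a nonlinear eigenvalue problem on $\partial\Omega$ via Dirichlet-to-Neumann (DtN) maps and analyze it semiclassically with $\lambda = h^{-1}z$. Introduce the interior DtN $N_\inner(\lambda)$ for $(\div n^{-1}\nabla - \lambda^2)$ and the outgoing exterior DtN $N_\exterior(\lambda)$ for $(-\Delta - \lambda^2)$. The transmission conditions in~\eqref{e:nProblem} show that $\lambda$ is a resonance iff the boundary operator
\begin{equation*}
\mathcal{F}(\lambda) := N_\exterior(\lambda) + n^{-1}|_{\partial\Omega}\,N_\inner(\lambda)
\end{equation*}
on $\partial\Omega$ has nontrivial kernel; the trace $f := u_\lambda|_{\partial\Omega}$ of a resonant state lies in this kernel.

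As $h \to 0^+$, $h\mathcal{F}(h^{-1}z)$ is a semiclassical $\Psi$DO on $\partial\Omega$ whose principal symbol, computed in Fermi coordinates, is $i\mu_\exterior(\xi',z) + n(x')^{-1}\sqrt{|\xi'|_{g^*}^2 + n(x')z^2}$, with $\mu_\exterior$ the outgoing branch of $\sqrt{z^2 - |\xi'|_{g^*}^2}$. In the exterior-propagating region $|\xi'|_{g^*}^2 < (\Re z)^2$ the first term is purely imaginary and the second strictly real positive, so $\mathcal{F}$ is elliptic. In the exterior-evanescent region the symbol reduces to
\begin{equation*}
p(x',\xi',z) = -\sqrt{|\xi'|_{g^*}^2 - z^2} + n(x')^{-1}\sqrt{|\xi'|_{g^*}^2 + n(x')z^2}.
\end{equation*}
For real $z > 0$, the characteristic set $\{p = 0\}$ is the smooth hypersurface $\Sigma_z = \{|\xi'|_{g^*}^2 = n(x')z^2/(n(x')-1)\}$, nonempty precisely because $n|_{\partial\Omega} > 1$ and lying strictly in the doubly-evanescent regime; off $\Sigma_z$, $p$ is elliptic.

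To obtain the resonance-free region, I would combine three ingredients. Elliptic estimates off $\Sigma$ give invertibility of $\mathcal{F}(\lambda)$ with polynomial loss. Standard non-trapping resolvent estimates for the exterior Helmholtz problem handle the glancing region and yield polynomial bounds on $N_\exterior(\lambda)$ down to $\Im\lambda > -M$ for any $M$; together these rule out resonances in $\{-M\le\Im\lambda\le 0\}$ whose semiclassical wavefront set stays away from $\Sigma$. Near $\Sigma$, a Grushin / normal-form reduction exploits that $p$ is real at $z\in\mathbb{R}$: the reduced operator on $\Sigma$ is self-adjoint at top order, so its approximate spectrum lies within $|\Im\lambda|\le C_N|\Re\lambda|^{-N}$ of the real axis for every $N$. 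Altogether $\mathcal{F}(\lambda)$ is invertible on $\{-M < \Im\lambda < -|\Re\lambda|^{-N}\}\cap\{|\Re\lambda|>C\}$, while absence of resonances in $\{\Im\lambda\ge 0\}$ follows from the standard unique-continuation argument combined with non-trapping, which forbids embedded eigenvalues.

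For the plasmonic conclusion, let $\{\lambda_j\}$ satisfy $|\Re\lambda_j|\to\infty$ with $|\Im\lambda_j|$ bounded; by the above $|\Im\lambda_j|\le|\Re\lambda_j|^{-N}$ for every $N$. The trace $f_j$ lies in $\ker\mathcal{F}(\lambda_j)$, so its semiclassical wavefront set is contained in $\Sigma$, which sits strictly in the doubly-evanescent regime. The Poisson extensions of $f_j$ into $\Omega_\inner$ and $\Omega_\exterior$ therefore decay exponentially at rate $e^{-c/h_j}$ away from $\partial\Omega$, and for any $\psi\in C_c^\infty(\mathbb{R}^d)$ with $\supp\psi\cap\partial\Omega=\emptyset$,
\begin{equation*}
\|\psi u_{\lambda_j}\|_{L^2(\mathbb{R}^d)} \le C_N|\lambda_j|^{-N}\|f_j\|_{L^2(\partial\Omega)}
\end{equation*}
for every $N$, while a trace lower bound yields $\|u_{\lambda_j}\|_{L^2(\partial\Omega)}\ge c\|f_j\|_{L^2(\partial\Omega)}$, proving~\eqref{e:plasmonic}. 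The main obstacle in the whole argument is the analysis near $\Sigma$, where $\mathcal{F}$ fails to be elliptic: one must exploit the self-adjoint structure of its principal symbol to show that plasmon resonances sit super-polynomially close to the real axis, which I expect to be the main technical content of the body of the paper. The non-trapping hypothesis plays the complementary role of keeping propagating resonances deep in $\{\Im\lambda<-M\}$.
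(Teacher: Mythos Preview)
Your overall architecture matches the paper's: reduce to the boundary operator $\Lambda_\exterior - \tau\Lambda_\inner$ (your $\mathcal{F}$, up to a sign convention), split $T^*\partial\Omega$ into the exterior-hyperbolic, glancing, and doubly-evanescent zones, use non-trapping propagation for the first two, and treat the characteristic set $\Sigma$ separately. The paper implements the propagation step via defect measures (Lemma~\ref{l:basicPropagation}) rather than invoking a black-box exterior resolvent bound, but this is a difference of mechanism, not of substance.

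The genuine gap is your treatment near $\Sigma$. The claim that ``self-adjoint at top order'' forces the plasmon spectrum into an $O(|\Re\lambda|^{-N})$ strip for every $N$ is false as stated: a real principal symbol only pins eigenvalues to within $O(h)$ of the real axis, and lower-order terms in the DtN symbol expansion could in principle shift them by $O(h)$ as well. To get $O(h^\infty)$ you would need self-adjointness of the full operator modulo $\Psi^{-\infty}$, which is not a symbolic fact and does not follow from your computation of $p$. The paper sidesteps any Grushin or normal-form machinery here: instead it proves directly, by Green's identity in the interior and exterior Poisson problems (Lemmas~\ref{l:inDtNSign}, \ref{l:outDtNSign}, Proposition~\ref{p:imagPart}), that
\[
|\Im z^2|\,\|Xu\|_{L^2(\partial\Omega)}^2 \;\lesssim\; \bigl|\langle(\Lambda_\exterior - \tau\Lambda_\inner)Xu,\,Xu\rangle\bigr| + O(h^\infty)\|u\|_{L^2}^2
\]
whenever $X$ is microsupported in $\{|\xi'|_{g_\exterior}>1\}$. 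This sign estimate is a global integration-by-parts statement about the Dirichlet extensions $G_\inner, G_\exterior$, not an asymptotic-expansion statement, and it is precisely what supplies the missing ``self-adjoint mod $h^\infty$'' input your Grushin scheme would need. With it in hand, Theorem~\ref{boundary defect measure thm n>1} combines the elliptic, propagation, and sign estimates directly, without ever writing down a normal form. Your approach is salvageable, but the crucial step is exactly this non-symbolic positivity computation.
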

    \begin{rem}
    We prove much more than that the resonant states with $|\Im\lambda_j|$ bounded are plasmonic in the sense of~\eqref{e:plasmonic} (see Lemma~\ref{l:plasmonic} and Figure~\ref{f:whatIsAPlasmon}). We are, in fact, able to describe their localization properties modulo $|\lambda_j|^{-\infty}$. For instance, one can see that for any $\chi\in C_c^\infty(\mathbb{R}^d)$, $\|\chi \partial_x^\alpha u_{\lambda_j}(x)\|_{L^2(\mathbb{R}^d)}\leq C|\lambda_j|^{-\frac{1}{2}+|\alpha|}$, $|\alpha|\leq 2$. 
    \end{rem}
    \begin{figure}
     \begin{tikzpicture}
    \clip (-3.6,-2.5)rectangle(3.6,1.5);
    \node at (0,0){\includegraphics[width=.55\textwidth]{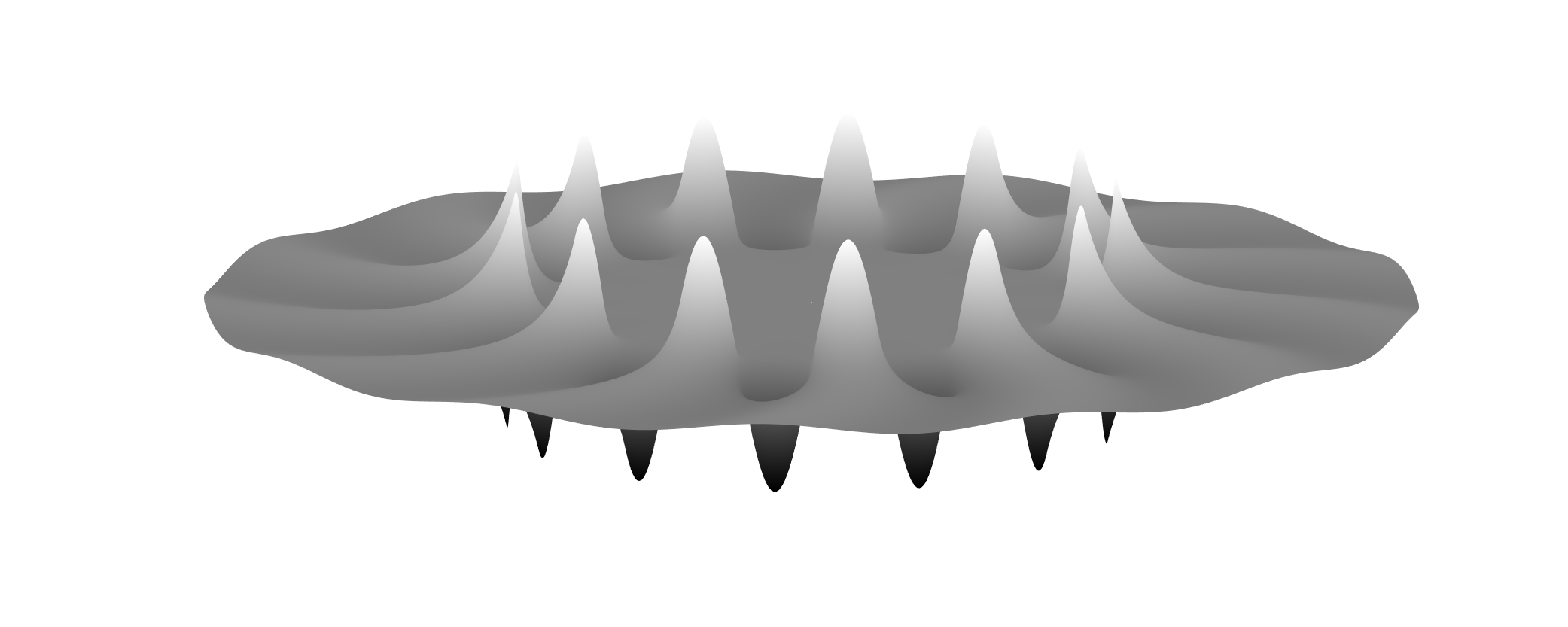}};
    \node at (0,-2){$\lambda_L \approx 8.4647-1.0396\times 10^{-2}\rmi$};
    \end{tikzpicture}
    \begin{tikzpicture}
    \clip (-3.6,-2.5)rectangle(3.6,1.5);
    \node at (0,0){\includegraphics[width=.55\textwidth]{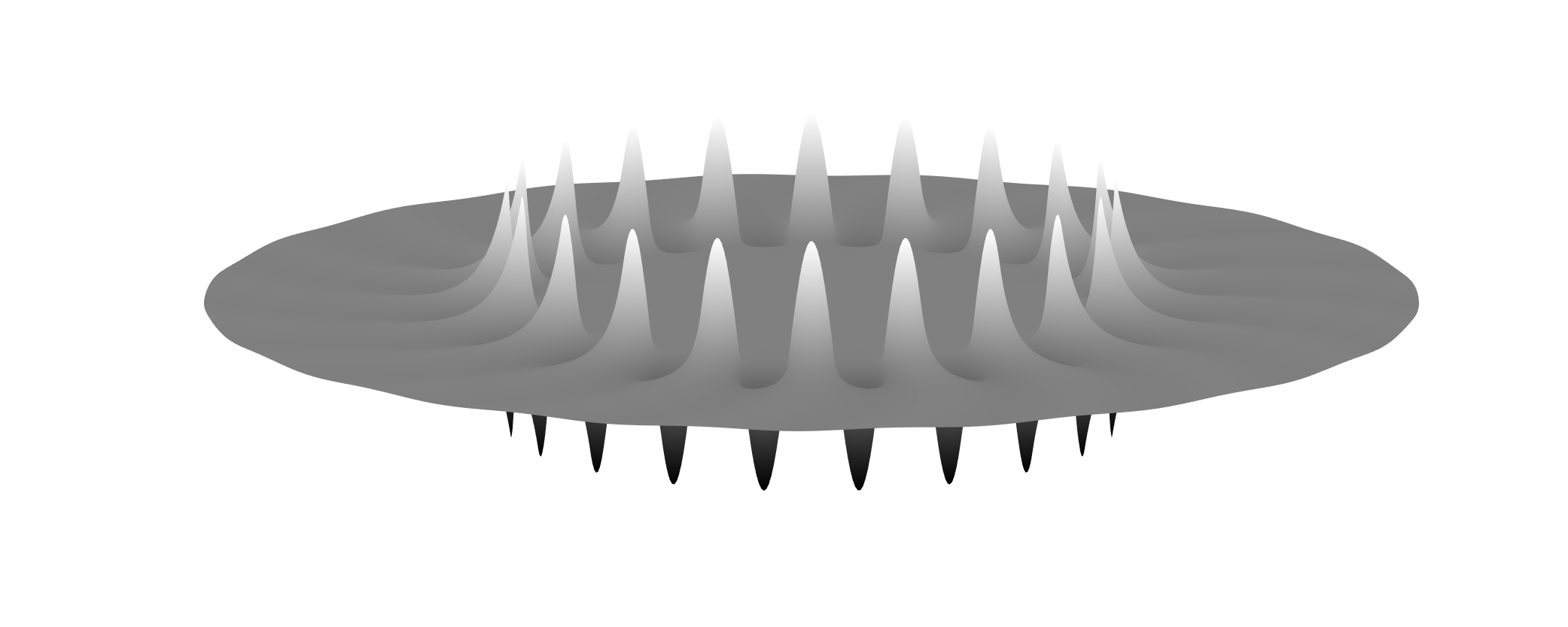}};
    \node at (0,-2){$\lambda_R\approx 13.145-8.5412\times 10^{-4}\rmi$};
    \end{tikzpicture}
    \caption{\label{f:whatIsAPlasmon} Lemma~\ref{l:plasmonic} in fact shows that, modulo $O(|\lambda_j|^{-\infty})$, all surface plasmons are as pictured here (with $\Omega_{\inner}=B(0,1)$). These plasmons concentrate in a $|\lambda_j|^{-1}$ neighborhood of the boundary, $\partial\Omega$ and oscillate at frequency $\sim |\lambda_j|$ in $\partial\Omega$. The functions plotted here are the real parts of the resonant state corresponding to $n|_{B(0,1)}\equiv 3$ with resonance $\lambda_L \approx 8.4647\times 10^0-1.0396\times 10^{-2}\rmi$ (on the left) and $\lambda_R\approx 13.145-8.5412\times 10^{-4}\rmi$ (on the right).}
    \end{figure}
\noindent    The resonance free regions of Theorems~\ref{t:simple} and~\ref{t:simple2} are pictured in Figures~\ref{f:1} and~\ref{f:2} respectively.

    Finally, we determine the asymptotic number of plasmonic resonances, counted with multiplicity.
    \begin{theorem}
    \label{t:simple4}
     Suppose that $\Omega_{\inner}$ is non-trapping and $n\in C^\infty(\overline{\Omega}_{\inner};(0,\infty))$ satisfies $n|_{\partial\Omega}>1$. Then for all $M>0$,
     \begin{align*}
     &\#\{ \lambda_j\in \mathcal{R}(n,\Omega_{\inner})\,:\, 0<\Re \lambda_j\leq \lambda,\,\Im \lambda \geq -M\}\\
     &\qquad=\frac{\lambda^{d-1}}{(2\pi)^{d-1}}\vol_{T^*\partial\Omega}\Big(\Big\{ (x',\xi')\in T^*\partial\Omega\,:\, |\xi'|^2_{g_{\text{euc}}}\leq 1+\frac{1}{n(x')-1}\Big\}\Big)+o(\lambda^{d-1}),
     \end{align*}
     where $g_{\text{euc}}$ is the metric induced on $\partial\Omega$ by the Euclidean metric on $\mathbb{R}^d$.
    \end{theorem}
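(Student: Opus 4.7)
The plan is to convert the resonance problem into a semiclassical spectral problem on $\partial\Omega$ and then invoke the sharp semiclassical Weyl law.

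First, I would represent $\mathcal{R}(n,\Omega_{\inner})$ via a boundary operator pencil. For $f\in C^\infty(\partial\Omega)$ and $\lambda$ outside a discrete set, let $u_\inner(\lambda,f)$ solve $(\div n^{-1}\nabla - \lambda^2)u_\inner=0$ in $\Omega_\inner$ with trace $f$, and let $u_\exterior(\lambda,f)$ be the $\lambda$-outgoing solution of $(-\Delta-\lambda^2)u_\exterior=0$ in $\Omega_\exterior$ with trace $f$. Put $\Lambda_\inner(\lambda)f := n^{-1}|_{\partial\Omega}\partial_\nu u_\inner$ and $\Lambda_\exterior(\lambda)f := \partial_\nu u_\exterior$, with $\nu$ the outward normal of $\Omega_\inner$. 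Then~\eqref{e:nProblem} is equivalent to $T(\lambda)f=0$ for $T(\lambda) := \Lambda_\inner(\lambda)+\Lambda_\exterior(\lambda)$, so $\mathcal{R}(n,\Omega_{\inner})$ is the set of characteristic values of the meromorphic pencil $T(\lambda)$, with Gohberg-Sigal theory matching multiplicities.

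Next, by Theorem~\ref{t:simple2} the resonant states we aim to count concentrate at $\partial\Omega$ and are microlocalized (at semiclassical scale $h:=|\Re\lambda|^{-1}$) to the evanescent region $|\xi'|>|\Re\lambda|$ in $T^*\partial\Omega$. Using the non-trapping hypothesis, I would show that in this microlocal region $h\Lambda_\inner(\lambda)$ and $h\Lambda_\exterior(\lambda)$ are semiclassical pseudodifferential operators on $\partial\Omega$. Writing $\eta':=h\xi'$ for the semiclassical cotangent variable, their principal symbols are $n^{-1}(x')\sqrt{|\eta'|^2+n(x')}$ and $-\sqrt{|\eta'|^2-1}$, so $hT(\lambda)$ has principal symbol
\[
p(x',\eta') = n^{-1}(x')\sqrt{|\eta'|^2+n(x')} - \sqrt{|\eta'|^2-1},
\]
vanishing exactly on $|\eta'|^2_{g_{\text{euc}}}=1+\tfrac{1}{n(x')-1}$ (which requires $n(x')>1$). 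A short rearrangement rewrites $p=0$ as $(h\lambda)^2=q(x',\eta')$ for
\[
q(x',\eta') := \tfrac{n(x')-1}{n(x')}\,|\eta'|^2_{g_{\text{euc}}}.
\]
I would then use a Grushin reduction to show that plasmonic characteristic values of $T(\lambda)$ in $\{\Im\lambda\geq -M\}$ are, modulo $O(h^\infty)$, in multiplicity-preserving bijection with the spectrum of a self-adjoint semiclassical pseudodifferential operator $Q(h)$ on $\partial\Omega$ with principal symbol $q$; namely, $\lambda$ is a plasmonic resonance iff $(h\lambda)^2$ is an eigenvalue of $Q(h)$.

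The standard semiclassical Weyl law on the compact $(d-1)$-manifold $\partial\Omega$ for the elliptic operator $Q(h)$ then yields
\[
\#\{j\,:\,E_j(h)\leq 1\} = \frac{1}{(2\pi h)^{d-1}}\,\vol_{T^*\partial\Omega}\{q\leq 1\} + o(h^{-(d-1)}),
\]
and $\{q\leq 1\}=\{|\eta'|^2_{g_{\text{euc}}}\leq 1+\tfrac{1}{n(x')-1}\}$, so setting $h=\lambda^{-1}$ recovers the claimed asymptotic. The hard part will be the Grushin reduction: converting a holomorphic operator pencil that is nonlinear in $\lambda$ and acts in a strip where the exterior resolvent is not $L^2$-bounded into a single self-adjoint operator while preserving multiplicities. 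This requires uniform semiclassical control of $\Lambda_\exterior(\lambda)$ throughout $-M\leq\Im\lambda\leq 0$ (via non-trapping polynomial bounds on the cutoff resolvent), a microlocal factorization of $T(\lambda)$ isolating the characteristic variety of $p$, and care to transfer the sharp Weyl remainder $o(h^{-(d-1)})$ through the reduction.
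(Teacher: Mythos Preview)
Your reduction to the boundary pencil $T(\lambda)$ and the symbol computation are correct and coincide with the paper's: Propositions~\ref{symbol of DtN exterior} and~\ref{symbol of DtN interior} give exactly your principal symbols, and the paper then factors $\Lambda_\exterior-\tau\Lambda_\inner$ microlocally on $\{|\xi'|_{g_\exterior}>1\}$ as $W^*(B(z)-z^2)W$ with $\sigma(B)=\tfrac{n-1}{n}\,|\xi'|^2_{g_{\mathrm{euc}}}$ in the special case---precisely your $q$ (see Lemma~\ref{l:inverse}). Where you diverge is in the counting. You propose a Grushin reduction to a \emph{self-adjoint} $Q(h)$ and then invoke the off-the-shelf Weyl law. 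The paper instead keeps the $z$-dependent, non-self-adjoint $B(z)$, introduces a complex absorber $Q$ so that $F(z)=\det(I+\rmi R_Q(z)Q)$ has the resonances as zeros (Lemmas~\ref{l:approxInverse}--\ref{l:formula}), counts them by contour integration along $[1-\e,1+\e]\pm\rmi h$, expresses $(B(z)-z^2)^{-1}$ through a propagator $U(t,z)$ solving $(hD_t-B(z))U=0$, and evaluates the trace by stationary phase---essentially reproving the Weyl law in situ for a mildly non-self-adjoint symbol.

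The advantage of the paper's route is that it never needs a genuinely self-adjoint model: the sign estimate $-\sgn(\Im z^2)\Im\langle(B(z)-z^2)u,u\rangle\geq (c|\Im z^2|-C_Nh^N)\|u\|^2$ (from Proposition~\ref{p:imagPart}, used in the proof of Lemma~\ref{l:inverse}) replaces unitarity and gives decay of $U(t,z)e^{-\rmi tz^2/h}$ off the real axis, so the $O(h^\infty)$ imaginary parts of the resonances are absorbed automatically by the contour argument. Your route is cleaner to state, but the step ``multiplicity-preserving bijection modulo $O(h^\infty)$ with the spectrum of a self-adjoint $Q(h)$'' is exactly where the difficulty hides: the effective Hamiltonian produced by a Grushin problem for $T(\lambda)$ will inherit the non-self-adjointness of $\Lambda_\exterior$ (its zeros genuinely sit below the real axis), so you still owe a Rouch\'e-type comparison with a self-adjoint reference operator and a multiplicity-stability argument under $O(h^\infty)$ perturbation---work of comparable weight to what the paper does directly. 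Either route must also handle the fact that $\Lambda_\exterior$ is only pseudodifferential on $\{|\xi'|>1\}$; the paper does this via the absorber $Q$ in Lemma~\ref{l:approxInverse}, whereas your Grushin reduction would have to be genuinely microlocal.
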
    
   \begin{rem} Note that Theorem \ref{t:simple} \ref{t:simple2}, and \ref{t:simple4} are the special cases of Theorem~\ref{t:noStates}, Theorem~\ref{t:states}, Theorem~\ref{t:plasmonic}, and Theorem~\ref{t:count} below.
   \end{rem}

   Theorems~\ref{t:simple} to~\ref{t:simple4} (and their analogs below) give a precise description of resonances near the real axis for a wide class of negative index of refraction scattering problems. They determine when such resonances exist, how many there are, and describe the asymptotic properties of the corresponding resonant states. While it is often possible to obtain asymptotic upper bounds on the number of resonances near the real axis (see e.g.~\cite{DaDy:13,DyGa:17,Dy:19,SjZw:07} and references therein), it is very rare to be able to give an asymptotic count of these resonances -- celebrated examples include scattering in one dimension~\cite{Zw:87}, by convex obstacles~\cite{SjZw:99}, by convex transparent obstacles~\cite{CaPoVo:01}, and with normally hyperbolic trapping~\cite{Dy:15}. Theorem~\ref{t:simple4} and its more general analog Theorem~\ref{t:count} provide another such example.
    
    \noindent{\bf{Relation with previous work on negative index of refraction metamaterials:}}

    To the best of the authors' knowledge, the first mathematical paper considering negative index of refraction scattering is~\cite{COSTABEL1985367}, where the authors study materials with constant index of refraction (i.e. $n|_{\Omega_{\inner}}\equiv c_{\inner}$) and show that the problem~\eqref{e:nProblem} is Fredholm under certain conditions on $\lambda$ and $c_{\inner}$.  The works~\cite{BoChCi:12,BoChCl:13,BoChCi:14} build on this theory, allowing $n$ to be variable, and study the problem in lower regularity. In Appendix~\ref{a:blackBox}, we give a different proof inspired by~\cite{CdV:25} to show that~\eqref{e:nProblem} (or indeed the more general problem~\eqref{e:mainProblem}) is Fredholm when $n|_{\partial\Omega}$ avoids $1$.
    
     We study scattering resonances in the context of negative index of refraction metamaterials. As far as the authors are aware, the only previous works in this context are~\cite{CdV:25,CaMo:23,DaBaCaMo:24}. In~\cite{CaMo:23,DaBaCaMo:24}, the authors study the case of $d=2$ with $\Omega_{\inner}$ having smooth boundary and show, without further assumptions on the geometry, that there are many resonances near the real axis in the case $n|_{\partial\Omega}>1$ and many negative eigenvalues in the case $n|_{\partial\Omega}<1$. In fact, the authors construct a sequence of quasimodes, $u_j$, with quasi-eigenvalue $\lambda_j\to \infty$ associated to~\eqref{e:nProblem} so that $u_j$ are highly localised near $\partial\Omega$. However, they do not show that the true resonant states are highly localized. 
    When $n|_{\partial\Omega}>1$ is constant,
    $$
    \lambda_j=\frac{2\pi j(n-1)}{n|\partial\Omega|},
    $$
    and, if $\Omega_{\inner}$ is non-trapping, we confirm from Theorem~\ref{t:simple4} that this sequence of quasi-eigenvalues captures most resonances near the real axis. Moreover, Theorem~\ref{t:simple2} shows that all corresponding resonant states are highly localized. Very recently, and independently from our work,~\cite{CdV:25} considers the higher dimensional analog of~\cite{CaMo:23}.

    The present article differs from and strengthens these earlier works in two substantial ways. First, under an additional natural dynamical assumption, we describe the location of all possible resonances near the real axis and show that if there are any, they must correspond to highly localized resonant states, and second, we determine how many such resonances there are. 

\begin{rem}There are variety transmission problems, including by positive index of refraction materials, which are much better developed in the mathematical literature (See e.g.~\cite{MR1724834,CaPoVo:99,PoVo:99, CaPoVo:01,Ga:19b,Ga:19,MoSp:19}).
\end{rem}
 
\begin{figure}
\begin{center}
\begin{tikzpicture}
\begin{scope}[scale=1.5]
\clip (-.65,-3) rectangle (4.8,.9);
\fill[white] (-4.8,-.3) rectangle(4.8,.9);
	\def\s{.8};
	\def\m{.65};
	\draw[->,thick] (-4, 0) -- (4, 0) node[above] {$\operatorname{Re}(\lambda)$};
	\draw[->,thick] (0, -3) -- (0, 0.3) node[above] {$\operatorname{Im}(\lambda)$};
	\draw(0,-1) node[solid,thick, cross out,draw=black, scale=0.5] {};
	\draw(0.8,-0.8) node[solid, thick,cross out,draw=black, scale=0.5] {};
	\draw(-0.8,-0.8) node[solid,thick, cross out,draw=black, scale=0.5] {};
	\foreach \x in {1,...,4} \draw(0,-0.1*\x^2-0.2*\x) node[solid,thick, cross out,draw=black, scale=0.5] {};
	\foreach \x in {2,...,5} \draw(0.1*\x^2+0.02*\x,-0.5*\x) node[solid, thick,cross out,draw=black, scale=0.5] {};
	\foreach \x in {2,...,5} \draw(-0.1*\x^2-0.02*\x,-0.5*\x) node[solid, thick,cross out,draw=black, scale=0.5] {};
	\foreach \x in {3,...,5} \draw(0.05*\x^2+0.04*\x,-0.5*\x) node[solid, thick,cross out,draw=black, scale=0.5] {};
	\foreach \x in {3,...,5} \draw(-0.05*\x^2-0.04*\x,-0.5*\x) node[solid, thick,cross out,draw=black, scale=0.5] {};
	\foreach \x in {4,...,5} \draw(0.02*\x^2+0.04*\x,-0.5*\x) node[solid, thick,cross out,draw=black, scale=0.5] {};
	\foreach \x in {4,...,5} \draw(-0.02*\x^2-0.04*\x,-0.5*\x) node[solid,thick, cross out,draw=black, scale=0.5] {};

	\filldraw[fill=light-gray] (1.5,-1.5) -- plot [smooth ] coordinates {(1.5,-\s) (2.125,-\s*\m) (2.75,-\s*\m*\m) (3.375,-\s*\m*\m*\m) (4,-\s*\m*\m*\m*\m )} --(4,-1.5)--cycle;
	\filldraw[color=black, fill=white, very thick] (2.75,-1.05) ellipse (0.8 cm and 0.3 cm) node[scale=0.6]{No resonance};
	\draw (1.5,0) node[above] {$C$};
	\draw[<->,thick] (4.2, 0) -- (4.2, -1.5) node[midway, right] {$M$};
	\draw[dashed] (1.5,0.05) --(1.5,-0.6);
	\foreach \x in {1,...,7} \draw(0.5*\x,{-2*exp(.5*(-\x-0.8))-.02}) node[solid, thick,circle ,draw=black, scale=0.3] {};
	\foreach \x in {1,...,7} \draw(-0.5*\x,{-2*exp(.5*(-\x-0.8))-.02}) node[solid, thick,circle ,draw=black, scale=0.3] {};
    \end{scope}
\end{tikzpicture}
\end{center}
    \caption{\label{f:2}The figure shows the resonances for~\eqref{e:nProblem} with $n|_{\partial\Omega}>1$. Non-plasmonic resonances are denoted with x's and plasmonic resonances with o's. The resonance free regions are those determined by Theorem~\ref{t:simple2} or~\ref{t:states}. Theorems~\ref{t:simple4} and~\ref{t:count} determine the asymptotic number of plasmonic resonances.}
\end{figure}
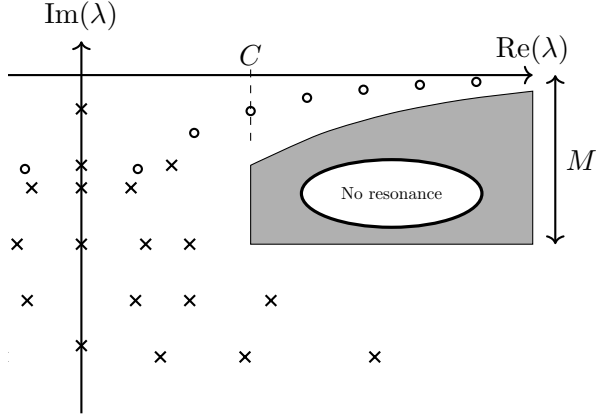

\subsection{Inhomogeneous metrics}
\label{s:general}
In this article, we study the general situation of a cavity with a negative definite Laplacian. This corresponds to a metamaterial where the material properties are not homogoneous and exhibit an effective negative wave-speed. To this end, let $g_{\exterior}$ be a smooth Riemannian metric on $\overline{\Omega_{\exterior}}$ and $g_{\inner}$ a smooth Riemannian metric on $\overline{\Omega_{\inner}}$. Let also $\rho_{\inner}\in C^\infty(\overline{\Omega_{\inner}};(0,\infty))$ and $\rho_{\exterior}\in C^\infty(\overline{\Omega_{\exterior}};(0,\infty))$. We assume that the geometry is Euclidean near infinity. That is
    \begin{equation*}
    \label{e:euclidenn}
    g_{\exterior}^{ij}(x)= \delta^{ij},\qquad \rho_{\exterior}(x)=1,\qquad \text{for }|x|\gg 1.
    \end{equation*}

    For a metric $g$ and positive function $\rho$, we define the operator 
    $$
    \Delta_{g,\rho}u:=\frac{1}{\rho\sqrt{|g|}}\partial_{x^i}(g^{ij}\sqrt{|g|}\rho\partial_j u(x)), \quad |g|:=|\det g_{ij}|,
    $$
    and note that $\Delta_{g,\rho}$ is symmetric on $L^2(\rho d\vol_g)$. We then define the unbounded operator $P:L^2(\Omega_{\inner},\rho_{\inner}d\vol_{g_{\inner}})\oplus L^2(\Omega_{\exterior},\rho_{\exterior}d\vol_{g_{\exterior}})\to  L^2(\Omega_{\inner},\rho_{\inner}d\vol_{g_{\inner}})\oplus L^2(\Omega_{\exterior},\rho_{\exterior}d\vol_{g_{\exterior}})$ given by 
    $$
    P(u_{\inner},u_{\exterior})=(\Delta_{g_{\inner},\rho_{\inner}},-\Delta_{g_{\exterior},\rho_{\exterior}})
    $$
    with domain 
    \begin{equation}
    \label{e:domain1}
    \mathcal{D}(P):=\Big\{\begin{aligned}u=(u_{\inner},u_{\exterior})\in H^1(\mathbb{R}^d)&\cap \big(H^2(\Omega_{\inner})\oplus H^2(\Omega_{\exterior})\big)\,:\\
    &\rho_{\exterior}\partial_{\nu_{\exterior}}u_{\exterior}\der \vol_{g_{\exterior},\partial\Omega}=\rho_{\inner}\partial_{\nu_{\inner}}\der \vol_{g_{\inner},\partial\Omega}\end{aligned}\Big\},
    \end{equation}
    where $\partial_{\nu_{\inner}}$ and $\partial_{\nu_{\exterior}}$ denote respectively the outward unit normal to $\Omega_{\inner}$ with respect to $g_{\inner}$ and the outward unit normal to $\Omega_{\exterior}$ with respect to $g_{\exterior}$. 

    The induced boundary volume forms from the exterior and interior metrics are not in general the same, i.e. $\der \vol_{g_{\exterior}}$ is not necessarily the same as $\der \vol_{g_{\inner}}$. It is thus natural to define a function on $\partial \Omega$ to measure their difference. Hence, we define $\tau\in C^\infty(\partial\Omega)$ 
    \begin{equation*}
    \label{rel density}
    \tau\der \vol_{g_\exterior,\partial \Omega}:=\der \vol_{g_\inner,\partial \Omega}.
    \end{equation*}
    Notice that $\tau$ is positive on $\partial \Omega$.
    
    We assume throughout the text that 
    \begin{equation}
    \label{e:thereIsAJump}
    \tau^2 \rho_{\inner}^2|\xi'|_{g_{\inner}}^2-\rho_{\exterior}^2|\xi'|^2_{g_{\exterior}}\neq 0,\qquad (x,\xi')\in T^*\partial\Omega.
    \end{equation}
    Under assumption~\eqref{e:thereIsAJump}, the operator $P$ is self-adjoint and is a black box Hamiltonian in the sense of~\cite[Section 4.1]{DyZw:19} (See Appendix~\ref{a:blackBox}).

    We then let $R_P(\lambda):=(P-\lambda^2)^{-1}:L^2(\Omega_{\inner})\oplus L^2(\Omega_{\exterior})\to  L^2(\Omega_{\inner})\oplus L^2(\Omega_{\exterior})$. By~\cite[Theorem 4.4]{DyZw:19}, $R_P(\lambda):L^2(\Omega_{\inner})\oplus L^2(\Omega_{\exterior})\to \mathcal{D}(P)$ is meromorphic in $\Im \lambda>0$ and has a meromorphic continuation to $\mathbb{C}$ for $d$ odd and the logarithmic cover of $\mathbb{C}\setminus\{0\}$ for $d$ even as an operator $R_{P}(\lambda):L^2(\Omega_{\inner})\oplus L^2_{\comp}(\Omega_{\exterior})\to \mathcal{D}_{\loc}(P)$, where 
    $$
    \mathcal{D}_{\loc}(P):=\Big\{u=(u_{\inner},u_{\exterior})\in H_{\loc}^1(\mathbb{R}^d)\cap\big( H^2(\Omega_{\inner})\oplus H_{\loc}^2(\Omega_{\exterior})\big)\,:\,\rho_{\exterior}\partial_{\nu_{\exterior}}u_{\exterior}=\tau\rho_{\inner}\partial_{\nu_{\inner}}u_{\inner}\Big\}.
    $$

    Defining  $(u_{\inner},u_{\exterior}):=R_P(\lambda)(f_{\inner},f_{\exterior})$, $(u_{\inner},u_{\exterior})$ satisfies 
    \begin{equation}
    \label{e:mainProblem}
    \begin{cases}(\Delta_{g_{\inner},\rho_{\inner}}-\lambda^2)u_{\inner}=f_{\inner}&\text{ in }\Omega_{\inner},\\
    (-\Delta_{g_{\exterior},\rho_{\exterior}}-\lambda^2)u_{\exterior}=f_{\exterior}&\text{ in }\Omega_{\exterior},\\
    u_\inner=u_{\exterior}&\text{ on $\partial\Omega$},\\
    \rho_{\exterior}\partial_{\nu_{\exterior}}u_{\exterior}-\tau\rho_{\inner}\partial_{\nu_{\inner}}u_{\inner}=0&\text{ on $\partial\Omega$},\\
    u_{\exterior}\text{ is $\lambda$-outgoing}.
    \end{cases}
    \end{equation}
    Define the set of \emph{resonances of $P$} by
    $$
    \mathcal{R}(P):=\{\lambda\,:\, \lambda\text{ is a pole of $R_P(\lambda)$}\}.
    $$
    \begin{rem}
    \label{special case}
    Note that \eqref{e:nProblem} is a special case of our general setting \eqref{e:mainProblem} with $g_\exterior^{ij}=\delta^{ij}$, $g_\inner^{ij}=n^{-1}\delta^{ij}$, $\rho_\exterior=1$, $\rho_\inner=n^{-\frac{d}{2}}$, and $\tau=n^{\frac{d-1}{2}}$.
    \end{rem}
    
    We now state the analogs of Theorems~\ref{t:simple} to~\ref{t:simple4} in the more general setting of a negative wave speed. We begin in the case
    \begin{equation}
    \label{e:noPlasmons}
    0>\rho_{\exterior}^2|\xi'|^2_{g_{\exterior}}-\tau^2\rho_{\inner}^2|\xi'|_{g_{\inner}}^2, \qquad (x',\xi')\in T^*\partial\Omega,
    \end{equation}
    where we show that there are no resonances close to the real axis.
	\begin{theorem}
		\label{t:noStates}
		Suppose that $(\Omega_{\exterior},g_{\exterior})$ is a non-trapping domain and~\eqref{e:noPlasmons} holds. Then for all $M>0$, there is $C>0$ such that 
		$$
		\calR(P)\cap \{ |\Re \lambda|>C\}\subset \{\Im \lambda \leq -M\}. 
		$$
        Moreover, for $\chi\in C_c^\infty(\mathbb{R}^d)$,
        $$
        \|\chi R_P(\lambda)\chi\|_{L^2\to L^2}\leq C|\lambda|^{-1},\qquad \lambda\in \{ \Re \lambda>C,\, \Im \lambda>-M\}.
        $$
	\end{theorem}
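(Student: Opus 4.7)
The plan is to pass to the semiclassical regime $h := 1/|\Re \lambda|$, $z := h\lambda$ (so $|\Re z|$ is bounded away from $0$ and $|\Im z| \le Mh$) and prove the estimate $\|\chi R_P(\lambda)\chi\|_{L^2 \to L^2} = O(h)$, which simultaneously yields the resonance-free region and the quantitative bound (since $h \sim |\lambda|^{-1}$). The strategy is to reduce the transmission problem~\eqref{e:mainProblem} to a pseudodifferential equation on $\partial\Omega$ for the common trace $f = u|_{\partial\Omega}$, and then to check that the associated boundary operator is semiclassically elliptic under~\eqref{e:noPlasmons}.

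To set up the reduction I would introduce two Dirichlet-to-Neumann operators. The interior operator $N_{\inner}(\lambda): f \mapsto \partial_{\nu_{\inner}}u_{\inner}|_{\partial\Omega}$ is defined from the unique solution of $(\Delta_{g_{\inner},\rho_{\inner}} - \lambda^2)u_{\inner} = 0$ with $u_{\inner}|_{\partial\Omega} = f$; since the interior principal symbol $-|\xi|^2_{g_{\inner}} - \lambda^2$ never vanishes on real $\xi$, this is a genuinely elliptic boundary problem and $hN_{\inner}(\lambda)$ is a standard semiclassical pseudodifferential operator on $\partial\Omega$ with principal symbol $\sqrt{|\xi'|^2_{g_{\inner}} + z^2}$ in boundary normal coordinates. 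The exterior operator $N_{\exterior}(\lambda)$ is defined from the $\lambda$-outgoing solution of $(-\Delta_{g_{\exterior},\rho_{\exterior}} - \lambda^2)u_{\exterior} = 0$; non-trapping supplies the standard high-frequency exterior Dirichlet resolvent bound $\|\chi R^{\mathrm{Dir}}_{\exterior}(\lambda)\chi\|_{L^2\to L^2} = O(h)$ and gives $hN_{\exterior}(\lambda)$ semiclassical principal symbol $\sqrt{|\xi'|^2_{g_{\exterior}} - z^2}$ in the elliptic region $|\xi'|^2_{g_{\exterior}} > \Re z^2$ and $-i\sqrt{z^2 - |\xi'|^2_{g_{\exterior}}}$ in the hyperbolic region. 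The transmission conditions then collapse to
\begin{equation*}
M(\lambda)f := \bigl(\tau\rho_{\inner} N_{\inner}(\lambda) - \rho_{\exterior} N_{\exterior}(\lambda)\bigr)f = 0.
\end{equation*}

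The algebraic heart of the proof is that, under~\eqref{e:noPlasmons}, the principal symbol of $hM(\lambda)$ is bounded away from zero on $T^*\partial\Omega$. In the hyperbolic region the interior term is real and strictly positive, while the exterior contribution is nonzero and purely imaginary, so $\Im(hM)$ is bounded below. In the elliptic region both contributions are real, and vanishing is equivalent to $(\tau^2\rho_{\inner}^2 + \rho_{\exterior}^2)z^2 = \rho_{\exterior}^2|\xi'|^2_{g_{\exterior}} - \tau^2\rho_{\inner}^2|\xi'|^2_{g_{\inner}}$, whose left side is positive while~\eqref{e:noPlasmons} forces the right side to be strictly negative. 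Standard semiclassical elliptic parametrix theory then yields a bounded inverse $M(\lambda)^{-1}$, and combining this with Poisson extensions on both sides together with the non-trapping exterior resolvent assembles a parametrix for the full problem~\eqref{e:mainProblem}, producing the desired $O(h)$ cutoff resolvent bound.

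The main technical obstacle is the glancing set $\{|\xi'|^2_{g_{\exterior}} = \Re z^2\} \subset T^*\partial\Omega$, where $N_{\exterior}(\lambda)$ fails to be a classical semiclassical pseudodifferential operator and the elliptic and hyperbolic symbolic calculi must be reconciled. Fortunately at glancing the exterior principal symbol vanishes while the interior contribution $\tau\rho_{\inner}\sqrt{|\xi'|^2_{g_{\inner}} + z^2}$ is real, nonzero, and of full tangential order, so ellipticity of $M(\lambda)$ persists through glancing and a parametrix exists there; the rigorous construction near glancing would proceed via Melrose–Sjöstrand type propagation of semiclassical wavefront set for the exterior outgoing problem, combined with the non-trapping hypothesis on $(\Omega_{\exterior},g_{\exterior})$ to control the exterior trace. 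Assembling these three symbolic regions uniformly, and tracking the semiclassical orders cleanly, is where the bulk of the technical work will lie.
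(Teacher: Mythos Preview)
Your overall strategy---semiclassical rescaling, reduction to a boundary problem via DtN maps, symbolic non-vanishing under~\eqref{e:noPlasmons}---is sound, and your algebraic check in the elliptic region is exactly right. But the paper takes a genuinely different route at the key step, and it is worth seeing why.

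You propose to invert $M(\lambda)=\tau\rho_{\inner}N_{\inner}-\rho_{\exterior}N_{\exterior}$ as a semiclassical pseudodifferential operator on $\partial\Omega$. The obstacle you flag at glancing is real, but it is broader than you suggest: away from the elliptic region $\{|\xi'|_{g_{\exterior}}>|\Re z|\}$, the outgoing exterior DtN map is \emph{not} a standard semiclassical $\Psi$DO, so ``standard elliptic parametrix theory'' does not apply to $M(\lambda)$ as stated. One could try to patch this with microlocal parametrices in the hyperbolic region plus Melrose--Sj\"ostrand propagation near glancing, but that is substantial work and not ``standard''.

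The paper sidesteps this entirely. After reformulating~\eqref{e:mainProblem} as the exterior problem~\eqref{e:transmission} with boundary condition $\rho_{\exterior}h\partial_{\nu_{\exterior}}v-\tau\Lambda_{\inner}(z)v=g$, the trace estimate (Lemma~\ref{thm easy}, via Lemma~\ref{l:intByPartsEstimate}) uses only the \emph{interior} DtN map $\Lambda_{\inner}$, which is a genuine elliptic $\Psi$DO on all of $T^*\partial\Omega$. The relevant symbol is not $\sigma(M)$ but
\[
\sigma(R+\Lambda^*\Lambda)=1-|\xi'|^2_{g_{\exterior}}+(\tau\rho_{\exterior}^{-1}\rho_{\inner})^2(|\xi'|^2_{g_{\inner}}+1),
\]
with $\Lambda=\rmi\tau\rho_{\exterior}^{-1}\Lambda_{\inner}$, and this is uniformly positive under~\eqref{e:noPlasmons}. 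This yields the a~priori trace bound with no exterior DtN information at all; the interior $L^2$ control then comes from a defect-measure argument using non-trapping propagation and the outgoing condition (Lemma~\ref{l:basicPropagation}, Theorem~\ref{boundary defect measure thm n<1}). In short: your approach builds the inverse of $M(\lambda)$ region by region and must confront glancing head-on; the paper's approach never needs $\Lambda_{\exterior}$ outside the elliptic region, trading that analysis for a defect-measure/propagation argument in the exterior.
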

	
	Our next three theorems consider the opposite case: \begin{equation}
    \label{e:plasmons}
    0<\rho_{\exterior}^2|\xi'|^2_{g_{\exterior}}-\tau^2\rho_{\inner}^2|\xi'|_{g_{\inner}}^2, \qquad (x',\xi')\in T^*\partial\Omega.
    \end{equation}
    The combination of the next three theorems shows that there are many resonances resonances superpolynomially close to the real axis, all of which are plasmonic and, moreover, any sequence of resonances that is not superpolynomially close to the real axis must have imaginary part whose absolute value tends to infinity.

The first theorem provides a resonances free region.
	\begin{theorem}
		\label{t:states}
		Suppose that $(\Omega_{\exterior},g_{\exterior})$ is a non-trapping domain and~\eqref{e:plasmons} holds. Then for all $M>0$, $N>0$ there is $C>0$ such that 
		$$
		\mathcal{R}(P)\cap \{ |\Re \lambda|>C\}\subset \{\Im \lambda \leq -M\}\cup \{-|\lambda|^{-N}<\Im \lambda<0\} . 
		$$
        Moreover, 
        $$
        \|\chi R_P(\lambda)\chi\|_{L^2\to L^2}\leq C|\Im \lambda|^{-1}|\lambda|^{-1},\qquad \lambda\in \{\Re \lambda>C,\, -M<\Im \lambda<(\Re \lambda)^{-N}\}.
        $$
\end{theorem}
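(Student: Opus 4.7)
The plan is to prove Theorem~\ref{t:states} by reducing the resonance problem to invertibility of a boundary operator on $\partial\Omega$ and exploiting a positive imaginary part of its symbol created by $\Im\lambda<0$.

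For $|\Re\lambda|$ large, introduce the interior and exterior Dirichlet-to-Neumann operators: $N_{\inner}(\lambda)f:=\partial_{\nu_{\inner}}u_{\inner}|_{\partial\Omega}$ where $u_{\inner}$ solves $(\Delta_{g_{\inner},\rho_{\inner}}-\lambda^2)u_{\inner}=0$ with $u_{\inner}|_{\partial\Omega}=f$, and similarly for $N_{\exterior}(\lambda)$ using the $\lambda$-outgoing solution to $(-\Delta_{g_{\exterior},\rho_{\exterior}}-\lambda^2)u_{\exterior}=0$. These are well-defined for $\lambda$ in the stated region because the interior operator is semiclassically elliptic at real energy, and the exterior Dirichlet problem is, by non-trapping, resonance-free in $\{\Re\lambda>C,\,\Im\lambda>-M\}$ with $\|\chi R_{\exterior}(\lambda)\chi\|\lesssim|\lambda|^{-1}$ there. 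A standard black-box/boundary-reduction then expresses $R_P(\lambda)$ as a combination of the two Dirichlet resolvents with the inverse of the transmission operator
$$L(\lambda):=\rho_{\exterior}N_{\exterior}(\lambda)-\tau\rho_{\inner}N_{\inner}(\lambda),$$
reducing the claimed bound $\|\chi R_P(\lambda)\chi\|\lesssim|\Im\lambda|^{-1}|\lambda|^{-1}$ to $\|L(\lambda)^{-1}\|\lesssim|\Im\lambda|^{-1}$ on suitable semiclassical Sobolev spaces on $\partial\Omega$.

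Viewing $L(\lambda)$ as an $h$-pseudodifferential operator with $h=1/|\Re\lambda|$, its principal symbol is nonvanishing and of size $\sim|\lambda|$ in the exterior-hyperbolic region, where $N_{\exterior}$ contributes the dominant term $-\rmi\sqrt{\lambda^2-|\xi'|^2_{g_{\exterior}}}$. In the doubly elliptic region, both DtN symbols are real, and the symbol of $L(\lambda)$ may vanish only on the plasmon surface
$$\Sigma(\lambda):=\Big\{(x',\xi')\in T^*\partial\Omega:\rho_{\exterior}^2(|\xi'|^2_{g_{\exterior}}-\lambda^2)=\tau^2\rho_{\inner}^2(|\xi'|^2_{g_{\inner}}+\lambda^2)\Big\},$$
which, by~\eqref{e:plasmons}, is a smooth hypersurface transversal to the fibers. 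The gain from $\Im\lambda<0$ comes from the Green identity
$$\Im\langle N_{\inner}(\lambda)f,f\rangle_{\partial\Omega}=-2\Re\lambda\,\Im\lambda\,\|u_{\inner}\|^2_{L^2(\rho_{\inner}\der\vol_{g_{\inner}})}\geq 0,$$
together with an analogous identity for $N_{\exterior}(\lambda)$ (valid modulo a non-negative radiation contribution from infinity after truncating to a large ball), which yield $\Im\sigma(L(\lambda))\gtrsim|\Im\lambda||\lambda|$ uniformly on $\Sigma(\lambda)$.

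Equipped with this, I partition $T^*\partial\Omega$ microlocally into: (a) the exterior-hyperbolic region, where ellipticity of $L(\lambda)$ gives $\|L(\lambda)f\|\gtrsim|\lambda|\|f\|$; (b) the doubly elliptic region at distance $\gtrsim|\Im\lambda|$ from $\Sigma(\lambda)$, where $|\Re\sigma(L(\lambda))|\gtrsim|\Im\lambda||\lambda|$ gives $\|L(\lambda)f\|\gtrsim|\Im\lambda||\lambda|\|f\|$; and (c) an $|\Im\lambda|$-neighborhood of $\Sigma(\lambda)$, where the sharp G\aa{}rding/Melin inequality applied to $\Im L(\lambda)$ gives the same bound. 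Gluing via a partition of unity on $\partial\Omega$ and controlling cross-terms by the $h$-calculus yields $\|L(\lambda)^{-1}\|\lesssim|\Im\lambda|^{-1}|\lambda|^{-1}$ and hence the theorem. The main obstacle is region (c): the sharp G\aa{}rding inequality in the standard $h$-calculus loses $O(h)=O(|\lambda|^{-1})$, which would swamp the $|\Im\lambda||\lambda|$ gain once $|\Im\lambda|\lesssim|\lambda|^{-1}$. To push down to $|\Im\lambda|\gtrsim|\lambda|^{-N}$ for arbitrary $N$, a second-microlocal calculus rescaled to the $|\Im\lambda|$-thin neighborhood of $\Sigma(\lambda)$ is required, together with careful estimates of commutators between this rescaled calculus and the standard one. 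This finite-order loss is precisely what confines the result to a super-polynomial, rather than exponential, resonance-free strip.
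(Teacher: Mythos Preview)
Your reduction to the boundary operator $L(\lambda)=\Lambda_{\exterior}-\tau\Lambda_{\inner}$ and the identification of the plasmon surface $\Sigma(\lambda)$ are exactly right, but there are two problems, one a genuine gap and one an unnecessary detour.

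\textbf{Region (a) is a genuine gap.} In the exterior-hyperbolic region $|\xi'|_{g_{\exterior}}<1$ (and near glancing), the outgoing Dirichlet-to-Neumann map $\Lambda_{\exterior}(\lambda)$ is \emph{not} a semiclassical pseudodifferential operator; Theorem~\ref{t:dtN} only gives a pseudodifferential description on $\{|\xi'|_{g_{\exterior}}>1\}$. The formal symbol $-\rmi\sqrt{\lambda^2-|\xi'|^2_{g_{\exterior}}}$ does not control $\Lambda_{\exterior}$ there, so the claimed ellipticity of $L(\lambda)$ has no meaning and cannot yield an estimate. The paper instead avoids $\Lambda_{\exterior}$ entirely in this region: Lemma~\ref{l:intByPartsEstimate} uses only the \emph{interior} DtN (which is pseudodifferential everywhere) in the quadratic form $R+\Lambda_{\inner}^*\Lambda_{\inner}$ to bound the boundary traces by the interior $L^2$ mass, and then Lemma~\ref{l:basicPropagation} closes the loop by a defect-measure propagation argument that exploits non-trapping and the outgoing condition. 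This is where the non-trapping hypothesis actually enters; your sketch never uses it except to justify the existence of $N_{\exterior}$.

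\textbf{Region (c): second microlocalization is unnecessary.} You already wrote the key identity $\Im\langle N_{\inner}(\lambda)f,f\rangle=-2\Re\lambda\,\Im\lambda\,\|u_{\inner}\|^2$, but then you pass to the symbol level and invoke G\aa{}rding, incurring the $O(h)$ loss you then propose to repair by second microlocalization. The paper instead stays at the operator level: combining the Green identity with the elementary lower bound $\|G_{\inner}(z)f\|_{L^2(\Omega_{\inner})}^2\gtrsim h\|f\|_{L^2(\partial\Omega)}^2$ (Lemma~\ref{l:inDtNSign}) gives $\sgn(\Im z^2)\Im\langle\tau\Lambda_{\inner}f,f\rangle\geq c|\Im z^2|\|f\|^2$ with \emph{no} semiclassical error. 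For the exterior piece, the parametrix of Lemma~\ref{l:dtNDescriptionExterior} gives the same with $O(h^\infty)$ remainder (Lemma~\ref{l:outDtNSign}). These combine into Proposition~\ref{p:imagPart}, which yields the estimate~\eqref{e:here} near $\Sigma(\lambda)$ for all $|\Im z|>h^N$ without any rescaled calculus.
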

Next, we show that any resonances with bounded imaginary parts are necessarily plasmonic.
\begin{theorem}
\label{t:plasmonic}
Suppose that $(\Omega_{\exterior},g_{\exterior})$ is a non-trapping domain and~\eqref{e:plasmons} holds. Then for any $\{\lambda_j\}_{j=1}^\infty\subset \calR(P)$ with $|\Re \lambda_j|\to \infty$ and $\sup|\Im \lambda_j |<\infty$, any $0\neq u_{\lambda_j}\in\mathcal{D}_{\loc}(P)$ satisfying~\eqref{e:mainProblem} with $(f_{\inner},f_{\exterior})=0$, $N>0$, and any $\psi\in C_c^\infty(\mathbb{R}^d)$ with $\supp \psi\cap \partial\Omega=\emptyset$, we have
$$
\frac{|\lambda_j|^N\|\psi u_{\lambda_j}\|_{L^2(\mathbb{R}^d)}}{\|u_{\lambda_j}\|_{L^2(\partial\Omega)}}\to 0.
$$
\end{theorem}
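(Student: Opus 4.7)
The plan is to work semiclassically at scale $h_j := |\Re \lambda_j|^{-1}$ and reduce the spatial structure of $u_{\lambda_j}$ to a microlocal estimate on its boundary trace $g_j := u_{\lambda_j}|_{\partial \Omega}$. Because the interior operator $\Delta_{g_{\inner},\rho_{\inner}} - \lambda_j^2$ has semiclassical principal symbol $-|\xi|^2_{g_{\inner}} - 1$, which vanishes nowhere on $T^*\mathbb{R}^d$, the interior Dirichlet problem is uniquely solvable; the outgoing exterior Dirichlet problem is also uniquely solvable under non-trapping, as follows from the resolvent bound of Theorem~\ref{t:states} applied to the pure exterior problem. Thus $g_j \neq 0$ for every nonzero resonant state, and after normalizing $\|g_j\|_{L^2(\partial \Omega)} = 1$ the claim reduces to $\|\psi u_{\lambda_j}\|_{L^2} = O(h_j^\infty)$.

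The first step is to construct semiclassical Poisson operators $\mathcal{P}_{\inner}(\lambda)$ and $\mathcal{P}_{\exterior}(\lambda)$ for the interior Dirichlet problem and the outgoing exterior Dirichlet problem respectively, so that $u_{\inner} = \mathcal{P}_{\inner}(\lambda_j) g_j$ and $u_{\exterior} = \mathcal{P}_{\exterior}(\lambda_j) g_j$. Feeding these into the Neumann transmission condition on $\partial \Omega$ yields $T(\lambda_j) g_j = 0$, where
\[
T(\lambda) := \rho_{\exterior}\, h\, \partial_{\nu_{\exterior}} \mathcal{P}_{\exterior}(\lambda) \;-\; \tau\, \rho_{\inner}\, h\, \partial_{\nu_{\inner}} \mathcal{P}_{\inner}(\lambda)
\]
is a semiclassical pseudodifferential operator on $\partial \Omega$. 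A direct symbol calculation (the interior normal exponent is $i\sqrt{|\xi'|^2_{g_{\inner}}+1}$ everywhere, while the outgoing exterior normal exponent is $+i\sqrt{|\xi'|^2_{g_{\exterior}}-1}$ in the elliptic region and $+\sqrt{1-|\xi'|^2_{g_{\exterior}}}$ in the hyperbolic region) shows that the principal symbol of $T$ is elliptic everywhere on $T^* \partial \Omega$ except on the \emph{plasmon manifold}
\[
\Sigma := \bigl\{ (x',\xi') \in T^* \partial \Omega \,:\, \rho_{\exterior}^2 ( |\xi'|^2_{g_{\exterior}} - 1 ) = \tau^2 \rho_{\inner}^2 ( |\xi'|^2_{g_{\inner}} + 1 ) \bigr\},
\]
which, thanks to the strict inequality~\eqref{e:plasmons}, is nonempty and lies entirely in the exterior-elliptic region $\{|\xi'|^2_{g_{\exterior}} > 1\}$. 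Microlocal elliptic regularity applied to $T$ then forces $\mathrm{WF}_{h_j}(g_j) \subseteq \Sigma$.

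With this wavefront set localization in hand, the conclusion follows from a standard boundary-layer estimate for elliptic Poisson operators. The interior Poisson operator $\mathcal{P}_{\inner}$ has purely imaginary normal exponents \emph{everywhere}, so for any $\psi_{\inner} \in C_c^\infty(\overline{\Omega_{\inner}})$ with $\supp \psi_{\inner} \cap \partial \Omega = \emptyset$ one has $\| \psi_{\inner} \mathcal{P}_{\inner}(\lambda_j) g_j \|_{L^2} \leq C_K h_j^K \| g_j \|_{L^2}$ for every $K$. On the exterior side, because $\mathrm{WF}_{h_j}(g_j) \subseteq \Sigma$ lies strictly in the exterior-elliptic region, $\mathcal{P}_{\exterior}(\lambda_j)$ acts on $g_j$ microlocally as an elliptic Poisson operator with purely imaginary outgoing normal exponent, giving $\| \psi_{\exterior} \mathcal{P}_{\exterior}(\lambda_j) g_j \|_{L^2} \leq C_K h_j^K \| g_j \|_{L^2}$ for $\psi_{\exterior} \in C_c^\infty(\Omega_{\exterior})$ with $\supp \psi_{\exterior} \cap \partial \Omega = \emptyset$. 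Writing $\psi = \psi_{\inner} + \psi_{\exterior}$ then yields $\| \psi u_{\lambda_j} \|_{L^2} = O(h_j^\infty) \| g_j \|_{L^2} = O(|\lambda_j|^{-\infty})$, which is the claimed conclusion.

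The main technical obstacle is the uniform semiclassical construction of the outgoing Poisson operator $\mathcal{P}_{\exterior}(\lambda)$ for $\lambda$ in a strip $\{|\Im \lambda| \leq M\}$ at high frequency, in particular handling its behaviour near the glancing set $\{|\xi'|^2_{g_{\exterior}} = 1\}$ where the outgoing parametrix changes type from hyperbolic (propagating) to elliptic (exponentially decaying). Fortunately, because $\Sigma$ is bounded away from glancing, only a parametrix for $\mathcal{P}_{\exterior}$ in a microlocal neighbourhood of the exterior-elliptic region is needed to close the argument; existence of such a parametrix, together with the required $O(h_j^\infty)$ remainder bounds away from $\partial \Omega$, follows from a standard WKB construction in the exterior-elliptic region, patched into an outgoing global extension whose control on compact sets is supplied by the non-trapping resolvent bound of Theorem~\ref{t:states}.
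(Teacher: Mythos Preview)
Your overall strategy—reduce to the boundary trace $g_j$, show $\WF_{h_j}(g_j)\subseteq\Sigma$, then invoke exponential decay of elliptic Poisson operators away from $\partial\Omega$—is correct and is precisely what the paper does. The gap is in the step ``microlocal elliptic regularity applied to $T$ forces $\WF_{h_j}(g_j)\subseteq\Sigma$.'' For this you need $T=\Lambda_{\exterior}-\tau\Lambda_{\inner}$ to be a semiclassical pseudodifferential operator on all of $T^*\partial\Omega$, but that is only established in the exterior-elliptic region $\{|\xi'|_{g_{\exterior}}>1\}$ (Proposition~\ref{symbol of DtN exterior}). In the hyperbolic region $\Lambda_{\exterior}$ in general carries non-pseudolocal FIO contributions from reflected billiard trajectories, and near glancing the standard calculus degenerates; your symbol computation there is purely formal. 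Thus from $Tg_j=0$ you only obtain $\WF_{h_j}(g_j)\cap\{|\xi'|_{g_{\exterior}}>1\}\subseteq\Sigma$, and nothing yet rules out $O(1)$ mass of $g_j$ in $\{|\xi'|_{g_{\exterior}}\leq 1\}$—for which $\psi\,\mathcal{P}_{\exterior}g_j$ need not be small, since hyperbolic boundary data launches rays that pass through $\supp\psi$. Your last paragraph locates the glancing difficulty in constructing $\mathcal{P}_{\exterior}$, but that is the wrong place: once $\WF(g_j)\subseteq\Sigma$ is known, the decay step indeed only needs the elliptic-region parametrix (Lemma~\ref{l:dtNDescriptionExterior}); the missing work is the wavefront localization itself.

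The paper closes this gap by never invoking $\Lambda_{\exterior}$ outside the elliptic region. It keeps the exterior problem with the Robin condition $(\rho_{\exterior}h\partial_{\nu_{\exterior}}-\tau\Lambda_{\inner})u=0$, in which only $\Lambda_{\inner}\in\Psi_h^1(\partial\Omega)$ appears, and proves a boundary-trace estimate off $\Sigma$ by an integration-by-parts identity in $\Omega_{\exterior}$ (Lemmas~\ref{l:intByPartsEstimate} and~\ref{thm harder}). This is fed into a defect-measure propagation argument using non-trapping (Lemma~\ref{l:basicPropagation}), which is what actually kills the hyperbolic and glancing boundary mass. Concretely, in Lemma~\ref{l:plasmonic} one subtracts $G_{\exterior}X_1 u$ with $X_1$ cutting off near $\Sigma$, checks that $w=u-G_{\exterior}X_1 u$ satisfies the same Robin problem with data $O(h^\infty)$ and $w|_{\partial\Omega}=(I-X_1)u$, and Lemma~\ref{l:basicPropagation} then forces $\|\chi w\|_{H_h^2}=O(h^\infty)$. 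Propagation, not ellipticity of $T$, is what closes the argument. (A minor point: the bound you want for $G_{\exterior}$ is Proposition~\ref{G bound thm}; Theorem~\ref{t:states} concerns the transmission resolvent and its estimate blows up as $|\Im z|\to 0$.)
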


Finally, we give an asymptotic formula for the number of plasmonic resonances.
\begin{theorem}
\label{t:count}
Suppose that $(\Omega_{\exterior},g_{\exterior})$ is a non-trapping domain and~\eqref{e:plasmons} holds. Then for all $M>0$, 
$$
\#\{ \lambda_j\in \calR(P)\,:\, 0< \Re \lambda_j \leq \lambda\,:\Im \lambda_j\geq -M\}= \frac{\lambda^{d-1}}{(2\pi )^{d-1}}\vol_{T^*\partial\Omega}(\mathcal{V}) +o(\lambda^{d-1}),
$$
where
$$
\mathcal{V}:=\Big\{ (x',\xi')\in T^*\partial\Omega\,:\rho^2_{\exterior}(x')|\xi'|_{g_{\exterior}}^2-\rho^2_{\inner}(x')^2\tau^2(x')|\xi'|_{g_{\inner}}^2 \leq\rho_{\exterior}(x')^2+\rho_{\inner}^2(x')\tau^2(x') \Big\}.
$$
\end{theorem}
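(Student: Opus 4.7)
The plan is to reduce counting plasmonic resonances of $P$ to a classical Weyl law for an effective self-adjoint pseudodifferential operator on $\partial\Omega$. First I would express resonances of $P$ as the non-trivial kernel of the boundary operator
$$\mathcal{A}(\lambda):=\rho_{\exterior}N_{\exterior}(\lambda)-\tau\rho_{\inner}N_{\inner}(\lambda)$$
on $\partial\Omega$, where $N_{\inner}(\lambda)$ and $N_{\exterior}(\lambda)$ are the interior (elliptic) and exterior (outgoing) Dirichlet-to-Neumann maps. Theorems~\ref{t:states} and~\ref{t:plasmonic} supply exactly the microlocal information we need: the resonances of interest lie super-polynomially close to the real axis, and the corresponding resonant data on $\partial\Omega$ is microlocally concentrated in the exterior elliptic region $\{(x',\xi')\in T^*\partial\Omega:|\xi'|^2_{g_{\exterior}}>\lambda^2\}$, where both $N_{\exterior}(\lambda)$ and $N_{\inner}(\lambda)$ are (to principal order) self-adjoint with positive symbols $\sqrt{|\xi'|^2_{g_{\exterior}}-\lambda^2}$ and $\sqrt{|\xi'|^2_{g_{\inner}}+\lambda^2}$ respectively.

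The second step is to produce the effective self-adjoint operator. Writing $L_{\bullet}:=-\Delta_{\partial\Omega,g_{\bullet}}$, one has $N_{\inner}(\lambda)^2\equiv L_{\inner}+\lambda^2$ and $N_{\exterior}(\lambda)^2\equiv L_{\exterior}-\lambda^2$ at the level of principal symbols. Composing $\mathcal{A}(\lambda)$ with its microlocally elliptic companion $\mathcal{A}_+(\lambda):=\rho_{\exterior}N_{\exterior}(\lambda)+\tau\rho_{\inner}N_{\inner}(\lambda)$ produces
$$\mathcal{A}(\lambda)\mathcal{A}_+(\lambda)\equiv \rho_{\exterior}^2 L_{\exterior}-\tau^2\rho_{\inner}^2 L_{\inner}-(\rho_{\exterior}^2+\tau^2\rho_{\inner}^2)\lambda^2$$
modulo terms subordinate near the plasmon manifold. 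Letting $A$ denote a symmetric quantization of $\rho_{\exterior}^2 L_{\exterior}-\tau^2\rho_{\inner}^2 L_{\inner}$ and $B:=\rho_{\exterior}^2+\tau^2\rho_{\inner}^2$, this leads to the effective operator $\tilde H:=B^{-1/2}AB^{-1/2}$, a positive, second-order, self-adjoint elliptic pseudodifferential operator on $\partial\Omega$ (positivity by~\eqref{e:plasmons}) with principal symbol
$$h(x',\xi')=\frac{\rho_{\exterior}^2|\xi'|^2_{g_{\exterior}}-\tau^2\rho_{\inner}^2|\xi'|^2_{g_{\inner}}}{\rho_{\exterior}^2+\tau^2\rho_{\inner}^2}.$$
Since $\{h\le 1\}$ coincides exactly with $\mathcal{V}$, the classical Weyl law for $\tilde H$ gives
$$\#\{\nu\in\operatorname{spec}(\tilde H):\nu\le\lambda^2\}=\frac{\lambda^{d-1}}{(2\pi)^{d-1}}\operatorname{vol}_{T^*\partial\Omega}(\mathcal{V})+o(\lambda^{d-1}).$$

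The remaining and main obstacle is to establish a multiplicity-preserving bijection between plasmonic resonances $\lambda_j$ with $\Re\lambda_j\in(0,\lambda]$, $\Im\lambda_j\ge -M$, and eigenvalues $\nu_k\le\lambda^2$ of $\tilde H$. I would implement this via a Grushin problem for $P-\lambda^2$ whose auxiliary spaces are spectral subspaces of $\tilde H$ on a window around $\lambda^2$, using the interior and exterior Dirichlet solution operators to lift data from $\partial\Omega$ into $\Omega_{\inner}$ and $\Omega_{\exterior}$. The Schur complement then coincides with $\tilde H-\lambda^2$ up to operators whose contribution is negligible at the counting level, so the argument principle matches the two counts. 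The main technical difficulties are: controlling the exterior hyperbolic region, where $N_{\exterior}(\lambda)$ has imaginary symbol --- by the non-trapping assumption together with Theorem~\ref{t:states}, any resonances supported there satisfy $\Im\lambda<-M$; uniform analysis of the Grushin reduction across the plasmon manifold $\partial\mathcal{V}$, handled by a G\aa rding-type estimate for $\tilde H$ near zero; and avoiding the countable set of $\lambda$ at which the auxiliary Dirichlet problems for $\Omega_{\inner}$ or $\Omega_{\exterior}$ are themselves resonant, which can be bypassed by a standard perturbation.
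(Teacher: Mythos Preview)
Your proposal correctly identifies the effective boundary operator governing plasmon resonances --- the same symbol $h(x',\xi')=(\rho_{\exterior}^2|\xi'|^2_{g_{\exterior}}-\tau^2\rho_{\inner}^2|\xi'|^2_{g_{\inner}})/(\rho_{\exterior}^2+\tau^2\rho_{\inner}^2)$ appears as $\sigma(B)$ in the paper's Lemma~\ref{l:inverse} --- and the Weyl volume $\mathcal{V}=\{h\le 1\}$ is the right one. However, your route to the count is genuinely different from the paper's. The paper does \emph{not} set up a Grushin problem or compare with a fixed self-adjoint model $\tilde H$; instead it introduces a complex absorbing potential $Q$ supported near the plasmon manifold, writes the resonance count as a contour integral of the logarithmic derivative of the Fredholm determinant $F(z)=\det(I+\rmi R_Q(z)Q)$ (Lemmas~\ref{l:basicUpper}--\ref{l:formula}), represents $(\Lambda_{\exterior}-\tau\Lambda_{\inner})^{-1}X$ microlocally via the propagator $U(t,z)$ of a $z$-dependent, non-self-adjoint operator $B(z)$ (Lemma~\ref{l:inverse}), and evaluates the resulting oscillatory trace by stationary phase at $t=0$. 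The input that replaces self-adjointness is the sign condition of Theorem~\ref{t:imagPart}, which yields $\|U(\pm t,z)e^{-\rmi tz^2/h}\|\le Ce^{-c|\Im z^2|t/h}$ on the relevant half-lines and allows the time integral to be cut off. Your approach, if it went through, would buy a cleaner conceptual picture; the paper's approach buys a direct path that avoids comparing two distinct spectral problems.

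The genuine gap in your outline is exactly the step you flag as ``the remaining and main obstacle'': the Grushin reduction. Your claim that ``the Schur complement then coincides with $\tilde H-\lambda^2$ up to operators whose contribution is negligible at the counting level'' is the entire content of the theorem, and you have not indicated how to prove it. Two concrete obstacles: (i) the factorization $\mathcal{A}(\lambda)\mathcal{A}_+(\lambda)\equiv A-B\lambda^2$ is only valid microlocally in the exterior elliptic region $\{|\xi'|_{g_{\exterior}}>\lambda\}$, since $N_{\exterior}(\lambda)$ is not a pseudodifferential operator elsewhere --- you need a mechanism (the paper uses the absorber $Q$ together with Lemma~\ref{l:approxInverse}) to show the complementary region does not contribute; (ii) your model $\tilde H$ is $\lambda$-independent and self-adjoint, whereas the true boundary operator is neither, so matching the two counts requires a quantitative perturbation bound on the effective Hamiltonian $E_{-+}(\lambda)$ relative to $\Pi(\tilde H-\lambda^2)\Pi$, uniform across each spectral window, which you have not supplied. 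Without these ingredients the Grushin sketch is a plausible strategy but not yet a proof.
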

	

\subsection{Outline and ideas from the proof}

We start by making a semiclassical rescaling, setting $\lambda =h^{-1}(1+z)$, with $|z|\leq Mh$ and $0<h<1$. The goal of this article can then be rephrased in the following way. 

Let $\chi \in C_c^\infty(\mathbb{R}^d)$ with $\chi \equiv 1$ on $\Omega_{\inner}$, $L^2(\mathbb{R}^d)\ni f=(f_i,f_\exterior )\in L^2(\Omega_{\inner})\oplus L^2(\Omega_{\exterior})$. Our main goal will be to prove estimates for the solution, $(u_\inner,u_\exterior )$ to 
	\begin{equation}
		\label{e:transmission1}
		\begin{cases}
			(P_{\inner}-z^2)u_{\inner}:=(h^2\Delta_{g_\inner,\rho_{\inner}} -z^2)u_\inner=h f_i&\text{in }\Omega_{\inner},\\
		(P_{\exterior}-z^2)u_{\exterior}:=(-h^2\Delta_{g_{\exterior},\rho_{\exterior}}-z^2)u_\exterior =h\chi f_\exterior &\text{in }\Omega_{\exterior},\\
			u_\exterior =u_\inner&\text{on }\partial\Omega,\\			\rho_{\exterior}h\partial_{\nu_{\exterior}}u_\exterior -\tau\rho_{\inner}h\partial_{\nu_{\inner}}u_{\inner}=0&\text{on }\partial\Omega,\\
            u_\exterior \text{ is }z/h\text{-outgoing}.
		\end{cases}
	\end{equation}

In section~\ref{Reformulation of the problem as an exterior problem}, using the solution of the Dirichlet problem in $\Omega_{\inner}$ and the outgoing Dirichlet problem in $\Omega_{\exterior}$, we reduce these estimates to the study of 
	\begin{equation}
		\label{e:transmission2Temp}
		\begin{cases}
			(P_{\exterior}-z^2)v_\exterior =0&\text{in }\Omega_{\exterior},\\
			\rho_{\exterior}h\partial_{\nu_{\exterior}}v_\exterior -\tau\Lambda_{\inner}(z)v_{\exterior} =:g\in H_h^{\frac{1}{2}}&\text{on }\partial\Omega,\\
            v_\exterior \text{ is }z/h\text{-outgoing},
		\end{cases}
	\end{equation}
    where $\Lambda_{\inner}(z)$ is a certain Dirichlet-to-Neumann map associated with the inner problem. More precisely, we define by $\Lambda_{\inner/\exterior}(z)w=\rho_{\inner/\exterior}h\partial_{\nu_{\inner/\exterior}}u_{\inner/\exterior}$, where $u_{\inner/\exterior}\in H^2_{\loc}(\overline{\Omega}_{\inner/\exterior})$ solves
    $$
    \begin{cases}
    (P_{\inner/\exterior}-z^2)u_{\inner}=0&\text{in }\Omega_{\inner/\exterior},\\
    u_{\inner/\exterior}=w&\text{on }\partial\Omega,\\
    u_{\exterior}\text{ is }z/h\text{ outgoing}.
    \end{cases}
    $$

    After the reduction to~\eqref{e:transmission2Temp}, it becomes natural to study the Dirichlet-to-Neumann map for operators of the form
    $$
    P(\omega;g,L):=-h^2\Delta_g+hL-\omega,
    $$
    where $L=\sum_{i=1}^dL^i(x)hD_{x^i}$, and $|\omega-\omega_0|\leq Ch$. 

    Our next theorem yields a parametrix for the Dirichlet-to-Neumann map in the elliptic region.
    \begin{theorem}
    \label{t:dtN}
    Let $\omega_0\in \mathbb{R}$, $L$ be a smooth vector field, and $(M,g)$ a Riemannian manifold with boundary. Then for all $\e>0$, $C>0$, and $|\omega-\omega_0|\leq Ch$ there is $E_\omega\in \Psi^1(\partial\Omega)$ with $\sigma(E_\omega)=\sqrt{|\xi'|^2_g-\omega_0}$ such that for any $s\geq \frac{1}{2}$, $X\in \Psi^0(\partial M)$ with $\WF(X)\subset \{|\xi'|_g>\omega_0\}$, $\delta>0$, $N>0$,  there is $C_1>0$ such that for all $0<h<1$,
    $$
    \|X(h\partial_{\nu_g}u-E_\omega (u|_{\partial M}))\|_{H_h^s(\partial M)}\leq C_1(h^{-\frac{1}{2}}\|P(\omega;g,L)u\|_{H_h^{s-\frac{1}{2}}(\partial M_\delta)}+h^N\|u\|_{H_h^1(\partial M_\delta)}),
    $$
    where
    $$
    \partial M_\delta:=\{ x\in M\,:\, d(x,\partial M)<\delta\}.
    $$
    \end{theorem}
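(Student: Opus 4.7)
The plan is to build $E_\omega$ via a Nirenberg-style boundary-layer factorization of $P(\omega;g,L)$ in boundary normal coordinates, valid microlocally in the elliptic region $\{|\xi'|^2_g>\omega_0\}$, and then to read off the Dirichlet-to-Neumann parametrix from this factorization. First I would introduce coordinates $(x_n,x')$ on the collar $\partial M_\delta$, with $x_n\in[0,\delta)$ the distance to $\partial M$ and $x'\in\partial M$. In these coordinates $g=dx_n^2\oplus g'(x_n)$, and a direct computation rewrites the operator as
$$P(\omega;g,L) = (hD_{x_n})^2 + hA(x_n;x',hD_{x'})\,hD_{x_n} + Q(x_n;x',hD_{x'}) - (\omega-\omega_0),$$
where $Q$ is a family of second-order tangential operators with principal symbol $|\xi'|^2_{g'(x_n)}-\omega_0$ and $A\in\Psi^0$ absorbs the mean curvature of $\{x_n=c\}$ and the tangential piece of $L$.

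The heart of the argument is to construct $E_\omega(x_n)\sim\sum_{j\ge 0}h^jE_j(x_n)$ with $E_j\in\Psi^{1-j}(\partial M)$, defined microlocally in the elliptic region, together with a companion $E_\omega^{+}(x_n)$ absorbing the $A$ correction, so that
$$P = (hD_{x_n}+iE_\omega^{+})(hD_{x_n}-iE_\omega) + O(h^\infty)$$
microlocally on $\WF(\tilde X)$, where $\tilde X$ slightly enlarges $X$. Matching the $h^0$ symbols forces $\sigma(E_0)=\sqrt{|\xi'|^2_{g'(x_n)}-\omega_0}$, which is smooth and bounded below in the elliptic region; each subsequent equation for $E_j$ is affine with leading coefficient $2E_0\in\Psi^1$, elliptic in the elliptic region, and so is solved inductively by tangential pseudodifferential calculus, with $h\partial_{x_n}E_{j-1}$ and the $A$-contributions feeding the inhomogeneous side. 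Setting $E_\omega:=E_\omega(0)$ yields the boundary parametrix with the stated principal symbol.

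I would then apply the factorization to $u$. Since the symbol of $hD_{x_n}+iE_\omega^{+}$ has strictly positive imaginary part in the elliptic region, a one-dimensional-in-$x_n$ energy estimate produces a right parametrix $G'$ for it on $[0,\delta)$ with uniform-in-$h$ operator norm. Applying $G'$ to the factorization gives
$$(hD_{x_n}-iE_\omega)u = G'\,P(\omega;g,L)u + r_\infty$$
microlocally inside $\WF(\tilde X)$, with $\|r_\infty\|_{L^2}=O(h^\infty)\|u\|_{H_h^1(\partial M_\delta)}$. Restricting to $\{x_n=0\}$ and using $h\partial_{\nu_g}u=-i\,hD_{x_n}u$ on $\partial M$ gives
$$h\partial_{\nu_g}u - E_\omega(u|_{\partial M}) = (G'P(\omega;g,L)u)\big|_{\partial M} + (\text{microlocal error}).$$
The trace term costs the standard semiclassical loss $\|v|_{\partial M}\|_{H_h^s}\le Ch^{-1/2}\|v\|_{H_h^{s+1/2}(\partial M_\delta)}$, producing the $h^{-1/2}\|P(\omega;g,L)u\|_{H_h^{s-1/2}(\partial M_\delta)}$ term, and applying $X$ and using its ellipticity relative to $\WF(\tilde X)$ absorbs the residual into $h^N\|u\|_{H_h^1(\partial M_\delta)}$.

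The main obstacle will be the Nirenberg factorization itself: tracking the $x_n$-dependence of $E_\omega(x_n)$, organizing the mixed $hD_{x_n}$ terms contributed by $A$ and $L$ at every order, and maintaining a uniform cutoff to the elliptic region so that the square root $\sqrt{|\xi'|^2-\omega_0}$ remains smooth. A secondary subtlety is verifying that the right parametrix $G'$ has a uniform-in-$h$ bound compatible with only the $h^{-1/2}$ trace loss asserted; this follows from an energy estimate in $x_n$ relying on the strict positivity of the imaginary part of the forward factor's symbol in the elliptic region.
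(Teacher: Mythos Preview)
Your proposal is correct and follows essentially the same route as the paper: the boundary-layer factorization you call Nirenberg-style is exactly the semiclassical Lee--Uhlmann factorization of Lemma~\ref{Pdecompmicrolocal}, and the energy estimate for the forward factor that you invoke to build $G'$ is Lemma~\ref{Elliptic estimate lemma}, applied to $v=(hD_{x_n}+iE_-)u$ in Lemma~\ref{DtN Elliptic estimate lemma -}. The only packaging difference is that the paper's energy estimate produces the boundary value $\|Xv(0)\|_{H_h^s}\le Ch^{-1/2}\|\tilde X Pu\|_{L^2((0,\delta);H_h^{s-1/2})}+O(h^N)$ in one stroke, so it bypasses your separate parametrix-then-trace step (which, as you note at the end, would in any case need the same energy estimate to justify the $h^{-1/2}$ loss, since the isotropic trace inequality you quote does not directly apply to $G'Pu$).
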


    Notice that Theorem~\ref{t:dtN} implies that $\Lambda_\inner(z)\in \Psi^1(\partial\Omega)$ with principal symbol 
    \begin{equation}
    \label{e:inDtN}
    \sigma(\Lambda_{\inner}(z))=\rho_{\inner}\sqrt{|\xi'|^2_{g_{\inner}}+1}
    \end{equation}
    and, moreover, for $X\in \Psi^0(\partial\Omega)$ with $\WF(X)\subset \{|\xi'|_{g_{\exterior}}>1\}$, $X\Lambda_{\exterior}(z)\in \Psi^1(\partial\Omega)$ with \begin{equation}
    \label{e:outDtN}
    \sigma(X\Lambda_{\exterior}(z))=\rho_i\sigma(X)\sqrt{|\xi'|^2_{g_{\exterior}}-1}.
    \end{equation}

    The distinction between~\eqref{e:noPlasmons} and~\eqref{e:plasmons} can be seen from~\eqref{e:inDtN} and~\eqref{e:outDtN}. Indeed, 
    $$
    \sigma( \Lambda_{\exterior} -\tau \Lambda_{\inner})=\rho_i\sigma(X)\sqrt{|\xi'|^2_{g_{\exterior}}-1}-\tau \rho_{\inner}\sqrt{|\xi'|^2_{g_{\inner}}+1},\qquad |\xi'|_{g_{\exterior}}>1,
    $$
    and this symbol does not vanish in the case of~\eqref{e:noPlasmons}, while it does in the case of~\eqref{e:plasmons}. One can also see why the existence of $(x',\xi')\in T^*\partial\Omega$ such that 
    $$
    \tau^2\rho_{\inner}^2|\xi'|^2_{g_{\inner}}=\rho_{\exterior}^2|\xi'|_{g_{\exterior}}^2
    $$
    may cause problems with self-adjointness. Indeed, in this case, the symbol is not uniformly elliptic as $|\xi'|\to \infty$ and hence, standard elliptic regularity results will fail.

In the case of~\eqref{e:noPlasmons}, the knowledge of the symbol of $\Lambda_{\inner}$ and that of $\Lambda_{\exterior}$ at high frequency is sufficient. However, in the case of~\eqref{e:plasmons} we need one more subtle piece of information about the Dirichlet-to-Neumann maps.
\begin{theorem}
\label{t:imagPart}
Let $M>0$. Then there is $c>0$ such that for all $0<h<1$, $|1-z|\leq Mh$, and $u\in H_h^{\frac{1}{2}}(\partial\Omega)$,
$$
-\operatorname{sgn}(\Im z^2)\Im \langle \tau \Lambda_{\inner}u,u\rangle_{L^2(\partial\Omega,d\vol_{g_{\exterior}})} \leq -c|\Im z^2|\|u\|_{L^2(\partial\Omega)}^2.
$$
Furthermore, for all $X\in \Psi^{\comp}$ with $\WF(X)\subset\{ |\xi'|_{g_{\exterior}}>1\}$, and $N>0$, there are $c>0$ and $C_N>0$ such that for all $0<h<1$, $|1-z|<Mh$, and $u\in L^2(\partial\Omega)$, we have
$$
\operatorname{sgn}(\Im z^2)\Im \langle  \Lambda_{\exterior}u,u\rangle_{L^2(\partial\Omega,d\vol_{g_{\exterior}})} \leq -(c|\Im z^2|-C_Nh^N)\|u\|_{L^2(\partial\Omega)}^2.
$$
\end{theorem}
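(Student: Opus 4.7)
The approach is Green's identity applied to the Poisson extensions of $u$, combined with semiclassical boundary-layer estimates. For the interior statement, let $U_\inner$ solve the Dirichlet problem $(P_\inner - z^2)U_\inner = 0$ in $\Omega_\inner$ with $U_\inner|_{\partial\Omega} = u$, so that $\Lambda_\inner u = \rho_\inner h\partial_{\nu_\inner} U_\inner|_{\partial\Omega}$. Pairing the PDE against $\overline{U_\inner}$ in $L^2(\rho_\inner \der\vol_{g_\inner})$ and integrating by parts yields
\begin{equation*}
h\langle \tau\Lambda_\inner u, u\rangle_{L^2(\partial\Omega,\der\vol_{g_\exterior})} = z^2\|U_\inner\|^2 + h^2 \int_{\Omega_\inner}\rho_\inner|\nabla U_\inner|^2_{g_\inner}\der\vol_{g_\inner},
\end{equation*}
where we used $\tau \der\vol_{g_\exterior,\partial\Omega} = \der\vol_{g_\inner,\partial\Omega}$ to convert between boundary measures. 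Taking imaginary parts kills the gradient term (which is real) and gives $h\Im\langle \tau\Lambda_\inner u, u\rangle = \Im(z^2)\|U_\inner\|^2$, so the claimed inequality reduces to the semiclassical coercivity estimate $\|U_\inner\|^2_{L^2(\Omega_\inner)}\gtrsim h\|u\|^2_{L^2(\partial\Omega)}$. This follows from the boundary-layer structure of the interior Poisson operator: in boundary-normal coordinates $(x',x_n)$, $U_\inner$ is microlocally of the form $u(x')\,e^{-\sqrt{|\xi'|^2_{g_\inner}+1}\,x_n/h}$, and the $L^2$ integral in the normal direction contributes the needed factor of $h$ by Plancherel, modulo semiclassical errors.

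For the exterior, one plays the same game, but the meromorphic continuation of $U_\exterior$ need not be globally $L^2$. When $\Im z > 0$, Green's identity on $\Omega_\exterior \cap B(0,R)$ followed by $R\to\infty$ kills the boundary-at-infinity contribution via exponential decay, yielding $-h\Im\langle \Lambda_\exterior u, u\rangle = \Im(z^2)\|U_\exterior\|^2$. When $\Im z \leq 0$, we instead work with $\chi U_\exterior$ for a smooth cutoff $\chi$ equal to $1$ near $\partial\Omega$ and compactly supported in $\mathbb{R}^d$. The commutator $[-h^2\Delta,\chi]$ is supported on $\{\nabla\chi \neq 0\}$, a region separated from $\partial\Omega$; moreover, the microlocal hypothesis $\WF(X)\subset\{|\xi'|_{g_\exterior}>1\}$ combined with the elliptic parametrix of Theorem~\ref{t:dtN} shows that $U_\exterior$ is microlocally $O(h^\infty)$ on $\supp\nabla\chi$. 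This is what produces the $C_N h^N$ correction, and the same boundary-layer argument as in the interior delivers the lower bound $\|\chi U_\exterior\|^2 \gtrsim h\|u\|^2_{L^2(\partial\Omega)}$ on the elliptic microlocal piece.

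The principal technical obstacle is the boundary-layer coercivity $\|U\|^2_{L^2(\Omega)}\gtrsim h\|u\|^2_{L^2(\partial\Omega)}$: since the ratio depends on the microlocal content of $u$, obtaining uniform constants requires leveraging the pseudodifferential structure from Theorem~\ref{t:dtN} together with a Plancherel-type argument on the Poisson kernel in boundary-normal coordinates. A secondary difficulty is rigorously justifying Green's identity in the exterior when $\Im z \leq 0$, where the meromorphic continuation may grow at infinity; this is handled by the microlocal cutoff argument above, which converts the infinite-contour issue into a controlled $O(h^\infty)$ remainder absorbed into $C_N h^N$.
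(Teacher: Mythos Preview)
Your approach is essentially the same as the paper's: Green's identity identifies $\Im\langle\Lambda u,u\rangle$ with $\Im(z^2)$ times the $L^2$ norm of the Poisson extension, and then a boundary-layer estimate gives $\|U\|^2_{L^2}\gtrsim h\|u\|^2_{L^2(\partial\Omega)}$. For the interior part your sketch matches the paper's Lemma~\ref{l:inDtNSign} closely; the paper makes the coercivity rigorous via Lemma~\ref{l:gainNormal} and the identity $\|u\|^2_{L^2(\partial\Omega)}=-\int_0^\infty\partial_{x^1}(\varphi\|G_\inner u(x^1)\|^2)\,\der x^1$, but your heuristic is the right one.

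For the exterior part there is one point where your justification is too loose. You claim that $U_\exterior$ is $O(h^\infty)$ on $\supp\nabla\chi$ by ``the elliptic parametrix of Theorem~\ref{t:dtN}.'' But Theorem~\ref{t:dtN} is a statement about the boundary trace $h\partial_\nu u|_{\partial\Omega}$, not about interior decay; and on $\supp\nabla\chi\subset\Omega_\exterior^\circ$ the operator $P_\exterior-z^2$ is \emph{not} elliptic (its characteristic set is $\{|\xi|_{g_\exterior}=1\}$), so no interior-elliptic argument applies. What actually makes $G_\exterior Xu$ small away from $\partial\Omega$ is the exponential normal decay of the Poisson extension for data microlocalized in $\{|\xi'|_{g_\exterior}>1\}$. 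The paper establishes this in Lemma~\ref{l:dtNDescriptionExterior}, which constructs an explicit approximation $\chi(\e^{-1}x^1)v$ to $G_\exterior Xu$ with $v$ solving a first-order equation whose solutions decay like $e^{-cx^1/h}$; this is the correct substitute for your ``elliptic parametrix'' step. Once you have that lemma, the paper's argument in Lemma~\ref{l:outDtNSign} treats both signs of $\Im z$ uniformly (no case split is needed), since the approximation $\chi(x^1)v$ is compactly supported near $\partial\Omega$ regardless of $\Im z$, and the $\partial B(0,R)$ boundary term from Green's identity is absorbed into the $O(h^\infty)$ error.
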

Provided that $|\Im z|>h^N$ for some $N$, Theorem~\ref{t:imagPart} allows us to obtain estimates where $\Lambda_{\exterior}-\tau\Lambda_{\inner}$ fails to be elliptic. 

In order to finish the proofs of Theorem~\ref{t:noStates} and~\ref{t:states}, we need to obtain estimates on $|\xi'|_{g_{\exterior}}\leq 1$. For this, we employ defect measure arguments similar to those in~\cite{GaMaSp:21,GaSpWu:20,Bu:02}.

The proof of Theorem~\ref{t:count} relies on Theorem~\ref{t:states} and a contour integration. Let $V(h):=[1-2\e,1+2\e]\times \rmi [-h,h]$. First, using a complex absorbing potential to reduce to operators of trace class, one can find a compactly microlocalized pseudodifferential operator, $X$, with $\WF(X)\subset \{|\xi'|_{g_{\exterior}}>1+2\e\}$ such that 
$$
\sum_{\lambda_j\in V(h)}\tilde{\psi}(\lambda_j)=\frac{1}{2\pi i} \int_{\partial V(h)}\tilde{\psi}(z)(\Lambda_{\exterior}(z)-\tau\Lambda_{\inner}(z))^{-1}\partial_z (\Lambda_{\exterior}(z)-\Lambda_{\inner}(z))Xdz+O(h^\infty),
$$
where $\psi\in C_c^\infty((1-2\e,1+2\e))$ and $\tilde{\psi}$ is an almost analytic extension of $\psi$.

Then, since on $\WF(X)$, $\Lambda_{\exterior}-\tau\Lambda_{\inner}$ is a pseudodifferential operator with symbol
$$
\rho_{\exterior}\sqrt{|\xi'|_{g_{\exterior}}^2-z^2}-\tau \rho_{\inner}\sqrt{|\xi'|_{g_{\inner}}+z^2},
$$
we will argue in this sketch as though the whole operator was such a pseudodifferential operator. In particular, we can find $E$ an elliptic pseudodifferential operator such that 
$$
\Lambda_{\exterior}-\tau\Lambda_{\inner}= E(B(z)-z^2),
$$
where $B(z)\in \Psi^2_h$ with 
$$
B(z)=B_0+hB_1(z),\qquad B_i\in \Psi^{2-i}_h,
$$
and 
$$
\sigma(B_0)=\frac{\rho_{\exterior}^2|\xi'|_{g_{\exterior}}^2-\rho_{\inner}^2\tau^2|\xi'|_{g_{\inner}}^2}{\rho_{\exterior}^2+\rho_{\inner}^2\tau^2}.
$$

Next, we closely follow the proof of the Weyl law for self-adjoint pseudodifferential operators. In particular, for $\mp \Im z>0$, we write
$$
(B-z^2)^{-1}=\frac{i}{h}\int_0^{\pm \infty} U(t)e^{\frac{i}{h}tz^2}dt,
$$
where
$$
(hD_t-B(z))U(t)=0,\qquad U(0)=I.
$$
Then, 
$$
\sum_{\lambda_j\in V(h)}\tilde{\psi}(\lambda_j)=\sum_{\pm}\pm\frac{1}{2\pi h} \int_0^{\pm\infty}\int_{\partial V(h)}\tilde{\psi}(z)U(t)e^{-\frac{i}{h}tz^2}E^{-1}\partial_z (\Lambda_{\exterior}(z)-\Lambda_{\inner}(z))Wdz+O(h^\infty),
$$ 
and, integrating by parts in $z$, we are able to replace the integral to time infinity by a finite integral; i.e. for $\chi\in C_c^\infty(-1,1)$ with $\chi \equiv 1$ near $0$, we have
$$
\sum_{\lambda_j\in V(h)}\tilde{\psi}(\lambda_j)=\frac{1}{2\pi h} \int_{-\infty}^\infty\int_{\partial V(h)}\tilde{\psi}(z)\chi(t)U(t)e^{-\frac{i}{h}tz^2}E^{-1}\partial_z (\Lambda_{\exterior}(z)-\Lambda_{\inner}(z))Wdz+O(h^\infty).
$$ 
At this point we can use an oscillatory integral approximation of $U(t)$ to compute the integrals and then approximate $1_{[1-\e,1+\e]}$ by cutoff functions $\psi$, thereby finishing the proof of theorem.

\subsection{Structure of the paper}

Section~\ref{s:preliminaries} contains a review of some preliminary material including basic notation for semiclassical operators, and defect measures as well as propagation of defect measure results. In Section~\ref{Reformulation of the problem as an exterior problem} we reformulate the problem as a scattering problem in the exterior of the obstacle $\Omega_{\inner}$ with a non-standard boundary condition. Next, in Section~\ref{Microlocal description of the Dirichlet-to-Neumann maps}, we prove Theorem~\ref{t:dtN} by implementing a factorization scheme for the Laplace-Beltrami operator near the boundary. We apply these methods specifically to $P_{\exterior}-z^2$ and $P_{\inner}-z^2$ and prove Theorem~\ref{t:imagPart} in Sections~\ref{LU exterior problem} and~\ref{LU interior problem}. In Section~\ref{s:proofOfTheorems}, we prove Theorems~\ref{t:noStates},~\ref{t:states}, and~\ref{t:plasmonic}. Finally, in Section~\ref{s:counting}, we prove Theorem~\ref{t:count}. Appendix~\ref{a:blackBox} shows that $P$ is a black-box Hamiltonian.

\bigskip

\noindent {\bf{Acknowledgements:}} The authors would like to thank Zo\"{i}s Moitier and Monique Dauge for introducing them to this problem and Yves Colin de Verdi\`{e}re for interesting discussion around~\cite{CdV:25}. Thanks also to Janosch Preuss for help with numerical experimentation. JG was supported by ERC Synergy Grant: PSINumScat - 101167139, Leverhulme Research Project Grant: RPG-2023-325, EPSRC Early Career Fellowship: EP/V001760/1, and EPSRC Standard Grant: EP/V051636/1. YF was supported by EPSRC Standard Grant: EP/V051636/1.

\section{Preliminaries}
\label{s:preliminaries}
\subsection{Semiclassical rescaling and pseudodifferential operators}
	
In order to prove our estimates, we reformulate our problem in semiclassical language; i.e. let $0<h<1$, $z=z(h)\in\mathbb{C}$ with $|1-z|\leq Mh$ and set $\lambda= h^{-1}z$.  We will also need semiclassical Sobolev spaces defined on a Riemannian manifold $(M,g)$, for $k\in\mathbb{N}$ by the norm
$$
\|u\|_{H_h^k(M)}^2:=\sum_{|\alpha|\leq k}\|(hD)^\alpha u\|_{L^2(M)}^2.
$$
We then define $H_h^s$ for $s\geq 0$ by interpolation and $H_h^{-s}$ by duality (Notice that when $M$ has a boundary $H_h^{-s}$ is the space of supported distributions).

 We then write $f\in H^s_{h,\loc}(M)$ if for all $\chi\in C_c^\infty(M)$, $\chi f\in H^s_{h}(M)$. We write $f\in H_{h,\comp}^s$ if $f\in H_h^s(M)$ and $f$ is compactly supported.
	
	We use the language of semiclassical pseudodifferential operators frequently in this paper. We now briefly recall the concepts and notation (see~\cite{Zw:12} and\cite[Appendix E]{DyZw:19} for a complete treatment). We will define pseudodifferential operators on $\mathbb{R}^d$, the definitions on manifolds being similar and refer the reader to~\cite[Appendix E]{DyZw:19} for the precise definitions on a manifold.

\noindent{\bf{Semiclassical Pseudodifferential Operators on $\mathbb{R}^d$}}
    We say $a\in C^\infty(T^*\mathbb{R}^d)$ is a symbol of order $m$ and write $a\in S^m(T^*\mathbb{R}^d)$ if for all $\alpha,\beta\in\mathbb{N}^d$, there is $C_{\alpha\beta}>0$ such that 
    $$
    |\partial_x^\alpha\partial_{\xi}^\beta a(x,\xi)|\leq C_{\alpha\beta}\langle \xi\rangle^{m-|\beta|},\qquad \langle \xi\rangle:=(1+|\xi|^2)^{1/2}.
    $$
    We then quantize $a\in S^m(T^*\mathbb{R}^d)$ using the quantization
    $$
    [\Op(a)u](x):=\frac{1}{(2\pi h)^d}\int e^{\frac{i}{h}\langle x-y,\xi\rangle}a(x,\xi)u(y)\der y \der \xi,
    $$
    and define the class of semiclassical pseudodifferential operators of order $m$, 
    $$
    \Psi^m_h(\mathbb{R}^d):=\{ \Op(a)\,:\, a\in S^m(T^*\mathbb{R}^d)\}.
    $$
    We write $\Psi_h^\infty(\mathbb{R}^d):=\cup_{m}\Psi_h^m(\mathbb{R}^d)$ and $\Psi_h^{-\infty}(\mathbb{R}^d):=\cap _m \Psi_h^m(\mathbb{R}^d)$. 

        We next recall a few technical lemmas and definitions from the calculus of semiclassical pseudodifferential operators. The first gives the basic elements of the calculus.
    \begin{lem}[Theorem 9.5~\cite{Zw:12}]
    \label{l:compose}
    Let $a\in S^{m_1}(\mathbb{R}^d)$ and $b\in S^{m_2}(\mathbb{R}^d)$. Then,
    \begin{gather*}
    h^{-1}(\Op(a)\Op(b)-\Op(ab))\in \Psi_h^{m_1+m_2-1}(\mathbb{R}^d),\\
    h^{-1}(\Op(a)^*-\Op(\bar{a}))\in \Psi_h^{m_1-1}(\mathbb{R}^d),\\
    h^{-2}([\Op(a),\Op(b)]+hi\Op(\{a,b\}))\in \Psi_h^{m_1+m_2-2}(\mathbb{R}^d).
    \end{gather*}
    \end{lem}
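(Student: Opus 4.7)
The plan is to derive all three statements from the standard composition and adjoint formulas for semiclassical pseudodifferential operators, both of which take the form of an asymptotic expansion in powers of $h$. As this is a classical result (cited as Theorem~9.5 of~\cite{Zw:12}), I would sketch the overall strategy rather than carry out the computation in detail.

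First, for the composition, I would begin from
$$
[\Op(a)\Op(b)u](x)=\frac{1}{(2\pi h)^{2d}}\iiiint e^{\frac{i}{h}(\langle x-y,\xi\rangle+\langle y-z,\eta\rangle)}a(x,\xi)b(y,\eta)u(z)\,dy\,d\xi\,dz\,d\eta,
$$
and read off the full symbol of $\Op(a)\Op(b)$ by collapsing the $(y,\xi)$ integrations. After the change of variables $w=y-x$, $\omega=\xi-\eta$, one obtains the oscillatory integral
$$
(a\#b)(x,\eta)=\frac{1}{(2\pi h)^d}\iint e^{-\frac{i}{h}\langle w,\omega\rangle}a(x,\eta+\omega)\,b(x+w,\eta)\,dw\,d\omega,
$$
whose phase is non-degenerate with unique critical point $w=0$, $\omega=0$. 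Stationary phase then yields the standard expansion
$$
(a\#b)(x,\xi)\sim \sum_{\alpha\in\mathbb{N}^d}\frac{(-ih)^{|\alpha|}}{\alpha!}\,\partial_\xi^\alpha a(x,\xi)\,\partial_x^\alpha b(x,\xi),
$$
with remainders belonging to the expected symbol classes. The first claim of the lemma is precisely the statement that the $|\alpha|=0$ term already gives the leading behavior, so that $a\#b-ab\in hS^{m_1+m_2-1}$. The adjoint formula is obtained by the same type of stationary-phase argument applied to the Schwartz kernel of $\Op(a)^*$, giving
$$
a^*(x,\xi)\sim\sum_{\alpha\in\mathbb{N}^d}\frac{(-ih)^{|\alpha|}}{\alpha!}\partial_x^\alpha\partial_\xi^\alpha\overline{a}(x,\xi),
$$
whose leading term is $\overline{a}$, which yields the second claim.

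The commutator identity then follows by subtraction: the $|\alpha|=0$ terms of $a\#b$ and $b\#a$ both equal $ab$ and cancel, while the $|\alpha|=1$ terms combine to $-ih(\partial_\xi a\cdot\partial_x b-\partial_\xi b\cdot\partial_x a)=-ih\{a,b\}$, with the remainder lying in $h^2 S^{m_1+m_2-2}$ as required. The main technical obstacle throughout is the control of symbol seminorms of these remainders uniformly in $h\in(0,1)$, which needs a quantitative stationary phase expansion with derivatives of all orders estimated; these bounds are standard and I would simply invoke them from~\cite[Chapter 9]{Zw:12} or~\cite[Appendix E]{DyZw:19} rather than reproduce the calculation.
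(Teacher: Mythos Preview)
The paper does not prove this lemma; it is stated as a citation of Theorem~9.5 in~\cite{Zw:12} and used as a black box. Your sketch is precisely the standard stationary-phase argument that underlies that reference, so there is nothing to compare.
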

    The next defines the principal symbol.
    \begin{lem}[Principal Symbol Map, Proposition E.14~\cite{DyZw:19}]
    There is a map $\sigma_m:\Psi_h^m(\mathbb{R}^d)\to S^m(T^*\mathbb{R}^d)$ so that 
    $$
    A-\Op(\sigma(A))\in h\Psi^{m-1}(\mathbb{R}^d). 
    $$
    \end{lem}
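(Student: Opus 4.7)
The plan is to read the statement almost directly off the definition of $\Psi_h^m(\mathbb{R}^d)$, which the paper sets up as the image of the quantization map, $\Psi_h^m(\mathbb{R}^d) := \Op(S^m(T^*\mathbb{R}^d))$. Under this definition, every $A \in \Psi_h^m(\mathbb{R}^d)$ is \emph{by construction} of the form $A = \Op(a)$ for some $a \in S^m(T^*\mathbb{R}^d)$, so setting $\sigma_m(A) := a$ gives a map with $A - \Op(\sigma_m(A)) = 0$, which certainly lies in $h\Psi^{m-1}(\mathbb{R}^d)$. The existence part of the lemma therefore requires no real work; the substance is to promote this from a choice function to a genuine, canonical map and to check that the resulting symbol satisfies the $S^m$ estimates.

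To make $\sigma_m$ canonical, I would establish that the quantization $\Op$ is injective on $S^m$ by inverting it at the level of Schwartz kernels. Writing $K_A(x,y)$ for the Schwartz kernel of $A = \Op(a)$, the oscillatory integral formula for $\Op$ gives
$$
K_A(x,y) = \frac{1}{(2\pi h)^d}\int_{\mathbb{R}^d} e^{\frac{i}{h}\langle x - y,\xi\rangle}\, a(x,\xi)\, d\xi,
$$
and a change of variables $y = x - hw$ followed by Fourier inversion in $w$ recovers
$$
a(x,\xi) = \int_{\mathbb{R}^d} K_A(x,\, x - h w)\, e^{-i\langle w,\xi\rangle}\, dw.
$$
This formula shows that the symbol is uniquely determined by $A$, so $\sigma_m(A) := a$ is well defined. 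It also lets me verify that $\sigma_m(A) \in S^m$: the standard kernel bounds satisfied by $\Op(a)$ for $a \in S^m$ (derivatives in $x$ and $y$ decay off the diagonal at rates controlled by $\langle (x-y)/h\rangle^{-M}$) translate, via this integral and integration by parts in $w$, into the required derivative estimates $|\partial_x^\alpha \partial_\xi^\beta a| \lesssim \langle \xi\rangle^{m - |\beta|}$.

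There is essentially no serious obstacle; the only point requiring a little care is that for $m > 0$ the integral defining $K_A$ is merely an oscillatory integral and the recovery formula for $a(x,\xi)$ must be interpreted in the distributional sense (or on Schwartz test functions) before one converts the kernel estimates into symbol estimates. Once that is done, the identity $A - \Op(\sigma_m(A)) = 0$ holds exactly — hence trivially lies in $h\Psi^{m-1}(\mathbb{R}^d)$ — and the map $\sigma_m$ has the required properties. (The more refined uniqueness statement, namely that $\sigma_m$ is canonical only modulo $hS^{m-1}$ once one passes to other quantizations or to manifolds, will be relevant later but is not part of the present claim.)
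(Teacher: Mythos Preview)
Your argument is correct. The paper does not provide its own proof of this lemma: it is stated in the preliminaries section with a citation to \cite[Proposition E.14]{DyZw:19} and no proof is given, so there is nothing to compare your approach against. Your observation that the paper's definition $\Psi_h^m(\mathbb{R}^d) := \{\Op(a): a\in S^m\}$ makes the existence essentially tautological is accurate, and your kernel-inversion argument for well-definedness is the standard one.
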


We write $\overline{T^*\mathbb{R}^d}=T^*\mathbb{R}^d\sqcup \mathbb{R}^d\times S^{d-1}$ for the fiber radially compactified cotangent bundle i.e. the cotangent bundle with the sphere at infinity in $\xi$ attached. 

We can now define the notion of the elliptic set.
\begin{defn}
Let $A\in \Psi_h^m(\mathbb{R}^d)$. For $(x_0,\xi_0)\in \overline{T^*M}$, we say that $A$ is elliptic at $(x_0,\xi_0)$ and write $(x_0,\xi_0)\in \Ellip(A)$ if there is a neighborhood, $U$ of $(x_0,\xi_0)$ and $c>0$ such that 
$$
|\sigma(A)(x,\xi)|\geq c\langle \xi\rangle^m,\qquad (x,\xi)\in U.
$$
\end{defn}
Next, we define the wavefront set of a pseudodifferential operator.
\begin{defn}
Let $A\in \Psi_h^m(\mathbb{R}^d)$. For $(x_0,\xi_0)\in \overline{T^*M}$ we say that $(x_0,\xi_0)$ is not in the wavefront set of $A$ and write $(x_0,\xi_0)\notin \WF(A)$ if there is $a\in S^m$ such that 
$$A=\Op(a)+O(h^\infty)_{\Psi_h^{-\infty}}$$
and 
$
(x_0,\xi_0)\notin \supp a.
$
\end{defn}
    The next lemma gives the so-called elliptic parametrix construction.
\begin{lem}[Proposiion E.32~\cite{DyZw:19}]
Suppose that $A\in \Psi_h^{m_1}(\mathbb{R}^d)$ and $B\in \Psi_h^{m_2}(\mathbb{R}^d)$ and $\WF(A)\subset \Ellip (B)$. Then there is $E\in \Psi_h^{m_1-m_2}(\mathbb{R}^d)$ with $\WF(E)\subset \WF(A)$ such that 
$$
A=EB+O(h^\infty)_{\Psi_h^{-\infty}}.
$$
\end{lem}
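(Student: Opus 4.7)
The plan is to build $E$ as an asymptotic series of symbols $e_0 + he_1 + h^2 e_2 + \cdots$ via an iterative elliptic inversion on the principal-symbol level, then resum using Borel's lemma. The ellipticity of $B$ on $\WF(A)$ is what makes inversion pointwise on symbols possible, and nested cutoffs will keep the wavefront set of $E$ inside $\WF(A)$.

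First I would reduce to the case of true symbols by writing $A = \Op(a) + O(h^\infty)_{\Psi^{-\infty}_h}$ with $\supp a$ contained in an arbitrarily small neighborhood of $\WF(A)$. Since $\WF(A) \subset \Ellip(B)$ is a closed-in-open containment in $\overline{T^*\mathbb{R}^d}$, I can fix an open set $U$ with $\WF(A) \subset U \subset \overline U \subset \Ellip(B)$ and require $\supp a \subset U$. Pick $\chi \in S^0$ with $\chi \equiv 1$ on $U$ and $\supp \chi \subset \Ellip(B)$, so that $b := \sigma(B)$ satisfies $|b(x,\xi)| \geq c\langle\xi\rangle^{m_2}$ on $\supp \chi$. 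Then define the leading symbol
$$
e_0 := \frac{\chi \, a}{b} \in S^{m_1-m_2}(T^*\mathbb{R}^d),
$$
which makes sense because $\chi/b \in S^{-m_2}$ is well defined (extend by $0$ off $\supp \chi$).

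Next I would run the standard iterative step. By Lemma~\ref{l:compose},
$$
\Op(e_0)\,B = \Op(e_0 b) + h\, R_1, \qquad R_1 \in \Psi_h^{m_1-1}(\mathbb{R}^d),
$$
and since $e_0 b = \chi a = a$ on $\supp a$ (modulo a symbol with support disjoint from $\WF(A)$, which is $O(h^\infty)$ after quantization only if I am careful — in fact $e_0 b - a = (\chi - 1)a$, and since $\chi \equiv 1$ on $\supp a$ this vanishes identically). Therefore $A - \Op(e_0)B \in h\,\Psi_h^{m_1-1}(\mathbb{R}^d)$, and crucially its full symbol has support contained in $\supp a$. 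Writing this remainder as $h\,\Op(r_1) + O(h^\infty)$ with $\supp r_1 \subset \supp a$, I repeat the construction: set $e_1 := \chi r_1 / b \in S^{m_1-m_2-1}$ with $\supp e_1 \subset \supp a$. By induction I obtain symbols $e_j \in S^{m_1-m_2-j}$ with $\supp e_j$ inside an arbitrarily small neighborhood of $\WF(A)$, such that for every $N$,
$$
A - \Op\!\Bigl(\sum_{j=0}^{N-1} h^j e_j\Bigr) B \in h^N\,\Psi_h^{m_1-N}(\mathbb{R}^d).
$$

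Finally, I would apply Borel's lemma in the class $S^{m_1-m_2}$: there exists $e \in S^{m_1-m_2}$ with $e \sim \sum_{j\geq 0} h^j e_j$, and the support of $e$ can be arranged to sit in any prescribed neighborhood of $\WF(A)$ (by multiplying each partial sum by a fixed cutoff equal to $1$ on a slightly smaller neighborhood before resumming). Setting $E := \Op(e)$ then yields $A - EB \in h^N\,\Psi_h^{m_1-N}$ for every $N$, i.e.\ $A - EB = O(h^\infty)_{\Psi_h^{-\infty}}$, and $\WF(E) \subset \WF(A)$ by construction. The main obstacle is bookkeeping: making sure at each iteration that the remainder's full symbol (not just its principal symbol) remains supported near $\WF(A)$, which is what forces the nested-cutoff structure and allows the final wavefront-set conclusion; once that is organized, the asymptotic summation is standard.
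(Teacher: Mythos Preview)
The paper does not prove this lemma; it is stated as a citation to \cite[Proposition~E.32]{DyZw:19} and used as a black box. Your iterative parametrix construction is exactly the standard proof of that proposition, and the symbol-level steps (dividing by $b$ on a cutoff, iterating, Borel resummation) are correct.

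One point deserves a cleaner justification: the conclusion $\WF(E)\subset\WF(A)$. Your construction only gives $\supp e\subset\supp a\subset U$, a fixed neighborhood of $\WF(A)$, and the parenthetical about ``multiplying each partial sum by a fixed cutoff equal to $1$ on a slightly smaller neighborhood'' does not actually shrink supports. The clean argument is a posteriori: once you have $A=EB+O(h^\infty)_{\Psi_h^{-\infty}}$ with $\WF(E)\subset U\subset\Ellip(B)$, take any $(x_0,\xi_0)\notin\WF(A)$. If $(x_0,\xi_0)\notin U$ you are done; if $(x_0,\xi_0)\in U$, then $(x_0,\xi_0)\in\Ellip(B)$ and $(x_0,\xi_0)\notin\WF(EB)=\WF(A)$, so applying a microlocal parametrix for $B$ at $(x_0,\xi_0)$ shows $(x_0,\xi_0)\notin\WF(E)$. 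This closes the gap without any nested-cutoff bookkeeping.
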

    The final lemma concerns boundedness of pseudodifferential operators. 
    \begin{lem}[Proposition E.19~\cite{DyZw:19}]
    Let $A\in \Psi_h^m(\mathbb{R}^d)$. Then, for all $s\in\mathbb{R}$,  there is $C>0$ such that for all $u\in H_h^{s+m}(\mathbb{R}^d)$ and $0<h<1$, 
    $$
    \|Au\|_{H_h^{s}(\mathbb{R}^d)}\leq C\|u\|_{H_h^{s+m}(\mathbb{R}^d)}.
    $$
    \end{lem}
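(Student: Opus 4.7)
The plan is to reduce the general case to $L^2$-boundedness of order-zero semiclassical pseudodifferential operators, and then prove the latter by a symbol-square-root argument.

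First I would reduce to the case $m = s = 0$. Let $\Lambda^t := \Op(\langle\xi\rangle^t) \in \Psi_h^t(\mathbb{R}^d)$. By the elliptic parametrix construction (the preceding lemma), $\Lambda^t$ admits a two-sided inverse modulo $O(h^\infty)_{\Psi_h^{-\infty}}$, and the Plancherel characterization of $H_h^k$ for integer $k$ combined with interpolation and duality shows that $\Lambda^t$ is an isomorphism $H_h^{s+t}(\mathbb{R}^d) \to H_h^s(\mathbb{R}^d)$ with norm bounded uniformly in $h\in(0,1)$. The assertion that $A : H_h^{s+m} \to H_h^s$ is bounded is therefore equivalent, upon conjugation, to $\tilde A := \Lambda^s A \Lambda^{-s-m} : L^2 \to L^2$ being bounded. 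The composition lemma places $\tilde A \in \Psi_h^0(\mathbb{R}^d)$, so it suffices to prove that every $A \in \Psi_h^0(\mathbb{R}^d)$ is bounded on $L^2(\mathbb{R}^d)$ uniformly for $0 < h < 1$.

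Second I would prove the $L^2$-bound for $A = \Op(a)$, $a \in S^0$, via the square-root (Hörmander) trick. Choose $M > \sup |a|^2 + 1$, so that $b_0 := \sqrt{M - |a|^2} \in S^0$ (the fact that $M - |a|^2 \ge 1$ is uniformly bounded below makes the square root smooth and the symbol estimates trivial). Setting $B_0 := \Op(b_0)$, the composition lemma gives
$$
A^*A + B_0^*B_0 = M\cdot I + h R_1, \qquad R_1 \in \Psi_h^{-1}(\mathbb{R}^d).
$$
Iterating, one constructs symbols $b_j \in S^{-j}$ for $1 \le j \le N$ by solving $2 b_0 b_{j+1} = -(\text{principal symbol of the current remainder})$, which is possible since $b_0 \ge 1$. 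With $B_N := \Op(\sum_{j=0}^N h^j b_j)$ one obtains
$$
A^*A + B_N^* B_N = M\cdot I + h^{N+1}R_{N+1}, \qquad R_{N+1} \in \Psi_h^{-N-1}(\mathbb{R}^d).
$$
Taking $N \ge 2d$, the Schwartz kernel of $R_{N+1}$ is sufficiently regular and decaying in the off-diagonal direction that the Schur test yields $\|R_{N+1}\|_{L^2 \to L^2} \le C$ uniformly in $h$, with $C$ depending only on finitely many seminorms of $a$. Pairing with $u$ then gives
$$
\|Au\|_{L^2}^2 = M\|u\|_{L^2}^2 - \|B_N u\|_{L^2}^2 + h^{N+1}\langle R_{N+1} u, u\rangle \le (M + Ch^{N+1})\|u\|_{L^2}^2,
$$
so $\|A\|_{L^2\to L^2} \le (M+C)^{1/2}$ uniformly for $h \in (0,1)$.

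The main obstacle is the bookkeeping in the iterative construction: one must check at each step that the composition/adjoint remainders generated by the calculus lie in the correct symbol class $S^{-j}$, so that the final remainder $R_{N+1}$ really is of sufficiently negative order for Schur to apply, with constants uniform in $h$. A cleaner but less elementary alternative would be to apply the Cotlar–Stein lemma to a dyadic partition of $a$ in $\xi$, but the square-root method above is both self-contained within the calculus already developed and yields an explicit bound in terms of $\sup|a|$.
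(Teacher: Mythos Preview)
The paper does not prove this lemma; it is simply quoted from \cite[Proposition E.19]{DyZw:19} as background in the preliminaries, with no argument supplied. Your proposal therefore cannot be compared against any proof in the paper, but it is a correct and standard argument: the reduction to $s=m=0$ via conjugation by $\Lambda^t=\Op(\langle\xi\rangle^t)$ and the H\"ormander square-root construction are exactly how the cited result is typically established (see, e.g., \cite[Theorem 4.23]{Zw:12}). The only point worth tightening is the claim that $\Lambda^t$ is an exact isomorphism of semiclassical Sobolev spaces; strictly speaking one knows this directly from Plancherel since $\widehat{\Lambda^t u}(\xi)=\langle h\xi\rangle^t\hat u(\xi)$, so no parametrix or interpolation is needed. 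Otherwise the bookkeeping you flag is routine and the argument goes through.
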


\medskip

\noindent{\bf{Tangential Pseudodifferential operators}}
    We will also have occasion to use tangential pseudodifferential operators on a domain $\Omega\subset \mathbb{R}^d$ with smooth boundary. Once again, we make these definition in local coordinates $\mathbb{R}_{x^1}\times \mathbb{R}^{d-1}_{x'}$, we $\Omega=\{x^1>0\}$. 

    We say that $a\in C^\infty (\mathbb{R}\times T^*\mathbb{R}^{d-1})$ is a tangential symbol of order $m$ and write $a\in S_{\tangent}^m$ if $a\in C^\infty (\mathbb{R}_{x^1};S^m(\mathbb{R}^{d-1}))$. We then define the class of tangential pseudodifferential operators of order $m$ by 
    $$
    \Psi_{\tangent,h}^{m}:=\{ \Op(a)\,:\, a\in S^m_{\tan,h}\}.
    $$
    We also write $\Psi_{\tangent,h}^\infty:=\cup_{m}\Psi_{\tangent, h}^m(\mathbb{R}^d)$ and $\Psi_{\tangent,h}^{-\infty}(\mathbb{R}^d):=\cap _m \Psi_{\tangent,h}^m(\mathbb{R}^d)$.
    
    Notice that for any $A\in \Psi_{\tangent,h}^m$ and $y\in \mathbb{R}$ $A$ can be restricted to an operator on $\{x^1=y\}$ and this operator lies in $\Psi_h^m(\{x^1=y\})$. 

    \subsection{The operator in Fermi Normal Coordinates}
	\label{Laplace operator around boundary}
	In Fermi normal coordinates $x=(x^1,x')$, where $x^1$ is the signed distance to the boundary, $\overline{\Omega}$ is given by $x^1\ge0$ and the metric is of the form 
	\begin{equation*}
		g=(\der x^1)^2+\sum_{\alpha,\beta=2}^d g_{\alpha\beta}(x) \der x^\alpha \der x^\beta.
	\end{equation*}
	Then, 
	\begin{equation}
		\label{LaplaceonBoundary}
		-h^2\Delta_{g,\rho}-z^2
		=(hD_{x^1})^2+ha(x)hD_{x^1}-R(x^1,x',hD_{x'}).
	\end{equation}
	Here, $a$ is a smooth function given by $a=(\sqrt{|g|}\rho)^{-1}D_{x^1}\sqrt{|g|}\rho$ with $\sqrt{|g|}$ being the Riemannian density function. Moreover, $R$ is a tangential differential operator of order $2$. The semiclassical principal symbol of $R$ is given by $\sigma(R)=r(x^1,x',\xi')$ with $r(0,x',\xi')=1-|\xi'|^2_{g}$.

    	\subsection{Semiclassical defect measures} \label{Semiclassical defect measures}
	Semiclassical defect measures are measures associated with a sequence (possibly subsequence) of functions $\{u(h)\}_{0<h<h_0}$. Some well-known existence theorem of semiclassical defect measures can be found in \cite[Appendix E.3]{DyZw:19} or \cite[Chapter 5]{Zw:12}. We will summarise them in the following. 
	\begin{itemize}
		\item If $u_j:=u(h_j)$ satisfies
		\begin{equation}
			\label{L2 loc condition}
			\|\chi u_j\|_{L^2(\Omega)}\le C_\chi
		\end{equation}
		for some constant $C_\chi$ depending on $\chi\in C^\infty_{c}(\overline{\Omega})$ but not $j$, then there is a subsequence $j_n$ and a non-negative Radon measure $\mu$ on $T^*\Omega$ such that
		\begin{equation}
			\label{interior measure}
			\langle \Op_h(a)(x,h_{j_n}D) u_{j_n}, u_{j_n} \rangle \to \int_{T^*\Omega} a(x,\xi) \der \mu \quad \text{for} \quad \forall a \in C^\infty_{c}(T^*\Omega).
		\end{equation}
		\item If $u_j$ satisfies
		\begin{equation}
			\label{L2 Dirichlet condition}
			\|u_j\|_{L^2(\partial \Omega)}\le C_D,
		\end{equation}
		then there is a subsequence $j_n$ and a non-negative Radon measure $\upsilon_D$ on $T^*\partial \Omega$ such that
		\begin{equation*}
			\label{Dirichlet measure}
			\langle \Op_h(b)(x',h_jD') u_j, u_j \rangle \to \int_{T^*\partial \Omega} b(x',\xi') \der \upsilon_D \quad \text{for} \quad \forall b \in C^\infty_{c}(T^*\partial \Omega).
		\end{equation*}
		\item If $u_j$ satisfies
		\begin{equation}
			\label{L2 Neumann condition}
			\|hD_\nu u_j\|_{L^2(\partial \Omega)}\le C_N,
		\end{equation}
		then there exists a non-negative Radon measure $\upsilon_N$ on $T^*\partial \Omega$ such that
		\begin{equation*}
			\label{Neumann measure}
			\langle \Op_h(b)(x',h_jD') hD_\nu u_j, hD_\nu u_j \rangle \to \int_{T^*\partial \Omega} b(x',\xi') \der \upsilon_N \quad \text{for} \quad \forall b \in C^\infty_{c}(T^*\partial \Omega).
		\end{equation*}
		\item If $u_j$ satisfies \eqref{L2 Dirichlet condition} and \eqref{L2 Neumann condition}
		then in addition to measures $\upsilon_D$ and $\upsilon_N$, there exists another Radon measure $\upsilon_{DN}$ on $T^*\partial \Omega$ such that
		\begin{equation*}
			\label{Dirichlet Neumann measure}
			\langle \Op_h(b)(x',h_jD')u_j, hD_\nu u_j \rangle \to \int_{T^*\partial \Omega} b(x',\xi') \der \upsilon_{DN} \quad \text{for} \quad \forall b \in C^\infty_{c}(T^*\partial \Omega).
		\end{equation*}
		\item Let $u_j$ satisfy
		\begin{equation}
			\label{PB_n condition}
			\begin{cases}
				(P_{\exterior}-z^2) u_j=h_jf_j  &\text{in } \Omega,\\
				(h_jD_{\nu_{\exterior}}+\Lambda)u_j=g_j  &\text{on } \partial \Omega,
			\end{cases}
		\end{equation}
		for some $f_j\in L^2_{\comp}(\Omega)$, $\|f_j\|_{L^2}\leq C$ and $g_j\in H_h^{\frac{1}{2}}(\partial\Omega)$, $\|g_j\|_{H_h^{\frac{1}{2}}}\leq C$, and $\Lambda\in \Psi_h^1(\partial\Omega)$. Then there exists Radon measures $\mu_f$ on $T^*\Omega$ and $\sigma_g$ on $T^*\partial \Omega$ such that
		\begin{equation*}
			\label{PB_n measures}
			\left\{
			\begin{aligned}
				&\langle \Op_h(a)(x,h_jD) u_j, f \rangle \to \int_{T^*\Omega} a(x,\xi) \der \mu_f \quad \text{for} \quad \forall a \in C^\infty_{c}(T^*\Omega),\\
				&\langle \Op_h(b)(x',h_jD') u_j, g \rangle \to \int_{T^*\partial \Omega} b(x',\xi') \der \upsilon_g \quad \text{for} \quad \forall b \in C^\infty_{c}(T^*\partial \Omega).
			\end{aligned}
			\right.
		\end{equation*}
		If $u_j$ further satisfies \eqref{L2 loc condition}, then $\supp(\mu)\cap T^*\Omega \subset \Sigma_p:=\{p=0\}$.
	\end{itemize}
	Notice that if $u_j$ satisfies \eqref{L2 Dirichlet condition}, \eqref{L2 Neumann condition} and \eqref{PB_n condition} then
	\begin{equation}
		\label{boundary measures relation}	
		\upsilon_g=\upsilon_{DN}+\overline{\sigma(\Lambda)}\upsilon_D.
	\end{equation}
	To obtain relationships between the interior defect measures and boundary defect measures one uses the following integration by parts formula.
	\begin{lem}[Integration by parts]
		\label{Integration by parts}
		Suppose that 
		$$
		P=(hD_{x^1})^2+ha(x)hD_{x^1}-R(x,hD_{x'})
		$$
		is formally self adjoint with respect to the density $\rho \der x$. Let $B=B_0+B_1hD_{x^1}$ with $B_i\in C^\infty_\comp((-2\delta,2\delta)_{x^1};\Psi^{\ell_i}_h(\R^{d-1}))$ for $i=1,2$. Moreover, $\Omega$ is defined for $x^1>0$. Then we have
		\begin{multline*}
			\label{IBPeqn}
			\frac{\rmi}{h}\langle [P,B]u,u \rangle_{L^2(\Omega)}=-\frac{2}{h} \Im\left(\langle Bu, Pu \rangle_{L^2(\Omega)}\right)+\frac{\rmi}{h}\langle Pu, (B-B^*)u \rangle_{L^2(\Omega)}
			\\
			-\Big(\langle B_0u,hD_{x^1}u\rangle_{L^2(\partial \Omega)}+\langle B_1hD_{x^1}u,hD_{x^1}u\rangle_{L^2(\partial \Omega)}+\langle B_1Ru,u \rangle_{L^2(\partial \Omega)}
			\\
			+\langle hD_{x^1}B_0 u, u\rangle_{L^2(\partial \Omega)}+\langle [hD_{x^1},B_1] hD_{x^1}u, u\rangle_{L^2(\partial \Omega)}+h\langle [a,B_1]hD_{x^1}u,u\rangle_{L^2(\partial\Omega)}+h\langle aB_0u,u\rangle_{L^2(\partial\Omega)}\Big),
		\end{multline*}
        where $a=\rho^{-1} D_{x^1}\rho$.
	\end{lem}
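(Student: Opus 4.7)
The identity is a direct application of Green's formula. First, expand
$$
\langle [P,B]u,u\rangle_{L^2(\Omega)} = \langle PBu,u\rangle - \langle BPu,u\rangle = \langle PBu,u\rangle - \langle Pu,B^*u\rangle,
$$
where the second equality uses only the algebraic definition of the $L^2$-adjoint of $B$ (no boundary contribution). The task then reduces to moving $P$ across in the first term via Green's identity, $\langle PBu,u\rangle = \langle Bu,Pu\rangle + \mathrm{BT}$, and identifying $\tfrac{\rmi}{h}\mathrm{BT}$ with the seven boundary terms in the statement.

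The two bulk terms follow from the elementary algebraic identity
$$
\frac{\rmi}{h}\bigl(\langle Bu,Pu\rangle-\langle Pu,B^*u\rangle\bigr) = -\frac{2}{h}\Im\langle Bu,Pu\rangle+\frac{\rmi}{h}\langle Pu,(B-B^*)u\rangle,
$$
obtained by adding and subtracting $\tfrac{\rmi}{h}\overline{\langle Bu,Pu\rangle}=\tfrac{\rmi}{h}\langle Pu,Bu\rangle$. This recovers the first line of the right-hand side.

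To compute BT, note that since $R=R(x,hD_{x'})$ is purely tangential, IBP against $R$ produces no boundary term in Fermi coordinates, so only $(hD_{x^1})^2$ and $ha\,hD_{x^1}$ contribute. Two applications of IBP in $x^1$ for $(hD_{x^1})^2$ yield pairings of $Bu|_{\partial\Omega}$ against $hD_{x^1}u|_{\partial\Omega}$ and of $hD_{x^1}(Bu)|_{\partial\Omega}$ against $u|_{\partial\Omega}$. Substituting $B=B_0+B_1hD_{x^1}$ expands the first into $\langle B_0u,hD_{x^1}u\rangle_{\partial\Omega}+\langle B_1hD_{x^1}u,hD_{x^1}u\rangle_{\partial\Omega}$, and the second into $\langle hD_{x^1}B_0u,u\rangle_{\partial\Omega}+\langle[hD_{x^1},B_1]hD_{x^1}u,u\rangle_{\partial\Omega}+\langle B_1(hD_{x^1})^2u,u\rangle_{\partial\Omega}$. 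The leftover $(hD_{x^1})^2u|_{\partial\Omega}$ is rewritten using the equation $(hD_{x^1})^2u = Pu + Ru - ha\,hD_{x^1}u$: the $Pu$ piece is reabsorbed into the bulk $\langle Bu,Pu\rangle$ after a matching IBP (so it disappears from BT), the $Ru$ piece yields $\langle B_1Ru,u\rangle_{\partial\Omega}$, and the $ha\,hD_{x^1}u$ piece combines with the boundary contribution from the single IBP of $ha\,hD_{x^1}$ to produce $h\langle[a,B_1]hD_{x^1}u,u\rangle_{\partial\Omega}+h\langle aB_0u,u\rangle_{\partial\Omega}$, the two subprincipal terms coming from the weight $a=\rho^{-1}D_{x^1}\rho$ attached to $\rho$-weighted self-adjointness.

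The proof is entirely computational and the main obstacle is bookkeeping: one must carefully expand all commutators $[hD_{x^1},B_0]$, $[hD_{x^1},B_1]$, $[a,B_1]$, keep track of orientation signs at $\partial\Omega=\{x^1=0\}$, and verify that every contribution produced by reducing $(hD_{x^1})^2u|_{\partial\Omega}$ through the equation sorts itself into one of the seven listed terms, with the $Pu$ contribution cleanly absorbed into the bulk side.
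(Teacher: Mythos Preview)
Your overall strategy---Green's formula for $P$ plus bookkeeping of the resulting boundary terms---is exactly the paper's approach. However, there is a gap in your second displayed step. You write $\langle BPu,u\rangle=\langle Pu,B^*u\rangle$ and claim this uses ``only the algebraic definition of the $L^2$-adjoint of $B$ (no boundary contribution).'' This is false: since $B=B_0+B_1hD_{x^1}$ contains a normal derivative, moving $B$ across on the half-space $\Omega=\{x^1>0\}$ does produce a boundary term. Concretely, with $B^*$ the formal adjoint,
\[
\langle BPu,u\rangle_{L^2(\Omega)}=\langle Pu,B^*u\rangle_{L^2(\Omega)}+\rmi h\langle Pu,B_1^*u\rangle_{L^2(\partial\Omega)}.
\]
This extra boundary term is precisely what cancels the $\langle B_1Pu,u\rangle_{L^2(\partial\Omega)}$ contribution you obtain later by substituting $(hD_{x^1})^2u=Pu+Ru-ha\,hD_{x^1}u$ in $\langle hD_{x^1}(Bu),u\rangle_{L^2(\partial\Omega)}$. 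There is no ``reabsorption into the bulk $\langle Bu,Pu\rangle$ after a matching IBP''; the cancellation is purely between these two boundary terms. The paper's proof keeps track of this boundary contribution from the outset (it is the $\rmi h\langle Pu,B_1^*u\rangle_{L^2(\partial\Omega)}$ in its displayed identity for $\langle BPu,u\rangle$), and once you restore it your computation matches the paper's and goes through.
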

	\begin{proof}
		Using the measures $\rho \der x$ in $\{x^1>0\}=\Omega$ and $\rho \der x'$ on $\{x^1=0\}=\partial\Omega$, from expression \eqref{LaplaceonBoundary}, we have
		\begin{equation*}
			\begin{aligned}
				\langle PBu,u \rangle_{L^2(\Omega,\rho dx)}&=\langle Bu, Pu \rangle_{L^2(\Omega)}+\rmi h\left(\langle hD_{x^1}Bu,u \rangle_{L^2(\partial \Omega)}+\langle Bu,hD_{x^1}u \rangle_{L^2(\partial \Omega)}\right),\\
				\langle BPu,u \rangle_{L^2(\Omega)}&=\langle Pu, Bu \rangle_{L^2(\Omega)}+\langle Pu, (B^*-B)u \rangle_{L^2(\Omega)}+\rmi h\langle Pu,B_1^*u \rangle_{L^2(\partial \Omega)}.
			\end{aligned}
		\end{equation*}
		One also has
		\begin{multline*}
			\langle hD_{x^1}B u, u\rangle_{L^2(\partial \Omega)}=\langle (hD_{x^1})^2 u, B_1^*u\rangle_{L^2(\partial \Omega)}
			\\
			+\langle hD_{x^1}B_0 u, u\rangle_{L^2(\partial \Omega)}+\langle [hD_{x^1},B_1] hD_{x^1}u, u\rangle_{L^2(\partial \Omega)}.
		\end{multline*}
		Therefore, the boundary contributions of $\langle [P,B]u,u \rangle_{L^2(\Omega)}$ is given by $\rmi h$ multiplying with
		\begin{multline*}
			\langle Bu,hD_{x^1}u \rangle_{L^2(\partial \Omega)}+\langle hD_{x^1}B u, u\rangle_{L^2(\partial \Omega)}-\langle Pu,B_1^*u \rangle_{L^2(\partial \Omega)}
			\\
			=\langle Bu,hD_{x^1}u \rangle_{L^2(\partial \Omega)}+\langle (R-ah^2D_{x^1})u,B_1^*u \rangle_{L^2(\partial \Omega)}
			\\
			+\langle hD_{x^1}B_0 u, u\rangle_{L^2(\partial \Omega)}+\langle [hD_{x^1},B_1] hD_{x^1}u, u\rangle_{L^2(\partial \Omega)}
			\\
			=\langle Bu,hD_{x^1}u\rangle_{L^2(\partial \Omega)}+\langle B_1Ru,u \rangle_{L^2(\partial \Omega)}+\langle hD_{x^1}B_0 u, u\rangle_{L^2(\partial \Omega)}
			\\
			+\langle [hD_{x^1},B_1] hD_{x^1}u, u\rangle_{L^2(\partial \Omega)}+h\langle [a,B_1]hD_{x^1}u,u\rangle_{L^2(\partial\Omega)}+h\langle aB_0u,u\rangle_{L^2(\partial\Omega)}.
		\end{multline*}
		This completes the proof.
	\end{proof}

	When $B_0=0$, we have
	\begin{multline*}
		\frac{\rmi}{h}\langle [P,B_1hD_{x^1}]u,u \rangle_{L^2(\Omega)}=
		\\
		-\frac{2}{h} \Im\left(\langle B_1hD_{x^1}u, Pu \rangle_{L^2(\Omega)}\right)+\frac{\rmi}{h}\langle Pu, (B_1hD_{x^1}-(hD_{x^1})^*B_1^*)u \rangle_{L^2(\Omega)}
		\\
		-\Big(\langle B_1hD_{x^1}u,hD_{x^1}u\rangle_{L^2(\partial \Omega)}+\langle B_1Ru,u \rangle_{L^2(\partial \Omega)}
		\\
		+h\langle [a,B_1]hD_{x^1}u,u\rangle_{L^2(\partial\Omega)}+\langle [hD_{x^1},B_1] hD_{x^1}u, u\rangle_{L^2(\partial \Omega)}\Big).
	\end{multline*}
	When $B_1=0$, we have
	\begin{multline*}
		\frac{\rmi}{h}\langle [P,B_0]u,u \rangle_{L^2(\Omega)}=-\frac{2}{h} \Im\left(\langle B_0u, Pu \rangle_{L^2(\Omega)}\right)+\frac{\rmi}{h}\langle Pu, (B_0-B_0^*)u \rangle_{L^2(\Omega)}
		\\
		-\left(\langle B_0u,hD_{x^1}u\rangle_{L^2(\partial \Omega)}+\langle hD_{x^1}B_0 u, u\rangle_{L^2(\partial \Omega)}+h\langle aB_0u,u\rangle_{L^2(\partial\Omega)}\right).
	\end{multline*}
	Using Lemma \ref{Integration by parts}, one can obtain the results of~\cite{Miller} (see also~\cite{GaLaSp}) on boundary defect measures.
	\begin{theorem}
		\label{measure invariant thm}
		Let $u_j$ satisfy \eqref{L2 loc condition}, \eqref{L2 Dirichlet condition}, \eqref{L2 Neumann condition}, and \eqref{PB_n condition}, then $\supp \mu\subset \{
    \sigma(P-1)=0\}\cap T^*\Omega$ and we have
		\begin{multline}
			\label{interior boundary measure eqn}
			\mu(H_p a)=-2\Im\mu_f(\Re a)-2\rmi \Im\mu_f(\Im a)
			\\
			-2\Re\upsilon_g(a_{\operatorname{even}})-2|\sigma( \Lambda)|\Im\upsilon_D(a_{\operatorname{even}})-\upsilon_N(a_{\operatorname{odd}})-\upsilon_D(ra_{\operatorname{odd}}),
		\end{multline}
		where $a_{\operatorname{even}}=\frac{1}{2}(a(x,r^{\frac{1}{2}},\xi')+a(x,-r^{\frac{1}{2}},\xi'))$, $a_{\operatorname{odd}}=\frac{1}{2r^{\frac{1}{2}}}(a(x,r^{\frac{1}{2}},\xi')-a(x,-r^{\frac{1}{2}},\xi'))$ and $r=\sigma(R)$ in \eqref{LaplaceonBoundary}. Let $\pi: T^*\Omega \to {}^bT^*\Omega$ define as $\pi(x^1,x',\xi_1,\xi')=(x^1,x',x^1\xi_1,\xi')$. If $\partial \Omega$ is nowhere tangent to $H_p$ to infinite order. Then, for $q\in C^\infty_{c}({}^bT^*\Omega;\R)$, we have
		\begin{equation}
			\label{measure invariant eqn}
			\pi_*\mu(q\circ \varphi_t)-\pi_*\mu(q)=\int_0^t\left(-2\Im\pi_*\mu_f-2\delta_{\partial \Omega}\otimes (\Re\upsilon_{DN})+\upsilon_\calG\right)(q\circ \varphi_s) \der s,
		\end{equation}
		where $\varphi_t$ is the bicharacteristic flow and the measure $\upsilon_\calG$ is only supported on the glancing set. Moreover, $\mu_f=0$ if $f=o(1)$ and, similarly, $\Re\upsilon_{DN}=\upsilon_\calG=0$ if $g=o(1)$.
	\end{theorem}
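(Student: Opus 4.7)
The plan is to apply Lemma \ref{Integration by parts} with the operator $B$ carefully adapted to the test symbol. Working in the Fermi normal coordinates of Section \ref{Laplace operator around boundary}, where $\sigma(P-1) = \xi_1^2 - r(x,\xi')$, I would first confirm $\supp\mu \subset \{\sigma(P-1)=0\}$ via a standard ellipticity argument applied to $(P-1)u_j = hf_j$. Because $\mu$ is supported on this characteristic set, one may reduce test symbols $a \in C_c^\infty(T^*\Omega)$ modulo the ideal generated by $\xi_1^2 - r$ via polynomial division in $\xi_1$:
$$
a(x,\xi_1,\xi') \;=\; q(x,\xi_1,\xi')(\xi_1^2 - r) + a_{\operatorname{even}}(x,\xi') + \xi_1\, a_{\operatorname{odd}}(x,\xi'),
$$
with $a_{\operatorname{even}},a_{\operatorname{odd}}$ the symbols defined in the theorem statement. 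Since $H_p p = 0$, one has $\mu(H_p a) = \mu(H_p(a_{\operatorname{even}} + \xi_1 a_{\operatorname{odd}}))$, so testing against the reduced symbol loses no information.

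Next, set $B = B_0 + B_1 h D_{x^1}$ where $B_0$ and $B_1$ are tangential quantizations of $a_{\operatorname{even}}$ and $a_{\operatorname{odd}}$ respectively, and apply Lemma \ref{Integration by parts}. Passing to the limit as $h_j\to 0$, the left-hand side converges to $\mu(\{p,\sigma(B)\}) = \mu(H_p a)$. The interior term $-\tfrac{2}{h}\Im\langle Bu_j, Pu_j\rangle = -2\Im\langle Bu_j, f_j\rangle$ produces the $\Im\mu_f$ contributions in \eqref{interior boundary measure eqn}; the $\frac{\rmi}{h}\langle Pu_j,(B-B^*)u_j\rangle$ term, of subprincipal order, separates the imaginary part of $a$ from its real part. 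The six boundary pieces in Lemma \ref{Integration by parts}, evaluated against $u_j|_{\partial\Omega}$ and $hD_{x^1}u_j|_{\partial\Omega}$, converge to linear combinations of $\upsilon_D$, $\upsilon_N$, and $\upsilon_{DN}$ paired with $a_{\operatorname{even}}$, $r a_{\operatorname{odd}}$, and $a_{\operatorname{odd}}$; the remaining commutators $[hD_{x^1},B_1]$ and $[a,B_1]$ contribute only at subleading order. Finally, applying \eqref{boundary measures relation}, $\upsilon_{DN} = \upsilon_g - \overline{\sigma(\Lambda)}\upsilon_D$, to eliminate $\upsilon_{DN}$ in favor of $\upsilon_g$ and a damping contribution from $\sigma(\Lambda)$ should yield \eqref{interior boundary measure eqn}.

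For the flow invariance \eqref{measure invariant eqn}, I would push $\mu$ forward via $\pi$ to ${}^bT^*\Omega$, on which the Hamiltonian flow $\varphi_t$ extends to the generalized bicharacteristic flow, reflecting at hyperbolic boundary points. Applying \eqref{interior boundary measure eqn} to symbols $a = q\circ\pi$ with $q \in C_c^\infty({}^bT^*\Omega;\R)$ and integrating from $0$ to $t$, at hyperbolic reflections the terms involving $a_{\operatorname{odd}}$ cancel (since $a_{\operatorname{odd}}$ is odd in $\xi_1$ under $\xi_1 \mapsto -\xi_1$), and the boundary terms involving $\upsilon_g$ and $\sigma(\Lambda)$ reassemble via \eqref{boundary measures relation} into the single boundary distribution $-2\delta_{\partial\Omega}\otimes\Re\upsilon_{DN}$ integrated along the flow.

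The main obstacle is the glancing set $\{r = 0\} \subset T^*\partial\Omega$: here $a_{\operatorname{odd}}$ develops a singularity from the factor $r^{-1/2}$ implicit in its definition, and the bicharacteristic flow is tangent to $\partial\Omega$. Under the hypothesis that $\partial\Omega$ is nowhere tangent to $H_p$ to infinite order, a Melrose-Sj\"ostrand-type local analysis near glancing (using Taylor expansion of $r$ transverse to the glancing set) isolates the residual contribution into the single measure $\upsilon_\calG$ supported on the glancing set as in \eqref{measure invariant eqn}. The closing remarks that $\mu_f = 0$ when $f = o(1)$ and $\Re\upsilon_{DN} = \upsilon_\calG = 0$ when $g = o(1)$ then follow by tracing which terms in the derivation actually use the source data versus the boundary data.
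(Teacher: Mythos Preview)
Your approach is essentially the same as the paper's: the paper also derives \eqref{interior boundary measure eqn} from Lemma~\ref{Integration by parts} together with \eqref{boundary measures relation}, and defers \eqref{measure invariant eqn} to the Miller/Galkowski--Lafontaine--Spence framework (\cite[Section~2]{GaLaSp}), so your more detailed sketch of the decomposition $a = q(\xi_1^2-r) + a_{\mathrm{even}} + \xi_1 a_{\mathrm{odd}}$ and the pushforward argument is in the right spirit.

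The one place where your proposal is vaguer than the paper is the final claim that $\Re\upsilon_{DN}=\upsilon_\calG=0$ when $g=o(1)$. You say this follows by ``tracing which terms use the source data versus the boundary data,'' but that alone does not explain why the \emph{real part} of $\upsilon_{DN}$ vanishes. The paper's mechanism is algebraic: when $\upsilon_g=0$, \eqref{boundary measures relation} gives $\upsilon_{DN}=-\overline{\sigma(\Lambda)}\upsilon_D$, and in the application at hand $\Lambda=\rmi\tau\rho_\exterior^{-1}\Lambda_\inner$ has \emph{purely imaginary} principal symbol (Proposition~\ref{symbol of DtN interior}), so $\overline{\sigma(\Lambda)}\upsilon_D$ is purely imaginary and $\Re\upsilon_{DN}=0$. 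The vanishing of $\upsilon_\calG$ then follows from $\Re\upsilon_{DN}=0$. This step genuinely uses the specific structure of $\Lambda$, not just that $g$ is small.
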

	\begin{proof}
		We will briefly mention the proof. Identity \eqref{interior boundary measure eqn} follows from identity \eqref{boundary measures relation}, Lemma \ref{Integration by parts} and the fact that $(H_p a)|_{S^*\R^d}=H_p (a|_{S^*\R^d})$. Here, $H_p$ is the Hamiltonian vector field generated by $p=\sigma(P)$. The proof of \eqref{measure invariant eqn} can be found in \cite[Section 2]{GaLaSp}. It is clear that $f=o(1)$ implies $\mu_f=0$. For $g=o(1)$, we know from \eqref{boundary measures relation} that $\upsilon_{DN}=-\overline{\sigma(\rmi \tau \Lambda_\inner)}\upsilon_D$, whose real part is zero as $\overline{\sigma(\rmi \tau \Lambda_\inner)}$ is purely imaginary (see Proposition \ref{symbol of DtN interior}). Finally, $\upsilon_\calG=0$ follows immediately from $\Re\upsilon_{DN}=0$.
	\end{proof}

    \subsection{An estimate for the Neumann Trace}
    \begin{lem}
    \label{l:neumannBounds}
    Suppose that 
		$$
		P=(hD_{x^1})^2+ha(x)hD_{x^1}-R(x,hD_{x'})
		$$
		is formally self adjoint with respect to the density $\rho \der x$. Then, for $s\leq \frac{1}{2}$ and $\e>0$, there is $C>0$ such that for $u\in H_h^1$ with $Pu\in L^2$, 
        $$
        \|hD_{x^1}u|_{x^1=0}\|_{H_h^{s}(\partial\Omega)}\leq Ch^{-1}\|Pu\|_{L^2(0,\e)}+C\|u\|_{H_h^1(0,\e)}+C\|u|_{x^1=0}\|_{H_h^{s+1}(\partial\Omega)}.
        $$
    \end{lem}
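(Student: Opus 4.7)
\noindent\emph{Proof plan.} My plan is to prove the estimate by a standard Green's-identity argument combined with a carefully chosen Poisson-type test function. Working in local Fermi normal coordinates near $\partial \Omega$, I have locally $\Omega=\{x^1>0\}$ and $P=(hD_{x^1})^2+h a\,hD_{x^1}-R(x,hD_{x'})$ as in the statement. I fix $\chi\in C_c^\infty([0,\epsilon))$ with $\chi(0)=1$ and $\chi'(0)=0$. The factor $h^{-1}$ in front of $\|Pu\|_{L^2}$ will appear naturally from the factor $ih$ that arises in Green's formula when extracting the Neumann trace.

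By the dual characterization of the Sobolev norm,
$$\|hD_{x^1}u|_{x^1=0}\|_{H_h^s(\partial\Omega)}=\sup_{\varphi\in C_c^\infty(\partial\Omega),\;\|\varphi\|_{H_h^{-s}}\le 1}\bigl|\langle hD_{x^1}u|_{x^1=0},\varphi\rangle_{L^2(\partial\Omega)}\bigr|,$$
so it suffices to estimate this pairing uniformly in $\varphi$. For each such $\varphi$ I take the Poisson-type extension
$$V(x^1,x'):=\chi(x^1)\bigl(I+x^1\Lambda/h\bigr)e^{-x^1\Lambda/h}\varphi(x'),$$
where $\Lambda\in\Psi_{\tangent,h}^1(\partial\Omega)$ is positive self-adjoint elliptic with $\sigma(\Lambda)=\langle\xi'\rangle$. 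A direct computation yields $V|_{x^1=0}=\varphi$ and $hD_{x^1}V|_{x^1=0}=0$, while the exponential factor provides tangential smoothing at scale $h$ for $x^1>0$, giving $\|V\|_{L^2(\Omega)}\le Ch\|\varphi\|_{H_h^{-s}}$ whenever $s\le 1/2$. Green's identity on $\{x^1>0\}$ applied to the formally self-adjoint $P$ (with respect to $\rho\,dx$) then produces
$$ih\,\langle hD_{x^1}u|_{x^1=0},\varphi\rangle_{L^2(\partial\Omega)}=\langle Pu,V\rangle_{L^2(\Omega)}-\langle u,PV\rangle_{L^2(\Omega)}+\mathcal{B}(u|_{x^1=0},\varphi),$$
where $\mathcal{B}$ collects the boundary contributions from the drift $ha\,hD_{x^1}$; these are all bounded by $Ch\|u|_{x^1=0}\|_{H_h^{s+1}}\|\varphi\|_{H_h^{-s}}$.

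Dividing by $ih$, the first pairing is controlled by $h^{-1}\|Pu\|_{L^2}\|V\|_{L^2}\le C\|Pu\|_{L^2}\|\varphi\|_{H_h^{-s}}\le Ch^{-1}\|Pu\|_{L^2}\|\varphi\|_{H_h^{-s}}$. To estimate $h^{-1}|\langle u,PV\rangle|$ I expand $PV=(hD_{x^1})^2V+h\,hD_{x^1}(aV)-RV$: integrating by parts twice in $x^1$ converts $\langle u,(hD_{x^1})^2V\rangle$ into $\langle hD_{x^1}u,hD_{x^1}V\rangle$ (the boundary term vanishes since $hD_{x^1}V|_{x^1=0}=0$), and by the Poisson structure of $V$ this is bounded by $C\|u\|_{H_h^1}\|\varphi\|_{H_h^{-s}}$; the drift term is handled similarly. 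The key piece $\langle u,RV\rangle=\langle R^*u,V\rangle$ is handled by decomposing $u(x^1,\cdot)=u|_{x^1=0}+\int_0^{x^1}\partial_s u(s,\cdot)\,ds$: the boundary part contributes $C\|u|_{x^1=0}\|_{H_h^{s+1}}\|\varphi\|_{H_h^{-s}}$ via tangential duality using $R^*\in\Psi_{\tangent,h}^2$, while the integrated interior part is bounded by $C\|u\|_{H_h^1}\|\varphi\|_{H_h^{-s}}$. Taking the supremum over $\varphi$ completes the argument.

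The main obstacle is handling the second-order tangential operator $R$ in the $\langle u,PV\rangle$ expansion: since $R$ has tangential order $2$ and $\varphi$ is only in $H_h^{-s}$ with $s\le 1/2$, a naive pairing is short by nearly two tangential derivatives. The trace decomposition above, together with the hypothesis $u|_{x^1=0}\in H_h^{s+1}$, is exactly what supplies the missing smoothness at the boundary and closes the estimate; this is the structural reason that the Dirichlet-trace norm $\|u|_{x^1=0}\|_{H_h^{s+1}}$ must appear on the right-hand side.
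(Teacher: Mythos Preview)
Your duality approach via a Poisson-type test function is genuinely different from the paper's route, which is a multiplier/commutator argument: the paper takes $B=\chi(x^1)E^*E\,hD_{x^1}$ with $E\in\Psi_h^s(\partial\Omega)$ elliptic, applies the integration-by-parts Lemma~\ref{Integration by parts} with $B_1=\chi E^*E$, $B_0=0$, and reads off $\langle E\,hD_{x^1}u,E\,hD_{x^1}u\rangle_{\partial\Omega}$ as a boundary term, with the remaining boundary piece $\langle E^*ERu,u\rangle_{\partial\Omega}$ absorbed by $\|u|_{x^1=0}\|_{H_h^{s+1}}^2$ and the interior pieces $h^{-1}\langle[P,B]u,u\rangle$, $h^{-1}\Im\langle Bu,Pu\rangle$ by $\|u\|_{H_h^1}^2+h^{-2}\|Pu\|_{L^2}^2$.

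Your argument, however, has a real gap in the estimate of $h^{-1}\langle u,PV\rangle$. First, a small point: your claim $\|V\|_{L^2}\le Ch\|\varphi\|_{H_h^{-s}}$ is off by $h^{1/2}$; a frequency-by-frequency calculation gives $\|V\|_{L^2((0,\e);H_h^m)}\le Ch^{1/2}\|\varphi\|_{H_h^{m-1/2}}$ (the Poisson kernel gains exactly half a tangential derivative in the $L^2(dx^1)$ sense). The serious problem is your claim that $h^{-1}\langle hD_{x^1}u,hD_{x^1}V\rangle$ and the ``integrated interior part'' of $h^{-1}\langle u,RV\rangle$ are bounded by $C\|u\|_{H_h^1}\|\varphi\|_{H_h^{-s}}$. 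Using the mapping property above, $\|hD_{x^1}V\|_{L^2}\sim Ch^{1/2}\|\varphi\|_{H_h^{1/2}}$ and $\|RV\|_{L^2;H_h^{-1}}\sim Ch^{1/2}\|\varphi\|_{H_h^{1/2}}$; pairing against $hD_{x^1}u\in L^2$ (resp.\ $u\in L^2;H_h^1$) and dividing by $h$ yields $Ch^{-1/2}\|u\|_{H_h^1}\|\varphi\|_{H_h^{1/2}}$. Since $s\le 1/2$ means $-s\ge -1/2$, the norm $\|\varphi\|_{H_h^{1/2}}$ is a full tangential derivative \emph{stronger} than $\|\varphi\|_{H_h^{-s}}$, and your bound does not close. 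Your trace decomposition $u(x^1)=u(0)+\int_0^{x^1}\partial_s u$ correctly isolates the boundary contribution (this is where $\|u|_{x^1=0}\|_{H_h^{s+1}}$ legitimately enters), but the remainder $\int_0^{x^1}\partial_s u$ carries no extra tangential regularity beyond $H_h^1(\Omega)$, and neither Hardy nor Fubini recovers the missing derivative.

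The structural reason is that your $V$ approximately solves $((hD_{x^1})^2+\Lambda^2)V=0$, not $PV=0$, so $PV$ retains a genuine second-order tangential piece $(\Lambda^2+R)V$ that is large in any norm compatible with $\varphi\in H_h^{-s}$. The paper's commutator method avoids this entirely: the only unavoidable second-order tangential contribution, $\langle E^*E\,Ru,u\rangle$, lands \emph{on the boundary} where $\|u|_{x^1=0}\|_{H_h^{s+1}}$ is available, while the interior commutator $[P,B]$ is one order lower. If you want to rescue the duality route, you would need $V$ built from an approximate Poisson kernel for $P$ itself (e.g.\ via the factorization operator $E_-$ of Section~\ref{Microlocal description of the Dirichlet-to-Neumann maps}) so that $PV$ is genuinely residual.
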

    \begin{proof}
    Let $E\in \Psi_h^{s}(\partial\Omega)$ elliptic, $\chi\in C_c^\infty([0,\e))$ with $\chi\equiv 1$ near $0$, set $B_1:=\chi(x^1)E$, $B_0=0$, $B:=E^*E\chi(x^1)hD_{x^1}$. Then, by Lemma~\ref{Integration by parts},
    \begin{align*}
    \|hD_{x^1}u|_{x^1=0}\|_{H_h^{s}(\partial\Omega)}^2&\leq \langle EhD_{x^1}u,EhD_{x^1}u\rangle_{L^2(\partial\Omega)}\\
    &=\frac{\rmi}{h}\langle [P,B]u,u\rangle_{L^2(\Omega)}-\frac{2}{h}\Im \langle Bu, Pu\rangle_{L^2(\Omega)}-\langle ERu,Eu\rangle_{L^2(\partial\Omega)}\\
    &\qquad+h\langle [a,\chi(x^1)E^*E]hD_{x^1}u,u\rangle_{L^2(\partial\Omega)}\\
    &\leq C\|u\|_{H_h^1(0,\e)}^2+Ch^{-1}\|u\|_{H_h^1(0,\e)}\|Pu\|_{L^2(0,\e)}+\|u|_{x^1=0}\|_{H_h^{s+1}(\partial\Omega)}^2\\
&\qquad+Ch^2\|hD_{x^1}u|_{x^1=0}\|_{H_h^s(\partial\Omega)}\|u|_{x^1=0}\|_{H_h^{s-1}(\partial\Omega)}\\
&\leq C\|u\|_{H_h^1(0,\e)}^2+\delta \|u\|_{H_h^1(0,\e)}^2+Ch^{-2}\delta^{-1}\|Pu\|_{L^2(0,\e)}\\
&\qquad +C(1+\delta^{-1})\|u|_{x^1=0}\|_{H_h^{s+1}(\partial\Omega)}^2+h^2\delta\|hD_{x^1}u|_{x^1=0}\|^2_{H_h^s(\partial\Omega)}.
    \end{align*}
  Taking $\delta>0$ small enough completes the proof of the lemma.
    \end{proof}
	
		\section{Reformulation of negative index of refraction problem as an exterior problem}
	\label{Reformulation of the problem as an exterior problem}
In this section, we reformulate the estimates for $R_P(z)$ in terms of an exterior scattering problem. To do this, we first review estimates for more classical resolvents.
    \subsection{Review of estimates for the Dirichlet resolvent in $\Omega_{\exterior}$}\label{s:exeteriorResolve}
	Since $\Omega_{\exterior}$ is connected, $P_{\exterior}$ is self-adjoint with domain $H_0^1\cap H^2$, and $g_{\exterior}^{ij}(x)=\delta^{ij}$, $\rho(x) \equiv 1$ for $|x|$ large enough, the theory of black-box scattering~\cite[Chapter 4]{DyZw:19} implies that there is a meromorphic family of operators $R_{\exterior}(z):L^2_{\comp}(\Omega_{\exterior})\to H^2_{h,\loc}\cap H_{0,\loc}^{1}(\Omega_{\exterior})$ satisfying
	$$
	(P_{\exterior}-z^2)R_{\exterior}(z)f=f\text{ in }\Omega_{\exterior},\,\, R_{\exterior}(z/h)f\text{ is }z/h\text{-outgoing}.
	$$
	Moreover, since $g_{\exterior}$ is non-trapping on $\Omega_{\exterior}$, combining~\cite[Theorem 1.3]{Bu:02} with elliptic regularity, we have for any $M>0$, there is $h_0>0$ such that  $R_{\exterior}(z)$ is  analytic in, $|1-z|\leq Mh$ and for all $\chi \in C_c^\infty(\overline{\Omega_{\exterior}})$ there is $C>0$ such that 
	\begin{equation}
    \label{e:exteriorResolve}
	\|\chi R_{\exterior}(z)\chi\|_{L^2(\Omega_{\exterior})\to H_h^2(\Omega_{\exterior})}\leq Ch^{-1},\,\,\, 0<h<h_0,\, \, |1-z|<Mh.
	\end{equation}
	 By Lemma~\ref{l:neumannBounds}, this implies
	    and hence also
    \begin{equation}
    \label{e:normalTraceBoundExterior}
	\|h\partial_\nu  R_{\exterior}(z)\chi\|_{L^2(\Omega_{\exterior})\to H_h^{1/2}(\partial\Omega)}\leq Ch^{-1}+\|\chi R_{\exterior}(z)\chi\|_{L^2(\partial\Omega)\to H_h^1(\Omega_{\exterior})}\leq Ch^{-1}.
    \end{equation}
	
	Moreover, letting $E_{\exterior}:H_h^{3/2}(\partial\Omega)\to H_{h,\comp}^{2}(\Omega_{\exterior})$, be an extension operator, the operator $G_{\exterior}(z):H_{h}^{3/2}(\partial\Omega)\to H^2_{h,\loc}(\Omega_{\exterior})$ defined by
    $$
    G_{\exterior}(z)v:=E_{\exterior}v-R_{\exterior}(z)E_{\exterior}v,
    $$
     is a meromorphic family of operators  satisfying
	$$
	(P_{\exterior}-z^2)G_{\exterior}(z)v=0\text{ in }\Omega_{\exterior},\,\, G_{\exterior}(z)g|_{\partial\Omega}=v,\,\, G_{\exterior}(z)v\text{ is }z/h\text{-outgoing}.
	$$
    We now obtain an improved version of \cite[Theorem 3.5]{BaSpWu:16}.
      The following proposition is an improvement of \cite[Theorem 3.5]{BaSpWu:16}, where we combine the method used in the proof of \cite[Theorem 3.5]{BaSpWu:16} and Lemma \ref{l:gainNormal}.
    \begin{prop}
    \label{G bound thm}
    Let $G_{\exterior,h}$ be the Dirichlet map for $(P_{\exterior}-z^2)$ satisfying $z/h$ outgoing condition. Then for all $M>0$, $\chi \in C_c^\infty(\overline{\Omega_{\exterior}})$, and $j=0,1$ there are $C,h_0>0$ such that for $0<h<h_0$,
    \begin{equation}
    \label{e:dirichletResolveNontrap}
    \|\chi G_{\exterior,h}(z)\|_{H^{\frac{1}{2}+j}_h(\partial \Omega)\to H^{1+j}_h(\Omega_\exterior)}\le C , \quad \text{for} \quad |1-z|\leq Mh,
    \end{equation}
    and 
    	\begin{equation}
        \label{e:dirichletReesolveNeumann}
	\|h\partial_{\nu_{\exterior}}  G_{\exterior}(z)\chi\|_{H_h^{\frac{1}{2}+j}(\partial\Omega)\to H_h^{-\frac{1}{2}+j}(\partial\Omega)}\leq C.
	\end{equation}
    \end{prop}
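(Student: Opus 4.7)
The plan is to combine the representation $G_\exterior(z) v = E_\exterior v - R_\exterior(z) E_\exterior v$ from Section~\ref{s:exeteriorResolve} with a near-boundary parametrix that reduces the resolvent correction to an $O(h^\infty)$ error. A naive application of~\eqref{e:exteriorResolve} together with the bound $\|(P_\exterior - z^2) E_\exterior v\|_{L^2} \leq C\|v\|_{H_h^{1/2+j}}$ yields only $\|\chi R_\exterior(z) E_\exterior v\|_{H_h^{1+j}} \leq Ch^{-1}\|v\|_{H_h^{1/2+j}}$, losing a factor of $h^{-1}$. To remove this loss I would replace $E_\exterior$ by a refined extension $\widetilde{G}(z)$ arranged so that $(P_\exterior - z^2) \widetilde{G}(z) v = O_{L^2_\comp}(h^\infty)\|v\|_{H_h^{1/2+j}}$.

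To construct $\widetilde{G}(z)$, work in Fermi normal coordinates near $\partial \Omega$ and factor
\[
P_\exterior - z^2 = (hD_{x^1} - A_+(x,hD_{x'};z))(hD_{x^1} - A_-(x,hD_{x'};z)) + O_{\Psi_{\tangent,h}^{-\infty}}(h^\infty),
\]
exactly as in the proof of Theorem~\ref{t:dtN}, with $A_+$ chosen to be the decaying root in the elliptic region $\{|\xi'|^2_{g_\exterior} > 1\}$ and the $z/h$-outgoing root in the hyperbolic region $\{|\xi'|^2_{g_\exterior} < 1\}$. Set $\widetilde{G}(z) v := \psi(x^1) U(x^1) v$, where $\psi \in C_c^\infty([0,\delta))$ equals $1$ near $0$ and the tangential pseudodifferential family $U(x^1)$ satisfies $(hD_{x^1} - A_+)U = O(h^\infty)$ with $U(0) = I$. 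Then the tangential calculus, together with Lemma~\ref{l:gainNormal} (which supplies the $1/2$-order gain from integrating in the normal direction), delivers $\|\widetilde{G}(z) v\|_{H_h^{1+j}(\Omega_\exterior)} \leq C\|v\|_{H_h^{1/2+j}(\partial \Omega)}$, while $(P_\exterior - z^2)\widetilde{G}(z) v = O_{L^2_\comp}(h^\infty) \|v\|_{H_h^{1/2+j}}$ by construction.

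The error $w := G_\exterior(z) v - \widetilde{G}(z) v$ is $z/h$-outgoing, vanishes on $\partial \Omega$, and satisfies $(P_\exterior - z^2) w = O_{L^2_\comp}(h^\infty) \|v\|_{H_h^{1/2+j}}$. Applying~\eqref{e:exteriorResolve} then gives $\|\chi w\|_{H_h^2} = O(h^\infty) \|v\|_{H_h^{1/2+j}}$, and combining with Step~2 proves~\eqref{e:dirichletResolveNontrap}. For the Neumann trace bound~\eqref{e:dirichletReesolveNeumann}, I would apply Lemma~\ref{l:neumannBounds} with $s = -\tfrac{1}{2}+j$ to $u := G_\exterior(z) v$; the lemma applies verbatim with $P_\exterior - z^2$ in place of $P$, since the commutators $[P,B]$ driving its proof are insensitive to a constant shift by $z^2$. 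The $h^{-1}\|Pu\|_{L^2}$ term therefore vanishes, leaving $\|h\partial_{\nu_\exterior} u|_{\partial \Omega}\|_{H_h^{-1/2+j}} \leq C\bigl(\|u\|_{H_h^1(\{0 < x^1 < \delta\})} + \|v\|_{H_h^{1/2+j}(\partial \Omega)}\bigr)$, and both terms are controlled by~\eqref{e:dirichletResolveNontrap}.

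The main obstacle is the construction of $U(x^1)$ across the glancing set $\{|\xi'|^2_{g_\exterior} = 1\}$: in the hyperbolic region $A_+$ has real principal symbol and $U(x^1)$ is oscillatory rather than exponentially decaying, so the correct outgoing root must be selected (forced by the outgoing condition on $G_\exterior(z)$) and one must verify that the tangential symbolic calculus remains uniform through the transition region. Non-trapping enters only in the final step, where it controls the $O(h^\infty)$ remainder through~\eqref{e:exteriorResolve}; the parametrix construction itself is entirely local near $\partial \Omega$.
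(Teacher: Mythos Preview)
Your approach is genuinely different from the paper's, and the obstacle you flag at the glancing set is not a technicality but a real gap. The factorization in Lemma~\ref{Pdecompmicrolocal} is only stated microlocally on $\{||\xi'|_g^2-\omega_0|>0\}$, and Lemma~\ref{l:gainNormal} likewise requires $\WF(X)\subset\{\Re\sigma(E_-)<-\e\}$. Neither tool gives a symbol calculus that is uniform through $\{|\xi'|^2_{g_\exterior}=1\}$, and the square root $\sqrt{|\xi'|^2_{g_\exterior}-z^2}$ is not a symbol in $S^1$ there. Building a parametrix that works uniformly across glancing (Melrose--Taylor, Airy-type constructions) is substantially harder than anything in this paper and is not what you want here. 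So as written, your construction of $\widetilde{G}(z)$ does not cover the full boundary data, and the remainder $w$ cannot be shown to be $O(h^\infty)$ without that.

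The paper avoids the glancing region entirely by a different mechanism: instead of a parametrix at the target $z$, it solves an auxiliary Dirichlet problem at the fixed complex frequency $1+\rmi h$,
\[
(-h^2\Delta_{g_\exterior,\rho_\exterior}-(1+\rmi h)^2)w_+=0,\qquad w_+|_{\partial\Omega}=g,
\]
where the strict positivity of $\Im z$ makes $w_+\in H^1$ globally. Green's identity then gives $h\|w_+\|_{L^2}^2\leq Ch|\langle h\partial_\nu w_+,g\rangle|$ and a similar $H_h^1$ bound, which, combined with Lemma~\ref{l:neumannBounds}, yields $\|w_+\|_{H_h^1}\leq C\|g\|_{H_h^{1/2}}$ by a purely energy-based bootstrap---no microlocal analysis and no glancing issue. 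One then writes $G_\exterior(z)g=\chi_1 w_+-R_\exterior(z)(P_\exterior-z^2)\chi_1 w_+$; the key point is that $(P_\exterior-z^2)\chi_1 w_+=[-h^2\Delta,\chi_1]w_++((1+\rmi h)^2-z^2)\chi_1 w_+$ gains a full factor of $h$ (from the commutator and from $|1-z|\leq Mh$), exactly cancelling the $h^{-1}$ in~\eqref{e:exteriorResolve}. Your final step, deducing~\eqref{e:dirichletReesolveNeumann} from~\eqref{e:dirichletResolveNontrap} via Lemma~\ref{l:neumannBounds}, is the same as the paper's.
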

	\begin{proof}
	Let $g\in H_h^{1/2}(\partial\Omega)$ and $w$ be a solution to 
    \begin{equation*}
        \left\{
        \begin{aligned}
            &-h^2\Delta_{g_{\exterior},\rho_{\exterior}} w_+- (1+\rmi h)^2 w_+=0 & \text{in} \quad \Omega_\exterior,\\
            &w_+=g & \text{on} \quad \partial \Omega.
        \end{aligned}
        \right.
    \end{equation*}
     Then Green's identity implies
    \begin{equation}
        \begin{aligned}
            &h\|w_+\|_{L^2(\Omega_\exterior)}^2\le C h|\langle h \partial_\nu w_+, g\rangle_{\partial \Omega}|
            \label{e:green1},\\
            &\|w_+\|^2_{H_h^1(\Omega_\exterior)}\le C(\|w_+\|_{L^2(\Omega_\exterior)}^2+h|\langle h \partial_\nu w_+, g\rangle_{\partial \Omega}|),
        \end{aligned}
        \end{equation}
    where the boundary contribution at infinity is zero since $h>0$ implies $w_+\in H^1$.

    Applying Lemma~\ref{l:neumannBounds}, one has for $s\leq \frac{3}{2}$,
    \begin{equation}
    \label{e:normalEst}
    \|h\partial_{\nu_{\exterior}}w_+\|_{H_h^{s-1}(\partial\Omega)}\leq \|w_+\|_{L^2(\Omega)} \|w_+\|_{H_h^1(\Omega)}+\|g\|_{H_h^{s}(\partial\Omega)}.
    \end{equation}
    Hence, using~\eqref{e:normalEst} with $s=\frac{1}{2}$ and~\eqref{e:green1}, we obtain
    \begin{equation*}
        \begin{aligned}
        &\|w_+\|_{H_h^1(\Omega_\exterior)}\le C \e  \|w_+\|_{H_h^1(\Omega_{\exterior})}+C(\e+\e^{-1})\|g\|_{H^{\frac{1}{2}}_h(\partial \Omega)}.
        \end{aligned}
    \end{equation*}
In particular,  taking $\e=\frac{1}{2C}$, we have
    \begin{equation}
        \label{G bound eqn 1}
        \|w_+\|_{H^1_h(\Omega_\exterior)}\le C\|g\|_{H^{\frac{1}{2}}_h(\partial \Omega)}.
    \end{equation}
Note also that if $g\in H_h^{3/2}$, then by elliptic regularity for $-h^2\Delta_{g_{\exterior},\rho_{\exterior}}+1$,
\begin{equation}
\label{e:h2}
\|w_+\|_{H^2_h(\Omega_\exterior)}\leq C(\|w_+\|_{H_h^1(\Omega)}+\|g\|_{H_h^{3/2}(\partial\Omega)})\leq C\|g\|_{H_h^{3/2}(\partial\Omega)}.
\end{equation}

Let $\chi_1,\chi_2\in C^\infty_0(\overline{\Omega_\exterior})$ $\chi_1\equiv 1$ near $\partial\Omega$, and $\supp (1-\chi_2)\cap \supp \chi_1=\emptyset$. Then, notice that 
\begin{align*}
G_{\exterior}(z)g&=\chi_1 w_+ -R_{\exterior}(z)(P_{\exterior}-z^2)\chi_1 w_+\\
&=\chi_1 w_+ -R_{\exterior}(z)\chi_2[-h^2\Delta_{g_\exterior,\rho_{\exterior}},\chi_1]w_+-((1+ih)^2-z^2)R_{\exterior}(z)\chi_2\chi_1 w_+.
\end{align*}
The estimate~\eqref{e:dirichletResolveNontrap} now follows from~\eqref{G bound eqn 1},~\eqref{e:h2}, and the estimates 
$$
\|\chi R_{\exterior}\chi_2\|_{L^2(\Omega_{\exterior})\to H_h^2(\Omega_{\exterior})}\leq Ch^{-1},\qquad \|[-h^2\Delta_{g_\exterior,\rho_{\exterior}},\chi_1]\|_{H_h^1\to L^2}\leq Ch.
$$
The estimate~\eqref{e:dirichletReesolveNeumann} now follows from Lemma~\ref{l:neumannBounds}.
	\end{proof}


	\subsection{Review of estimates for the Dirichlet resolvent in $\Omega_{\inner}$}\label{s:interiorResolve}
	Since $P_{\inner}$ is self-adjoint with domain $H_0^1(\Omega_{\inner})\cap H^2(\Omega_{\inner})$ and $P_{\inner}\leq Ch$, for $|1-z|\leq C_0 h$, there is a holomorphic family of operators $R_{\inner}(z):=(P_\inner -z^2)^{-1}:L^2(\Omega_{\inner})\to H^2_{h}(\Omega_{\inner})\cap H_{0}^{1}(\Omega_{\inner})$ satisfying
	\begin{equation*}
		\label{HTP}
		(P_{\inner} -z^2)R_{\inner}(z)f=f\text{ in }\Omega_{\inner}.
	\end{equation*}
	Moreover, 
	$
	\| R_{\inner}(z)\|_{L^2(\Omega_\inner)\to L^2(\Omega_{\inner})}\leq C.
    $
    Hence by elliptic regularity estimates (see e.g.~\cite[Theorem 4.18]{Mc:00}), 
    \begin{equation}
    \label{e:interiorResolve}
    \| R_{\inner}(z)\|_{L^2(\Omega_\inner)\to H_h^2(\Omega_\inner)}\leq C
    \end{equation}
    and by Lemma~\ref{l:neumannBounds}
    \begin{equation}
    \label{e:normalTraceBoundInner}
    \| h\partial_\nu  R_{\inner}(z)\|_{L^2(\Omega_\inner)\to H_h^{\frac{1}{2}}(\partial\Omega)}\leq C.
\end{equation}

	In addition, letting $E_{\inner}:H_h^{3/2}(\partial\Omega)\to H_h^2(\Omega_{\inner})$ be an extension operator with 
    $$
    \|E_{\inner}\|_{H_h^{3/2}(\partial\Omega)\to H_h^{2}(\Omega_{\inner})}\leq Ch^{1/2},
    $$
    and defining $G_{\inner}(z):H_{h}^{3/2}(\partial\Omega)\to H^2_{h}(\Omega_{\inner})$ by 
    $$
    G_{\inner}(z)v:=E_{\inner}v-R_{\inner}(z)E_{\inner}v.
    $$
    $G_{\inner}(z)$ satisfies
	$$
	(P_{\inner} -z^2)G_{\inner}(z)g=0\text{ in }\Omega_{\inner},\,\, G_{\inner}(z)g|_{\partial\Omega}=g, 
	$$
    and for any $M>0$ there is $h_0>0$ such that for $0<h<h_0$, $j=0,1$
	\begin{multline}
    \label{e:innerEstimates}
	\|G_{\inner}(z)g\|_{H_h^{\frac{1}{2}+j}\to H_h^{1+j}}+ \|h\partial_\nu  G_{\inner}(z)\chi\|_{H_h^{\frac{1}{2}+j}\to H_h^{-\frac{1}{2}+j}}\leq C,\quad 0<h<h_0,\, |1-z|\leq Mh.
	\end{multline}
	
\subsection{Reformulation of the negative index of refraction problem}	
	
	It is convenient to reduce our problem to studying the case of $f\equiv 0$ at the cost of placing inhomogeneous data in the boundary condition. To do this, define $v_\exterior :=u_\exterior -R_{\exterior}(z)h\chi f_\exterior $ and $v_\inner:=u_\inner-R_{\inner}(z)hf_\inner $.
	
	Then, using~\eqref{e:transmission1} we obtain
	\begin{equation}
		\label{e:transmission2}
		\begin{cases}
			(P_\inner-z^2)v_\inner=0&\text{in }\Omega_{\inner},\\
			(P_{\exterior}-z^2)v_\exterior =0&\text{in }\Omega_{\exterior},\\
			v_\exterior =v_\inner &\text{on }\partial\Omega,\\
			\rho_{\exterior}h\partial_{\nu_{\exterior}}v_\exterior -\tau\rho_{\inner}h\partial_{\nu_{\inner}} v_\inner=:g&\text{on }\partial\Omega,\\
            v_\exterior \text{ is }z/h\text{-outgoing}.\\
		\end{cases}
	\end{equation}
    Notice that by~\eqref{e:exteriorResolve},
    \begin{equation}
    \label{e:extDiff}
    \|\chi(v_{\exterior}-u_{\exterior})\|_{H_h^2(\Omega_{\exterior})}\leq Ch^{-1}\|f_{\exterior}\|_{L^2}, 
    \end{equation}
    by~\eqref{e:interiorResolve}
    \begin{equation}
    \label{e:inDiff}
    \|v_{\inner}-u_{\inner}\|_{H_h^2(\Omega_{\inner})}\leq C\|f_{\inner}\|_{L^2}.
    \end{equation}
    Next, by~\eqref{e:normalTraceBoundExterior} and~\eqref{e:normalTraceBoundInner},
    \begin{equation}
    \label{e:traceData}
    \|g\|_{H_h^{1/2}}\leq C(h^{-1}\|f_{\exterior}\|_{L^2}+\|f_{\inner}\|_{L^2}).
    \end{equation}
    Finally, observe that 
    \begin{equation}
    \label{e:inEstimate}
    \|v_{\inner}\|_{H_h^2}\leq C\|v_{\exterior}\|_{H_h^{3/2}(\partial\Omega)}.
    \end{equation}
    Using~\eqref{e:extDiff},~\eqref{e:inDiff},~\eqref{e:traceData}, and~\eqref{e:inEstimate}, Theorem~\ref{t:noStates} will follow from the estimate
    \begin{equation}
    \label{e:noStatesToProve}
    \|\chi v_{\exterior}\|_{H_h^2(\Omega_{\exterior})}+\|v_{\exterior}\|_{H_h^{\frac{3}{2}}(\partial\Omega)}\leq  C\|g\|_{H_h^{1/2}(\partial\Omega)}, \quad |\Im z|<Mh,
    \end{equation}
    and Theorem~\ref{t:states} will follow from the estimate
    \begin{equation}
    \label{e:statesToProve}
    \|\chi v_{\exterior}\|_{H_h^2(\Omega_{\exterior})}+\|v_{\exterior}\|_{H_h^{\frac{3}{2}}(\partial\Omega)}\leq  C|\Im z|^{-1}\|g\|_{H_h^{1/2}(\partial\Omega)},\quad -Mh<\Im z<-h^N.
    \end{equation}

	Since our goal is now to prove~\eqref{e:noStatesToProve} and~\eqref{e:statesToProve}, we can now reduce~\eqref{e:transmission2} to an exterior scattering problem with a non-standard Robin-type boundary condition. For this, let
	$ \Lambda_{\inner}(z):H_h^s(\partial\Omega)\to H_h^{s-1}(\partial\Omega)$ be the Dirichlet-to-Neumann (DtN) map defined as follows. $\Lambda_{\inner}(z)u_0:=h\rho_{\inner}\partial_{\nu_\inner}G_{\inner}u_0$, where $\nu_\inner$ is outward normal with respect to the metric $g_\inner$. We then rewrite~\eqref{e:transmission2} as
	\begin{equation}
		\label{e:transmission}
		\begin{cases}
			(P_{\exterior}  -z^2)v_\exterior =0&\text{in }\Omega_{\exterior},\\
			\rho_{\exterior}h\partial_{\nu_{\exterior}}v_\exterior -\tau \Lambda_{\inner}(z)v_\exterior |_{\partial\Omega}=g&\text{on }\partial\Omega,\\
            v_\exterior \text{ is }z/h\text{-outgoing}.
		\end{cases}
	\end{equation}
	The proof of Theorems~\ref{t:noStates} and~\ref{t:states} will consist of proving estimates on the solution to~\eqref{e:transmission}. Since we have eliminated $v_\inner$ from~\eqref{e:transmission}, we will abuse notation slightly and simply write $v_\exterior =v$ from now on.

	\section{Microlocal description of the Dirichlet-to-Neumann map}
    \label{Microlocal description of the Dirichlet-to-Neumann maps}

    In addition to the DtN map $\Lambda_{\inner}(z)$, we will use the outgoing DtN map, $\Lambda_{o}(z):H_h^{s}(\partial\Omega)\to H_h^{s-1}(\partial\Omega)$ defined as $h\rho_{\exterior}\partial_{\nu_{\exterior}} G_{\exterior}(z)$. In this section we provide a full description of $\Lambda_{\inner}(z)$ as a pseudodifferential operator and a microlocal description of $\Lambda_{o}(z)$ on $|\xi'|_{g_{\exterior}}>1$. In particular, we prove Theorem~\ref{t:dtN}. 
    
    In fact, we generalize our situation slightly, defining for $L:=\sum_{i=1}^d L^i(x)hD_{x^i}$ and $\omega=\omega_0+h\omega_1+o(h^2)$, 
    \begin{equation}\label{P(w;g,L)}
    P(\omega;g,L):=-h^2\Delta_g+hL-\omega.
    \end{equation}
    Notice that 
    \begin{equation}
    \label{e:pExtRewrite}
    \begin{aligned}P_{\exterior}-\omega&= -\frac{h^2}{\rho \sqrt{|g|}}\partial_i g^{ij}_\exterior \sqrt{|g|} \rho \partial_j-\omega\\
    &= -h^2\Delta_{g_\exterior}-h\rho^{-1}(\partial_i\rho)g^{ij}_\exterior h\partial_j-\omega=P(\omega;g_{\exterior},L_{\exterior}),
    \end{aligned}
    \end{equation}
    with 
    $$
    L_{\exterior}:=-\rho^{-1}(\partial_i\rho)g^{ij}_\exterior h\partial_j.
    $$
    Similarly, there is $L_{\inner}$ such that 
    \begin{equation}
    \label{e:pInRewrite}
    -P_{\inner}+\omega=P(-\omega;g_{\inner},L_{\inner}).
    \end{equation}

	\subsection{Semiclassical Lee-Uhlmann constructions}
	\label{LUconstruction}
To the best of authors' knowledge, the earliest paper showing DtN map as a classical (i.e. non-semiclassical) pseudodifferential operator and providing a way of calculating the full classical symbol expression of DtN map can be found in the work of Sylvester and Uhlmann \cite{SylvGunther1988}. The method of Sylvester and Uhlmann is based on the study of Calder\'{o}n projector. A more direct approach to DtN map via factorization modulo smoothing operators can be found in the work of Lee--Uhlmann~\cite{LU1989}, and their method allows one to calculate its full classical asymptotic expansions in a simpler and more intuitive way. In this section, we give a semiclassical version of Lee--Uhlmann approach. While the results in this sections are considered folk-lore knowledge, we were unable to find a reference in the literature.
\begin{rem}
In the simplest form of factorization problem, say solving the equation 
\begin{equation}
    \label{e:factoreqn}
    p(x;\lambda):=x^2-\lambda^2=0
\end{equation}
for some unknown number $x$, we can factorize $p(x;\lambda)$ as $(x-\lambda)(x+\lambda)$ and obtain $x=\lambda$ or $x=-\lambda$ as solutions to this toy problem. To further determine which solution to be valid, we would need more information about $x$. Lee--Uhlmann's idea is essentially solving an analogue of \eqref{e:factoreqn} for $p(x;\lambda)$ being the Laplace-Beltrami operator, $x$ being an unknown operator with some given classical pseudodifferential operator $\lambda$ and, furthermore, the right-hand-side of \eqref{e:factoreqn} is replaced by some smoothing operator. To determine which solution of $x$ to be the right candidate boils down to choosing which $\pm\lambda$ that gives the well-posedness of the heat equation. In fact, it is due to the nature that the heat operator $e^{\pm \lambda t}$ is only well-posed in positive time $t$ for $-\lambda$ if we assume $\lambda>0$ (See \cite[Proposition 1.2]{LU1989}). 

Our approach is essentially a semiclassical version of Lee-Uhlmann's method, i.e. $p(x;\lambda)$, $x$ and $\lambda$ are now semiclassical pseudodifferential operators. Note that Lemmas \ref{Elliptic estimate lemma} and \ref{l:lopa} to be proved in this section imply that depending on the sign of principal symbol of $\lambda$, the operator $x-\lambda$ enjoys different microlocal estimates. In other words, the operator $x+\lambda$ and $x-\lambda$ satisfy different microlocal estimates if we fix $\lambda$. In this way, we replaces the fulfillment of the well-posedness of the heat equation, as in Lee--Uhlmann's construction, by microlocal estimates. See Remark \ref{Epm comparison} for further details.
\end{rem}
	The following lemma gives a semiclassical factorisation of the semiclassical Laplace-Beltrami with potentials.
	\begin{lem}
		\label{Pdecompmicrolocal}
		Let $\omega_0\in\mathbb{R}$ and suppose that $|\omega_0-\omega|<Ch$. Then, for $X\in \Psi^0_{\tangent,h}(\Omega)$ such that $\WF(X)\subset\{(x,\xi):\big||\xi'|_g^2-\omega_0\big|>0\}$, we have, in the boundary normal coordinates,
		\begin{equation*}
			\label{Laplace factorisation microlocal}
			X P(\omega;g,L)=X\left(hD_{x^1}+h \tilde{a}-\rmi E_{\pm}(x,hD_{x'})\right)\left(hD_{x^1}+\rmi E_{\pm}(x,hD_{x'})\right) +O(h^\infty)_{ \Psi^{-\infty}_{\tangent,h}}
		\end{equation*}
		in the boundary normal coordinates, where $\tilde{a}=a+L^1$, $E_{\pm}\in \Psi^1_{\tangent,h}(\Omega)$ and its principal symbols is given by
		\begin{equation}
			\label{Epm}
			\sigma(E_{\pm})=e_1:=\begin{cases}\pm \rmi \sqrt{\omega_0-|\xi'|_g^2}&|\xi'|_g^2-\omega_0<0,\\
				-\sqrt{|\xi'|_g^2-\omega_0}&|\xi'|_g^2-\omega_0>0.
			\end{cases}
		\end{equation} 
	\end{lem}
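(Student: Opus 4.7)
The plan is to implement a semiclassical version of the Lee--Uhlmann factorization~\cite{LU1989} by an iterative symbol construction, solving a sequence of algebraic equations for $E_\pm$ order by order in $h$ on $\WF(X)$.

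First, I would rewrite $P(\omega;g,L)$ in boundary normal coordinates. Splitting $L = L^1 hD_{x^1} + L'$ with $L' := \sum_{i\ge 2} L^i hD_{x^i} \in \Psi^1_{\tangent,h}$ and combining with~\eqref{LaplaceonBoundary}, one obtains
\begin{equation*}
P(\omega;g,L) = (hD_{x^1})^2 + h\tilde a\,(hD_{x^1}) + \tilde Q(x^1,x',hD_{x'}),
\end{equation*}
with $\tilde a = a + L^1$ and $\tilde Q \in \Psi^2_{\tangent,h}$ having principal symbol $|\xi'|_g^2 - \omega_0$ (absorbing the constant $\omega_0$ into the order-$2$ symbol, consistent with the convention of Section~\ref{Laplace operator around boundary}, which is legitimate since $|\omega-\omega_0|\le Ch$). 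A direct expansion, using $[hD_{x^1},E_\pm] = \tfrac{h}{i}\partial_{x^1}E_\pm$, then gives
\begin{equation*}
(hD_{x^1}+h\tilde a - iE_\pm)(hD_{x^1}+iE_\pm) = (hD_{x^1})^2 + h\tilde a\,(hD_{x^1}) + E_\pm^2 + h\,\partial_{x^1}E_\pm + ih\,\tilde a\,E_\pm,
\end{equation*}
so the lemma reduces to constructing $E_\pm \in \Psi^1_{\tangent,h}$ with
\begin{equation*}
E_\pm^2 + h\,\partial_{x^1}E_\pm + ih\,\tilde a\,E_\pm = \tilde Q \quad \text{modulo } O(h^\infty)_{\Psi^{-\infty}_{\tangent,h}}\text{ on }\WF(X).
\end{equation*}

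I would solve this by an asymptotic expansion $E_\pm \sim \sum_{j\ge 0} h^j E_\pm^{(j)}$ with $E_\pm^{(j)} \in \Psi^{1-j}_{\tangent,h}$. The order-$h^0$ equation is the pointwise identity $\sigma(E_\pm^{(0)})^2 = |\xi'|_g^2 - \omega_0$; on $\WF(X) \subset \{\,||\xi'|_g^2-\omega_0|>0\,\}$ this has exactly the two smooth square-root branches recorded in~\eqref{Epm}, the sign in the elliptic region being the one dictated by the microlocal mapping properties of $hD_{x^1}+iE_\pm$ that drive the later proofs. At order $h^j$ for $j\ge 1$ one obtains the linear algebraic equation
\begin{equation*}
2\,\sigma(E_\pm^{(0)})\,\sigma(E_\pm^{(j)}) = F_j,
\end{equation*}
where $F_j$ is a polynomial expression in the previously determined $\sigma(E_\pm^{(k)})$ for $k<j$, their tangential derivatives, $\tilde a$, and the subprincipal symbols of $\tilde Q$. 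Since $|\sigma(E_\pm^{(0)})|\ge c>0$ on $\WF(X)$, each $\sigma(E_\pm^{(j)})$ is uniquely determined there.

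Finally, I would Borel sum the formal series into an honest tangential symbol of order $1$, extended from a microlocal neighborhood of $\WF(X)$ by a smooth cutoff. Left-multiplying the resulting identity by $X$ absorbs every error supported off $\WF(X)$ into the $O(h^\infty)_{\Psi^{-\infty}_{\tangent,h}}$ remainder. The main delicate point is the bookkeeping of tangential orders against powers of $h$ through the iteration, together with verifying that the implicit constants in the $O(h^\infty)$ remainder are uniform in $|\omega-\omega_0|\le Ch$; each individual step in the construction is algorithmic.
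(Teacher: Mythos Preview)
Your proposal is correct and follows essentially the same iterative Lee--Uhlmann scheme as the paper: the paper likewise sets $E_\pm=\sum_{j\le 1}h^{1-j}\Op_h(e_j)$, takes $e_1$ as in~\eqref{Epm}, and then inductively removes the error $h^kF_k$ by choosing $e_{-(k-1)}=-\tfrac12 f_k/e_1$, which is exactly your equation $2\sigma(E_\pm^{(0)})\sigma(E_\pm^{(j)})=F_j$ in different indexing. The only cosmetic difference is that you first expand the product and isolate the operator equation $E_\pm^2+h\partial_{x^1}E_\pm+ih\tilde a E_\pm=\tilde Q$ before iterating, whereas the paper runs the induction directly on the factorization remainder.
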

	\begin{proof}
		Our strategy is to show $E_{\pm}=\sum_{j\le 1}h^{1-j}\Op_h(e_j)$ for some $e_j \in S^{j}_{1,0}(T^*\partial \Omega)$. First, set $E_{\pm}=\Op_h(e_1)$ with $e_1$ as in~\eqref{Epm}. Then it follows from equation \eqref{LaplaceonBoundary} and definition of $P(\omega,g,L)$ that
		\begin{equation*}
			X_+\left(hD_{x^1}+h \tilde{a}-\rmi E_{\pm}(x,hD_{x'})\right)\left(hD_{x^1}+\rmi E_{\pm}(x,hD_{x'})\right)=X_+ P(\omega;g,L)+hX_+F_1,
		\end{equation*}
		where $F_1\in \Psi^0_{\tangent,h}$. This proves our first induction step. Suppose that we have proved the $k$-th induction step, i.e.
		\begin{equation*}
			X_+\left(hD_{x^1}+h \tilde{a}-\rmi E_{+,k}(x,hD_{x'})\right)\left(hD_{x^1}+\rmi E_{+,k}(x,hD_{x'})\right)=X_+ P(\omega;g)+h^kX_+F_k,
		\end{equation*}
		where $E_{+,k}=\sum_{-(k-1)< j\le 1}h^{1-j}\Op_h(e_j)$ and $F_k\in \Psi^{1-k}_{\tangent,h}$. Then we set $e_{-(k-1)}=-\frac{1}{2}f_k/e_1$ with $f_k=\sigma(F_k)$.
		\begin{multline*}
			X_+\left(hD_{x^1}+h \tilde{a}-\rmi E_{+,k+1}\right)\left(hD_{x^1}+\rmi E_{+,k+1}\right)
			\\
			=X_+\left(hD_{x^1}+h \tilde{a}-\rmi E_{+,k}-\rmi h^k\Op_h(e_{-(k-1)})\right)\left(hD_{x^1}+\rmi E_{+,k}+\rmi h^k\Op_h(e_{-(k-1)})\right)
			\\
			=X_+\Big( P(\omega;g)+h^kX_+F_k++h^k(E_{+,k}\Op_h(e_{-(k-1)})+\Op_h(e_{-(k-1)})E_{+,k})
			\\+\rmi h^k[hD_{x^1},\Op_h(e_{-(k-1)})] \Big)+O(h^{k+1})_{\Psi^{-k-1}_{\tangent,h}}
			=X_+ P(\omega;g)+h^{k+1}X_+F_{k+1},
		\end{multline*}
		which proves the $(k+1)$-th induction step and hence completes the proof.
	\end{proof}
	When $\omega_0<0$, Lemma \ref{Pdecompmicrolocal}  gives a full factorization for $P(\omega;g)$. 
	\begin{cor}
		\label{Pdecomp}
		Let $\omega_0<0$ and suppose that $|\omega-\omega_0|<Ch$. Consider operator $P(\omega;g,L)$. Then $P$ is strongly elliptic and we have, in the boundary normal coordinates,
		\begin{equation}
			\label{Laplace factorisation}
			P(\omega;g)=\left(hD_{x^1}+h \tilde{a}-\rmi E_{\pm}(x,hD_{x'})\right)\left(hD_{x^1}+\rmi E_{\pm}(x,hD_{x'})\right)+O(h^\infty)_{ \Psi^{-\infty}_{\tangent,h}},
		\end{equation}
		where $E_{\pm}$ is a first order semiclassical operator whose principal symbol is chosen to be $\sigma(E_\pm)=\pm\sqrt{|\xi'|_{g}^2-\omega_0}$. 
	\end{cor}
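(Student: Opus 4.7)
The plan is to derive Corollary \ref{Pdecomp} from Lemma \ref{Pdecompmicrolocal} by observing that the sign hypothesis $\omega_0 < 0$ places us globally in the elliptic regime on $T^*\partial\Omega$, so the microlocal cutoff $X$ in the lemma can simply be taken to be the identity.

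First I would verify strong ellipticity of $P(\omega; g, L)$. Its principal symbol is $|\xi|_g^2 - \omega_0$, which satisfies $|\xi|_g^2 - \omega_0 \geq -\omega_0 > 0$ uniformly on the cotangent bundle, so $P$ is indeed uniformly elliptic of order two. The lower-order perturbation $hL$ does not affect this conclusion.

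Next I would turn to the factorization. The wavefront hypothesis on $X$ in Lemma \ref{Pdecompmicrolocal} requires $\WF(X) \subset \{(x,\xi) : \big||\xi'|_g^2 - \omega_0\big| > 0\}$. Since $\omega_0 < 0$ and $|\xi'|_g^2 \geq 0$, one has $|\xi'|_g^2 - \omega_0 \geq -\omega_0 > 0$ on all of $T^*\partial\Omega$, so this condition imposes no restriction and we may take $X = I$. Only the elliptic branch in \eqref{Epm} is ever relevant, producing a globally smooth symbol for $E_\pm$. To recover both signs $\pm\sqrt{|\xi'|_g^2 - \omega_0}$ asserted in the corollary, I would observe that the inductive construction used in the proof of Lemma \ref{Pdecompmicrolocal} goes through unchanged if the starting symbol $e_1$ is replaced by $+\sqrt{|\xi'|_g^2 - \omega_0}$: the induction only requires $e_1$ to be smooth and nonvanishing (so that $e_{-(k-1)} = -\tfrac{1}{2} f_k / e_1$ is well-defined at each step), and both of these properties follow at once from the global ellipticity in the current regime.

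Since Lemma \ref{Pdecompmicrolocal} already carries out the genuine work of constructing the factorization order by order in $h$ and of controlling the remainder in $\Psi^{-\infty}_{\tangent,h}$, I do not anticipate any substantive obstacle here: the corollary is essentially a bookkeeping observation that under $\omega_0 < 0$ the elliptic branch of $E_\pm$ is globally defined, the microlocal cutoff is superfluous, and either choice of sign for the square root produces a valid factorization modulo $O(h^\infty)_{\Psi^{-\infty}_{\tangent,h}}$.
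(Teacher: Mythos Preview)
Your proposal is correct and follows exactly the paper's approach: the paper's proof is the single sentence ``The proof follows immediately from Proposition \ref{Pdecompmicrolocal} with $X=I$.'' Your additional remarks on strong ellipticity and on obtaining both signs $\pm\sqrt{|\xi'|_g^2-\omega_0}$ by rerunning the induction with either choice of $e_1$ fill in details the paper leaves implicit, but the argument is the same.
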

	\begin{proof}
		The proof follows immediately from Proposition \ref{Pdecompmicrolocal} with $X=I$. 
	\end{proof}
    \subsection{Energy estimates for first order operators}
    \label{Energy estimates for first order operators}
	The first estimate is the following basic energy estimate, which can be applied to $E_\pm$.
	\begin{lem}
		\label{Hyperbolic estimate lemma}
		Let $\Lambda\in \Psi^1_{\tangent,h}(\Omega)$ with $\Im\sigma(\Lambda)\le 0$. Then, for all $s\in \R$, and $t_0<t_1$, we have
		\begin{equation*}
			\label{Hyperbolic estimate eqn}
			\|v|_{x^1=t_0}\|_{H^s_h(\partial\Omega)}\le C \left(h^{-1} \|(hD_{x^1}-\Lambda )v\|_{L^2((t_0,t_1); H^s_h(\partial \Omega))}+\|v\|_{L^2((t_0,t_1); H^s_h(\partial \Omega))}\right).
		\end{equation*}
	\end{lem}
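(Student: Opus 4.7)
The estimate is a backward energy inequality for the first-order operator $hD_{x^1}-\Lambda$; the sign condition $\Im\sigma(\Lambda)\le 0$ is precisely what makes the Cauchy problem well-posed from $x^1=t_1$ downwards to $x^1=t_0$. My plan is to prove the case $s=0$ by differentiating $\|v(x^1,\cdot)\|^2_{L^2(\partial\Omega)}$ in $x^1$, controlling the contribution of $\Lambda$ via the sharp G{\aa}rding inequality, applying a Gr\"onwall-type bound, and then integrating in $x^1$. The case of general $s$ will then follow by conjugating with a tangential elliptic operator of order $s$.

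A direct computation using $\partial_{x^1}=(\rmi/h)hD_{x^1}$ gives
$$\frac{d}{dx^1}\|v(x^1,\cdot)\|^2_{L^2(\partial\Omega)}=-\frac{2}{h}\Im\langle(hD_{x^1}-\Lambda)v,v\rangle-\frac{2}{h}\Im\langle\Lambda v,v\rangle.$$
Since $\partial\Omega$ is compact and $-\tfrac{1}{2\rmi}(\Lambda-\Lambda^*)\in\Psi^1_{\tangent,h}$ has principal symbol $-\Im\sigma(\Lambda)\ge 0$, the sharp G{\aa}rding inequality yields $-\Im\langle\Lambda v,v\rangle\ge -Ch\|v\|^2_{L^2(\partial\Omega)}$. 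Combining this with Cauchy--Schwarz and Young's inequality applied to $\langle(hD_{x^1}-\Lambda)v,v\rangle$, I obtain
$$\frac{d}{dx^1}\|v(x^1,\cdot)\|^2_{L^2(\partial\Omega)}\ge -C_1\|v\|^2_{L^2(\partial\Omega)}-\frac{C_2}{h^2}\|(hD_{x^1}-\Lambda)v\|^2_{L^2(\partial\Omega)}.$$
Multiplying by the integrating factor $e^{C_1 x^1}$, integrating from $t_0$ to an arbitrary $t\in(t_0,t_1)$, and averaging the resulting inequality over $t\in(t_0,t_1)$ (the left-hand side being $t$-independent) gives
$$\|v(t_0,\cdot)\|^2_{L^2(\partial\Omega)}\le C\|v\|^2_{L^2((t_0,t_1);L^2(\partial\Omega))}+\frac{C}{h^2}\|(hD_{x^1}-\Lambda)v\|^2_{L^2((t_0,t_1);L^2(\partial\Omega))}.$$
Taking square roots establishes the $s=0$ case.

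To pass to general $s$, pick an elliptic $A_s\in\Psi^s_{\tangent,h}(\partial\Omega)$ satisfying $\|A_sw\|_{L^2(\partial\Omega)}\sim\|w\|_{H^s_h(\partial\Omega)}$. Because $hD_{x^1}$ commutes with tangential operators,
$$(hD_{x^1}-\Lambda)A_sv=A_s(hD_{x^1}-\Lambda)v+[A_s,\Lambda]v,\qquad [A_s,\Lambda]\in h\Psi^s_{\tangent,h}.$$
Applying the $s=0$ bound to $A_sv$ and using $h^{-1}\|[A_s,\Lambda]v\|_{L^2}\le C\|v\|_{H^s_h}$ to absorb the commutator into the $\|v\|_{L^2H^s_h}$ term yields the claimed inequality at level $s$.

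The only technical subtlety is the use of \emph{sharp} G{\aa}rding to control $\Im\langle\Lambda v,v\rangle$: the hypothesis $\Im\sigma(\Lambda)\le 0$ is not strict, so the ordinary G{\aa}rding inequality is unavailable, but the $O(h)$ loss produced by sharp G{\aa}rding contributes only a fixed additive constant to $C_1$ in the differential inequality above, which is harmless after the Gr\"onwall step because $t_1-t_0$ is fixed.
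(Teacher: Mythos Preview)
Your proof is correct and follows essentially the same approach as the paper's. Both arguments differentiate $\|v(x^1,\cdot)\|^2$ in $x^1$, use the sharp G{\aa}rding inequality to control $\Im\langle\Lambda v,v\rangle$ (the paper simply calls it ``G{\aa}rding's inequality''), and integrate the resulting differential inequality. The only cosmetic differences are that the paper incorporates $A=\langle hD'\rangle^s$ from the start (absorbing the commutator $[\Lambda,A]A^{-1}$ into the G{\aa}rding step, since it has vanishing principal symbol) rather than reducing to $s=0$ first, and the paper integrates against a smooth cutoff $\varphi\in C_c^\infty((t_-,t_2))$ with $\varphi\equiv 1$ near $t_0$ in place of your Gr\"onwall-plus-averaging step.
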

	\begin{proof}
		Let $A=\langle hD' \rangle^s$. We work in the boundary normal coordinates as before and start with the derivative of $\|v(x^1,\cdot)\|_{L^2(\R^{d-1})}$ in $x^1$. Omitting arguments $x^1$ and $\R^{d-1}$, we have
		\begin{multline}
			\label{dA eqn}
			\frac{h}{2} \partial_{x^1}\|Av\|_{L^2}^2=-\Im\langle hD_{x^1} Av, Av \rangle
			=-\Im\langle (hD_{x^1} -\Lambda)Av, Av \rangle-\Im\langle \Lambda Av, Av \rangle
			\\
			=-\Im\langle A(hD_{x^1} -\Lambda)v, Av \rangle-\Im\left\langle \left(\Lambda +[\Lambda,A]A^{-1}\right)Av, Av \right\rangle
			\\
			\ge -\|A(hD_{x^1} -\Lambda)v\|_{L^2}\|AXv\|_{L^2}-Ch \|Av\|_{L^2}^2
			\\
			\ge -h^{-1}\|A(hD_{x^1} +\Lambda)v\|_{L^2}^2-Ch \|Av\|_{L^2}^2,
		\end{multline}
		where G\aa rding's inequality is used for $\Im\left\langle \left(\Lambda +[\Lambda,A]A^{-1}\right)Av, Av \right\rangle$. In other words,
		\begin{equation*}
			\partial_{x^1}\|v\|_{H^s_h}^2\ge -C\left(h^{-2}\|(hD_{x^1} +\Lambda)v\|_{H^s_h}^2 +\|v\|_{H^s_h}^2\right).
		\end{equation*}
		Let $t_-<t_0$ and $\varphi \in C^\infty_c(t_-,t_2)$ with $\varphi\ge 0$ and $\varphi=1$ in a neighbourhood of $x^1=t_0$. Then we have
		\begin{multline*}
			\|v(t_0)\|_{H^s_h}^2=-\int_{t_0}^\infty \partial_{s}\left( \varphi(s)\|v(s)\|_{H^s_h}^2\right) \der s
			\\
			\le C \int_{t_0}^{t_1}\|v(s)\|_{H^s_h}^2 \der s- \int_{t_0}^\infty \varphi(s) \partial_{s}\|v(s)\|_{H^s_h}^2 \der s 
			\\
			\le C \left(h^{-2}\|(hD_{x^1} -\Lambda)v(s)\|_{L^2((t_0,t_1); H^s_h(\partial \Omega))}^2+\|v(s)\|_{L^2((t_0,t_1); H^s_h(\partial \Omega))}^2 \right).
		\end{multline*}
	\end{proof}

    Our next estimate allows us to both microlocalize and work in higher regularity than Lemma~\ref{Hyperbolic estimate lemma}. The method for microlocalization was communicated from~\cite{GaLe:25}.
	\begin{lem}
		\label{Elliptic estimate lemma}
		Let $\Lambda\in \Psi^1_{\tangent, h},$ $\e>0$, $X,\tilde X\in \Psi^0_{\tangent,h}(\Omega)$ such that $\WF(X)\subset \Ellip(\tilde X)$ and $\WF(X)\subset \{(x,\xi):\Im\sigma(\Lambda)(x,\xi)< -\e\langle \xi\rangle \}$. Also, let $s\in \R$ and $0\leq t_0<t_1<t_2$. Then, there exists $h_0,c,C>0$ such that
		\begin{equation*}
			\begin{aligned}
				\label{Elliptic estimate eqn}
				&\|Xv(t_0)\|_{H^s_h(\partial \Omega)} +c h^{-\frac{1}{2}}\|Xv\|_{L^2\left((t_0,t_1);H_h^{s+\frac{1}{2}}(\partial \Omega)\right)}
				\\
				&\le Ch^{-\frac{1}{2}}\|\tilde{X}(hD_{x^1}-\Lambda)v\|_{L^2\left((t_0,t_2);H_h^{s-\frac{1}{2}}(\partial \Omega)\right)}+h^N\| v\|_{L^2\left((t_0,t_2);H_h^{-N}(\partial \Omega)\right)}\\
				&+h^N\| (hD_{x^1}-\Lambda)v\|_{L^2\left((t_0,t_2);H_h^{-N}(\partial \Omega)\right)}
			\end{aligned}
		\end{equation*}
		for all $0<h<h_0$.
	\end{lem}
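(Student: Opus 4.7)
The plan is to combine the energy method from Lemma \ref{Hyperbolic estimate lemma} with a sharp-G\aa rding positive-commutator argument, exploiting the strict microlocal ellipticity $\Im\sigma(\Lambda)<-\e\langle\xi\rangle$ on $\WF(X)$ to gain a half-derivative of interior regularity over the hyperbolic estimate. I introduce a microlocal multiplier $A=\Op(\chi\langle\xi\rangle^s)\in\Psi^s_{\tangent,h}$ with $\chi\in S^0_{\tangent}$ satisfying $\chi\equiv 1$ on $\WF(X)$ and $\supp\chi\subset\Ellip(\tilde X)\cap\{\Im\sigma(\Lambda)<-\e\langle\xi\rangle/2\}$. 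Microlocal elliptic regularity then reduces the control of the LHS of the desired inequality to that of $\|Av(t_0)\|_{L^2}$ and $\|Av\|_{L^2((t_0,t_1);H^{1/2}_h)}$ up to $O(h^\infty)\|v\|_{H^{-N}_h}$ remainders; and in the opposite direction $\|Af\|_{H^{-1/2}_h}\le C\|\tilde{X}f\|_{H^{s-1/2}_h}+O(h^\infty)\|f\|_{H^{-N}_h}$ since $\WF(A)\subset\Ellip(\tilde X)$.

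Following the proof of Lemma \ref{Hyperbolic estimate lemma}, I compute $\frac{h}{2}\partial_{x^1}\|Av\|^2_{L^2}=-\Im\langle hD_{x^1}Av,Av\rangle$, substitute $hD_{x^1}v=\Lambda v+(hD_{x^1}-\Lambda)v$, and commute $A$ past $\Lambda$, producing the symmetric principal term $-\langle\Lambda_I Av,Av\rangle$ with $\Lambda_I:=(\Lambda-\Lambda^*)/(2\rmi)$, an $A(hD_{x^1}-\Lambda)v$-forcing, and commutators $[A,\Lambda],[hD_{x^1},A]\in h\Psi^s_{\tangent,h}$. Because $-\sigma(\Lambda_I)\ge\e\langle\xi\rangle/2$ on $\supp\chi$, I modify $-\Lambda_I$ outside $\supp\chi$ into a fully elliptic tangential operator with symbol $\ge\e\langle\xi\rangle/4$ everywhere; sharp G\aa rding together with the fact that this modification vanishes microlocally on $\WF(A)$ yields
$$-\langle\Lambda_I Av,Av\rangle\geq c\|Av\|^2_{H^{1/2}_h}-Ch\|Av\|^2_{L^2}-O(h^\infty)\|v\|^2_{H^{-N}_h}.$$
Cauchy-Schwarz together with Young's inequality absorbs the forcing and commutators; multiplication by $2/h$ gives the pointwise-in-$x^1$ bound
$$\partial_{x^1}\|Av\|^2\geq \frac{c}{h}\|Av\|^2_{H^{1/2}_h}-\frac{C}{h}\|A(hD_{x^1}-\Lambda)v\|^2_{H^{-1/2}_h}-Ch\|v\|^2_{H^{s-1/2}_h}-O(h^\infty)\|v\|^2_{H^{-N}_h}.$$
Integrating against a cutoff $\varphi\in C_c^\infty(\mathbb{R})$ with $\varphi\equiv 1$ on $[t_0,t_1]$, $\supp\varphi\subset(-\infty,t_2)$, $\varphi'\le 0$ on $[t_1,t_2]$, and using $\|Av(t_0)\|^2=-\int_{t_0}^\infty\partial_{x^1}(\varphi\|Av\|^2)dx^1$, yields the intermediate estimate
$$\|Av(t_0)\|^2+\frac{c}{h}\|Av\|^2_{L^2((t_0,t_1);H^{1/2}_h)}\leq C\|Av\|^2_{L^2((t_1,t_2);L^2)}+\frac{C}{h}\|\tilde{X}(hD_{x^1}-\Lambda)v\|^2_{L^2((t_0,t_2);H^{s-1/2}_h)}+Ch\|v\|^2_{L^2((t_0,t_2);H^{s-1/2}_h)}+O(h^\infty).$$

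The main obstacle is upgrading the tail $\|Av\|^2_{L^2((t_1,t_2);L^2)}$ and the commutator error $Ch\|v\|^2_{L^2 H^{s-1/2}_h}$ to the $h^N$ form required by the conclusion. I handle this by iterating the argument with a nested family of cutoffs $\chi=\chi_0,\chi_1,\ldots,\chi_K\in S^0_{\tangent}$ satisfying $\chi_{k-1}\equiv 1$ on $\supp\chi_k$, associated multipliers $A_k=\Op(\chi_k\langle\xi\rangle^s)$, and corresponding nested intervals $(t_0,t_1^{(k)})\subset(t_0,t_1^{(k-1)})$, $(t_1^{(k)},t_2^{(k)})\subset(t_1^{(k-1)},t_2^{(k-1)})$. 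Since the commutator $[A_k,\Lambda]$ is microlocalized in $\WF(A_k)\subset\Ellip(A_{k-1})$, microlocal elliptic regularity replaces the sub-principal contribution $Ch\|v\|^2_{H^{s-1/2}_h}$ at the $k$-th stage by $Ch\|A_{k-1}v\|^2_{H^{-1/2}_h}+O(h^\infty)\|v\|^2_{H^{-N}_h}$, to which the $(k-1)$-th iteration's $H^{1/2}$-gain applies, improving it by an extra factor of $h$; simultaneously the tail $\|A_k v\|^2_{L^2((t_1^{(k)},t_2^{(k)});L^2)}$ is reabsorbed by reapplying the estimate at the next level with initial time shifted slightly into $(t_1^{(k)},t_1^{(k-1)})$. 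After $K=O(N)$ iterations the remainders collapse into the stated $h^N\big(\|v\|^2_{L^2 H^{-N}_h}+\|(hD_{x^1}-\Lambda)v\|^2_{L^2 H^{-N}_h}\big)$ form, completing the proof.
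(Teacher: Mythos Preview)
Your proposal is correct and follows essentially the same approach as the paper: an energy/positive-commutator identity for a microlocalized order-$s$ multiplier, the microlocal (sharp) G\aa rding inequality to extract the $H^{s+1/2}$ gain, integration against a spatial cutoff in $x^1$, and an iteration over nested microlocal cutoffs and time intervals to upgrade the subprincipal remainders to $O(h^N)$. The only cosmetic differences are that the paper separates the Sobolev weight $\langle hD'\rangle^s$ from the microlocal cutoff $X$ (writing $AXv$ rather than combining them into a single $A$), and invokes the microlocal G\aa rding inequality directly instead of first modifying $\Lambda_I$ outside $\supp\chi$ to a globally elliptic symbol.
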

	\begin{proof}
		Let $A=\langle hD' \rangle^s$. Exactly as in~\eqref{dA eqn}, one has
		\begin{equation}
			\label{dAX eqn}
			\frac{h}{2} \partial_{x^1}\|AXv\|_{L^2(\partial \Omega)}^2
			=-\Im\langle A(hD_{x^1} -\Lambda)Xv, AXv \rangle-\Im\langle \left(\Lambda +[\Lambda,A]A^{-1}\right)AXv, AXv \rangle.
		\end{equation}

		
		By the microlocal G\aa rding inequality \cite[Proposition E.34]{DyZw:19},
		\begin{equation*}
			\Im\langle \Lambda AXv, AXv \rangle\le -\e \|Xv\|_{H^{s+\frac{1}{2}}_h(\partial \Omega)}^2+h^N\|v\|^2_{H^{-N}_h(\partial \Omega)},
		\end{equation*}
		Therefore, plugging into \eqref{dAX eqn}, we arrive at
		\begin{equation}
			\label{e:plane}
			\frac{h}{2} \partial_{x^1}\|AXv\|_{L^2(\partial \Omega)}^2
			\ge - \|(hD_{x^1} -\Lambda)Xv\|_{H_h^{s-\frac{1}{2}}(\partial \Omega)}^2
			+ c\|Xv\|_{H_h^{s+\frac{1}{2}}(\partial \Omega)}^2-h^N\|v\|^2_{H^{-N}_h(\partial \Omega)}.
		\end{equation}
		
		We now claim that if $t_0<t_1<t_2$, and $X'\in \Psi^0_{h,\tangent}(\Omega)$ with $\WF(X)\subset \Ellip(X')$,  then, 
		\begin{equation}
			\begin{aligned}
				\label{e:ellip1}
				&\|Xv(t_0)\|_{H^s_h(\partial \Omega)}^2+c h^{-1}\|Xv\|_{L^2\left((t_0,t_1);H_h^{s+\frac{1}{2}}(\partial \Omega)\right)}^2
				\\
				&\le C h^{-1}\|X(hD_{x^1} -\Lambda)v\|_{L^2((t_0,t_2);H_h^{s-\frac{1}{2}}(\partial \Omega))}^2
				+C\|X'v\|_{L^2((t_0,t_2);H_h^{s-\frac{1}{2}}(\partial \Omega))}^2\\
				&\qquad +h^N\| v\|_{L^2((t_0,t_2);H_h^{-N}(\partial \Omega))}.
			\end{aligned}
		\end{equation}
		
		To prove this, $t_-<t_0$ and $\varphi\in C^\infty((t_-,t_2);[0,1])$ with $\varphi\equiv 1$ on $[t_0,t_1]$. Then by~\eqref{e:plane} with $v$, we have
		\begin{align*}
			\|Xv(t_0)\|_{H^s_h(\partial \Omega)}^2&=
			-\int_{t_0}^\infty \partial_{\tau }\left(\varphi(\tau) \| Xv(\tau )\|_{H^s_h(\partial \Omega)}^2\right) \der \tau
			\\
			&\le C h^{-1}\Big(\|X(hD_{x^1} -\Lambda)v\|_{L^2((t_0,t_2);H_h^{s-\frac{1}{2}}(\partial \Omega))}^2-c\|Xv\|_{L^2((t_0,t_2);H_h^{s+\frac{1}{2}}(\partial \Omega))}^2\Big)
			\\
			&+C\| Xv\|_{L^2((t_1,t_2);H_h^{s}(\partial \Omega))}^2\\
			&+ Ch^{-1}\|[hD_{x^1}-\Lambda,X]v\|_{L^2((t_0,t_2);H_h^{s-\frac{1}{2}}(\partial\Omega)}^2+Ch^N\| v\|_{L^2((t_0,t_2);H_h^{-N}(\partial \Omega))},
		\end{align*}
		which implies~\eqref{e:ellip1} since $\| Xv\|_{L^2((t_1,t_2);H_h^{s}(\partial \Omega))}^2$ can be absorbed by $\|Xv\|_{L^2((t_0,t_2);H_h^{s+\frac{1}{2}}(\partial \Omega))}^2$ for sufficiently small $h$ and
		\begin{multline*}
			Ch^{-1}\|[hD_{x^1}-\Lambda,X]v\|_{L^2((t_0,t_2);H_h^{s-\frac{1}{2}}(\partial\Omega)}^2+\| Xv\|_{L^2((t_0,t_2);H_h^{s-\frac{1}{2}}(\partial \Omega))}^2\\\leq C\| X'v\|_{L^2((t_0,t_2);H_h^{s-\frac{1}{2}}(\partial \Omega))}^2+Ch^N\| v\|_{L^2((t_0,t_2);H_h^{-N}(\partial \Omega))}.
		\end{multline*}
		
		Now, we will prove by induction that for $t_0<t_1<t_2<t_3$, and $X'\in \Psi^0_{h,\tangent}(\Omega)$ with $\WF(X)\subset \Ellip(X')$,
		\begin{equation}
			\begin{aligned}
				\label{e:ellip2}
				&\|Xv(t_0)\|_{H^s_h(\partial \Omega)}^2+c h^{-1}\|Xv\|_{L^2\left((t_0,t_1);H_h^{s+\frac{1}{2}}(\partial \Omega)\right)}^2
				\\
				&\le C h^{-1}\|X'(hD_{x^1} -\Lambda)v\|_{L^2((t_0,t_3);H_h^{s-\frac{1}{2}}(\partial \Omega))}^2
				+Ch^j\|X'v\|_{L^2((t_0,t_2);H_h^{s-j-\frac{1}{2}}(\partial \Omega))}^2\\
				&\qquad +h^N\| v\|_{L^2((t_0,t_3);H_h^{-N}(\partial \Omega))}+h^N\| (hD_{x^1}-\Lambda)v\|_{L^2((t_0,t_3);H_h^{-N}(\partial \Omega))}.
			\end{aligned}
		\end{equation}
		By~\eqref{e:ellip1} we have~\eqref{e:ellip2} with $j=0$. 
		Suppose that~\eqref{e:ellip2} holds for some $j\geq 0$. Let $t_0<t_1<t_2'<t_2<t_3$ and $X'',X'\in \Psi^0_{h,\tangent}(\Omega)$ with $\WF(X)\subset \Ellip(X'')$ and $\WF(x'')\subset\Ellip(X')$. Then, by the induction hypothesis~\eqref{e:ellip2} holds. 
		\begin{equation}
			\begin{aligned}
				\label{e:ellip3}
				&\|Xv(t_0)\|_{H^s_h(\partial \Omega)}^2+c h^{-1}\|Xv\|_{L^2\left((t_0,t_1);H_h^{s+\frac{1}{2}}(\partial \Omega)\right)}^2
				\\
				&\le C h^{-1}\|X''(hD_{x^1} -\Lambda)v\|_{L^2((t_0,t_3);H_h^{s-\frac{1}{2}}(\partial \Omega))}^2
				+Ch^j\|X''v\|_{L^2((t_0,t_2');H_h^{s-j-\frac{1}{2}}(\partial \Omega))}^2\\
				&\qquad +h^N\| v\|_{L^2((t_0,t_3);H_h^{-N}(\partial \Omega))}+h^N\| (hD_{x^1}-\Lambda)v\|_{L^2((t_0,t_3);H_h^{-N}(\partial \Omega))}.
			\end{aligned}
		\end{equation}
		By~\eqref{e:ellip1} with $s$ replaced by $s-j$ and $(t_0,t_1,t_2)$ replaced by $(t_0,t_2',t_2)$, and $(X,X')$ replaced by $(X'',X')$,  we have 
		\begin{equation*}
			\begin{aligned}
				&\|X''v\|_{L^2((t_0,t_2');H_h^{s-j-\frac{1}{2}}(\partial \Omega))}^2\\
				&\le C \|X''(hD_{x^1} -\Lambda)v\|_{L^2((t_0,t_2);H_h^{s-j-\frac{1}{2}}(\partial \Omega))}^2
				+Ch\|X'v\|_{L^2((t_0,t_2);H_h^{s-j-\frac{1}{2}}(\partial \Omega))}^2\\
				&\qquad +h^N\| v\|_{L^2((t_0,t_2);H_h^{-N}(\partial \Omega))}.
			\end{aligned}
		\end{equation*}
		Using this in~\eqref{e:ellip3} then implies~\eqref{e:ellip2} with $j$ replaced by $j+1$ and hence completes the proof of the lemma.
	\end{proof}

    We also need an estimate analogous to Lemma~\ref{Elliptic estimate lemma} for estimating $v$ in the interior.
	\begin{lem}
		\label{l:lopa}
		Let $\Lambda\in \Psi^1_{\tangent,h}$ $\e>0$ $X,\tilde X\in \Psi^0_{\tangent,h}(\Omega)$ such that $\WF(X)\subset \Ellip(\tilde X)$ and $\WF(X)\subset \{(x,\xi):\Im\sigma(\Lambda)(x,\xi)> \e\langle \xi\rangle \}$. Also, let $s\in \R$ and $0\leq t_0<t_1<t_2$. Then, there exists $h_0,c,C>0$ such that
		\begin{equation}
			\begin{aligned}
				\label{Elliptic estimate eqna}
				& \|Xv\|_{L^2\left((t_0,t_1);H_h^{s+\frac{1}{2}}(\partial \Omega)\right)}\\
				&\le C\|\tilde{X}(hD_{x^1}-\Lambda)v\|_{L^2\left((t_0,t_2);H_h^{s-\frac{1}{2}}(\partial \Omega)\right)}+Ch^{\frac{1}{2}}\|\tilde{X}v(t_0)\|_{H^s_h(\partial \Omega)}\\
				& \qquad+h^N\| v\|_{L^2\left((t_0,t_2);H_h^{-N}(\partial \Omega)\right)}+h^N\| (hD_{x^1}-\Lambda)v\|_{L^2\left((t_0,t_2);H_h^{-N}(\partial \Omega)\right)}\\
				& \qquad+h^N\| v(t_0)\|_{H_h^{-N}(\partial \Omega)},
			\end{aligned}
		\end{equation}
		for all $0<h<h_0$.
	\end{lem}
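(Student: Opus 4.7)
The proof proposal follows closely the scheme of Lemma~\ref{Elliptic estimate lemma}, but with the sign of $\Im\sigma(\Lambda)$ reversed the roles of the boundary trace and the interior $L^2$ norm swap: here we are given the trace at $t_0$ as \emph{data} and want to estimate $Xv$ in $L^2$ of an interior strip, rather than recovering the trace from interior information.

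The first step is the same energy identity as in~\eqref{dAX eqn}. With $A=\langle hD'\rangle^s$,
\begin{equation*}
\frac{h}{2}\partial_{x^1}\|AXv\|_{L^2(\partial\Omega)}^2 = -\Im\langle A(hD_{x^1}-\Lambda)Xv, AXv\rangle-\Im\langle(\Lambda+[\Lambda,A]A^{-1})AXv,AXv\rangle.
\end{equation*}
The hypothesis $\Im\sigma(\Lambda)\ge \e\langle\xi\rangle$ on $\WF(X)$ now gives, via the microlocal G\aa rding inequality, $-\Im\langle \Lambda AXv,AXv\rangle\le -\e\|Xv\|_{H_h^{s+1/2}}^2+h^N\|v\|_{H_h^{-N}}^2$. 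Combined with Cauchy--Schwarz and Young's inequality on the first term (absorbing half of the $\|Xv\|_{H_h^{s+1/2}}^2$), we obtain the pointwise-in-$x^1$ differential inequality
\begin{equation*}
\partial_{x^1}\|AXv\|_{L^2(\partial\Omega)}^2 \le Ch^{-1}\|(hD_{x^1}-\Lambda)Xv\|_{H_h^{s-1/2}}^2 - ch^{-1}\|Xv\|_{H_h^{s+1/2}}^2 + h^{N-1}\|v\|_{H_h^{-N}}^2.
\end{equation*}

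Second, because the sign in front of $\|Xv\|_{H_h^{s+1/2}}^2$ is now negative, I integrate \emph{forward} in $x^1$ from $t_0$ against a cutoff $\varphi\in C_c^\infty([t_0,t_2^+);[0,1])$ with $\varphi\equiv 1$ on $[t_0,t_1]$ (for some $t_2^+$ slightly larger than $t_1$ but $\le t_2$). The fundamental theorem of calculus then yields
\begin{equation*}
ch^{-1}\|Xv\|_{L^2((t_0,t_1);H_h^{s+1/2})}^2 \le \|Xv(t_0)\|_{H_h^s}^2 + Ch^{-1}\|(hD_{x^1}-\Lambda)Xv\|_{L^2((t_0,t_2);H_h^{s-1/2})}^2+h^N\|v\|_{L^2((t_0,t_2);H_h^{-N})}^2,
\end{equation*}
multiplying through by $h$ and taking square roots delivers the prototype of the desired estimate with $X$ on the right-hand side and initial trace $\|Xv(t_0)\|_{H_h^s}$.

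Third, I need to replace both occurrences of $X$ on the right by $\tilde X$. For the term $(hD_{x^1}-\Lambda)Xv$, the commutator $[hD_{x^1}-\Lambda,X]\in h\Psi^0_{\tangent,h}$ with $\WF([hD_{x^1}-\Lambda,X])\subset\WF(X)\subset\Ellip(\tilde X)$, so by the elliptic parametrix construction it is of the form $h\tilde X Q+O(h^\infty)_{\Psi_{\tangent,h}^{-\infty}}$ for some $Q\in\Psi^0_{\tangent,h}$, giving
\begin{equation*}
\|(hD_{x^1}-\Lambda)Xv\|_{L^2H_h^{s-1/2}} \le \|\tilde X(hD_{x^1}-\Lambda)v\|_{L^2 H_h^{s-1/2}} + Ch\|\tilde X v\|_{L^2 H_h^{s-1/2}}+h^N(\|v\|+\|(hD_{x^1}-\Lambda)v\|).
\end{equation*}
For the initial-trace term the same elliptic parametrix gives $Xv(t_0)=Q_0\tilde X v(t_0)+O(h^\infty)_{\Psi^{-\infty}}v(t_0)$ for some $Q_0\in\Psi_{h}^0$, hence $\|Xv(t_0)\|_{H_h^s}\le C\|\tilde X v(t_0)\|_{H_h^s}+h^N\|v(t_0)\|_{H_h^{-N}}$. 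Finally, the extraneous $\|\tilde X v\|_{L^2 H_h^{s-1/2}}$ term produced by the commutator is bootstrapped away by a finite induction exactly as in~\eqref{e:ellip2}--\eqref{e:ellip3} of the proof of Lemma~\ref{Elliptic estimate lemma}: apply the prototype estimate on a shrinking sequence of subintervals with progressively smaller Sobolev order to trade $h$ powers against regularity and thereby convert it into an $O(h^N)$ remainder.

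The main obstacle, and the only genuinely new point compared to Lemma~\ref{Elliptic estimate lemma}, is the directional choice when integrating the differential inequality: with $\Im\sigma(\Lambda)>0$ the first-order operator $hD_{x^1}-\Lambda$ is the model of a forward (in $x^1$) parabolic propagator, so one must integrate \emph{from} $t_0$ rather than \emph{to} $t_0$ in order to turn the negative $\|Xv\|_{H_h^{s+1/2}}^2$ term to our advantage. Everything else—the Gårding step, the commutator bookkeeping, and the induction that replaces $X$ by $\tilde X$—is structurally identical to the preceding lemma.
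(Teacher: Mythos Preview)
Your proposal is correct and follows essentially the same approach as the paper: derive the differential inequality~\eqref{e:plane 1} from~\eqref{dAX eqn} via the microlocal G\aa rding inequality (now with the opposite sign), integrate against a cutoff $\varphi$ equal to $1$ on $[t_0,t_1]$ and vanishing near $t_2$ so that the fundamental theorem of calculus produces~\eqref{e:ellip1a} with the trace $\|Xv(t_0)\|_{H_h^s}$ as data, and then bootstrap the commutator remainder $h\|X'v\|_{L^2((t_0,t_2);H_h^{s-1/2})}$ by the same induction scheme as in~\eqref{e:ellip2a}--\eqref{e:ellip3a}. Your observation that the only new point is the direction of integration (forward from $t_0$, because $\Im\sigma(\Lambda)>0$ makes $hD_{x^1}-\Lambda$ forward-parabolic) is exactly right.
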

	\begin{proof}Let $A=\langle hD' \rangle^s$. We start again from~\eqref{dAX eqn} and use the
		%
		microlocal G\aa rding inequality \cite[Proposition E.34]{DyZw:19} to obtain
		\begin{equation*}
			\Im\langle \Lambda AXv, AXv \rangle\ge \e \|Xv\|_{H^{s+\frac{1}{2}}_h(\partial \Omega)}^2-h^N\|v\|^2_{H^{-N}_h(\partial \Omega)}.
		\end{equation*}
		
		Therefore, plugging into \eqref{dAX eqn}, we arrive at
		\begin{equation}
			\label{e:plane 1}
			\frac{h}{2} \partial_{x^1}\|AXv\|_{L^2(\partial \Omega)}^2
			\le \|(hD_{x^1} -\Lambda)Xv\|_{H_h^{s-\frac{1}{2}}(\partial \Omega)}^2
			- c\|Xv\|_{H_h^{s+\frac{1}{2}}(\partial \Omega)}^2+h^N\|v\|^2_{H^{-N}_h(\partial \Omega)}.
		\end{equation}
		
		We now claim that if $t_0<t_1<t_2$, and $X'\in \Psi^0_{h,\tangent}(\Omega)$ with $\WF(X)\subset \Ellip(X')$,  then, 
		\begin{equation}
			\begin{aligned}
				\label{e:ellip1a}
				&\|Xv\|_{L^2\left((t_0,t_1);H_h^{s+\frac{1}{2}}(\partial \Omega)\right)}^2\\
				&\le C \|X(hD_{x^1} -\Lambda)v\|_{L^2((t_0,t_2);H_h^{s-\frac{1}{2}}(\partial \Omega))}^2 +h\|Xv(t_0)\|_{H^s_h(\partial \Omega)}^2\\
				&\qquad+Ch\|X'v\|_{L^2((t_0,t_2);H_h^{s-\frac{1}{2}}(\partial \Omega))}^2+h^N\| v\|_{L^2((t_0,t_2);H_h^{-N}(\partial \Omega))}.
			\end{aligned}
		\end{equation}
		
		To prove this, let $t_-<t_0$ and $\varphi\in C^\infty((t_-,t_2);[0,1])$ with $\varphi\equiv 1$ on $[t_0,t_1]$. Then by~\eqref{e:plane 1} with $v$, we have
		\begin{align*}
			\|Xv(t_0)\|_{H^s_h(\partial \Omega)}^2&=
			-\int_{t_0}^\infty \partial_{\tau }\left(\varphi(\tau) \| Xv(\tau )\|_{H^s_h(\partial \Omega)}^2\right) \der \tau
			\\
			&\ge -C h^{-1}\Big(\|X(hD_{x^1} -\Lambda)v\|_{L^2((t_0,t_2);H_h^{s-\frac{1}{2}}(\partial \Omega))}^2-c\|Xv\|_{L^2((t_0,t_2);H_h^{s+\frac{1}{2}}(\partial \Omega))}^2\Big)
			\\
			&-C\| Xv\|_{L^2((t_1,t_2);H_h^{s}(\partial \Omega))}^2\\
			&- Ch^{-1}\|[hD_{x^1}-\Lambda,X]v\|_{L^2((t_0,t_2);H_h^{s-\frac{1}{2}}(\partial\Omega)}^2-Ch^N\| v\|_{L^2((t_0,t_2);H_h^{-N}(\partial \Omega))},
		\end{align*}
		which implies~\eqref{e:ellip1a} since $\| Xv\|_{L^2((t_1,t_2);H_h^{s}(\partial \Omega))}^2$ can be absorbed by $\|Xv\|_{L^2\left((t_0,t_1);H_h^{s+\frac{1}{2}}(\partial \Omega)\right)}^2$ for sufficiently small $h$ and
		\begin{multline*}
			Ch^{-1}\|[hD_{x^1}-\Lambda,X]v\|_{L^2((t_0,t_2);H_h^{s-\frac{1}{2}}(\partial\Omega)}^2+\| Xv\|_{L^2((t_0,t_2);H_h^{s-\frac{1}{2}}(\partial \Omega))}^2\\\leq C\| X'v\|_{L^2((t_0,t_2);H_h^{s-\frac{1}{2}}(\partial \Omega))}^2+Ch^N\| v\|_{L^2((t_0,t_2);H_h^{-N}(\partial \Omega))}.
		\end{multline*}
		
		Now, we will prove by induction that for $t_0<t_1<t_2<t_3$, and $X'\in \Psi^0_{h,\tangent}(\Omega)$ with $\WF(X)\subset \Ellip(X')$,
		\begin{equation}
			\begin{aligned}
				\label{e:ellip2a}
				&\|Xv\|_{L^2\left((t_0,t_1);H_h^{s+\frac{1}{2}}(\partial \Omega)\right)}^2\le C \|X'(hD_{x^1} -\Lambda)v\|_{L^2((t_0,t_3);H_h^{s-\frac{1}{2}}(\partial \Omega))}^2
				\\
				&\qquad\qquad\qquad+Ch\|X'v(t_0)\|_{H^s_h(\partial \Omega)}^2+Ch^{j+1}\|X'v\|_{L^2((t_0,t_2);H_h^{s-j-\frac{1}{2}}(\partial \Omega))}^2\\
				&\qquad\qquad\qquad +h^N\| v\|_{L^2((t_0,t_3);H_h^{-N}(\partial \Omega))}+h^N\| (hD_{x^1}-\Lambda)v\|_{L^2((t_0,t_3);H_h^{-N}(\partial \Omega))}\\
				&\qquad\qquad\qquad +h^N\| v(t_0)\|_{H_h^{-N}(\partial \Omega)}.
			\end{aligned}
		\end{equation}
		By~\eqref{e:ellip1a} we have~\eqref{e:ellip2a} with $j=0$. 
		Suppose that~\eqref{e:ellip2a} holds for some $j\geq 0$. Let $t_0<t_1<t_2'<t_2<t_3$ and $X'',X'\in \Psi^0_{h,\tangent}(\Omega)$ with $\WF(X)\subset \Ellip(X'')$ and $\WF(X'')\subset\Ellip(X')$. Then, by the induction hypothesis~\eqref{e:ellip2a} holds. 
		\begin{equation}
			\begin{aligned}
				\label{e:ellip3a}
				&\|Xv\|_{L^2\left((t_0,t_1);H_h^{s+\frac{1}{2}}(\partial \Omega)\right)}^2\le C \|X''(hD_{x^1} -\Lambda)v\|_{L^2((t_0,t_3);H_h^{s-\frac{1}{2}}(\partial \Omega))}^2
				\\
				&\qquad\qquad\qquad +h\|X''v(t_0)\|_{H^s_h(\partial \Omega)}^2+Ch^{j+1}\|X''v\|_{L^2((t_0,t_2');H_h^{s-j-\frac{1}{2}}(\partial \Omega))}^2\\
				&\qquad\qquad\qquad+h^N\| v\|_{L^2((t_0,t_3);H_h^{-N}(\partial \Omega))}+h^N\| (hD_{x^1}-\Lambda)v\|_{L^2((t_0,t_3);H_h^{-N}(\partial \Omega))}\\
				&\qquad\qquad\qquad+h^N\| v(t_0)\|_{H_h^{-N}(\partial \Omega)}.
			\end{aligned}
		\end{equation}
		By~\eqref{e:ellip1a} with $s$ replaced by $s-j-1$ and $(t_0,t_1,t_2)$ replaced by $(t_0,t_2',t_2)$, and $(X,X')$ replaced by $(X'',X')$,  we have 
		\begin{equation*}
			\begin{aligned}
				&\|X''v\|_{L^2((t_0,t_2');H_h^{s-j-\frac{1}{2}}(\partial \Omega))}^2\\
				&\le C \|X''(hD_{x^1} -\Lambda)v\|_{L^2((t_0,t_2);H_h^{s-j-\frac{3}{2}}(\partial \Omega))}^2 +h\|X''v(t_0)\|_{H^{s-j-1}_h(\partial \Omega)}^2\\
				&\qquad+Ch\|X'v\|_{L^2((t_0,t_2);H_h^{s-j-\frac{3}{2}}(\partial \Omega))}^2+h^N\| v\|_{L^2((t_0,t_2);H_h^{-N}(\partial \Omega))}.
			\end{aligned}
		\end{equation*}
		Using this in~\eqref{e:ellip3a} then implies~\eqref{e:ellip2a} with $j$ replaced by $j+1$ and hence completes the proof of the lemma.
	\end{proof}

	\subsection{Estimates for the operator $P$}
	In this section, we use the factorisation, Lemma~\ref{Pdecompmicrolocal} together with the estimates from the previous subsection to prove estimates on solutions to $Pu=f$. 
	
    Let $E_\pm$ be the factorisation operators of $P$ defined in Lemma \ref{Pdecompmicrolocal}. We have the following microlocal estimates.
	\begin{lem}
		\label{DtN Hyperbolic estimate lemma}
		Let $X,\tilde X\in \Psi^0_{\tangent,h}(\Omega)$ such that $\WF(X)\subset \Ellip(\tilde X)$, then for $t_0<t_1$, we have
		\begin{multline*}
			\|X(hD_{x^1}+\rmi E_\pm )u|_{x^1=t_0}\|_{H^s_h(\partial\Omega)}
			\\
			\le C \left(h^{-1}\|XPu\|_{L^2((t_0,t_1); H^s_h(\partial \Omega))}+ \|\tilde Xu\|_{H^1_h((t_0,t_1); H^s_h(\partial \Omega))}\right)
			+h^N\|\tilde X u\|_{L^2\left((t_0,t_1);H_h^{-N}(\partial \Omega)\right)}.
		\end{multline*}
	\end{lem}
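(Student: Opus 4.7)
The plan is to use the semiclassical Lee--Uhlmann factorization (Lemma \ref{Pdecompmicrolocal}) to reduce the desired bound to the basic hyperbolic energy estimate (Lemma \ref{Hyperbolic estimate lemma}). Set $v := (hD_{x^1}+\rmi E_\pm)u$; then the factorization gives
$$
(hD_{x^1}+h\tilde a - \rmi E_\pm)\,v \;=\; Pu + R u,\qquad R\in O(h^\infty)_{\Psi^{-\infty}_{\tangent,h}}.
$$
Writing $\Lambda := -h\tilde a + \rmi E_\pm \in \Psi^1_{\tangent,h}$, the left-hand side becomes $(hD_{x^1}-\Lambda)v$. Inspection of \eqref{Epm} shows that in the hyperbolic region $\sigma(E_\pm)$ is purely imaginary, so $\sigma(\Lambda) = \rmi\sigma(E_\pm)$ is real there, while in the elliptic region $\sigma(E_\pm) = -\sqrt{|\xi'|_g^2-\omega_0}$ and thus $\sigma(\Lambda) = -\rmi\sqrt{|\xi'|_g^2-\omega_0}$ has strictly negative imaginary part. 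Hence $\Im\sigma(\Lambda)\leq 0$ on all of $T^*\partial\Omega$, so Lemma \ref{Hyperbolic estimate lemma} is applicable to $\Lambda$.

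Commuting $X$ past $hD_{x^1}-\Lambda$, one finds
$$
(hD_{x^1}-\Lambda)(Xv) \;=\; XPu + XRu + hBv,
$$
where $B\in \Psi^0_{\tangent,h}$ is obtained from $[hD_{x^1},X]=-\rmi h\partial_{x^1}X$, $[h\tilde a, X]$, and $[\rmi E_\pm, X]$, each of which lies in $h\Psi^0_{\tangent,h}$ with wavefront set inside $\WF(X)\subset \Ellip(\tilde X)$. Lemma \ref{Hyperbolic estimate lemma} then produces
\begin{align*}
\|Xv|_{x^1=t_0}\|_{H^s_h(\partial\Omega)}
&\leq C\bigl(h^{-1}\|XPu\|_{L^2((t_0,t_1);H^s_h)} + \|Xv\|_{L^2((t_0,t_1);H^s_h)} + \|Bv\|_{L^2((t_0,t_1);H^s_h)}\bigr)\\
&\qquad + h^N\,\|\tilde X u\|_{L^2((t_0,t_1);H^{-N}_h)},
\end{align*}
the $h^N$ term absorbing the $XRu$ contribution, since $XR$ is $O(h^\infty)$ between any pair of Sobolev spaces.

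It remains to dominate the two bulk terms $\|Xv\|_{L^2 H^s_h}$ and $\|Bv\|_{L^2 H^s_h}$ by $\|\tilde X u\|_{H^1_h((t_0,t_1);H^s_h)}$. Since $\WF(X),\WF(B)\subset \Ellip(\tilde X)$, the elliptic parametrix construction provides $X = X_0\tilde X + O(h^\infty)_{\Psi^{-\infty}_{\tangent,h}}$ and $B = B_0\tilde X + O(h^\infty)_{\Psi^{-\infty}_{\tangent,h}}$ with $X_0,B_0\in\Psi^0_{\tangent,h}$. The commutators $[\tilde X, hD_{x^1}]$ and $[\tilde X, E_\pm]$ lie in $h\Psi^0_{\tangent,h}$, giving
$$
\tilde X v \;=\; (hD_{x^1}+\rmi E_\pm)(\tilde X u) + h F u
$$
for some $F\in\Psi^0_{\tangent,h}$; applying $X_0$ or $B_0$ and using boundedness of tangential pseudodifferential operators of order $0$, one bounds both $\|Xv\|_{L^2 H^s_h}$ and $\|Bv\|_{L^2 H^s_h}$ by $C\|\tilde X u\|_{H^1_h((t_0,t_1);H^s_h)}$ plus the residual $h^N\|\tilde X u\|_{L^2 H^{-N}_h}$. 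The main technical obstacle is this last step: one must track at each stage that the wavefront sets of all error symbols remain inside $\Ellip(\tilde X)$, so that every residual generated by the elliptic parametrix is genuinely $O(h^\infty)$ in a Sobolev space of arbitrarily negative order, thereby yielding the single $h^N$ remainder displayed in the statement.
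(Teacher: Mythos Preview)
Your proposal is correct and follows essentially the same approach as the paper: set $v=(hD_{x^1}+\rmi E_\pm)u$, use the factorization (Lemma~\ref{Pdecompmicrolocal}) to rewrite $(hD_{x^1}-\Lambda)(Xv)$ as $XPu$ plus a commutator term of order $h$ acting on $v$, and then apply Lemma~\ref{Hyperbolic estimate lemma} with $\Lambda=\rmi E_\pm - h\tilde a$, which satisfies $\Im\sigma(\Lambda)\le 0$ everywhere. The paper's proof is terser in the final step (it writes the inequality and says ``which proves the lemma''), while you spell out the elliptic-parametrix bookkeeping that controls the bulk terms $\|Xv\|_{L^2H^s_h}$ and $\|Bv\|_{L^2H^s_h}$ by $\|\tilde X u\|_{H^1_h((t_0,t_1);H^s_h)}$; the argument is the same.
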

	\begin{proof}
		First note that
		\begin{multline}
			\label{DtN Hyperbolic estimate lemma eqn 1}
			(hD_{x^1}+ha-\rmi E_\pm)X(hD_{x^1}+\rmi E_\pm)u
			\\
			=XPu+([hD_{x^1}+ha-\rmi E_\pm,X])(hD_{x^1}+\rmi E_\pm)u+O(h^\infty)_{\Psi^{-\infty}_{\tangent,h}}(\tilde X u).
		\end{multline}
		Let $\Lambda=\rmi E_\pm-h a$, which satisfies the criteria in Lemma \ref{Hyperbolic estimate lemma}. Then by setting $v=X(hD_{x^1}+\rmi E_\pm)u$, we have
		$$
		\|v|_{x^1=t_0}\|_{H^s_h(\partial\Omega)}\le C \left(h^{-1} \|(hD_{x^1}-\Lambda )v\|_{L^2((t_0,t_1); H^s_h(\partial \Omega))}+\|v\|_{L^2((t_0,t_1); H^s_h(\partial \Omega))}\right).
		$$
		That is
		\begin{multline*}
			\|X(hD_{x^1}+\rmi E_\pm)u|_{x^1=t_0}\|_{H^s_h(\partial\Omega)}
			\\
			\le C \Big(h^{-1}\|XPu\|_{L^2((t_0,t_1); H^s_h(\partial \Omega))}+ h^{-1}\|[hD_{x^1}+h a-\rmi E_\pm,X](hD_{x^1}+\rmi E_\pm)u\|_{L^2((t_0,t_1); H^s_h(\partial \Omega))}
			\\
			+\|X(hD_{x^1}+\rmi E_\pm)u\|_{L^2((t_0,t_1); H^s_h(\partial \Omega))}\Big)
			+h^N\|\tilde X u\|_{L^2\left((t_0,t_1);H_h^{-N}(\partial \Omega)\right)},
		\end{multline*}
		which proves the lemma.
	\end{proof}
	
	\begin{lem}
		\label{DtN Elliptic estimate lemma -}
		Let $\e>0$, $X,\tilde X\in \Psi^0_{\tangent,h}(\Omega)$ such that $\WF(X)\subset \Ellip(\tilde X)\subset \WF(\tilde X)\subset \{(x,\xi):\Re\sigma(E_-)(x,\xi)< -\e \}$. Also, let $s\in \R$ and $t_0<t_1<t_2$. Then, there exists $h_0,c,C>0$ such that
		\begin{equation}
			\begin{aligned}
				\label{DtN Elliptic estimate eqn}
				&\|X(hD_{x^1}+\rmi E_-)u(t_0)\|_{H^s_h(\partial \Omega)} +c h^{-\frac{1}{2}}\|X(hD_{x^1}+\rmi E_-)u\|_{L^2\left((t_0,t_1);H_h^{s+\frac{1}{2}}(\partial \Omega)\right)}
				\\
				&\le Ch^{-\frac{1}{2}}\|\tilde XPu\|_{L^2((t_0,t_2);H_h^{s-\frac{1}{2}}(\partial \Omega))}
				\\
				&\qquad+Ch^N\|u\|_{H^1_h((t_0,t_2);H_h^{-N}(\partial \Omega))}+Ch^N\| Pu\|_{L^2((t_0,t_2);H_h^{-N}(\partial \Omega))}
			\end{aligned}
		\end{equation}
		for all $0<h<h_0$. 
		If $\Re\sigma(E_-)(x,\xi')< 0$ on $[t_0,t_2]\times T^*\partial\Omega$, then we have $X=\tilde X=I$ and a better estimate
		\begin{equation}
			\label{DtN Elliptic estimate eqn better}
			\begin{aligned}
				&\|(hD_{x^1}+\rmi E_-)u(t_0)\|_{H^s_h(\partial \Omega)} +ch^{-\frac{1}{2}}\|(hD_{x^1}+\rmi E_-)u\|_{L^2\left((t_0,t_1);H_h^{s+\frac{1}{2}}(\partial \Omega)\right)}
				\\
				&\le Ch^{-\frac{1}{2}}\|Pu\|_{L^2((t_0,t_2);H_h^{s-\frac{1}{2}}(\partial \Omega))}+Ch^N\|u\|_{L^2((t_0,t_2);H_h^{-N}(\partial \Omega))}
			\end{aligned}
		\end{equation}
		for all $0<h<h_0$.
	\end{lem}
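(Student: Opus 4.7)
The plan is to reduce the claim to the first-order estimate of Lemma~\ref{Elliptic estimate lemma} by setting $v:=(hD_{x^1}+\rmi E_-)u$ and exploiting the factorization of Lemma~\ref{Pdecompmicrolocal}. I pick an intermediate cutoff $\tilde X'\in\Psi^0_{\tangent,h}$ with $\WF(\tilde X)\subset\Ellip(\tilde X')$ and $\WF(\tilde X')\subset\{\Re\sigma(E_-)<-\e/2\}$. Applying Lemma~\ref{Pdecompmicrolocal} with this $\tilde X'$ in place of the cutoff $X$ there gives
$$
\tilde X'(hD_{x^1}+h\tilde a-\rmi E_-)v=\tilde X'\,Pu+O(h^\infty)_{\Psi^{-\infty}_{\tangent,h}}u.
$$
Writing $\Lambda:=\rmi E_- - h\tilde a\in\Psi^1_{\tangent,h}$, this says $\tilde X'(hD_{x^1}-\Lambda)v$ coincides with $\tilde X' Pu$ modulo a smoothing remainder applied to $u$, on all of $\Ellip(\tilde X')\supset\WF(\tilde X)$.

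Next I check that Lemma~\ref{Elliptic estimate lemma} applies to $v$ with this $\Lambda$. The principal symbol satisfies $\Im\sigma(\Lambda)=\Re\sigma(E_-)$, and since $\sigma(E_-)=-\sqrt{|\xi'|_g^2-\omega_0}$ is real and grows like $-|\xi'|$ as $|\xi'|\to\infty$, the pointwise hypothesis $\Re\sigma(E_-)<-\e$ on $\WF(X)$ upgrades to $\Im\sigma(\Lambda)\le -c\langle\xi'\rangle$ on a conic neighborhood of $\WF(X)$ still contained in $\Ellip(\tilde X)$. Hence Lemma~\ref{Elliptic estimate lemma} (applied with the same $s$, $X$, $\tilde X$) yields
$$
\|Xv(t_0)\|_{H^s_h}+ch^{-\tfrac12}\|Xv\|_{L^2((t_0,t_1);H^{s+\tfrac12}_h)}\le Ch^{-\tfrac12}\|\tilde X(hD_{x^1}-\Lambda)v\|_{L^2((t_0,t_2);H^{s-\tfrac12}_h)}+\text{smoothing errors in } v.
$$
Substituting $\tilde X(hD_{x^1}-\Lambda)v=\tilde X Pu+O(h^\infty)u$ converts the first term on the right into $Ch^{-\tfrac12}\|\tilde X Pu\|_{L^2;H^{s-\tfrac12}_h}$ plus an $h^N$ error applied to $u$. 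The residual negative-Sobolev norms of $v$ and of $(hD_{x^1}-\Lambda)v$ are bounded by $\|u\|_{H^1_h;H^{-N'}_h}$ and $\|Pu\|_{L^2;H^{-N'}_h}+O(h^\infty)\|u\|$ respectively (because $v$ is a first-order semiclassical operator applied to $u$), producing precisely the two extra $h^N$ terms in the statement after relabeling $N$.

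The improved estimate in the case where $\Re\sigma(E_-)<0$ holds on all of $[t_0,t_2]\times T^*\partial\Omega$ follows from the same argument but using the global factorization of Corollary~\ref{Pdecomp}; no cutoffs $X,\tilde X$ are needed, so all errors arise from the single $O(h^\infty)_{\Psi^{-\infty}}u$ remainder of the factorization, which produces the one $h^N\|u\|_{L^2;H^{-N}_h}$ term. The main obstacle is the microlocal bookkeeping in the general case: one has to pick the intermediate cutoffs so that both the $O(h^\infty)$ factorization remainder and the commutator errors built into the proof of Lemma~\ref{Elliptic estimate lemma} land inside $\Ellip(\tilde X)$, so that they can be absorbed into the $h^N$ terms on the right-hand side. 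This is exactly what the hypothesis $\WF(X)\subset\Ellip(\tilde X)$ is designed to accommodate, and it is the reason one does not lose powers of $h$ when converting the microlocal first-order estimate into an estimate for $P$.
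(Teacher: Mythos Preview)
Your argument for the first estimate~\eqref{DtN Elliptic estimate eqn} is correct and is essentially the paper's proof: set $v=(hD_{x^1}+\rmi E_-)u$, $\Lambda=\rmi E_--h\tilde a$, use the factorization to identify $(hD_{x^1}-\Lambda)v$ with $Pu$ microlocally, and feed this into Lemma~\ref{Elliptic estimate lemma}; the residual $h^N\|v\|$ and $h^N\|(hD_{x^1}-\Lambda)v\|$ terms are bounded by $\|u\|_{H^1_h;H^{-N}_h}$ and $\|Pu\|_{L^2;H^{-N}_h}$ as you say.

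There is, however, a genuine gap in your treatment of the improved estimate~\eqref{DtN Elliptic estimate eqn better}. You assert that with $X=\tilde X=I$ ``all errors arise from the single $O(h^\infty)_{\Psi^{-\infty}}u$ remainder of the factorization'', but this is not so: Lemma~\ref{Elliptic estimate lemma} itself still produces a term $h^N\|v\|_{L^2((t_0,t_2);H_h^{-N})}$, and since $v=(hD_{x^1}+\rmi E_-)u$ contains a normal derivative, this is \emph{a priori} only controlled by $\|u\|_{H^1_h;H^{-N}_h}$, not by the $\|u\|_{L^2;H^{-N}_h}$ that appears in~\eqref{DtN Elliptic estimate eqn better}. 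The paper closes this gap by introducing an intermediate $t_2'\in(t_1,t_2)$, splitting the offending term over $(t_0,t_1)$ and $(t_1,t_2')$, absorbing the first piece into the left-hand side (it is dominated by $ch^{-1/2}\|v\|_{L^2((t_0,t_1);H_h^{s+1/2})}$ for $h$ small), and bounding the second piece via \emph{interior} elliptic regularity: the hypothesis $\Re\sigma(E_-)<0$ on $[t_0,t_2]\times T^*\partial\Omega$ forces $P$ to be elliptic there, so
\[
\|(hD_{x^1}+\rmi E_-)u\|_{H^1_h((t_1,t_2');H_h^{-N})}\le C\|Pu\|_{L^2((t_0,t_2);H_h^{-N})}+C\|u\|_{L^2((t_0,t_2);H_h^{-N})}.
\]
Without this step the improvement from $H^1_h$ to $L^2$ in the error is unjustified.
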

    \begin{rem}
    \label{Epm comparison}
    Note that Lemma \ref{DtN Hyperbolic estimate lemma} says that both $E_+$ and $E_-$ are good approximation to $h\partial_{x^1}$ if we allow some $H^1_h$-error of $u$. Lemma \ref{DtN Elliptic estimate lemma -} says $E_-$ is a better approximation than $E_+$ as the $H^1_h$-error of $u$ is reduced to $h^\infty$ small for $E_-$.
    \end{rem}

	An immediate corollary of Lemma \ref{DtN Elliptic estimate lemma -} is the following elliptic estimate.
	\begin{cor}
		\label{DtN Elliptic estimate cor -}
		Let $\omega_0\in\mathbb{R}$ and suppose that $|\omega_0-\omega|<Ch$ and $P$ be given as in \eqref{P(w;g,L)}, Then, there exists $h_0,C,\omega'>0$ such that for $0<h<h_0$,
		\begin{equation}
			\label{hDx_1 estimate 1}
			\begin{aligned}
				&\|hD_{x^1}u(t_0)\|_{H^s_h(\partial \Omega)}
				\le Ch^{-\frac{1}{2}}\|(P+\omega')u\|_{L^2((t_0,t_2);H_h^{s-\frac{1}{2}}(\partial \Omega))}
				\\
				&\qquad+ \|u(t_0)\|_{H^{s+1}_h(\partial \Omega)}+Ch^N\|u\|_{L^2((t_0,t_2);H_h^{-N}(\partial \Omega))},
			\end{aligned}
		\end{equation}
		and the following estimate holds,
		\begin{equation*}
			\label{hDx_1 estimate 2}
			\begin{aligned}
				&\|hD_{x^1}u\|_{L^2\left((t_0,t_1);H_h^{s+\frac{1}{2}}(\partial \Omega)\right)}
				\le C\|(P+\omega')u\|_{L^2((t_0,t_2);H_h^{s-\frac{1}{2}}(\partial \Omega))}
				\\
				&\qquad+\|u\|_{L^2\left((t_0,t_1);H_h^{s+\frac{3}{2}}(\partial \Omega)\right)}+Ch^N\|u\|_{L^2((t_0,t_2);H_h^{-N}(\partial \Omega))}.
			\end{aligned}
		\end{equation*}
	\end{cor}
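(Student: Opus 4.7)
The strategy is to reduce to the case $\omega_0<0$ where the global (non-microlocalized) ``better estimate'' \eqref{DtN Elliptic estimate eqn better} from Lemma~\ref{DtN Elliptic estimate lemma -} applies, and then convert an estimate on $(hD_{x^1}+\rmi E_-)u$ into one on $hD_{x^1}u$ by using that $E_-\in \Psi^1_{\tangent,h}$. Concretely, I would pick $\omega'>\omega_0$ so that the shifted operator $P+\omega'=P(\omega-\omega';g,L)$ corresponds to the threshold $\omega_0':=\omega_0-\omega'<0$, and observe that the hypothesis $|\omega-\omega_0|<Ch$ is preserved under the shift, namely $|(\omega-\omega')-\omega_0'|=|\omega-\omega_0|<Ch$. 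For such $\omega_0'<0$ we have $|\xi'|^2_g-\omega_0'>0$ for all $(x',\xi')\in T^*\partial \Omega$, so by Lemma~\ref{Pdecompmicrolocal} (or Corollary~\ref{Pdecomp}) the factorization is global and the principal symbol of the corresponding $E_-$ satisfies $\Re\sigma(E_-)=-\sqrt{|\xi'|^2_g-\omega_0'}<0$ uniformly on $[t_0,t_2]\times T^*\partial \Omega$, which is exactly the hypothesis needed to invoke the ``better'' version of Lemma~\ref{DtN Elliptic estimate lemma -}.

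Applying \eqref{DtN Elliptic estimate eqn better} with $P$ replaced by $P+\omega'$ then yields
\begin{equation*}
\|(hD_{x^1}+\rmi E_-)u(t_0)\|_{H^s_h(\partial \Omega)}+ch^{-\frac{1}{2}}\|(hD_{x^1}+\rmi E_-)u\|_{L^2((t_0,t_1);H^{s+\frac{1}{2}}_h(\partial \Omega))}
\end{equation*}
is bounded by $Ch^{-\frac{1}{2}}\|(P+\omega')u\|_{L^2((t_0,t_2);H^{s-\frac{1}{2}}_h(\partial \Omega))}+Ch^N\|u\|_{L^2((t_0,t_2);H^{-N}_h(\partial \Omega))}$. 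To pass from $(hD_{x^1}+\rmi E_-)u$ to $hD_{x^1}u$ I would write $hD_{x^1}u=(hD_{x^1}+\rmi E_-)u-\rmi E_- u$, and use that $E_-\in\Psi^1_{\tangent,h}$ is bounded as a tangential operator with $\|E_- w\|_{H^s_h(\partial \Omega)}\le C\|w\|_{H^{s+1}_h(\partial \Omega)}$, so that the trace estimate produces the term $\|u(t_0)\|_{H^{s+1}_h(\partial \Omega)}$ appearing in \eqref{hDx_1 estimate 1} and the interior estimate produces the term $\|u\|_{L^2((t_0,t_1);H^{s+\frac{3}{2}}_h(\partial \Omega))}$ appearing in the second estimate.

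The two displayed estimates then follow by triangle inequality (with the $c>0$ factor multiplied through on the $L^2$-in-$x^1$ bound, so that the $h^{-\frac{1}{2}}$ on the right-hand side of the ``better estimate'' combines with $h^{\frac{1}{2}}$ to give $C$ as claimed). There is no serious obstacle here; the only points requiring care are tracking that the symbol-class membership $E_-\in\Psi^1_{\tangent,h}$ is independent of the chosen $\omega'$ up to lower-order terms, and that the remainder $Ch^N\|u\|_{L^2((t_0,t_2);H^{-N}_h)}$ in \eqref{DtN Elliptic estimate eqn better} already absorbs any $h^N$ errors coming from the factorization modulo $\Psi^{-\infty}_{\tangent,h}$. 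Thus the proof reduces to a one-line invocation of Lemma~\ref{DtN Elliptic estimate lemma -} after the shift, followed by the triangle inequality and tangential boundedness of $E_-$.
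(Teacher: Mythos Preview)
Your proposal is correct and follows exactly the approach of the paper's own proof: choose $\omega'>\omega_0$ so that the shifted operator $P+\omega'=P(\omega-\omega';g,L)$ has negative threshold, then invoke the global ``better estimate'' \eqref{DtN Elliptic estimate eqn better} from Lemma~\ref{DtN Elliptic estimate lemma -}. The paper's proof is a single sentence and leaves the triangle-inequality step $hD_{x^1}u=(hD_{x^1}+\rmi E_-)u-\rmi E_-u$ (together with $E_-\in\Psi^1_{\tangent,h}$) implicit; you have correctly spelled out this detail.
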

	\begin{proof}[Proof of Corollary \ref{DtN Elliptic estimate cor -}]
		We only need to choose $\omega'$ large enough such that $P+\omega'$ makes 
        $$
        \Re\sigma(E_-)(x,\xi')< 0 \quad \text{on} \quad [t_0,t_2]\times T^*\partial\Omega,
        $$ 
        then estimate \eqref{DtN Elliptic estimate eqn better} holds for $E_-(\omega')$ and $P+\omega'$.
	\end{proof}
	\begin{proof}[Proof of Lemma \ref{DtN Elliptic estimate lemma -}]
		Let $t_0<t_1<t_2'<t_2$. We consider the $E_-$ factorisation of \eqref{DtN Hyperbolic estimate lemma eqn 1} in Lemma \ref{DtN Hyperbolic estimate lemma}. In other words, we set $\Lambda=\rmi E_--ha$ and substitute $v=(hD_{x^1}+\rmi E_-)u$. Let $X'\in \Psi_{h,\tangent}^0(\Omega)$ with $\WF(X)\subset\Ellip (X_1)$ and $\WF(X_1)\subset \Ellip(\tilde{X})$. Then replacing $(X,\tilde{X})$ by $(X,X_1)$ in Lemma \ref{Elliptic estimate lemma}, we obtain
		\begin{equation}
			\label{e:george}
			\begin{aligned}
				&\|X(hD_{x^1}+\rmi E_-)u(t_0)\|_{H^s_h(\partial \Omega)} +ch^{-\frac{1}{2}}\|X(hD_{x^1}+\rmi E_-)u\|_{L^2\left((t_0,t_1);H_h^{s+\frac{1}{2}}(\partial \Omega)\right)}\\
				&
				\le Ch^{-\frac{1}{2}}\|X_1Pu\|_{L^2((t_0,t_2');H_h^{s-\frac{1}{2}}(\partial \Omega))}+Ch^N\|X_1(hD_{x^1}+\rmi E_-)u \|_{L^2((t_0,t_2');H_h^{-N}(\partial \Omega))}\\
				&\quad+
				Ch^N\|Pu \|_{L^2((t_0,t_2');H_h^{-N}(\partial \Omega))}.
			\end{aligned}
		\end{equation}
		To obtain~\eqref{DtN Elliptic estimate eqn} we simply estimate
		\begin{align*}
			&h^{-\frac{1}{2}}\|X_1Pu\|_{L^2((t_0,t_2');H_h^{s-\frac{1}{2}}(\partial \Omega))}+Ch^N\|X_1(hD_{x^1}+\rmi E_-)u \|_{L^2((t_0,t_2');H_h^{-N}(\partial \Omega))}\\
			&\le Ch^{-\frac{1}{2}}\|\tilde{X}Pu\|_{L^2((t_0,t_2');H_h^{s-\frac{1}{2}}(\partial \Omega))}+Ch^N\|\tilde{X} u\|_{H_h^1((t_0,t_2');H_h^{-N}(\partial \Omega))}\\
			&\quad+Ch^N\|Pu \|_{L^2((t_0,t_2');H_h^{-N}(\partial \Omega))}.
		\end{align*}

		Now, we are left with the case $\Re\sigma(E_-)(x,\xi')< 0$ on $T^*\Omega$. Note that by setting $X=X_1=I$ in \eqref{e:george}, one has
		\begin{multline}
			\label{DtN Elliptic estimate simple}
			\|(hD_{x^1}+\rmi E_-)u(t_0)\|_{H^s_h(\partial \Omega)} +ch^{-\frac{1}{2}}\|(hD_{x^1}+\rmi E_-)u\|_{L^2\left((t_0,t_1);H_h^{s+\frac{1}{2}}(\partial \Omega)\right)}
			\\
			\le Ch^{-\frac{1}{2}}\|Pu\|_{L^2((t_0,t_2');H_h^{s-\frac{1}{2}}(\partial \Omega))}
			+h^N\| u\|_{L^2((t_0,t_2');H_h^{-N}(\partial \Omega))}
			\\
			+h^N\| (hD_{x^1}+\rmi E_-)u\|_{L^2((t_0,t_2');H_h^{-N}(\partial \Omega))}.
		\end{multline}
		Since $\Re\sigma(E_-)(x,\xi')< 0$ on $[t_0,t_2]\times T^*\partial\Omega$ implies $P$ is elliptic in the neighbourhood of $[t_1,t_2']\times \partial \Omega$, this means 
		$$
		\| (hD_{x^1}+\rmi E_-)u\|_{H_h^1((t_1,t_2');H_h^{-N}(\partial \Omega))}\le C\|Pu\|_{L^2((t_0,t_2);H_h^{-N}(\partial \Omega))}+\|u\|_{L^2((t_0,t_2);H_h^{-N}(\partial \Omega))},
		$$
		which can be applied to \eqref{DtN Elliptic estimate simple} to complete the proof.
	\end{proof}
	
	The application of Lemma \ref{l:lopa} to $E_+$ is given by the following.
	
	\begin{lem}
		\label{DtN Elliptic estimate lemma +}
		Let $\e>0$, $X,\tilde X\in \Psi^0_{\tangent,h}(\Omega)$ such that $\WF(X)\subset \Ellip(\tilde X)\subset \WF(\tilde X)\subset \{(x,\xi):\Re\sigma(E_+)(x,\xi)> \e \}$. Also, let $s\in \R$ and $t_0<t_1<t_2$. Then, there exists $h_0,c,C>0$ such that
		\begin{equation}
			\label{DtN Elliptic estimate eqn +}
			\begin{aligned}
				& \|X(hD_{x^1}+\rmi E_+)u\|_{L^2\left((t_0,t_1);H_h^{s+\frac{1}{2}}(\partial \Omega)\right)}\\
				&\le C\|\tilde{X}Pu\|_{L^2\left((t_0,t_2);H_h^{s-\frac{1}{2}}(\partial \Omega)\right)}+Ch^{\frac{1}{2}}\|\tilde{X}(hD_{x^1}+\rmi E_+)u(t_0)\|_{H^s_h(\partial \Omega)}\\
				& \qquad+h^N\| (hD_{x^1}+\rmi E_+)u\|_{L^2\left((t_0,t_2);H_h^{-N}(\partial \Omega)\right)}+h^N\| Pu\|_{L^2\left((t_0,t_2);H_h^{-N}(\partial \Omega)\right)}\\
				& \qquad+h^N\| (hD_{x^1}+\rmi E_+)u(t_0)\|_{H_h^{-N}(\partial \Omega)}.
			\end{aligned}
		\end{equation}
		for all $0<h<h_0$. 
		If $\Re\sigma(E_+)(x,\xi')> 0$ on $[t_0,t_2]\times T^*\partial\Omega$, then we have $X=\tilde X=I$ and a better estimate
		\begin{multline}
			\label{DtN Elliptic estimate eqn better +}
			\|(hD_{x^1}+\rmi E_+)u\|_{L^2\left((t_0,t_1);H_h^{s+\frac{1}{2}}(\partial \Omega)\right)}\\
			\\
			\le C\|Pu\|_{L^2\left((t_0,t_2);H_h^{s-\frac{1}{2}}(\partial \Omega)\right)}+Ch^{\frac{1}{2}}\|(hD_{x^1}+\rmi E_+)u(t_0)\|_{H^s_h(\partial \Omega)}
		\end{multline}
		for all $0<h<h_0$.
	\end{lem}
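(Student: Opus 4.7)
The plan is to mirror the proof of Lemma \ref{DtN Elliptic estimate lemma -}, substituting Lemma \ref{l:lopa} for Lemma \ref{Elliptic estimate lemma}. Concretely, I would start from the $E_+$-factorization of Lemma \ref{Pdecompmicrolocal}:
\begin{equation*}
(hD_{x^1} + h\tilde{a} - \rmi E_+)(hD_{x^1}+\rmi E_+)u = Pu + O(h^\infty)_{\Psi^{-\infty}_{\tangent,h}}u.
\end{equation*}
Setting $v := (hD_{x^1} + \rmi E_+)u$ and $\Lambda := \rmi E_+ - h\tilde{a}$, this reads $(hD_{x^1} - \Lambda)v = Pu + O(h^\infty)u$. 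Since $\Im \sigma(\Lambda) = \Re\sigma(E_+)$ and $E_+$ is of order one, the hypothesis $\Re\sigma(E_+) > \e$ on $\WF(\tilde X)$ implies $\Im\sigma(\Lambda) > \e'\langle\xi'\rangle$ there, matching the hypothesis of Lemma \ref{l:lopa} precisely.

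Next, I would pick auxiliary cutoffs $X_1 \in \Psi^0_{\tangent,h}$ with $\WF(X) \subset \Ellip(X_1)$ and $\WF(X_1) \subset \Ellip(\tilde X)$, and apply Lemma \ref{l:lopa} to $v$ with cutoffs $X$ and $X_1$. Using
\begin{equation*}
X_1(hD_{x^1}-\Lambda)v = X_1 Pu + O(h^\infty)_{\Psi^{-\infty}_{\tangent,h}}u,
\end{equation*}
the right-hand side of Lemma \ref{l:lopa} translates directly into the three types of error terms appearing in \eqref{DtN Elliptic estimate eqn +}: an $H_h^{s-1/2}$-norm of $X_1Pu$, the initial-data term $h^{1/2}\|X_1(hD_{x^1}+\rmi E_+)u(t_0)\|_{H_h^s}$, and three $h^N$-residuals. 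A bootstrap argument analogous to the one at the end of the proof of Lemma \ref{DtN Elliptic estimate lemma -} then replaces $X_1$ by $\tilde X$ throughout, producing \eqref{DtN Elliptic estimate eqn +} modulo $O(h^\infty)$ remainders.

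For the improved estimate \eqref{DtN Elliptic estimate eqn better +}, the global positivity $\Re\sigma(E_+) > 0$ on $[t_0,t_2]\times T^*\partial\Omega$ lets me take $X = \tilde X = I$, and moreover $P$ itself becomes microlocally elliptic in a neighborhood of $[t_1,t_2']\times \partial\Omega$ (since both factors of the factorization are elliptic there). Consequently the $h^N$ error $\|(hD_{x^1}+\rmi E_+)u\|_{L^2(H_h^{-N})}$ can be absorbed into the left-hand side for $h$ small, the term $\|Pu\|_{L^2(H_h^{-N})}$ is dominated by the principal $Pu$ term on the right, and $\|(hD_{x^1}+\rmi E_+)u(t_0)\|_{H_h^{-N}}$ is dominated by the boundary $v(t_0)$ term.

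The main obstacle is essentially bookkeeping: verifying the sign match between the factor $E_+$ and the orientation of the $x^1$-flow implicit in Lemma \ref{l:lopa}, and iterating the microlocal elliptic parametrix so that each commutator remainder gains one power of $h$ and eventually becomes negligible. The key conceptual point---that the $E_+$ and $E_-$ factorizations yield \emph{opposite} microlocal estimates, one requiring interior/exterior-type data and the other boundary-type data---is already encoded in the contrast between Lemmas \ref{Elliptic estimate lemma} and \ref{l:lopa}, so after the sign verification the argument proceeds in complete parallel with that of Lemma \ref{DtN Elliptic estimate lemma -}.
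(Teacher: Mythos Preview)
Your proposal is correct and follows essentially the same route as the paper: set $\Lambda=\rmi E_+-h\tilde a$, $v=(hD_{x^1}+\rmi E_+)u$, and apply Lemma~\ref{l:lopa} directly to obtain \eqref{DtN Elliptic estimate eqn +}, with \eqref{DtN Elliptic estimate eqn better +} then following by taking $X=\tilde X=I$. The paper's own proof is in fact terser than yours (one sentence), and your added discussion of the absorption of the $h^N$ remainders in the global case is a reasonable elaboration rather than a deviation.
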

	
	\begin{proof}
		By setting $\Lambda=\rmi E_+-ha$ and $v=(hD_{x^1}+\rmi E_+)u$ in equation \eqref{Elliptic estimate eqna}, which gives equation \eqref{DtN Elliptic estimate eqn +}. Equation \eqref{DtN Elliptic estimate eqn better +} follows immediately from equation \eqref{DtN Elliptic estimate eqn +}.
	\end{proof}

	Combining Lemma~\ref{DtN Elliptic estimate lemma -} and Lemma~\ref{DtN Elliptic estimate lemma +} yields the following estimate.
	\begin{lem}
		\label{DtN microlocal Elliptic estimate lemma H1 -}
		Let $\e>0$, $B\in \Psi^1_h(\partial\Omega)$, $X,\tilde X\in \Psi^0_{\tangent,h}(\Omega)$ such that $\WF(X)\subset \Ellip(\tilde X)\subset \WF(\tilde X)\subset \{(x,\xi):\Re\sigma(E_-)(x,\xi)< -\e \}$. Also, let $s\in \R$ and $t_0<t_1<t_2$. If $\Re\sigma(E_-)(x,\xi)< -\e$, then there exists $h_0,C>0$ such that
		\begin{equation}
			\label{DtN microlocal Elliptic estimate lemma -}
			\begin{aligned}
				& \|Xu\|_{H^1_h\left((t_0,t_1);H_h^{s}(\partial \Omega)\right)}+\|Xu\|_{L^2\left((t_0,t_1);H_h^{s+1}(\partial \Omega)\right)}\\
				&\le C\|\tilde XPu\|_{L^2((t_0,t_2);H_h^{s-1}(\partial \Omega))}+Ch^{\frac{1}{2}}\|\tilde{X}u(t_0)\|_{H^{s+\frac{1}{2}}_h(\partial \Omega)}\\
				& \qquad+Ch^N\|u\|_{H^1_h((t_0,t_2);H_h^{-N}(\partial \Omega))}+Ch^N\| Pu\|_{L^2((t_0,t_2);H_h^{-N}(\partial \Omega))}+Ch^N\|u(t_0)\|_{H_h^{-N}(\partial\Omega)}
			\end{aligned}
		\end{equation}
		for all $0<h<h_0$. 
	\end{lem}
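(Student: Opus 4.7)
The plan is to bootstrap from Lemma \ref{DtN Elliptic estimate lemma -}, which controls $X(hD_{x^1}+\rmi E_-)u$ in terms of $Pu$, to a control of $Xu$ itself by applying Lemma \ref{l:lopa} to $u$, viewing $(hD_{x^1}+\rmi E_-)$ as a first-order operator whose principal symbol is microlocally dissipative on $\WF(X)$. To this end, choose a chain $X\prec X_1\prec\tilde X$ of tangential cutoffs, meaning $\WF(X)\subset\Ellip(X_1)$ and $\WF(X_1)\subset\Ellip(\tilde X)\subset\WF(\tilde X)\subset\{\Re\sigma(E_-)<-\e\}$, and an auxiliary time $t_1<t_1'<t_2$.

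\textbf{Step 1 (control of the factor).} Apply Lemma \ref{DtN Elliptic estimate lemma -} with $(X,\tilde X)$ replaced by $(X_1,\tilde X)$ and regularity index $s-\tfrac{1}{2}$ on the interval $(t_0,t_1')\subset(t_0,t_2)$, to obtain
\[
\|X_1(hD_{x^1}+\rmi E_-)u\|_{L^2((t_0,t_1');H_h^{s}(\partial\Omega))}\le C\|\tilde X Pu\|_{L^2((t_0,t_2);H_h^{s-1}(\partial\Omega))}+\mathrm{errors},
\]
where the ``errors'' are the $h^N$-type terms appearing in Lemma \ref{DtN Elliptic estimate lemma -}.

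\textbf{Step 2 (transport recovery of $u$).} Set $\Lambda:=-\rmi E_-\in\Psi^1_{\tangent,h}(\Omega)$. The hypothesis $\Re\sigma(E_-)<-\e$ on $\WF(X)$ forces $|\xi'|^2_g>\omega_0+\e^2$ there, so $\Im\sigma(\Lambda)=\sqrt{|\xi'|^2_g-\omega_0}\ge c\langle\xi'\rangle$ uniformly on $\WF(X)$ (combining the fiberwise lower bound on $|\xi'|$ with the asymptotics at infinity). Since $(hD_{x^1}-\Lambda)u=(hD_{x^1}+\rmi E_-)u$, Lemma \ref{l:lopa} applied to $v=u$ with cutoffs $(X,X_1)$ and index $s+\tfrac{1}{2}$ yields
\[
\|Xu\|_{L^2((t_0,t_1);H_h^{s+1}(\partial\Omega))}\le C\|X_1(hD_{x^1}+\rmi E_-)u\|_{L^2((t_0,t_1');H_h^{s}(\partial\Omega))}+Ch^{1/2}\|X_1 u(t_0)\|_{H_h^{s+1/2}(\partial\Omega)}+\mathrm{errors}.
\]
Composing with Step 1 produces the $L^2((t_0,t_1);H_h^{s+1})$ half of the estimate, with $\|X_1 u(t_0)\|_{H_h^{s+1/2}}$ bounded by $\|\tilde X u(t_0)\|_{H_h^{s+1/2}}$ plus $h^N\|u(t_0)\|_{H_h^{-N}}$.

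\textbf{Step 3 (normal derivative).} From the identity
\[
hD_{x^1}Xu=X(hD_{x^1}+\rmi E_-)u-\rmi E_-(Xu)-\rmi[X,E_-]u+[hD_{x^1},X]u,
\]
the first term is controlled by Step 1, the second by the output of Step 2 together with the order-one boundedness of $E_-$, and the two commutators lie in $h\Psi^0_{\tangent,h}$ with wavefront set in $\WF(X)$, so they are absorbed into the error terms after one additional layer of cutoffs. This produces the $H_h^1((t_0,t_1);H_h^s)$ part of the estimate.

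\textbf{Main obstacle.} The only nontrivial bookkeeping is the chain of cutoffs: each application of Lemmas \ref{DtN Elliptic estimate lemma -} and \ref{l:lopa} and each commutator argument shifts the wavefront set slightly, so one must interleave enough intermediate cutoffs (as in the inductive proofs of Lemmas \ref{Elliptic estimate lemma} and \ref{l:lopa}) to guarantee that all remainders are either absorbed into the $h^N$ errors or estimated in terms of $\tilde X u$ and $\tilde X Pu$. The substantive analytic input is already contained in the Lee--Uhlmann factorization of Lemma \ref{Pdecompmicrolocal} and in the microlocal energy estimates of Lemmas \ref{DtN Elliptic estimate lemma -} and \ref{l:lopa}.
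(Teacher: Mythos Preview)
Your argument is correct and in fact slightly more direct than the paper's. The paper invokes both factorizations $E_-$ and $E_+$: Lemma~\ref{DtN Elliptic estimate lemma -} controls $X(hD_{x^1}+\rmi E_-)u$ in terms of $Pu$ alone, Lemma~\ref{DtN Elliptic estimate lemma +} controls $X(hD_{x^1}+\rmi E_+)u$ in terms of $Pu$ and the boundary trace, and then the parallelogram law recovers $\|XhD_{x^1}u\|$ and $\|X(E_+-E_-)u\|$ separately, the latter giving $\|Xu\|_{L^2;H_h^{s+1}}$ by ellipticity of $E_+-E_-$. You bypass $E_+$ entirely: having controlled $v:=(hD_{x^1}+\rmi E_-)u$ via Lemma~\ref{DtN Elliptic estimate lemma -}, you observe that $(hD_{x^1}-\Lambda)u=v$ with $\Lambda=-\rmi E_-$ satisfying $\Im\sigma(\Lambda)=-\Re\sigma(E_-)\ge c\langle\xi'\rangle$ on $\WF(X)$, so Lemma~\ref{l:lopa} applied directly to $u$ yields the tangential regularity, and the normal derivative then follows from the algebraic identity in your Step~3. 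Both routes rest on the same two energy lemmas; yours uses only the $E_-$ branch and is marginally cleaner, while the paper's parallelogram argument makes the symmetric role of $E_\pm$ more visible and feeds naturally into the subsequent Lemma~\ref{l:gainNormal}.
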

	\begin{proof}
		Let $\WF(X)\subset \Ellip(X')\subset\WF(X')\subset \Ellip(\tilde X)\subset \WF(\tilde X)$. Combining \eqref{DtN Elliptic estimate eqn} and \eqref{DtN Elliptic estimate eqn +} yields
		\begin{equation*}
			\begin{aligned}
				&h^{\frac{1}{2}}\|X(hD_{x^1}+\rmi E_-)u(t_0)\|_{H^s_h(\partial \Omega)}+c\|X(hD_{x^1}+\rmi E_-)u\|_{L^2\left((t_0,t_1);H_h^{s+\frac{1}{2}}(\partial \Omega)\right)}
				\\
				&\le C\|X' Pu\|_{L^2((t_0,t_2);H_h^{s-\frac{1}{2}}(\partial \Omega))}
				\\
				&\qquad+Ch^N\|u\|_{H^1_h((t_0,t_2);H_h^{-N}(\partial \Omega))}+Ch^N\| Pu\|_{L^2((t_0,t_2);H_h^{-N}(\partial \Omega))}
			\end{aligned}
		\end{equation*}
		and
		\begin{equation*}
			\begin{aligned}
				& \|X(hD_{x^1}+\rmi E_+)u\|_{L^2\left((t_0,t_1);H_h^{s+\frac{1}{2}}(\partial \Omega)\right)}\\
				&\le C\|X'Pu\|_{L^2\left((t_0,t_2);H_h^{s-\frac{1}{2}}(\partial \Omega)\right)}+Ch^{\frac{1}{2}}\|X'(hD_{x^1}+\rmi E_+)u(t_0)\|_{H^s_h(\partial \Omega)}\\
				& \qquad+h^N\| (hD_{x^1}+\rmi E_+)u\|_{L^2\left((t_0,t_2);H_h^{-N}(\partial \Omega)\right)}+h^N\| Pu\|_{L^2\left((t_0,t_2);H_h^{-N}(\partial \Omega)\right)}\\
				& \qquad+h^N\| (hD_{x^1}+\rmi E_+)u(t_0)\|_{H_h^{-N}(\partial \Omega)}.
			\end{aligned}
		\end{equation*}
		With parallelogram law on Hilbert spaces, we have
		\begin{equation*}
			\begin{aligned}
				&\|XhD_{x^1}u\|_{L^2\left((t_0,t_1);H_h^{s+\frac{1}{2}}(\partial \Omega)\right)}+\|X(E_+-E_-)u\|_{L^2\left((t_0,t_1);H_h^{s+\frac{1}{2}}(\partial \Omega)\right)}
				\\
				&\le C\|X' Pu\|_{L^2((t_0,t_2);H_h^{s-\frac{1}{2}}(\partial \Omega))}+Ch^{\frac{1}{2}}\|X'(hD_{x^1}+\rmi E_-)u(t_0)\|_{H^s_h(\partial \Omega)}
				\\
				&\qquad+Ch^{\frac{1}{2}}\|X'(E_+-E_-)u(t_0)\|_{H^s_h(\partial \Omega)}+Ch^N\|u\|_{H^1_h((t_0,t_2);H_h^{-N}(\partial \Omega))}
				\\
				&\qquad+Ch^N\| Pu\|_{L^2((t_0,t_2);H_h^{-N}(\partial \Omega))}+Ch^N\| (hD_{x^1}+\rmi E_+)u(t_0)\|_{H_h^{-N}(\partial \Omega)}.
			\end{aligned}
		\end{equation*}
		This shows
		\begin{equation*}
			\label{simple eqn 11}
			\begin{aligned}
				&\|hD_{x^1}Xu\|_{L^2\left((t_0,t_1);H_h^{s+\frac{1}{2}}(\partial \Omega)\right)}+\|Xu\|_{L^2\left((t_0,t_1);H_h^{s+\frac{3}{2}}(\partial \Omega)\right)}
				\\
				&\le C\|\tilde XPu\|_{L^2((t_0,t_2);H_h^{s-\frac{1}{2}}(\partial \Omega))}+Ch^{\frac{1}{2}}\|\tilde X u(t_0)\|_{H^{s+1}_h(\partial \Omega)}
				\\
				&\qquad+Ch^N\|u\|_{H^1_h((t_0,t_2);H_h^{-N}(\partial \Omega))}+Ch^N\| Pu\|_{L^2((t_0,t_2);H_h^{-N}(\partial \Omega))}
				\\
				& \qquad+Ch^N\|hD_{x^1}u(t_0)\|_{H_h^{-N}(\partial \Omega)}+Ch^N\|u(t_0)\|_{H_h^{-N}(\partial \Omega)}+h\|\tilde Xu\|_{L^2\left((t_0,t_1);H_h^{s+\frac{1}{2}}(\partial \Omega)\right)}.
			\end{aligned}
		\end{equation*}
		A similar bootstrap argument as used in the proof of Lemma \ref{Elliptic estimate lemma} will allow us to absorb the term $h\|\tilde Xu\|_{L^2\left((t_0,t_1);H_h^{s+\frac{1}{2}}(\partial \Omega)\right)}$. On the other hand, estimate \eqref{hDx_1 estimate 1} allows us to kill the term $Ch^N\|hD_{x^1}u(t_0)\|_{H_h^{-N}(\partial \Omega)}$. This proves the Lemma by replacing $s$ with $s-\frac{1}{2}$.
	\end{proof}
	
	A simple elliptic parametrix estimate then yields
	\begin{lem}
		Let $\e>0$, $B\in \Psi^1_h(\partial\Omega)$, $X,\tilde X\in \Psi^0_{\tangent,h}(\Omega)$ such that $\WF(X)\subset \Ellip(\tilde X)\subset \WF(\tilde X)\subset \{(x,\xi):\Re\sigma(E)(x,\xi)< -\e \}\cap \{(x,\xi):|\sigma(B)-\rmi \sigma(E)|(x,\xi)>0\}$. Also, let $s\in \R$ and $t_0<t_1<t_2$. Then, there exists $h_0,C>0$ such that
		\begin{equation}
			\label{DtN microlocal Elliptic estimate lemma - B}
			\begin{aligned}
				& \|Xu\|_{L^2\left((t_0,t_1);H_h^{s+1}(\partial \Omega)\right)}\\
				&\le C\|\tilde XPu\|_{L^2\left((t_0,t_2);H_h^{s-1}(\partial \Omega)\right)}+Ch^{\frac{1}{2}}\|\tilde{X}(hD_{x^1}+B)u(t_0)\|_{H^{s-\frac{1}{2}}_h(\partial \Omega)}\\
				& \qquad+Ch^N\|u\|_{H^1_h\left((t_0,t_2);H_h^{-N}(\partial \Omega)\right)}+Ch^N\| Pu\|_{L^2((t_0,t_2);H_h^{-N}(\partial \Omega))}+Ch^N\|u(t_0)\|_{H_h^{-N}(\partial\Omega)}
			\end{aligned}
		\end{equation}
		for all $0<h<h_0$. 
	\end{lem}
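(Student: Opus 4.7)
The statement is flagged as following from ``a simple elliptic parametrix estimate,'' so the approach is to bootstrap on the previous lemma (Lemma~\ref{DtN microlocal Elliptic estimate lemma H1 -}) by converting the boundary data $\|\tilde{X}u(t_0)\|_{H^{s+1/2}_h}$ into data involving $\tilde{X}(hD_{x^1}+B)u(t_0)$. The hypothesis $|\sigma(B)-\rmi\sigma(E)|>0$ on $\WF(\tilde X)$ is precisely the ellipticity needed to make this conversion at the cost of half a derivative, balanced by the $h^{1/2}$ factor.

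Here is the plan. First, I would choose an intermediate tangential cutoff $X'\in\Psi^0_{\tangent,h}$ with $\WF(X)\subset \Ellip(X')\subset \WF(X')\subset \Ellip(\tilde X)\cap \{\Re\sigma(E)<-\e\}\cap\{|\sigma(B)-\rmi\sigma(E)|>0\}$, and apply Lemma~\ref{DtN microlocal Elliptic estimate lemma H1 -} with the pair $(X,X')$ to obtain
\begin{equation*}
\|Xu\|_{L^2((t_0,t_1);H_h^{s+1}(\partial\Omega))}\le C\|X'Pu\|_{L^2((t_0,t_2);H_h^{s-1}(\partial\Omega))}+Ch^{\frac{1}{2}}\|X'u(t_0)\|_{H_h^{s+\frac{1}{2}}(\partial\Omega)}+(\text{admissible errors}).
\end{equation*}

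Next, since $B-\rmi E\in\Psi_{\tangent,h}^1$ is microlocally elliptic on $\WF(X')$ by assumption, the elliptic parametrix construction yields $F\in\Psi_h^{-1}(\partial\Omega)$ with $\WF(F)\subset\WF(X')$ such that $X'=F\tilde{X}(B-\rmi E)+O(h^\infty)_{\Psi^{-\infty}}$ at $x^1=t_0$. Evaluating at the trace and writing $(B-\rmi E)u(t_0)=(hD_{x^1}+B)u(t_0)-(hD_{x^1}+\rmi E)u(t_0)$, one gets
\begin{equation*}
\|X'u(t_0)\|_{H_h^{s+\frac{1}{2}}}\le C\|\tilde{X}(hD_{x^1}+B)u(t_0)\|_{H_h^{s-\frac{1}{2}}}+C\|\tilde{X}(hD_{x^1}+\rmi E)u(t_0)\|_{H_h^{s-\frac{1}{2}}}+h^N\|u(t_0)\|_{H_h^{-N}}.
\end{equation*}

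To handle the remaining $(hD_{x^1}+\rmi E_-)$ trace, I would apply Lemma~\ref{DtN Elliptic estimate lemma -} (with a slightly larger cutoff still inside $\Ellip(\tilde X)$), giving
\begin{equation*}
\|\tilde{X}(hD_{x^1}+\rmi E)u(t_0)\|_{H_h^{s-\frac{1}{2}}}\le Ch^{-\frac{1}{2}}\|\tilde{X}Pu\|_{L^2((t_0,t_2);H_h^{s-1}(\partial\Omega))}+(\text{admissible errors}),
\end{equation*}
so that after multiplication by the prefactor $h^{1/2}$ the $h^{-1/2}$ exactly cancels and this term merges with the $\|\tilde{X}Pu\|$ term already present. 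Substituting both bounds into the first estimate produces the claimed inequality. I do not expect any essential obstacle; the only point requiring care is that each enlargement of cutoff ($X\subset X'\subset\tilde{X}$) stays inside the region where $\Re\sigma(E)<-\e$ and where $B-\rmi E$ is elliptic, which is possible by the hypothesis on $\WF(\tilde X)$, and that the various $O(h^\infty)$ remainders acting on $u$ and its trace are absorbed into the stated last three error terms.
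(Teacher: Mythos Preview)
Your proposal is correct and follows essentially the same route as the paper: apply Lemma~\ref{DtN microlocal Elliptic estimate lemma H1 -}, use the microlocal ellipticity of $B-\rmi E_-$ to trade $\|X'u(t_0)\|_{H_h^{s+1/2}}$ for $\|\tilde X(B-\rmi E_-)u(t_0)\|_{H_h^{s-1/2}}$, split $(B-\rmi E_-)u=(hD_{x^1}+B)u-(hD_{x^1}+\rmi E_-)u$, and absorb the latter via Lemma~\ref{DtN Elliptic estimate lemma -}. The paper performs these steps in the opposite order (parametrix first, then Lemma~\ref{DtN microlocal Elliptic estimate lemma H1 -}) and explicitly bootstraps away a residual $Ch\|\tilde X u(t_0)\|_{H_h^{s-1/2}}$ term arising from commutators in the parametrix, but the argument is otherwise identical.
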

	\begin{proof}
		Microlocal elliptic estimate says
		\begin{equation*}
			\begin{aligned}
				\|Xu(t_0)\|_{H^{s+\frac{1}{2}}_h(\partial \Omega)} &\le C\|X(B-\rmi E_-)u(t_0)\|_{H^{s-\frac{1}{2}}_h(\partial \Omega)}
				\\
				&\qquad +Ch\|X' u(t_0)\|_{H^{s-\frac{1}{2}}_h(\partial \Omega)}+Ch^N\|u(t_0)\|_{H^{-N}_h(\partial \Omega)}
				\\
				&\le  C\|X(hD_{x^1}+B)u(t_0)\|_{H^{s-\frac{1}{2}}_h(\partial \Omega)} +C\|X(hD_{x^1}+\rmi E_-)u(t_0)\|_{H^{s-\frac{1}{2}}_h(\partial \Omega)}
				\\
				&\qquad +Ch\|\tilde X u(t_0)\|_{H^{s-\frac{1}{2}}_h(\partial \Omega)}+Ch^N\|u(t_0)\|_{H^{-N}_h(\partial \Omega)}
				\\
				&\le Ch^{-\frac{1}{2}}\|\tilde XPu\|_{L^2((t_0,t_2);H_h^{s-1}(\partial \Omega))}+C\|X(hD_{x^1}+B)u(t_0)\|_{H^{s-\frac{1}{2}}_h(\partial \Omega)}
				\\
				&\qquad+Ch^N\|u\|_{H^1_h((t_0,t_2);H_h^{-N}(\partial \Omega))}+Ch^N\| Pu\|_{L^2((t_0,t_2);H_h^{-N}(\partial \Omega))}
				\\
				&\qquad +Ch\|\tilde X u(t_0)\|_{H^{s-\frac{1}{2}}_h(\partial \Omega)}+Ch^N\|u(t_0)\|_{H^{-N}_h(\partial \Omega)}.
			\end{aligned}
		\end{equation*}
		Using the bootstrapping argument as in Section \ref{Energy estimates for first order operators}, we can replace $Ch\|\tilde X u(t_0)\|_{H^{s-\frac{1}{2}}_h(\partial \Omega)}$ by $Ch^N\|u(t_0)\|_{H^{-N}_h(\partial \Omega)}$ and estimate \eqref{DtN microlocal Elliptic estimate lemma - B} follows immediately from \eqref{DtN microlocal Elliptic estimate lemma -}.
	\end{proof}

	Finally, we can combine the above estimates to obtain higher regularity in the normal variable. 
	\begin{lem}
		\label{l:gainNormal}
		Let $\e>0$, $B\in \Psi^1_h(\partial\Omega)$, $X,\tilde X\in \Psi^0_{\tangent,h}(\Omega)$ such that $\WF(X)\subset \Ellip(\tilde X)\subset \WF(\tilde X)\subset \{(x,\xi):\Re\sigma(E)(x,\xi)< -\e \}\cap \{(x,\xi):|\sigma(B)-\rmi \sigma(E)|(x,\xi)>0\}$. Also, let $k\in \N$, $k\ge 2$, $s\in \R$ and $t_0<t_1<t_2$. Then, there exists $h_0,C>0$ such that
		\begin{equation}
			\label{l:gainNormal eqn1}
			\begin{aligned}
				&\|Xu\|_{H_h^{k}\left((t_0,t_1);H_h^{s}\right)}+\|Xu\|_{H_h^{k-1}\left((t_0,t_1);H_h^{s+1}\right)}+\|Xu\|_{H_h^{k-2}\left((t_0,t_1);H_h^{s+2}\right)}\\
				&\qquad\leq C\sum_{j=0}^{k-2}\|\tilde XPu\|_{H_h^{k-2-j}((t_0,t_2);H_h^{s+j})}+Ch^{\frac{1}{2}}\|\tilde{X}u(t_0)\|_{H^{s+k-\frac{1}{2}}_h(\partial \Omega)}\\
				& \qquad+Ch^N\|u\|_{H^{1}_h\left((t_0,t_2);H_h^{-N}(\partial \Omega)\right)}+Ch^N\| Pu\|_{L^2\left((t_0,t_2);H_h^{-N}(\partial \Omega)\right)}+Ch^N\|u(t_0)\|_{H_h^{-N}(\partial \Omega)}
			\end{aligned}
		\end{equation}
		and
		\begin{equation}
			\label{l:gainNormal eqn2}
			\begin{aligned}
				&\|Xu\|_{H_h^{k}((t_0,t_1);H_h^{s})}+\|Xu\|_{H_h^{k-1}((t_0,t_1);H_h^{s+1})}+\|Xu\|_{H_h^{k-2}((t_0,t_1);H_h^{s+2})}\\
				&\qquad\leq C\sum_{j=0}^{k-2}\|\tilde XPu\|_{H_h^{k-2-j}((t_0,t_2);H_h^{s+j})}+Ch^{\frac{1}{2}}\|\tilde{X}(hD_{x^1}+B)u(t_0)\|_{H^{s+k-\frac{3}{2}}_h(\partial \Omega)}\\
				& \qquad+Ch^N\|u\|_{H^{1}_h((t_0,t_2);H_h^{-N}(\partial \Omega))}+Ch^N\| Pu\|_{L^2((t_0,t_2);H_h^{-N}(\partial \Omega))}+Ch^N\|u(t_0)\|_{H_h^{-N}}
			\end{aligned}
		\end{equation}
		for all $0<h<h_0$.
	\end{lem}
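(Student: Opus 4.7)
The plan is to prove both \eqref{l:gainNormal eqn1} and \eqref{l:gainNormal eqn2} simultaneously by induction on $k\geq 2$, using Lemma \ref{DtN microlocal Elliptic estimate lemma H1 -} (respectively Lemma \ref{DtN microlocal Elliptic estimate lemma - B}) as the ``level $k=1$'' case, and using the identity
$$
(hD_{x^1})^2 u= Pu+R(x^1,x',hD_{x'})u-h\tilde a\, hD_{x^1}u+(\omega_0-\omega)u
$$
coming from \eqref{LaplaceonBoundary} to convert pairs of normal derivatives of $u$ into an application of $P$, a tangential operator of order $2$, and lower-order remainders.

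For the base case $k=2$, I pick an intermediate $t_1<t_1'<t_2$ and apply Lemma \ref{DtN microlocal Elliptic estimate lemma H1 -} (or Lemma \ref{DtN microlocal Elliptic estimate lemma - B}) with $s$ replaced by $s+1$ and with $(t_0,t_1,t_2)$ replaced by $(t_0,t_1',t_2)$. This controls the two terms $\|Xu\|_{H^1_h((t_0,t_1');H_h^{s+1})}$ and $\|Xu\|_{L^2((t_0,t_1');H_h^{s+2})}$ by the right-hand side of \eqref{l:gainNormal eqn1} at level $k=2$ (the boundary terms $\|\tilde X u(t_0)\|_{H^{s+\frac{3}{2}}_h}$ match automatically). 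For the remaining term $\|Xu\|_{H^2_h((t_0,t_1);H^s_h)}$ I apply $X$ to the identity above, commute through modulo $h\Psi^0_{\tangent,h}$ errors, and bound each piece in $L^2((t_0,t_1);H^s_h)$: the $XPu$ piece enters the forcing $\|\tilde X Pu\|_{L^2(H^s_h)}$, the $XRu$ piece is controlled by $\|Xu\|_{L^2(H^{s+2}_h)}$ (since $R$ is tangential of order $2$), and $hX\tilde a\, hD_{x^1}u$ by $h\|Xu\|_{H^1_h(H^s_h)}$, both already in hand.

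For the inductive step $k-1\Rightarrow k$, I apply the induction hypothesis at level $k-1$ with $s$ replaced by $s+1$, on a slightly enlarged interval. This directly yields the second and third terms $\|Xu\|_{H^{k-1}_h(H^{s+1}_h)}$ and $\|Xu\|_{H^{k-2}_h(H^{s+2}_h)}$ of \eqref{l:gainNormal eqn1}, and its forcing sum produces exactly $\sum_{j'=1}^{k-2}\|\tilde X Pu\|_{H^{k-2-j'}_h(H^{s+j'}_h)}$ after reindexing $j'=j+1$; the boundary contribution is $\|\tilde X u(t_0)\|_{H^{(s+1)+(k-1)-\frac{1}{2}}_h}=\|\tilde X u(t_0)\|_{H^{s+k-\frac{1}{2}}_h}$, which matches the target. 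To produce the remaining term $\|Xu\|_{H^k_h(H^s_h)}$, I iterate the identity once: writing $X(hD_{x^1})^k u=X(hD_{x^1})^{k-2}\big[Pu+Ru-h\tilde a\,hD_{x^1}u+(\omega_0-\omega)u\big]$ and commuting $X$ through at the cost of $h\Psi^0_{\tangent,h}$ errors, the $Pu$ contribution gives the missing $j=0$ forcing term $\|\tilde X Pu\|_{H^{k-2}_h(H^s_h)}$, while the $Ru$, $h\,hD_{x^1}u$, and $u$ contributions are bounded respectively by $\|Xu\|_{H^{k-2}_h(H^{s+2}_h)}$, $h\|Xu\|_{H^{k-1}_h(H^s_h)}$, and $\|Xu\|_{H^{k-2}_h(H^s_h)}$, all controlled by the induction hypothesis at level $k-1$.

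The main bookkeeping obstacle, rather than any genuine analytic difficulty, is the careful choice of nested intermediate intervals $t_0<t_1<\cdots<t_2$ and of nested cutoffs $X=X_0\prec X_1\prec\cdots\prec \tilde X$ needed so that all commutator errors fall under the $h^N$-smoothing terms on the right-hand side, and so that the $\|\tilde X Pu\|$ forcing terms assembled from the two sources (the shifted induction hypothesis and the $Pu$ term produced by the reduction) combine to give exactly $\sum_{j=0}^{k-2}\|\tilde X Pu\|_{H^{k-2-j}_h((t_0,t_2);H^{s+j}_h)}$. The version \eqref{l:gainNormal eqn2} with $(hD_{x^1}+B)u(t_0)$ in place of $u(t_0)$ follows by the identical induction, simply using Lemma \ref{DtN microlocal Elliptic estimate lemma - B} instead of Lemma \ref{DtN microlocal Elliptic estimate lemma H1 -} as the base case.
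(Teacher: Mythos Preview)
Your proposal is correct and follows essentially the same approach as the paper: both argue by induction on $k\ge 2$, use Lemma~\ref{DtN microlocal Elliptic estimate lemma H1 -} (resp.\ the $B$-variant) with $s\mapsto s+1$ to supply the $H^{k-1}_h(H^{s+1}_h)$ and $H^{k-2}_h(H^{s+2}_h)$ pieces, and then recover the top $H^k_h(H^s_h)$ piece from the identity $(hD_{x^1})^2=P+R-h\tilde a\,hD_{x^1}+O(h)$, reducing two normal derivatives to one application of $P$ plus tangential/lower-order terms. The only cosmetic difference is that the paper writes the commutation as $(hD_{x^1})^kX=(hD_{x^1})^{k-2}\big(X(P-A_2)+\text{commutators}\big)$ rather than $X(hD_{x^1})^{k-2}[\cdots]$, but this is the same computation.
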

	
	\begin{proof}
		First note that for $k\ge 2$
		\begin{equation*}
			\begin{aligned}
				(hD_{x^1})^{k}X&=(hD_{x^1})^{k-2}\left(X(hD_{x^1})^2+2[hD_{x^1},X]+[hD_{x^1},[hD_{x^1},X]]\right)\\
				&=(hD_{x^1})^{k-2}\left(X(P-A_2)+2[hD_{x^1},X]+[hD_{x^1},[hD_{x^1},X]]\right)
			\end{aligned}
		\end{equation*}
		where $A_2=ha(x)hD_{x^1}-R(x^1,x',hD_{x'})$ and $R \in \Psi^2_{\tangent,h}(\Omega)$ as defined in \eqref{LaplaceonBoundary}.
		Therefore
		\begin{equation}
			\label{l:gainNormal proof eqn 1}
			\begin{aligned}
				&\|(hD_{x^1})^{k}Xu\|_{L^2\left((t_0,t_1);H_h^{s}\right)}
				\\
				&\qquad \le \|(hD_{x^1})^{k-2} XPu\|_{L^2\left((t_0,t_1);H_h^{s}\right)}+\|(hD_{x^1})^{k-2} X u\|_{L^2\left((t_0,t_1);H_h^{s+2}\right)}
				\\
				&\qquad +Ch \|\tilde X u\|_{H^{k-2}_h\left((t_0,t_1);H_h^{s+2}\right)}+Ch^N \|u\|_{H^{k-2}_h\left((t_0,t_1);H_h^{-N}\right)}
			\end{aligned}
		\end{equation}
		implies
		\begin{equation*}
			\begin{aligned}
				&\|(hD_{x^1})^{2}Xu\|_{L^2\left((t_0,t_1);H_h^{s}\right)}
				\\
				&\qquad \le \|XPu\|_{L^2\left((t_0,t_1);H_h^{s}\right)}+\|X u\|_{L^2\left((t_0,t_1);H_h^{s+2}\right)}
				\\
				&\qquad +Ch \|\tilde X u\|_{L^2\left((t_0,t_1);H_h^{s+2}\right)}+Ch^N \|u\|_{L^2\left((t_0,t_1);H_h^{-N}\right)}.
			\end{aligned}
		\end{equation*}
		Then, together with Lemma \ref{DtN microlocal Elliptic estimate lemma H1 -}, we obtain \eqref{l:gainNormal eqn1} for the case $k=2$ and all $s\in \R$. Now, suppose \eqref{l:gainNormal eqn1} holds for $k$ and all $s\in \R$. Then \eqref{l:gainNormal proof eqn 1} says
		\begin{equation*}
			\begin{aligned}
				&\|(hD_{x^1})^{k+1}Xu\|_{L^2\left((t_0,t_1);H_h^{s}\right)}
				\\
				&\qquad \le \|XPu\|_{H^{k-1}_h\left((t_0,t_1);H_h^{s}\right)}+\|X u\|_{H^{k-1}_h\left((t_0,t_1);H_h^{s+2}\right)}
				\\
				&\qquad +Ch \|\tilde X u\|_{H^{k-1}_h\left((t_0,t_1);H_h^{s+2}\right)}+Ch^N \|u\|_{H^{k-1}_h\left((t_0,t_1);H_h^{-N}\right)}.
			\end{aligned}
		\end{equation*}
		Moreover, \eqref{l:gainNormal eqn1} with $s$ replacing by $s+1$ says
		\begin{equation*}
			\begin{aligned}
				&\|Xu\|_{H_h^{k}\left((t_0,t_1);H_h^{s+1}\right)}+\|Xu\|_{H_h^{k-1}\left((t_0,t_1);H_h^{s+2}\right)}\\
				&\qquad\leq C\sum_{j=0}^{k-2}\|\tilde XPu\|_{H_h^{k-2-j}((t_0,t_2);H_h^{s+1+j})}+Ch^{\frac{1}{2}}\|\tilde{X}u(t_0)\|_{H^{s+k+\frac{1}{2}}_h(\partial \Omega)}\\
				& \qquad+Ch^N\|u\|_{H^{1}_h\left((t_0,t_2);H_h^{-N}(\partial \Omega)\right)}+Ch^N\| Pu\|_{L^2\left((t_0,t_2);H_h^{-N}(\partial \Omega)\right)}+Ch^N\|u(t_0)\|_{H_h^{-N}}.
			\end{aligned}
		\end{equation*}
		This implies
		\begin{equation*}
			\begin{aligned}
				&\|Xu\|_{H_h^{k+1}\left((t_0,t_1);H_h^{s}\right)}+\|Xu\|_{H_h^{k}\left((t_0,t_1);H_h^{s+1}\right)}+\|Xu\|_{H_h^{k-1}\left((t_0,t_1);H_h^{s+2}\right)}\\
				&\qquad\leq C\sum_{j=0}^{(k+1)-2}\|\tilde XPu\|_{H_h^{(k+1)-2-j}((t_0,t_2);H_h^{s+j})}+Ch^{\frac{1}{2}}\|\tilde{X}u(t_0)\|_{H^{s+(k+1)-\frac{1}{2}}_h(\partial \Omega)}\\
				& \qquad+Ch^N\|u\|_{H^{1}_h\left((t_0,t_2);H_h^{-N}(\partial \Omega)\right)}+Ch^N\| Pu\|_{L^2\left((t_0,t_2);H_h^{-N}(\partial \Omega)\right)}+Ch^N\|u(t_0)\|_{H_h^{-N}},
			\end{aligned}
		\end{equation*}
		which completes the proof for \eqref{l:gainNormal eqn1} and \eqref{l:gainNormal eqn2} follows similarly.
	\end{proof}

    \begin{rem}
    \label{interior elliptic estimate}
    The classical interior elliptic estimate follows immediately from Lemma \ref{l:gainNormal}. That is, if $P$ is a classical second-order elliptic operator, then for any $V\Subset U$, we have
    \begin{equation*}
    \label{interior elliptic eqn}
        \|u\|_{H^2_h(V)}\le C (\|Pu\|_{L^2(U)}+\|u\|_{L^2(U)}).
    \end{equation*}
    \end{rem}

	\subsection{Exterior problem}
	\label{LU exterior problem}
	In this section, we apply the results of the previous sections to $P_{\exterior}-z^2$.
\begin{lem}
\label{l:dtNDescriptionExterior}
Let $\e_0>0$, $M>0$, $X,X_2\in \Psi_{\tangent,h}^{\comp}(\partial\Omega)$ with $\WF(X_2) \subset \{|\xi'|_{g_\exterior}>1+\e_0\}$ and $\WF(X)\cap \WF(I-X_2)=\emptyset$. Let $U$ be a Fermi normal coordinate neighborhood of $\partial \Omega$ in $\Omega_{\exterior}$ with coordinates $(x^1,x')$, $E_-$ as in Proposition~\ref{Pdecompmicrolocal}. Then for all $\chi\in C_c^\infty(-1,1)$ with $\chi\equiv 1$ near $0$, $\e>0$ small enough, all $k,N\geq 0$, and $\psi\in C_c^\infty(\overline{\Omega}_{\exterior})$, there is $C>0$ such that for all $0<h<1$, $z\in [1-\e_0,1+\e_0]+\rmi[-Mh,Mh]$  and $u\in L^2(\partial\Omega)$ we have
$$
\|\psi \partial_z^k (\chi(\e^{-1}x^1)v-G_{\exterior}Xu)\|_{H_h^2(\Omega_{\exterior})}\leq Ch^N\|u\|_{L^2(\partial\Omega)},
$$
where $v$ satisfies
\begin{equation}
\label{e:too basic}
(hD_{x^1}-\Lambda)v=0,\qquad v|_{x^1}=Xu,\qquad \Lambda:=-\rmi(E_- X_2-(I-X_2)\Op(\langle \xi'\rangle)).
\end{equation}
Moreover, for $X'\in \Psi_{\tangent,h}^{\comp}$ with $\WF(I-X')\cap \WF(X)=\emptyset$,
$$
\|(I-X')\partial_z^k\chi v\|_{H_h^N}\leq C_Nh^N\|u\|_{L^2(\partial\Omega)}.
$$
\end{lem}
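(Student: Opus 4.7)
Set $w := \chi(\e^{-1}x^1)v - G_{\exterior}(z)Xu$. Then $w|_{\partial\Omega}=0$ (since $\chi(0)=1$ and $v|_{x^1=0}=Xu$) and $w$ is $z/h$-outgoing ($\chi v$ is compactly supported near $\partial\Omega$, while $G_{\exterior}(z)Xu$ is outgoing by definition). The strategy is to show that $(P_{\exterior}-z^2)w = O_{L^2}(h^N)\|u\|_{L^2(\partial\Omega)}$ with compactly supported source; then the non-trapping outgoing Dirichlet resolvent bound \eqref{e:exteriorResolve} yields $\|\psi w\|_{H_h^2(\Omega_{\exterior})}\leq Ch^{-1}\cdot O(h^N)\|u\|_{L^2}$, proving the $k=0$ case.

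\textbf{Microlocal concentration (the ``moreover'' statement).} On $\WF(X_2)\subset\{|\xi'|_{g_{\exterior}}>1+\e_0\}$, $\sigma(E_-) = -\sqrt{|\xi'|_{g_{\exterior}}^2-1}$ is real and negative, so
\[
\Im\sigma(\Lambda) = \sigma(X_2)\sqrt{|\xi'|_{g_{\exterior}}^2-1}+(1-\sigma(X_2))\langle\xi'\rangle \geq c\langle\xi'\rangle
\]
uniformly on $T^*\partial\Omega$. Given $X'$ with $\WF(I-X')\cap\WF(X)=\emptyset$, I pick $\tilde X\in\Psi_{\tangent,h}^{\comp}$ with $\WF(I-X')\subset\Ellip(\tilde X)$ and $\WF(\tilde X)\cap\WF(X)=\emptyset$. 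Applying Lemma~\ref{l:lopa} with its $(X,\tilde X)$ replaced by $(I-X',\tilde X)$ and its $\Lambda$ equal to ours, using $(hD_{x^1}-\Lambda)v=0$ and $\tilde X v|_{x^1=0}=\tilde X Xu=O(h^\infty)\|u\|_{L^2}$, yields $\|(I-X')v\|_{L^2((0,\delta);H_h^s(\partial\Omega))}=O(h^N)\|u\|_{L^2}$ for every $s,N$. Normal regularity is recovered by repeatedly substituting $hD_{x^1}v=\Lambda v$, converting $(hD_{x^1})^k$ into a tangential operator applied to $v$; this gives $\|(I-X')\chi v\|_{H_h^N(\Omega_{\exterior})}=O(h^N)\|u\|_{L^2}$.

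\textbf{Main estimate.} Decompose $(P_{\exterior}-z^2)(\chi v) = \chi(P_{\exterior}-z^2)v + [P_{\exterior}-z^2,\chi]v$. An $L^2$-energy argument for $hD_{x^1}v=\Lambda v$, using sharp G\aa rding together with $\Im\sigma(\Lambda)\geq c\langle\xi'\rangle$, yields $\|v(x^1)\|_{H_h^s(\partial\Omega)}\leq e^{-cx^1/h}\|u\|_{L^2}$ (up to $O(h^\infty)$), so the commutator term, supported where $\chi'(\e^{-1}x^1)\neq 0$, is $O(h^\infty)\|u\|_{L^2}$. For $\chi(P_{\exterior}-z^2)v$, taking $X'=X_2$ in the microlocal concentration gives $(I-X_2)v=O(h^\infty)$ in every $H_h^s$, from which $(I-X_2)(P_{\exterior}-z^2)v=O_{L^2}(h^\infty)$ after commuting $(I-X_2)$ past $P_{\exterior}$ and using that $[X_2,P_{\exterior}]$ has principal symbol vanishing on $\WF(X)\subset\Ellip(X_2)$. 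For $X_2(P_{\exterior}-z^2)v$, Lemma~\ref{Pdecompmicrolocal} gives $X_2(P_{\exterior}-z^2) = X_2(hD_{x^1}+h\tilde a-\rmi E_-)(hD_{x^1}+\rmi E_-) + O(h^\infty)_{\Psi^{-\infty}_{\tangent,h}}$. Substituting $hD_{x^1}v=\Lambda v$,
\[
(hD_{x^1}+\rmi E_-)v = (\Lambda+\rmi E_-)v = \rmi E_-(I-X_2)v + \rmi(I-X_2)\Op(\langle\xi'\rangle)v,
\]
and both terms are $O(h^\infty)$ by the microlocal concentration (for the second, $\Op(\langle\xi'\rangle)v$ has wavefront in $\WF(v)\subset\WF(X)$, disjoint from $\WF(I-X_2)$). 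Combining, $(P_{\exterior}-z^2)(\chi v)=O_{L^2}(h^N)\|u\|_{L^2}$ for every $N$, and \eqref{e:exteriorResolve} closes the $k=0$ case.

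\textbf{$z$-derivatives and main obstacle.} Differentiating the first-order ODE yields $(hD_{x^1}-\Lambda(z))\partial_z v = (\partial_z\Lambda)v$ with zero initial data; since $(\partial_z\Lambda)v$ still has wavefront in $\WF(X)$, $\partial_z v$ inherits both the microlocal concentration and the exponential decay of $v$. Iterating, each $\partial_z^k w$ satisfies $(P_{\exterior}-z^2)\partial_z^k w$ equals a source built from lower-order $z$-derivatives of $w$ and $v$ modulo $O_{L^2}(h^N)$, to which \eqref{e:exteriorResolve} applies recursively. The main delicate point is the careful bookkeeping of microlocal cutoffs as they are commuted past $P_{\exterior}$, $E_-$, and $\Op(\langle\xi'\rangle)$, ensuring wavefront-disjointness from $\WF(X)$ is preserved at each step; once this bookkeeping is in place, the remaining estimates are routine consequences of Lemmas~\ref{Pdecompmicrolocal} and~\ref{l:lopa}.
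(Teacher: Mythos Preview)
Your approach is essentially the same as the paper's: both show $(P_{\exterior}-z^2)(\chi v)=O(h^\infty)$ via the factorization Lemma~\ref{Pdecompmicrolocal}, the microlocal concentration from Lemma~\ref{l:lopa}, and exponential decay in $x^1$ coming from $\Im\sigma(\Lambda)\geq c\langle\xi'\rangle$, then close with the non-trapping resolvent bound~\eqref{e:exteriorResolve} and induct on $k$ for the $z$-derivatives. One small slip to fix: in your microlocal concentration step you choose $\tilde X\in\Psi_{\tangent,h}^{\comp}$ with $\WF(I-X')\subset\Ellip(\tilde X)$, but since $X'\in\Psi^{\comp}$ the set $\WF(I-X')$ contains fiber infinity and cannot sit inside the bounded set $\Ellip(\tilde X)$; take instead $\tilde X\in\Psi_{\tangent,h}^0$ of the form $I-Y$ with $Y\in\Psi^{\comp}$, $\WF(X)\subset\Ellip(Y)$, $\WF(Y)\cap\WF(I-X')=\emptyset$ (this is exactly what the paper does with its nested chain $X_0,\dots,X_4$, applying Lemma~\ref{l:lopa} with $(I-X_3,I-X_1)$).
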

\begin{proof}
Let $\e>0$ small enough so that $\{d(\partial\Omega,x)<3\e\}\subset U$. We will require a few microlocal cutoffs below. Let $X_j\in \Psi^{\comp}_{\tangent,h}(\Omega)$, $j=0,1,\dots, 4$ such that $X_0=X$, 
\begin{equation*}
\begin{aligned}
&\WF(X_j)\subset \{|\xi'|_{g_\exterior}-1-\e_0>0\}& &\text{for} \quad j=0,1,\dots, 4,\\
&\WF(X_j)\cap \WF(I-X_{j'})=\emptyset & &\text{for} \quad j<j'.
\end{aligned}
\end{equation*}
Notice that, with $\Lambda:=-\rmi (E_-X_{2}-(I-X_{2})\Op(\langle\xi'\rangle))$, we have $\Im\sigma(\Lambda)>c_0\langle\xi'\rangle>0$. Define 
$$
\tilde{v}(x^1,x'):=e^{\frac{c_0x^1}{2h}}v(x^1,x'),
$$
so that $\tilde{v}$ satisfies
\begin{equation}
\label{e:tildeV}
(hD_{x^1}-(\Lambda-\rmi \tfrac{c_0}{2}))\tilde v=0,\qquad \tilde v|_{x^1=0}=Xu.
\end{equation}
We claim that for any $X_4\in\Psi_{\tangent,h}^\comp$ with $\WF(I-X_4)\cap \WF(X_2)=\emptyset$, any $k\geq 0$, and any $0<\e'<3\e$,
\begin{gather}
\|\partial_z^k\tilde{v}\|_{L^2((0,3\e);H_h^N(\partial\Omega))}\leq Ch^{\frac{1}{2}}\|Xu\|_{L^2(\partial\Omega)},\label{e:basicL2}\\
\|(I-X_4)\partial_z^k\tilde{v}\|_{H_h^N((0,\e')\times \partial\Omega)}\leq C_Nh^N\|u\|_{L^2(\partial\Omega)}\label{e:microlocalK}.
\end{gather}
For $k=0$, using that $\Im \sigma(\Lambda-\rmi \tfrac{c_0}{2})>\frac{c_0}{2}\langle\xi'\rangle$, we have by Lemma~\ref{l:lopa} with $X$ replaced by $I$ that,
\begin{equation}
\label{e:basic}
\|\tilde{v}\|_{L^2\left((0,3\e);H_h^{s}(\partial \Omega)\right)}\leq Ch^{\frac{1}{2}}\|Xu\|_{L^2(\partial \Omega)}.
\end{equation}
Using Lemma~\ref{l:lopa} again, this time with $X$ and $\tilde X$ replaced by $(I-X_3)$ and $(I-X_1)$ respectively, we obtain that 
\begin{equation}
\label{l:outDtNSign eqn 1}
\begin{aligned}
&\|(I-X_3)\tilde{v}\|_{L^2\left((0,\e');H_h^s(\partial \Omega)\right)}
\\
&\qquad \leq C_Nh^N(\|\tilde{v}\|_{L^2\left((0,3\e);H^{-N}_h(\partial \Omega)\right)}+\|u\|_{L^2(\partial \Omega)})\leq C_Nh^N\|u\|_{L^2(\partial \Omega)}.
\end{aligned}
\end{equation}
Hence, using~\eqref{e:tildeV} again, many times,  
\begin{equation*}
\begin{aligned}
&\|(I-X_4)(hD_{x^1})^k\tilde{v}\|_{L^2\left((0,\e');H_h^{N-k}(\partial \Omega)\right)}\\
&=\|(I-X_4)(\Lambda-\rmi\tfrac{c_0}{2})^k\tilde{v}\|_{L^2\left((0,\e');H_h^{N-k}(\partial \Omega)\right)}\\
&\leq\|(\Lambda-\rmi\tfrac{c_0}{2})^k(I-X_4)\tilde{v}\|_{L^2\left((0,\e');H_h^{N-k}(\partial \Omega)\right)}+Ch\|(I-X_3)\tilde{v}\|_{L^2\left((0,\e');H^{N-1}_h(\partial \Omega)\right)}.
\end{aligned}
\end{equation*}
That is 
\begin{equation*}
\label{l:outDtNSign eqn 2}
\begin{aligned}
&\|(I-X_4)\tilde v\|_{H^k_h\left((0,\e');H_h^{N-k}(\partial \Omega)\right)}
\\
&\qquad \le \|(I-X_4)\tilde{v}\|_{L^2\left((0,\e');H_h^{N}(\partial \Omega)\right)}+Ch\|(I-X_3)\tilde{v}\|_{L^2\left((0,\e');H^{N-1}_h(\partial \Omega)\right)}.
\end{aligned}
\end{equation*}
Combining with \eqref{l:outDtNSign eqn 1}, one has
\begin{equation}
\label{e:microlocal}
\|(I-X_4)\tilde{v}\|_{H_h^N\left((0,\e')\times \partial \Omega\right)}\leq C_Nh^N\|u\|_{L^2(\partial \Omega)}.
\end{equation}

Now, suppose that ~\eqref{e:basicL2} and~\eqref{e:microlocal} hold for $0\leq k\leq K-1$. 
Then, observe that 
$$
(hD_{x^1}-(\Lambda+\rmi \tfrac{c_0}{2}))\partial_z^K\tilde{v}=\sum_{0\leq j\leq K-1} A_j(z)\partial_z^j\tilde{v},\qquad \partial_z^K\tilde{v}|_{x^1=0}=0,
$$
where $A_j(z)\in \Psi_{\tangent,h}^{\comp}$ can be computed from derivatives of $\Lambda$ in $z$. 
Applying Lemma~\ref{l:lopa} as in \eqref{e:basic} and~\eqref{l:outDtNSign eqn 1}, we obtain respectively
$$
\|\partial_z^K\tilde{v}\|_{L^2((0,3\e);H_h^s)}\leq C\sum_{j\leq K-1}\|\partial_z^j\tilde{v}\|_{L^2((0,3\e);H_h^{s-\frac{1}{2}})}\leq Ch^{\frac{1}{2}}\|Xu\|_{L^2(\partial\Omega)},
$$
and $\e'<\e''<3\e$,
\begin{equation*}
\begin{aligned}
\|(I-X_4)\partial_z^K\tilde{v}&\|_{L^2\left((0,\e');H_h^s(\partial \Omega)\right)} \leq C\sum_{0\leq j\leq K-1}\|(I-X_3)A_j\partial_z^j \tilde{v}\|_{L^2\left((0,\e'');H_h^{s-1}(\partial \Omega)\right)}\\
&+C_Nh^N\left(\|\partial_z^K\tilde{v}\|_{L^2\left((0,\e'');H_h^{-N}(\partial \Omega)\right)}+\sum_{0\leq j\leq K-1}\|A_j\partial_z^j \tilde{v}\|_{L^2\left((0,\e'');H_h^{-N}(\partial \Omega)\right)}\right)\\
&\leq C_Nh^N\|u\|_{L^2(\partial \Omega)}.
\end{aligned}
\end{equation*}
Hence, arguing as we did to obtain~\eqref{e:microlocal}, we have that~\eqref{e:basicL2} and~\eqref{e:microlocalK} hold for all $K$.

Now that we have~\eqref{e:basicL2} and~\eqref{e:microlocalK}, we finish the proof of the lemma by understanding $\partial_z^K(P_\exterior-z^2)\chi(x^1)v$. 
Observe that 
\begin{equation}
\label{e:quasimode}
\begin{aligned}
&\partial_z^K(P_{\exterior}-z^2)\chi(x^1)v
\\
&=\partial_z^KX_4(hD_{x^1}-h\tilde{a}-\rmi E_{-}(x,hD_{x'}))(hD_{x^1}+\rmi E_-(x,hD_{x'}))\chi(x^1)v\\
&\qquad+\partial_z^K(I-X_4)(P_{\exterior}-z^2)\chi(x^1)v+O(h^\infty)_{\Psi_{\tangent,h}^{-\infty}}\chi(x^1)v\\
&=\partial_z^KX_4(hD_{x^1}-h\tilde{a}-\rmi E_{-}(x,hD_{x'}))\chi(x^1)(hD_{x^1}-\Lambda )v\\
&\qquad-\rmi h X_4\partial_z^K(hD_{x^1}-h\tilde{a}-\rmi E_{-}(x,hD_{x'}))\chi'(x^1)v\\
&\qquad+(I-X_4)\partial_z^K(P_{\exterior}-z^2)\chi(x^1)(I-X_{3})v+O(h^\infty)_{\Psi_{\tangent,h}^{-\infty}}\chi(x^1)v.
\end{aligned}
\end{equation}
By~\eqref{e:microlocalK}, we have
\begin{equation}
\label{e:a}
\|(I-X_3)\partial_z^Kv\|_{H_h^N\left((0,2\e)\times\partial \Omega\right)}=\left\|e^{-\frac{c_0 x^1}{2h}}(I-X_3)\partial_z^K\tilde{v}\right\|_{H_h^N\left((0,2\e)\times\partial \Omega\right)}\leq C_Nh^N\|u\|_{L^2(\partial \Omega)},
\end{equation}
 by~\eqref{e:too basic} we have for any $j$, and $B_j\in\Psi_{\tangent,h}^{\comp}$,
\begin{equation}
\label{e:b}
\begin{aligned}
&\|X_4(hD_{x^1}-B_j))\chi'\partial_z^jv\|_{L^2(\Omega_\exterior)}
\\
&\qquad \le \|X_4\chi'hD_{x^1}\partial_z^jv\|_{L^2(\Omega_\exterior)}+h\|X_4\chi''\partial_z^jv\|_{L^2(\Omega_\exterior)}+\|\partial_z^jv\|_{L^2\left((\e,2\e);H^1_h(\partial \Omega)\right)}
\\
&\qquad \le C\sum_{k=0}^j\|\partial_z^k v\|_{L^2\left((\e,2\e);H^1_h(\partial \Omega)\right)}\le C\left\|e^{-\frac{c_0 x^1}{2h}}\partial_z^j\tilde{v}\right\|_{L^2\left((\e,2\e);H^1_h(\partial \Omega)\right)}
\\
&\qquad \le Ce^{-\frac{c}{h}}\|u\|_{L^2(\partial \Omega)},
\end{aligned}
\end{equation}
and by~\eqref{e:basic}, we have
\begin{equation}
\label{e:c}
\|\partial_z^jv\|_{L^2\left((0,3\e);H_h^{s}(\partial \Omega)\right)}=\left\|e^{-\frac{c_0 x^1}{2h}}\partial_z^j\tilde{v}\right\|_{L^2\left((0,3\e);H_h^{s}(\partial \Omega)\right)}\leq Ch^{1/2}\|u\|_{L^2(\partial \Omega)}.
\end{equation}
Hence, using~\eqref{e:a},~\eqref{e:b}, and~\eqref{e:c} in~\eqref{e:quasimode}, we obtain
\begin{equation}
\label{e:derivativeEstimate}
\|\partial_z^K(P_{\exterior}-z^2)\chi(x^1)v\|_{L^2(\Omega_\exterior)}\leq C_Nh^N\|u\|_{L^2(\partial\Omega)}.
\end{equation}
Taking $K=0$, using that $G_{\exterior}(z)Xu$ is outgiong, we first obtain for any $\psi \in C_c^\infty(\overline\Omega_{\exterior})$, 
$$
\|\psi(G_{\exterior}(z)Xu-\chi(x^1)v)\|_{H_h^2(\Omega_\exterior)}=\|\psi R_{\exterior}(z)(P_\exterior - z^2)\chi(x^1)v\|_{H_h^2(\Omega_\exterior)}\leq C_Nh^N\|u\|_{L^2(\partial \Omega)}.
$$
Now, suppose by induction that for $0\leq j\leq K-1$, and any $\psi\in C_c^\infty(\overline{\Omega}_{\exterior})$,
$$
\|\psi\partial_z^j(G_{\exterior}(z)Xu-\chi(x^1)v)\|_{H_h^2(\Omega_\exterior)}\leq C_Nh^N\|u\|_{L^2(\partial\Omega)}.
$$
Then, observe that $\partial_z^K(G_{\exterior}(z)Xu-\chi v)|_{\partial\Omega}=0$, and
$$
[(P_\exterior-z^2)\partial_z^K(G_{\exterior}(z)Xu-\chi v)]=\sum_{j=0}^{K-1}(a_j+b_jz)\partial_z^j(G_{\exterior}(z)Xu-\chi v)- \partial_z^K[(P_{\exterior}-z^2)\chi v],
$$
where $a_j,b_j\in\mathbb{C}$.  By~\eqref{e:derivativeEstimate} and the inductive hypothesis, we have for any $\tilde{\psi}\in C_c^\infty(\overline{\Omega}_{\exterior})$,
$$
\|\tilde{\psi} [(P_\exterior-z^2)\partial_z^K(G_{\exterior}(z)Xu-\chi v)]\|_{L^2}\leq C_Nh^N\|u\|_{L^2(\partial\Omega)}.
$$
Hence, since $\partial_z^KG(z)Xu$ is outgoing, that for any $\psi\in C_c^\infty(\overline{\Omega}_i)$ with $\supp \psi\cap \supp(1-\tilde{\psi})=\emptyset$, 
$$
\|\psi\partial_z^K(G_{\exterior}(z)Xu-\chi v)]\|_{H_h^2(\Omega_{\exterior})}\leq C_Nh^N\|u\|_{L^2(\partial\Omega)}.
$$
\end{proof}

	\begin{prop}
		\label{symbol of DtN exterior}
		Let $\e_0\geq 0$, $M>0$, $X\in \Psi^0_{h}(\partial\Omega)$ with $\WF(X) \in \{(x,\xi):|\xi'|_{g_{\exterior}}>1+\e_0\}$. Then for all $z\in [1-\e_0,1+\e_0]+\rmi[-Mh,Mh]$, we have  $X\Lambda_{o}(z)\in \Psi^{1}_h(\partial\Omega)$ and 
		\begin{equation*}
			\label{prin and sub of XhDu}
				\sigma(X\Lambda_\exterior (z))=\sigma(X)\rho_{\exterior}\left(|\xi'|^2_{g_\Eucl}-(\Re z)^2\right)^{\frac{1}{2}}.\\
		\end{equation*}
            Moreover, for $X_c\in\Psi^{\comp}_h(\partial\Omega)$ with $\WF(X_c)\subset \{ |\xi'|_{g_{\exterior}}>1+\e_0\}$, and $k\geq 0$, $X_c\partial_z^k\Lambda_{\exterior}\in \Psi^{\comp}_h$ with symbol
            \begin{equation}
            \label{e:derivativesOut}\sigma(X_c\partial_z^k\Lambda_{\exterior})=\sigma(X_c)\rho_{\exterior}\partial_z^k\left(|\xi'|^2_{g_\Eucl}-z^2\right)^{\frac{1}{2}}.
            \end{equation}
	\end{prop}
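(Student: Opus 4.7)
The plan is to combine Lemma~\ref{l:dtNDescriptionExterior} with the factorization in Lemma~\ref{Pdecompmicrolocal} to obtain a microlocal pseudodifferential description of $\Lambda_{\exterior}(z)$ in the elliptic region $\{|\xi'|_{g_{\exterior}}>1+\e_0\}$; the principal symbol and the $z$-derivative formulas then follow by direct inspection of the factorization.

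Given $X$ as in the proposition, select $X_2\in\Psi^{\comp}_{\tangent,h}$ with $\WF(X_2)\subset\{|\xi'|_{g_{\exterior}}>1+\e_0\}$ and $\WF(X)\cap\WF(I-X_2)=\emptyset$. Lemma~\ref{l:dtNDescriptionExterior} provides a near-boundary parametrix $G_{\exterior}(z)Xu=\chi(\e^{-1}x^1)v+O(h^\infty\|u\|_{L^2})_{H_h^2}$, where $v$ solves $(hD_{x^1}-\Lambda)v=0$, $v|_{x^1=0}=Xu$, with $\Lambda=-\rmi(E_-(z)X_2-(I-X_2)\Op(\langle\xi'\rangle))$. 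I regard $E_-(z)$ as constructed through Lemma~\ref{Pdecompmicrolocal} with $\omega_0=(\Re z)^2$, so $\sigma(E_-(z))=-(|\xi'|^2_{g_{\exterior}}-(\Re z)^2)^{1/2}$ in the elliptic region; since $(I-X_2)X=O(h^\infty)_{\Psi^{-\infty}}$ by disjointness of wavefronts, $\Lambda X=-\rmi E_-(z)X+O(h^\infty)$.

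In Fermi normal coordinates the outward unit normal to $\Omega_{\exterior}$ at $\partial\Omega$ is $-\partial_{x^1}$ and $\chi'(0)=0$, so taking the Neumann trace of the parametrix gives
\begin{align*}
\Lambda_{\exterior}(z)Xu&=\rho_{\exterior}h\partial_{\nu_{\exterior}}G_{\exterior}(z)Xu|_{\partial\Omega}=-\rho_{\exterior}h\partial_{x^1}v|_{x^1=0}+O(h^\infty\|u\|_{L^2})\\
&=-\rmi\rho_{\exterior}(hD_{x^1}v)|_{x^1=0}+O(h^\infty\|u\|_{L^2})=-\rho_{\exterior}E_-(z)Xu+O(h^\infty\|u\|_{L^2}),
\end{align*}
which exhibits $\Lambda_{\exterior}(z)X\in\Psi^1_h$ modulo $h^\infty$, with principal symbol $\sigma(X)\rho_{\exterior}(|\xi'|^2_{g_{\exterior}}-(\Re z)^2)^{1/2}$. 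To transfer $X$ from the right to the left of $\Lambda_{\exterior}$, I pick $X'\in\Psi^0_h$ with $\WF(X)\subset\Ellip(X')$ and $\WF(X')\subset\{|\xi'|_{g_{\exterior}}>1+\e_0'\}$ for some $\e_0'\in(0,\e_0)$, decompose $X\Lambda_{\exterior}=X\Lambda_{\exterior}X'+X\Lambda_{\exterior}(I-X')$, and identify the first summand with $-X\rho_{\exterior}E_-(z)X'$ modulo $h^\infty$ as above. The second summand is $O(h^\infty)$ in every semiclassical Sobolev operator norm by duality: $\langle X\Lambda_{\exterior}(I-X')u,\phi\rangle=\langle(I-X')u,\Lambda_{\exterior}^*X^*\phi\rangle$, and the analogous parametrix construction (with $E_+$ replacing $E_-$) applied to $\Lambda_{\exterior}^*X^*\phi$ shows the latter has wavefront contained in $\WF(X)$, hence disjoint from $\WF(I-X')$.

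Finally, the $z$-derivative formula \eqref{e:derivativesOut} follows by differentiating the identity $\Lambda_{\exterior}(z)X=-\rho_{\exterior}E_-(z)X+O(h^\infty)$ in $z$, with the $\partial_z^k$ estimates of Lemma~\ref{l:dtNDescriptionExterior} justifying the differentiation of the remainder; since $Xu$ is $z$-independent and $\partial_z^k E_-(z)$ has principal symbol $\partial_z^k(-(|\xi'|^2_{g_{\exterior}}-z^2)^{1/2})$, composition with a compactly wavefront-supported $X_c$ gives $X_c\partial_z^k\Lambda_{\exterior}(z)\in\Psi^{\comp}_h$ with the claimed symbol. The main obstacle is the transposition from $\Lambda_{\exterior}X$ to $X\Lambda_{\exterior}$, since $\Lambda_{\exterior}$ is not globally pseudodifferential near the glancing set $|\xi'|_{g_{\exterior}}=1$; this is resolved by the adjoint/duality argument sketched above.
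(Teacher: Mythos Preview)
Your argument is essentially correct but takes a longer route than the paper, and the transposition step is not fully justified as written.

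The paper bypasses the commutation issue entirely. Instead of invoking Lemma~\ref{l:dtNDescriptionExterior} (which requires the input to be $Xu$ and therefore produces $\Lambda_{\exterior}X$), it applies the elliptic estimate~\eqref{DtN Elliptic estimate eqn} from Lemma~\ref{DtN Elliptic estimate lemma -} directly to $G_{\exterior}(z)u_0$ for \emph{arbitrary} $u_0$. That estimate already carries the cutoff $X$ on the left, so together with the non-trapping bound $\|\chi G_{\exterior}\|_{H_h^{-N}\to H_h^2}\leq Ch^{-1}$ one gets in one stroke
\[
\|X(hD_{x^1}+\rmi E_-)G_{\exterior}(z)u_0|_{x^1=0}\|_{H^s_h}\leq Ch^N\|u_0\|_{H^{-N}_h},
\]
hence $X\Lambda_{\exterior}(z)=-X\rho_{\exterior}E_-+O(h^\infty)_{\Psi^{-\infty}}$ with no transposition needed. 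The second statement then follows from Lemma~\ref{l:dtNDescriptionExterior}, as you also indicate.

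Your duality argument for $X\Lambda_{\exterior}(I-X')=O(h^\infty)$ is the weak point. You assert that an analogous parametrix construction applies to $\Lambda_{\exterior}^*X^*\phi$, but $\Lambda_{\exterior}(z)^*$ is the DtN for the incoming problem (or at $\bar z$), and you would need to verify that the factorization machinery still applies there with the right bounds; this is plausible but not established in the paper. Moreover, the remark ``with $E_+$ replacing $E_-$'' is misleading: in the elliptic region $|\xi'|_{g_{\exterior}}^2>1$ the symbols of $E_+$ and $E_-$ coincide (see~\eqref{Epm}), so the distinction is not the relevant one. The paper's direct route via Lemma~\ref{DtN Elliptic estimate lemma -} is both shorter and avoids having to analyze the adjoint.
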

	\begin{proof}
		In this case, we recall~\eqref{e:pExtRewrite}.
		In particular, by~\eqref{DtN Elliptic estimate eqn} and Lemma~\ref{l:gainNormal}
		\begin{equation*}
		\begin{aligned}
        &\|(X(hD_{x^1}+\rmi E_-)G_\exterior (z)u_0)(0)\|_{H^s_h(\partial \Omega)}
        \\
        &\qquad \leq Ch^N\|G_\exterior (z)u_0\|_{H^1_h((t_0,t_2);H_h^{-N}(\partial \Omega))}\leq Ch^N\|u_0\|_{H_h^{-N}(\partial \Omega)},
		\end{aligned}
		\end{equation*}
		where we have used the non-trapping estimate of $\|\chi G_\exterior\|_{H^{-N}_h(\partial \Omega)\to H^2_h(\Omega_\exterior)}\le C h^{-1}$. Hence, since $\Lambda_\exterior (z)u_0 = - \rho_\exterior h\partial_{x^1}u|_{x^1=0} = -\rmi \rho_\exterior hD_{x^1}u|_{x^1=0}$
		$$
		X\Lambda_{o}(z)u_0=\left(-X\rho_\exterior E_-+O(h^\infty)_{\Psi^{-\infty}}\right)u_0,
		$$
		and the first statement follows since $\sigma(E_-)=-\sqrt{|\xi'|_{g_{\exterior}}^2-(\Re z)^2}$.
        The second statement follows directly from Lemma~\ref{l:dtNDescriptionExterior}.

	\end{proof}

	\begin{lem}
    \label{l:outDtNSign}
		Let $\e_0>0$ and $M>0$. Then for $X\in \Psi^{\comp}_h(\partial\Omega)$, with $\WF(X)\subset \{|\xi'|_{g_{\exterior}}^2>1+\e_0\}$ and $z\in[1-\e_0,1+\e_0]+\rmi [-Mh,Mh]$, we have
		$$
		-\operatorname{sgn}(\Im z^2)\Im \langle \Lambda_{\exterior}(z)Xu,Xu\rangle_{L^2(\partial\Omega, \der \vol_{g_\exterior,\partial\Omega})} \geq C|\Im z^2|\|Xu\|_{L^2(\partial \Omega)}^2 +O(h^\infty)\|u\|_{L^2(\partial \Omega)}^2.
		$$
        for some $C>0$.
	\end{lem}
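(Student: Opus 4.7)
The plan is to reduce the estimate to Green's identity for $P_\exterior-z^2$ applied to a compactly supported, explicit boundary-layer approximation of $G_\exterior(z)Xu$ furnished by Lemma~\ref{l:dtNDescriptionExterior}. Concretely, pick $\chi\in C_c^\infty((-1,1))$ equal to $1$ near $0$, fix $\e>0$ small, and apply Lemma~\ref{l:dtNDescriptionExterior} with $k=0$ to obtain $v$ solving $(hD_{x^1}-\Lambda)v=0$ in Fermi normal coordinates of $\overline{\Omega_\exterior}$ with $v|_{x^1=0}=Xu$, where $\Lambda=-\rmi(E_-X_2-(I-X_2)\Op(\langle\xi'\rangle))$, and $\|\psi(G_\exterior(z)Xu-\chi(x^1/\e)v)\|_{H_h^2}=O(h^\infty)\|u\|_{L^2(\partial\Omega)}$ for every $\psi\in C_c^\infty(\overline{\Omega_\exterior})$. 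Consequently $\chi v$ has compact support in $\overline{\Omega_\exterior}$, satisfies $(P_\exterior-z^2)(\chi v)=O(h^\infty)\|u\|_{L^2}$ in $L^2$, has boundary value $(\chi v)|_{\partial\Omega}=Xu$, and---using $\nu_\exterior=-\partial_{x^1}$ together with the first order equation $h\partial_{x^1}v|_{x^1=0}=\rmi\Lambda Xu$---satisfies $\rho_\exterior h\partial_{\nu_\exterior}(\chi v)|_{\partial\Omega}=\Lambda_\exterior(z)Xu+O(h^\infty)\|u\|_{L^2}$ in $H_h^{1/2}$.

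Next, apply Green's identity for $P_\exterior-z^2$ (self-adjoint with respect to $\rho_\exterior\,d\vol_{g_\exterior}$) to $\chi v$ on $\Omega_\exterior$. Compact support removes all contributions from infinity; the bulk integral reduces to $2\rmi\Im(z^2)\|\chi v\|_{L^2(\Omega_\exterior,\rho_\exterior d\vol_{g_\exterior})}^2+O(h^\infty)\|u\|^2$ while the boundary integral at $\partial\Omega$ yields $-2\rmi h\Im\langle\Lambda_\exterior(z)Xu,Xu\rangle_{L^2(\partial\Omega,d\vol_{g_\exterior,\partial\Omega})}+O(h^\infty)\|u\|^2$. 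Rearranging produces
\[
\Im\langle\Lambda_\exterior(z)Xu,Xu\rangle=-\frac{\Im z^2}{h}\,\|\chi v\|_{L^2(\Omega_\exterior,\rho_\exterior d\vol_{g_\exterior})}^2+O(h^\infty)\|u\|^2.
\]

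It remains to bound $\|\chi v\|^2$ from below. Integrating the one-dimensional identity $\partial_{x^1}\|v(x^1,\cdot)\|_{L^2(\partial\Omega)}^2=-(2/h)\Im\langle\Lambda v,v\rangle_{L^2(\partial\Omega)}$ over $x^1\in(0,\infty)$, and using that $v$ decays exponentially in $x^1/h$ since $\Im\sigma(\Lambda)\geq c\langle\xi'\rangle>0$, yields $\|Xu\|^2=(2/h)\int_0^\infty\Im\langle\Lambda v,v\rangle\,dx^1$. Since $v$ is microlocalized in $\WF(X_2)$, which is compact in $\xi'$, boundedness of $\Im\Lambda$ gives $\Im\langle\Lambda v,v\rangle\leq C\|v\|_{L^2(\partial\Omega)}^2+O(h^\infty)\|u\|^2$, hence $\|v\|_{L^2(\Omega_\exterior)}^2\geq ch\|Xu\|^2-O(h^\infty)\|u\|^2$; the same exponential decay makes the cutoff by $\chi$ cost only $O(h^\infty)\|u\|^2$, so $\|\chi v\|^2\geq ch\|Xu\|^2-O(h^\infty)\|u\|^2$. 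Multiplying the displayed Green's identity by $-\sgn(\Im z^2)$ and using $|\Im z^2|\leq Ch$ to absorb lower-order errors yields $-\sgn(\Im z^2)\Im\langle\Lambda_\exterior Xu,Xu\rangle=|\Im z^2|\|\chi v\|^2/h+O(h^\infty)\|u\|^2\geq c|\Im z^2|\|Xu\|^2+O(h^\infty)\|u\|^2$, the desired bound.

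The main obstacle is the careful bookkeeping of $O(h^\infty)$ errors, in particular identifying $\rho_\exterior h\partial_{\nu_\exterior}(\chi v)|_{\partial\Omega}$ with $\Lambda_\exterior Xu$ modulo $O(h^\infty)$ and handling the sign $\nu_\exterior=-\partial_{x^1}$ that enters both the first order equation and Green's identity; crucially, the compactly supported approximation $\chi v$ is what lets the argument proceed uniformly in both regimes $\Im z>0$ and $\Im z<0$, bypassing the exponential growth at infinity of the exact outgoing solution $G_\exterior(z)Xu$ when $\Im z<0$.
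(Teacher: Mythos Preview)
Your proof is correct and follows essentially the same route as the paper: both use Lemma~\ref{l:dtNDescriptionExterior} to replace $G_{\exterior}(z)Xu$ by the compactly supported approximation $\chi v$, apply Green's identity to obtain $-h\Im\langle\Lambda_\exterior Xu,Xu\rangle=\Im z^2\|\chi v\|^2+O(h^\infty)\|u\|^2$, and then bound $\|\chi v\|^2\geq ch\|Xu\|^2-O(h^\infty)\|u\|^2$ by integrating $\partial_{x^1}\|v(x^1,\cdot)\|^2$ and using the first-order equation together with the compact microlocalization of $v$. The only cosmetic difference is that the paper first writes the integration-by-parts identity for $G_\exterior Xu$ on $B(0,R)\cap\Omega_\exterior$ before passing to $\chi v$, whereas you apply Green's identity directly to $\chi v$; your version is arguably cleaner since it makes explicit why the boundary term at infinity is absent.
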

	\begin{proof}
    First observe that, integration by parts on $B(0,R)\cap \Omega_{\exterior}$ yields
    \begin{equation*}
    \label{e:intByParts}
    \begin{aligned}
    &-h\Im \langle \Lambda_{\exterior}(z)Xu,Xu\rangle_{L^2(\partial\Omega, \der \vol_{g_\exterior,\partial\Omega})}\\
    &\quad=\Im z^2\|G_{\exterior}Xu\|_{L^2(B(0,R)\cap \Omega_{\exterior},\rho_\exterior\der \vol_{g_\exterior})}^2+h\Im \langle \rho_\exterior h\partial_{r}G_{\exterior}Xu,G_{\exterior}Xu\rangle_{L^2(\partial B(0,R), \der \vol_{g_\exterior,\partial B(0,R)})}.
    \end{aligned}
    \end{equation*}
To simplify the notation, we will omit the dependence on $\rho_\exterior$. In order to complete the proof, we need to understand $G_{\exterior}Xu$. 

We now apply Lemma~\ref{l:dtNDescriptionExterior}
\begin{align*}
-h\Im \langle \Lambda_\exterior(z)G_\exterior Xu,Xu\rangle_{L^2(\partial\Omega)}&=\Im z^2\|\chi(x^1)v\|_{L^2(\Omega_\exterior)}^2+O(h^\infty)\|u\|_{L^2(\partial \Omega)}^2.
\end{align*}

Finally, we have, letting $X'\in \Psi_{\tangent,h}^\comp$ with $\WF(I-X')\cap \WF(X)=\emptyset$
\begin{align*}
\|Xu\|_{L^2(\partial \Omega)}^2&=-\int_0^\infty \partial_{x^{1}}\|\chi(x^1)\tilde v(x^1)\|_{L^2(\partial \Omega)}^2\der x^1\\
&=-\int_0^\infty 2\Re \left\langle \partial_{x^1}(\chi(x^1)v(x^1)),\chi(x^1)v(x^1)\right\rangle_{L^2(\partial \Omega)}\der x^1\\
&\leq C h^{-1}\|\chi(x^1)hD_{x^1}v\|_{L^2\left((0,2\e) \times \partial \Omega\right)}\|\chi(x^1)v\|_{L^2\left((0,2\e) \times \partial \Omega\right)}+C\|v\|^2_{L^2\left((0,2\e) \times \partial \Omega\right)}\\
&=C h^{-1}\|\chi(x^1)\Lambda v\|_{L^2\left((0,2\e) \times \partial \Omega\right)}\|\chi(x^1)v\|_{L^2\left((0,2\e) \times \partial \Omega\right)}+C\|v\|^2_{L^2\left((0,2\e) \times \partial \Omega\right)}\\
&\le C h^{-1}\|\Lambda X'\chi(x^1)v\|_{L^2\left((0,2\e) \times \partial \Omega\right)}\|\chi(x^1)v\|_{L^2\left((0,2\e) \times \partial \Omega\right)}+C_Nh^N\|Xu\|_{L^2(\partial \Omega)}^2\\
&\leq Ch^{-1}\|\chi(x^1)v\|_{L^2(\Omega_\exterior)}^2+C_Nh^N\|u\|_{L^2(\partial \Omega)}^2,
\end{align*}
which completes the proof of the lemma.
	%
	\end{proof}

    \subsection{Application to the interior problem}
	\label{LU interior problem}
	Next, we apply our estimates to $P_{\inner}-z^2$.
    \begin{lem}
\label{l:dtNDescriptionInner}
Let $\e_0>0$, $M>0$, $U$ be a Fermi normal coordinate neighborhood of $\partial \Omega$ in $\Omega_{\exterior}$ with coordinates $(x^1,x')$, $E_-$ as in Proposition~\ref{Pdecompmicrolocal} with $P_{\inner}-z^2=-P(-z^2,g_{\inner},L_{\inner})$ (as in~\eqref{e:pInRewrite}). Then there for all $\chi\in C_c^\infty(-1,1)$ with $\chi\equiv 1$ near $0$, $\e>0$ small enough, and $k,N\geq 0$ there is $C>0$ such that for all $0<h<1$, $z\in[1-\e_0,1+\e_0]+\rmi[-Mh,Mh]$, and $u\in L^2(\partial\Omega)$,  we have
$$
\| \partial_z^k (\chi(\e^{-1}x^1)v-G_{\inner}u)\|_{H_h^2(\Omega_{\inner})}\leq Ch^N\|u\|_{L^2(\partial\Omega)},
$$
where $v$ satisfies
\begin{equation*}
\label{e:too basic interior}
(hD_{x^1}+\rmi E_-)v=0,\qquad v|_{x^1}=u.
\end{equation*}
\end{lem}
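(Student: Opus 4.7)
My plan is to mirror the argument for Lemma~\ref{l:dtNDescriptionExterior}, exploiting the crucial simplification that for the interior problem we have a \emph{global} factorization rather than a microlocal one. Since $P_\inner-z^2=-P(-z^2;g_\inner,L_\inner)$ with $\omega_0=-1<0$, Corollary~\ref{Pdecomp} applies and gives, in Fermi normal coordinates,
\begin{equation*}
P_\inner-z^2=-\bigl(hD_{x^1}+h\tilde a-\rmi E_-\bigr)\bigl(hD_{x^1}+\rmi E_-\bigr)+O(h^\infty)_{\Psi^{-\infty}_{\tangent,h}},
\end{equation*}
with $\sigma(E_-)=-\sqrt{|\xi'|^2_{g_\inner}+1}$. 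Setting $\Lambda:=-\rmi E_-$, we have $\Im\sigma(\Lambda)=-\Re\sigma(E_-)=\sqrt{|\xi'|^2_{g_\inner}+1}\geq c_0\langle\xi'\rangle$ uniformly, so the hypotheses of Lemma~\ref{l:lopa} hold \emph{without} any microlocalization. This removes the need for the cutoffs $X,X_2,\dots$ that appeared in Lemma~\ref{l:dtNDescriptionExterior}.

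The first step is to establish the basic quasimode bounds on $v$. I introduce the exponential weight $\tilde v(x^1,x'):=e^{c_0 x^1/(2h)}v(x^1,x')$, which solves $(hD_{x^1}-(\Lambda-\rmi c_0/2))\tilde v=0$ with $\tilde v|_{x^1=0}=u$. Since $\Im\sigma(\Lambda-\rmi c_0/2)\geq (c_0/2)\langle\xi'\rangle$, Lemma~\ref{l:lopa} (applied with $X=\tilde X=I$) yields $\|\tilde v\|_{L^2((0,3\e);H_h^s(\partial\Omega))}\leq Ch^{1/2}\|u\|_{L^2(\partial\Omega)}$ for every $s$. Conjugating back then gives exponential decay $\|v(x^1,\cdot)\|_{H_h^s(\partial\Omega)}\lesssim h^{1/2}e^{-c_0x^1/(2h)}\|u\|_{L^2(\partial\Omega)}$. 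To handle $z$-derivatives I run an induction on $k$: differentiating the transport equation in $z$ produces $(hD_{x^1}-(\Lambda-\rmi c_0/2))\partial_z^K\tilde v=\sum_{j<K}A_j(z)\partial_z^j\tilde v$ with $\partial_z^K\tilde v|_{x^1=0}=0$, and a second application of Lemma~\ref{l:lopa} closes the induction exactly as in~\eqref{e:basicL2}.

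Next I show $(P_\inner-z^2)\chi(\e^{-1}x^1)v=O(h^\infty)_{L^2}\|u\|_{L^2(\partial\Omega)}$, and similarly for all $z$-derivatives. Using the factorization,
\begin{equation*}
(P_\inner-z^2)\chi v=-\bigl(hD_{x^1}+h\tilde a-\rmi E_-\bigr)\chi(hD_{x^1}+\rmi E_-)v+[\text{commutators with }\chi]+O(h^\infty)v.
\end{equation*}
The first term vanishes because $(hD_{x^1}+\rmi E_-)v=0$, and the commutator term is supported in $x^1\in\supp\chi'\subset[\e,2\e]$, where the exponential decay of $v$ gives an $e^{-c/h}$ bound. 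Differentiating in $z$ and combining with the bounds from the first step gives $\|\partial_z^K(P_\inner-z^2)\chi v\|_{L^2}\leq C_Nh^N\|u\|_{L^2(\partial\Omega)}$.

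Finally, for the comparison with $G_\inner u$, note $(\chi v - G_\inner u)|_{\partial\Omega}=0$, so by the interior Dirichlet resolvent estimate~\eqref{e:interiorResolve},
\begin{equation*}
\|\chi v-G_\inner u\|_{H_h^2(\Omega_\inner)}=\|R_\inner(z)(P_\inner-z^2)\chi v\|_{H_h^2(\Omega_\inner)}\leq C\|(P_\inner-z^2)\chi v\|_{L^2(\Omega_\inner)}\leq C_Nh^N\|u\|_{L^2(\partial\Omega)}.
\end{equation*}
The $z$-derivative statement follows by induction on $k$: $\partial_z^k(\chi v-G_\inner u)$ has zero trace on $\partial\Omega$ and solves a Dirichlet problem whose right-hand side, by the Leibniz rule and the inductive hypothesis, is $O(h^\infty)_{L^2}\|u\|_{L^2(\partial\Omega)}$, and one applies $R_\inner(z)$ again. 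The main technical check is just the bookkeeping for the induction and the $\chi'$ commutator terms; there is no obstacle analogous to the non-trapping/outgoing issue of the exterior case, precisely because $R_\inner(z)$ is bounded on $L^2$ uniformly in the relevant $z$-range.
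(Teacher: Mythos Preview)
Your proof is correct and follows exactly the approach the paper intends: the paper's own proof is a single sentence stating that the argument is ``nearly identical to Lemma~\ref{l:dtNDescriptionExterior} with the caveat that all cutoffs can be taken to be the identity,'' and you have correctly identified that this simplification comes from the global factorization of Corollary~\ref{Pdecomp} (since $\omega_0=-1<0$) and the uniform boundedness of $R_\inner(z)$. Your sketch in fact supplies more detail than the paper does.
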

\begin{proof}
The proof of this lemma is nearly identical to Lemma~\ref{l:dtNDescriptionExterior} with the caveat that all cutoffs can be taken to be the identity, which simplifies the proof substantially. 
\end{proof}
    
	\begin{prop}
		\label{symbol of DtN interior}
		Let $\e_0>0$, $M>0$. Then for all $z\in [1-\e_0,1+\e_0]+\rmi[-Mh,Mh]$, we have $\Lambda_{\inner}(z)\in \Psi_h^1(\partial\Omega)$ with principal symbol 
        \begin{equation*}
			\label{prin and sub of calV}
			\sigma(\Lambda_{\inner}(z))=\rho_\inner\left(|\xi'|^2_{g_\inner}+(\Re z)^2\right)^{\frac{1}{2}}.
		\end{equation*}
        Moreover
        \begin{equation}
        \label{e:derivativesIn}
        \sigma(\partial_z^\alpha \Lambda_{\inner}(z))= \rho_{\inner}\partial_z^\alpha\left(|\xi'|^2_{g_\inner}+z^2\right)^{\frac{1}{2}}.
        \end{equation}
	\end{prop}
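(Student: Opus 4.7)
The plan is to mirror the proof of Proposition \ref{symbol of DtN exterior} with a key simplification: since the interior problem is globally elliptic, no microlocal cutoff $X$ is needed and the full factorization from Corollary \ref{Pdecomp} is available. First, I would rewrite $P_{\inner}-z^2=-P(-z^2;g_\inner,L_\inner)$ as in \eqref{e:pInRewrite}, and note that for $|1-z|\le Mh$ the parameter $\omega:=-z^2$ satisfies $|\omega-\omega_0|\le Ch$ with $\omega_0:=-1<0$. Since $|\xi'|_{g_\inner}^2-\omega_0=|\xi'|_{g_\inner}^2+1>0$ everywhere, Corollary \ref{Pdecomp} applies globally and yields, in Fermi normal coordinates with $\Omega_\inner=\{x^1>0\}$,
\begin{equation*}
P(-z^2;g_\inner,L_\inner)=\bigl(hD_{x^1}+h\tilde a-\rmi E_-\bigr)\bigl(hD_{x^1}+\rmi E_-\bigr)+O(h^\infty)_{\Psi^{-\infty}_{\tangent,h}},
\end{equation*}
where $E_-\in\Psi^1_{\tangent,h}$ has principal symbol $\sigma(E_-)=-\sqrt{|\xi'|_{g_\inner}^2+(\Re z)^2}$ (by choosing $\omega_0=-(\Re z)^2$, which is allowed up to $O(h^2)$ error).

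Next, I would apply Lemma \ref{l:dtNDescriptionInner} to write $G_\inner(z)u_0=\chi(x^1)v+O(h^\infty)u_0$ in $H_h^2(\Omega_\inner)$, where $v$ solves $(hD_{x^1}+\rmi E_-)v=0$ with $v|_{x^1=0}=u_0$. Since the outward unit normal to $\Omega_\inner$ in these coordinates is $-\partial_{x^1}$, we compute
\begin{equation*}
\Lambda_\inner(z)u_0=-\rho_\inner h\partial_{x^1}G_\inner(z)u_0\bigr|_{x^1=0}=-\rmi\rho_\inner hD_{x^1}v\bigr|_{x^1=0}+O(h^\infty)u_0=-\rho_\inner E_- u_0+O(h^\infty)u_0,
\end{equation*}
where the penultimate step uses $hD_{x^1}v=-\rmi E_-v$. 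This shows $\Lambda_\inner(z)\in\Psi_h^1(\partial\Omega)$ with $\sigma(\Lambda_\inner(z))=-\rho_\inner\sigma(E_-)=\rho_\inner\sqrt{|\xi'|_{g_\inner}^2+(\Re z)^2}$, as claimed.

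For the derivative formula \eqref{e:derivativesIn}, I would differentiate the identity $\Lambda_\inner(z)u_0=-\rho_\inner h\partial_{x^1}G_\inner(z)u_0|_{x^1=0}$ in $z$. Lemma \ref{l:dtNDescriptionInner} already controls $\partial_z^k(\chi v-G_\inner u_0)$ in $H_h^2$ modulo $O(h^\infty)\|u_0\|_{L^2}$, so it suffices to observe that the inductive construction in Lemma \ref{Pdecompmicrolocal} produces $E_-(z)$ depending smoothly on $\omega=-z^2$ (since at each step the new symbol is a rational function of the previously constructed symbols and $e_1$, with $e_1$ smooth in $z$ for $\omega_0<0$). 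Differentiating the factored form yields a semiclassical pseudodifferential operator whose principal symbol is $\rho_\inner\partial_z^\alpha\sqrt{|\xi'|^2_{g_\inner}+z^2}$. The only real bookkeeping issue, and hence the step I would write out most carefully, is confirming that the sign choice for $E_-$ forced by Lemma \ref{l:dtNDescriptionInner} (i.e.\ by the requirement $\Im\sigma(-\rmi E_-)>0$ so that the forward Cauchy problem on $\{x^1>0\}$ is well-posed via Lemma \ref{l:lopa}) indeed gives the positive square root in the answer.
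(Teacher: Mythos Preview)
Your proposal is correct and follows essentially the same approach as the paper: invoke Lemma~\ref{l:dtNDescriptionInner} (which plays the role of Lemma~\ref{l:dtNDescriptionExterior} with all cutoffs equal to the identity), read off $\Lambda_\inner(z)=-\rho_\inner E_-+O(h^\infty)_{\Psi^{-\infty}}$ from the factorization, and compute the principal symbol. Your version is in fact slightly more careful than the paper's terse proof in that you take $\omega_0=-(\Re z)^2$ rather than $-1$, which directly yields the $(\Re z)^2$ appearing in the statement for the full range $z\in[1-\e_0,1+\e_0]+\rmi[-Mh,Mh]$; and your discussion of the sign choice via the well-posedness criterion $\Im\sigma(-\rmi E_-)>0$ makes explicit what the paper leaves implicit.
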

	\begin{proof}
    The Proposition follows directly from Lemma~\ref{l:dtNDescriptionInner} once we calculate the symbol of $E_-$.
        Recall that $\sigma(E_-)=-\sqrt{|\xi'|_{g_{\inner}}^2+1}.$ Recall that $\nu_{g_\inner}$ is the outward normal with respect to the metric $g_\inner$ and we have $-\partial_{ x^1_\inner}=\partial_{\nu_{g_\inner}}$
		In particular, the DtN map with respect to $g_\inner$ is given by $\Lambda_{\inner}(z)u_0=\rho_{\inner}h\partial_{\nu_\inner}u_0|_{\partial \Omega_\inner}$, which can be written as
		$$
		\Lambda_{\inner}(z)u_0=  \left(-\rho_{\inner}E_-+O(h^\infty)_{\Psi^{-\infty}}\right)u_0.
		$$

	\end{proof}
	
	\begin{lem}\label{l:inDtNSign}
		 Let $\e_0\geq 0$ and $M>0$. Then for $X\in \Psi^{k}_h(\partial\Omega)$, and $z\in[1-e_0,1+\e_0]+\rmi[-Mh,Mh]$, we have
		$$
		\operatorname{sgn}(\Im z^2)\Im \langle \Lambda_{\inner}Xu,Xu\rangle_{L^2(\partial\Omega, \der \vol_{g_\inner,\partial\Omega})} \geq C |\Im z^2|\|Xu\|_{L^2(\partial \Omega)}^2,
		$$
        or equivalently,
        $$
		\operatorname{sgn}(\Im z^2)\Im \langle \tau \Lambda_{\inner}Xu,Xu\rangle_{L^2(\partial\Omega, \der \vol_{g_\exterior,\partial\Omega})} \geq C |\Im z^2|\|Xu\|_{L^2(\partial \Omega)}^2,
		$$
		for some constant $C>0$.
	\end{lem}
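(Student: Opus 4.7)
The strategy is the interior counterpart of the proof of Lemma~\ref{l:outDtNSign}. Writing $w := G_\inner(z)(Xu)$, one has $(P_\inner - z^2)w = 0$ in $\Omega_\inner$, $w|_{\partial\Omega} = Xu$, and $\Lambda_\inner Xu = h\rho_\inner \partial_{\nu_\inner} w$. The first step is Green's identity: multiplying $(P_\inner - z^2)w = 0$ by $\bar w$ in $L^2(\Omega_\inner, \rho_\inner d\vol_{g_\inner})$ and integrating by parts gives, with no contribution at infinity since $\Omega_\inner$ is bounded,
$$
h\langle \Lambda_\inner Xu, Xu\rangle_{L^2(\partial\Omega, d\vol_{g_\inner, \partial\Omega})} = \|h\nabla_{g_\inner} w\|^2_{L^2(\Omega_\inner, \rho_\inner d\vol_{g_\inner})} + z^2 \|w\|^2_{L^2(\Omega_\inner, \rho_\inner d\vol_{g_\inner})}.
$$
Taking imaginary parts (the $\|h\nabla w\|^2$ term is real) yields $h\Im \langle \Lambda_\inner Xu, Xu\rangle = \Im(z^2)\, \|w\|^2$, i.e.
$$
\operatorname{sgn}(\Im z^2)\, \Im \langle \Lambda_\inner Xu, Xu\rangle = \frac{|\Im z^2|}{h}\|w\|^2_{L^2(\Omega_\inner, \rho_\inner d\vol_{g_\inner})}.
$$
The lemma therefore reduces to the reverse-trace inequality $\|w\|^2_{L^2(\Omega_\inner)} \geq c\,h\,\|Xu\|^2_{L^2(\partial\Omega)}$, up to an $O(h^N)\|Xu\|^2$ remainder.

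For this reverse-trace bound I would invoke the microlocal description of the interior Dirichlet extension from Lemma~\ref{l:dtNDescriptionInner}, which gives $w = \chi(\e^{-1}x^1)\, v + r$ with $\|r\|_{H^2_h(\Omega_\inner)} \leq C h^N \|Xu\|_{L^2(\partial\Omega)}$, where $v$ solves the transport equation $(hD_{x^1} + \rmi E_-)v = 0$ in Fermi normal coordinates with $v|_{x^1=0} = Xu$ and $\sigma(E_-) = -\sqrt{|\xi'|^2_{g_\inner} + (\Re z)^2}$. Then, mirroring the closing calculation of Lemma~\ref{l:outDtNSign}, I would write
$$
\|Xu\|^2_{L^2(\partial\Omega)} = \|v(0)\|^2_{L^2(\partial\Omega)} = -\int_0^\infty \partial_{x^1}\bigl(\chi^2(\e^{-1}x^1)\,\|v(x^1,\cdot)\|^2_{L^2(\partial\Omega)}\bigr)\,dx^1,
$$
expand the derivative, and use the transport equation to replace $h\partial_{x^1}v$ by $-\rmi E_- v$. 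Controlling the resulting term via $\|\chi E_- v\|_{L^2} \leq C\|\chi v\|_{L^2}$ (justified by Lemma~\ref{l:dtNDescriptionInner}, since $v$ is confined modulo $h^\infty$ to the essential wavefront set of $Xu$) yields
$$
\|Xu\|^2_{L^2(\partial\Omega)} \leq C\, h^{-1}\,\|\chi v\|^2_{L^2(\Omega_\inner)} + O(h^N)\,\|Xu\|^2_{L^2(\partial\Omega)}.
$$
Combining with $\|w\|^2 \geq \tfrac{1}{2}\|\chi v\|^2 - \|r\|^2 \geq \tfrac{1}{2}\|\chi v\|^2 - O(h^{2N})\|Xu\|^2$ and absorbing the error for $h$ small gives $\|w\|^2 \geq c h \|Xu\|^2$, which together with the identity from Green's formula proves the first inequality. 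The equivalent formulation involving $\tau \Lambda_\inner$ and the exterior boundary measure follows directly from $\tau\, d\vol_{g_\exterior, \partial\Omega} = d\vol_{g_\inner, \partial\Omega}$.

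The main obstacle is the bound $\|\chi E_- v\|_{L^2(\Omega_\inner)} \leq C\|\chi v\|_{L^2(\Omega_\inner)}$: since $E_-\in\Psi^1_{\tangent,h}$ is of positive order, this is not automatic and has to be extracted from the wavefront-cutoff bootstrap built into the proof of Lemma~\ref{l:dtNDescriptionInner}, matching the analogous step in the exterior proof. Unlike Lemma~\ref{l:outDtNSign} (and the exterior DtN), no condition $\WF(X)\subset\{|\xi'|^2_{g_\exterior}>1+\e_0\}$ is required here, because the interior factorization operator $E_-$ has globally elliptic symbol on $T^*\partial\Omega$, so the transport parametrix is valid without excising a glancing set.
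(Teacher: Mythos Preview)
Your Green's identity step and the reduction to the reverse-trace inequality $\|w\|_{L^2(\Omega_\inner)}^2 \geq ch\|Xu\|_{L^2(\partial\Omega)}^2$ are exactly right, and match the paper. The equivalence of the two formulations via $\tau\,\der\vol_{g_\exterior,\partial\Omega}=\der\vol_{g_\inner,\partial\Omega}$ is also fine.

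The gap is in the step you yourself flag as the main obstacle. You propose to get $\|\chi E_-v\|_{L^2}\le C\|\chi v\|_{L^2}$ from a ``wavefront-cutoff bootstrap built into Lemma~\ref{l:dtNDescriptionInner}''. But Lemma~\ref{l:dtNDescriptionInner} contains no microlocal confinement statement for $v$; that extra conclusion ($\|(I-X')\chi v\|_{H_h^N}\le C_Nh^N\|u\|$) appears only in the \emph{exterior} Lemma~\ref{l:dtNDescriptionExterior}, and it is available there precisely because one assumes $X\in\Psi_h^{\comp}$. In the interior lemma the hypothesis is $X\in\Psi_h^k(\partial\Omega)$ for arbitrary $k$, so $Xu$ need not be compactly microlocalized and $E_-\in\Psi^1_{\tangent,h}$ genuinely loses a derivative on $v$. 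Mirroring the closing calculation of Lemma~\ref{l:outDtNSign} therefore fails: that calculation inserts a compact cutoff $X'$ with $\Lambda X'\in\Psi^{\comp}$ to turn $\|\Lambda v\|$ into $\|v\|$, and no such $X'$ exists here.

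The paper avoids this entirely by working directly with $w=G_\inner(z)Xu$ rather than the transport parametrix. Since the interior operator is globally elliptic, Lemma~\ref{l:gainNormal} applies with $X=\tilde X=I$ and (together with interior elliptic regularity to handle the region $x^1>\e$) gives
\[
\|w\|_{H^1_h((0,2\e);L^2(\partial\Omega))}^2 \le Ch\,\|Xu\|_{L^2(\partial\Omega)}^2 + Ch^N\|w\|_{L^2(\Omega_\inner)}^2.
\]
One then runs the fundamental-theorem-of-calculus argument on $\varphi(x^1)\|w(x^1)\|^2$ and uses Cauchy--Schwarz \emph{with a small parameter} $\delta$:
\[
h^{-1}\!\int \|hD_{x^1}w\|\,\|w\|\,\der x^1 \le Ch^{-1}\bigl(\delta\|w\|_{H^1_h((0,2\e);L^2)}^2 + \delta^{-1}\|w\|_{L^2}^2\bigr)\le C\delta\|Xu\|_{L^2}^2 + C(\delta)\,h^{-1}\|w\|_{L^2}^2.
\]
Choosing $\delta$ small absorbs the first term and yields $\|Xu\|_{L^2}^2\le Ch^{-1}\|w\|_{L^2(\Omega_\inner)}^2$, which is the reverse-trace bound. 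The point is that one never needs $\|E_-v\|\lesssim\|v\|$; one only needs the $H^1_h$ bound on $w$ near the boundary in terms of $h^{1/2}\|Xu\|_{L^2}$ plus a harmless $h^N\|w\|_{L^2}$, and that comes for free from the elliptic factorization via Lemma~\ref{l:gainNormal}.
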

	\begin{proof}
		Observe that, from Section \ref{Reformulation of the problem as an exterior problem}, one has
		\begin{equation}
        \label{e:intByPartsInner}
		\begin{aligned}
        &h\Im \langle \tau \Lambda_{\inner}Xu,Xu\rangle_{L^2(\partial\Omega, \der \vol_{g_\exterior,\partial\Omega})}=h\Im \langle \Lambda_{\inner}Xu,Xu\rangle_{L^2(\partial\Omega, \der \vol_{g_\inner,\partial\Omega})}\\
        &= \Im \langle z^2G_{\inner}(z)Xu,G_{\inner}(z)Xu\rangle_{L^2(\Omega_\inner,\rho_\inner\der \vol_{g_\inner})} =\Im z^2\|G_{\inner}(z)Xu\|_{L^2(\Omega_\inner,\rho_\inner\der \vol_{g_\inner})}^2.
		\end{aligned}
        \end{equation}
		We will omit the dependence of $\rho_\inner$ to ease the notations. Now, $(P_{\inner}-z^2)G_{\inner}(z)g=0\text{ in }\Omega_{\inner}$, $G_{\inner}(z)g|_{\partial\Omega}=g$ and hence, using the factorization~\eqref{Laplace factorisation}, we have by Lemma \ref{l:gainNormal}  with $X=\tilde{X}=I$,
        \begin{align*}
        \|G_{\inner}(z)Xu\|_{H_h^1\left((0,\e);L^2(\partial\Omega)\right)}&\leq Ch^N\|G_{\inner}(z)Xu\|_{H_h^1\left((0,2\e);H_h^{-N}(\partial\Omega)\right)}+Ch^{\frac{1}{2}}\|Xu\|_{L^2(\partial\Omega)}.
        \end{align*}
        
        Subtracting part of the first term on the left to the right-hand side and using local elliptic regularity for $P_{\inner}-z^2$ and applying~\eqref{e:innerEstimates} we have
        \begin{equation}
        \label{l:inDtNSign eqn 1}
        \begin{aligned}
         \|G_{\inner}(z)Xu\|_{H_h^1\left((0,\e);L^2(\partial\Omega)\right)}&\leq Ch^N\|G_{\inner}(z)Xu\|_{H_h^1\left((\e,2\e);H_h^{-N}(\partial\Omega)\right)}+Ch^{\frac{1}{2}}\|Xu\|_{L^2(\partial\Omega)}\\
         &\leq  Ch^N\|G_{\inner}(z)Xu\|_{L^2(\Omega_{\inner})}+Ch^{\frac{1}{2}}\|Xu\|_{L^2(\partial\Omega)},
        \end{aligned}
        \end{equation}
        where we have used that for $U\Subset \Omega_{\inner}$, one has the interior elliptic estimate
        \begin{equation}
        \label{l:inDtNSign eqn 2}
        \|G_{\inner}(z)Xu\|_{H_h^N(U)}\leq Ch^N\|G_{\inner}(z)Xu\|_{L^2(\Omega_\inner)}.
        \end{equation}
        Combining \eqref{l:inDtNSign eqn 1} and \eqref{l:inDtNSign eqn 2}, one obtains
        \begin{equation}
        \label{l:inDtNSign eqn 3}
         \|G_{\inner}(z)Xu\|_{H_h^1\left((0,2\e);L^2(\partial \Omega)\right)}\leq  Ch^N\|G_{\inner}(z)Xu\|_{L^2(\Omega_{\inner})}+Ch^{\frac{1}{2}}\|Xu\|_{L^2(\partial\Omega)}.
        \end{equation}
        Hence, letting $\varphi\in C_c^\infty([0,2\e))$ with $\varphi\equiv 1$ on $[0,\e]$, 
        \begin{align*}
        \|Xu\|^2_{L^2(\partial\Omega)}&=-\int_0^\infty \partial_{x^{1}}\left(\varphi(x^1)\|G_{\inner}(z)Xu(x^1)\|_{L^2(\partial \Omega_\inner)}^2\right)\der x^1\\
        &\leq C\|G_{\inner}(z)Xu\|_{L^2(\Omega_{\inner},\rho_\inner\der \vol_{g_\inner})}^2+C h^{-1}\int_0^\infty \|hD_{x^1}G_{\inner}(z)Xu\|_{L^2(\partial \Omega)}\|G_{\inner}(z)Xu\|_{L^2(\partial \Omega)} \der x^1\\
        &\leq Ch^{-1}\left(\delta\|G_{\inner}(z)Xu\|^2_{H_h^1\left((0,2\e);L^2(\partial \Omega)\right)}+(1+\delta^{-1})\|G_{\inner}(z)Xu\|_{L^2(\Omega_{\inner},\rho_\inner\der \vol_{g_\inner})}\right)\\
        &\leq Ch^{-1}(\delta h\|Xu\|_{L^2(\partial\Omega)}^2+(2+\delta^{-1})\|G_{\inner}(z)Xu\|_{L^2(\Omega_{\inner})},
        \end{align*}
        where we used \eqref{l:inDtNSign eqn 3} in the last step. Now, choosing $\delta$ small enough in the above estimate, we have
        $$
        \|Xu\|_{L^2(\partial\Omega)}^2\leq Ch^{-1}\|G_{\inner}(z)Xu\|_{L^2(\Omega_{\inner})}^2.
        $$
        The lemma now follows by combining this with~\eqref{e:intByPartsInner}.
	\end{proof}
	
	\subsection{Combination of interior and exterior problems}
	\label{LU combination problem}
	
    Thanks to the ellipticity of the interior problem, we have an accurate representation of $\Lambda_{\inner}(z)$  and so we work with $\rho_{\inner}h\partial_{\nu_{\inner}}u_{\inner}$ replaced by $\Lambda_{\inner}(z)u_{\inner}$. However, the exterior Dirichlet-to-Neumann map can only be accurately approximated microlocally in $|\xi'|_{g_{\exterior}}>1$ (this is done in Proposition \ref{symbol of DtN exterior}). Therefore, the difference of the exterior Dirichlet-to-Neumann and the interior Dirichlet-to-Neumann can only be directly analyzed microlocally on microlocally in $|\xi'|_{g_{\exterior}}>1$. 
	\begin{prop}
    \label{p:imagPart}
		Let $e_0\geq 0$, $M>0$, and $X\in \Psi^\comp_h(\partial\Omega)$, with $\WF(X)\subset\{|\xi'|_{g_{\exterior}}>1+\e_0\}$.  Then, for all $N\in \N$, there exists $C, C_N, h_0>0$ such that for $0<h<h_0$ and $z\in[1-\e_0,1+\e_0]+\rmi[-Mh,Mh]$,
		\begin{equation*}
			\label{Xn control via DtN difference eqn}
			(|\Im z^2|\|X u\|^2_{L^2(\partial\Omega)}-C_Nh^N\|u\|_{L^2(\partial\Omega)}^2)\le C\left(|\langle ( \Lambda_{\exterior}-\tau\Lambda_{\inner}) X u, X u \rangle_{L^2(\partial\Omega, \der \vol_{g_\exterior,\partial\Omega})}|\right).
		\end{equation*}
	\end{prop}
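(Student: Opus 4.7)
The plan is that Proposition \ref{p:imagPart} follows essentially immediately by combining the two sign estimates just established, namely Lemma \ref{l:outDtNSign} (for $\Lambda_{\exterior}$) and Lemma \ref{l:inDtNSign} (for $\tau\Lambda_{\inner}$), since the imaginary parts they control have opposite signs and hence cooperate when one takes their difference.

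More precisely, I would first observe that Lemma \ref{l:outDtNSign} gives, for $X\in\Psi_h^{\comp}(\partial\Omega)$ with $\WF(X)\subset\{|\xi'|_{g_{\exterior}}>1+\e_0\}$,
\begin{equation*}
-\sgn(\Im z^2)\,\Im\langle \Lambda_{\exterior}(z)Xu,Xu\rangle_{L^2(\partial\Omega,\der\vol_{g_\exterior,\partial\Omega})} \;\geq\; c|\Im z^2|\,\|Xu\|_{L^2(\partial\Omega)}^2 - C_N h^N\|u\|_{L^2(\partial\Omega)}^2,
\end{equation*}
while Lemma \ref{l:inDtNSign} (applied with respect to the same measure $\der\vol_{g_\exterior,\partial\Omega}$, as noted in its second form) gives
\begin{equation*}
\sgn(\Im z^2)\,\Im\langle \tau\Lambda_{\inner}(z)Xu,Xu\rangle_{L^2(\partial\Omega,\der\vol_{g_\exterior,\partial\Omega})} \;\geq\; c|\Im z^2|\,\|Xu\|_{L^2(\partial\Omega)}^2.
\end{equation*}

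The key step is then to add these two inequalities. The left-hand sides combine into $-\sgn(\Im z^2)\,\Im\langle (\Lambda_{\exterior}-\tau\Lambda_{\inner})Xu,Xu\rangle$, and the right-hand sides combine to yield
\begin{equation*}
-\sgn(\Im z^2)\,\Im\langle (\Lambda_{\exterior}-\tau\Lambda_{\inner})Xu,Xu\rangle \;\geq\; 2c|\Im z^2|\,\|Xu\|_{L^2(\partial\Omega)}^2 - C_N h^N\|u\|_{L^2(\partial\Omega)}^2.
\end{equation*}
Since $|\Im\langle v,v\rangle|\leq |\langle v,v\rangle|$ and $|\sgn(\Im z^2)|\leq 1$, this gives
\begin{equation*}
|\langle (\Lambda_{\exterior}-\tau\Lambda_{\inner})Xu,Xu\rangle| \;\geq\; 2c|\Im z^2|\,\|Xu\|_{L^2(\partial\Omega)}^2 - C_N h^N\|u\|_{L^2(\partial\Omega)}^2,
\end{equation*}
which upon rearrangement is exactly the claim of the proposition, with the constant $C$ being $(2c)^{-1}$.

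There is no genuine obstacle here; the proposition is essentially a bookkeeping corollary of the two sign lemmas, whose content is that $\Im\Lambda_{\inner}$ and $\Im\Lambda_{\exterior}$ have definite, and crucially opposite, signs determined by $\sgn(\Im z^2)$. The only small care needed is to use Lemma \ref{l:inDtNSign} in its second form so that both inner products are taken with respect to the same boundary measure $\der\vol_{g_\exterior,\partial\Omega}$, so that the difference $\Lambda_{\exterior}-\tau\Lambda_{\inner}$ appears in a single bilinear form.
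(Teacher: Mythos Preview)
Your proposal is correct and takes essentially the same approach as the paper: the paper's proof is a two-line argument that invokes Lemmas~\ref{l:inDtNSign} and~\ref{l:outDtNSign}, adds them to obtain a lower bound on $|\Im\langle(\tau\Lambda_{\inner}-\Lambda_{\exterior})Xu,Xu\rangle|$, and then bounds this by the absolute value of the full inner product. Your write-up is slightly more detailed but the reasoning is identical.
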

	\begin{proof}
        Observe that by Lemmas~\ref{l:inDtNSign} and~\ref{l:outDtNSign}
        $$
      |\Im\langle (\tau \Lambda_{\inner}-\Lambda_{\exterior})Xu,Xu\rangle_{L^2(\partial\Omega, \der \vol_{g_\exterior,\partial\Omega})} |\geq C(|\Im z^2|\|Xu\|_{L^2(\partial\Omega)}^2-C_Nh^N\|u\|_{L^2(\partial\Omega)}^2),
        $$
        which proves the proposition once relabeling the constant $C$.
        \end{proof}

\section{Proofs of Theorems~\ref{t:noStates},~\ref{t:states}, and~\ref{t:plasmonic}}
\label{s:proofOfTheorems}

In this section, we prove the first three main theorems on our article.  
\subsection{Microlocal estimates for boundary traces}
Before proceeding to the proofs of the theorems, we require some microlocalized apriori estimates on boundary traces.
	\begin{lem}
		\label{l:intByPartsEstimate}
		Let $\Lambda\in \Psi^1_h(\partial\Omega)$,$X,\tilde{X}\in \Psi_h^0(\partial\Omega)$ with 
		$
		\WF(X)\subset \Ellip  ( \sigma(R+\Lambda^*\Lambda))\cap \Ellip(\tilde{X}),
		$
		and 
		$$
		P:=(hD_{x^1})^2+hahD_{x^1}-R(x,hD_{x'})
		$$
		be formally self-adjoint.  Then for all $s\in\mathbb{R}$, $\e>0$, there is $C>0$ such that 
		\begin{align*}
			& \|Xu|_{x^1=0}\|_{H_h^{s+1}(\partial \Omega)}+\|XhD_{x^1}u|_{x^1=0}\|_{H_h^s(\partial\Omega)}\\
			&\leq C \|u\|_{L^2\left((0,\e); H^{s-2}_h(\partial\Omega)\right) }+Ch^{-1}\|Pu\|_{L^2\left((0,\e);H_h^s(\partial\Omega)\right)}+C\|\tilde{X}(hD_{x^1}-\Lambda )u|_{x^1=0}\|_{H_h^{s}(\partial\Omega)}\\
			&\qquad+Ch^N\|u|_{x^1=0}\|_{H_h^{-N}(\partial\Omega)}+Ch^N\|(hD_{x^1}-\Lambda )u|_{x^1=0}\|_{H_h^{-N}(\partial\Omega)}.
		\end{align*}
	\end{lem}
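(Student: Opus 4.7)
The plan is to apply the integration by parts identity of Lemma \ref{Integration by parts} with a carefully chosen self-adjoint test operator $B = B_0 + B_1 h D_{x^1}$ whose boundary contribution reconstitutes the elliptic combination $\langle (R+\Lambda^*\Lambda) A^s X u, A^s X u\rangle_{L^2(\partial\Omega)} + \|A^s X(hD_{x^1}-\Lambda) u\|^2_{L^2(\partial\Omega)}$, and then to extract the desired boundary norms via sharp G\aa rding on the elliptic operator $R+\Lambda^*\Lambda$. Concretely, with $A = \Op(\langle\xi'\rangle)\in \Psi_h^1(\partial\Omega)$ self-adjoint elliptic and $\chi\in C_c^\infty([0,\e))$ satisfying $\chi(0)=1$, $\chi'(0)=0$, I would take
\[
B_1 = \chi(x^1) X^* A^{2s} X, \qquad B_0 = -\tfrac12\chi(x^1)\bigl(X^* A^{2s} X\, \Lambda + \Lambda^* X^* A^{2s} X\bigr),
\]
both self-adjoint tangential operators. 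The motivation for $B_0$ is the operator identity $(hD_{x^1}-\Lambda)^*(hD_{x^1}-\Lambda) = (hD_{x^1})^2 + \Lambda^*\Lambda - (\Lambda+\Lambda^*)hD_{x^1}$ (valid since $\Lambda$ is tangential in $x^1$): the anticommutator in $B_0$ is designed to absorb the cross term $(\Lambda+\Lambda^*)hD_{x^1}$ when reconstituting $(hD_{x^1}-\Lambda)^*(hD_{x^1}-\Lambda)$ on the boundary.

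After applying Lemma \ref{Integration by parts}, the boundary contribution at $x^1=0$ is $-\bigl[\|A^s X hD_{x^1} u\|^2 + \langle X^* A^{2s} X R u, u\rangle + 2\Re\langle B_0 u, hD_{x^1}u\rangle\bigr]$, modulo $O(h^2)$ errors from the $[hD_{x^1},B_1]$, $[a,B_1]$ terms, which all vanish to leading order at $x^1=0$ by the choice of $\chi$. Substituting $hD_{x^1}u=(hD_{x^1}-\Lambda)u+\Lambda u$ in the $B_0$ term and using the commutator estimates $[X,\Lambda], [A^s,\Lambda], [X,R]\in h\Psi^{\ast}$, this boundary contribution becomes
\[
-\bigl[\langle (R+\Lambda^*\Lambda) A^s X u, A^s X u\rangle + \|A^s X(hD_{x^1}-\Lambda) u\|^2\bigr] + \textrm{(cross terms)},
\]
where the residual cross terms, of the form $\Re\langle A^s X\,\Im\Lambda\,u, A^s X hD_{x^1} u\rangle$, are absorbed via Cauchy--Schwarz with a small constant $\delta$ into the LHS combination $\|Xu\|^2_{H_h^{s+1}}+\|XhD_{x^1}u\|^2_{H_h^s}$. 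Applying sharp G\aa rding to $R+\Lambda^*\Lambda$ on $\WF(X)$ then yields $\langle(R+\Lambda^*\Lambda)A^sXu,A^sXu\rangle\geq c\|Xu\|^2_{H_h^{s+1}} - Ch\|\tilde Xu\|^2_{H_h^{s+1/2}} - C_Nh^N\|u|_{\partial\Omega}\|^2_{H_h^{-N}}$, and the piece $\|A^sX(hD_{x^1}-\Lambda)u\|^2$ is bounded above by $\|\tilde X(hD_{x^1}-\Lambda)u\|^2_{H_h^s}$ up to $h^N$-errors using the ellipticity of $\tilde X$ on $\WF(X)$.

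The interior contribution $\frac{i}{h}\langle[P,B]u,u\rangle_{L^2(\Omega)} + \frac{2}{h}\Im\langle Bu, Pu\rangle_{L^2(\Omega)}$ (the $\frac{i}{h}\langle Pu, (B-B^*)u\rangle$ term being $O(h^2)$ by the near-self-adjointness of $B$) is bounded by Cauchy--Schwarz as $Ch^{-1}\|Pu\|_{L^2((0,\e);H_h^s)}\|u\|_{L^2((0,\e);H_h^{s+1/2})}$, and the commutator $\frac{1}{h}[P,B]$ is a bulk pseudodifferential operator of order $2s+2$ whose pairing with $u$ gives $C\|u\|^2_{L^2((0,\e);H_h^{s+1})}$. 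These high-order bulk norms are reduced to the RHS norm $\|u\|_{L^2((0,\e);H_h^{s-2})} + h^{-1}\|Pu\|_{L^2((0,\e);H_h^s)}$ via the tangential elliptic regularity of $P$ on $\WF(X)$ (where $R+\Lambda^*\Lambda$ is elliptic), iterating the bootstrap technique already used in the proofs of Lemmas \ref{Elliptic estimate lemma} and \ref{l:gainNormal}. Collecting contributions, absorbing $\delta$-multiples of the LHS, and taking square roots yields the claimed estimate; $\|XhD_{x^1}u\|_{H_h^s}$ is then recovered from the triangle inequality $\|XhD_{x^1}u\|_{H_h^s}\leq \|\tilde X(hD_{x^1}-\Lambda)u\|_{H_h^s} + \|X\Lambda u\|_{H_h^s} + h^N(\cdots)$, with $\|X\Lambda u\|_{H_h^s}\leq C\|Xu\|_{H_h^{s+1}}$.

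The main obstacle is the bookkeeping in the boundary identification: the symmetric choice of $B_0$ produces exactly the $(\Lambda+\Lambda^*)hD_{x^1}$ cross term necessary to reconstitute $(hD_{x^1}-\Lambda)^*(hD_{x^1}-\Lambda)$, but a residual $\Im\Lambda$-dependent term survives, and one must ensure that its absorption via $\delta$-Cauchy--Schwarz does not overwhelm the G\aa rding constant $c$. A secondary and more subtle difficulty is the bulk commutator bound: $[P,B]/h$ is naively of order $2s+2$, so we must exploit the tangential ellipticity of $R+\Lambda^*\Lambda$ inside $\WF(X)$ (via the equation $(hD_{x^1})^2u = Pu+Ru - hahD_{x^1}u$) to iteratively trade the high-order bulk norm $\|u\|_{H_h^{s+1}(\Omega_\e)}$ for the low-order $\|u\|_{L^2((0,\e);H_h^{s-2})}$ plus the $Pu$ term on the RHS.
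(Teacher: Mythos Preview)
Your overall strategy---apply Lemma~\ref{Integration by parts} with a test operator of the form $B=B_0+B_1hD_{x^1}$, extract a boundary quadratic form involving $R+\Lambda^*\Lambda$, invoke microlocal G\aa rding, bound the interior terms, and bootstrap away the lower-order boundary errors---is exactly the paper's. However, your specific choice of $B_0$ does not produce the form you claim. With $M=X^*A^{2s}X$ and $\tilde B_0=-\tfrac12(M\Lambda+\Lambda^*M)$, write $f=hD_{x^1}u|_{x^1=0}$, $g=u|_{x^1=0}$; the boundary contribution is $\langle Mf,f\rangle+\langle MRg,g\rangle+2\Re\langle\tilde B_0g,f\rangle$ modulo $O(h)$. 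Substituting $f=(f-\Lambda g)+\Lambda g$ and collecting the $g$-diagonal terms gives a quadratic form with principal symbol $\sigma(M)\bigl(\sigma(R)+(\Im\sigma(\Lambda))^2-(\Re\sigma(\Lambda))^2\bigr)$, \emph{not} $\sigma(M)\bigl(\sigma(R)+|\sigma(\Lambda)|^2\bigr)$. The discrepancy $2(\Re\sigma(\Lambda))^2$ is $O(1)$ and kills the G\aa rding step in general (it happens to vanish in the applications to Lemmas~\ref{thm easy}--\ref{thm harder}, where $\Lambda=\rmi\tau\rho_\exterior^{-1}\Lambda_\inner$ has purely imaginary symbol, but the lemma is stated for arbitrary $\Lambda$). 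The paper avoids this by taking $B_0=0$: the correct $\Lambda^*\Lambda$ term then emerges from expanding $\langle E_0^*E_0\,hD_{x^1}u,hD_{x^1}u\rangle$ via $hD_{x^1}u=(hD_{x^1}-\Lambda)u+\Lambda u$, and the only cross term is $2\Re\langle E_0\Lambda u,E_0(hD_{x^1}-\Lambda)u\rangle$, which is absorbable by $\delta$-Cauchy--Schwarz against the G\aa rding lower bound since $\|E_0\Lambda u\|\le C\|E_0u\|_{H_h^1}+O(h)$.

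For the bulk estimate, the phrase ``tangential elliptic regularity of $P$ on $\WF(X)$'' is misleading: $P$ need not be elliptic on $\WF(X)$; only $R+\Lambda^*\Lambda$ is. The paper instead applies Lemma~\ref{l:gainNormal} and Lemma~\ref{DtN microlocal Elliptic estimate lemma H1 -} to the shifted operator $P+\omega'$ with $\omega'$ large, which is genuinely elliptic. Crucially, this produces on the right-hand side, in addition to the $\|Pu\|$ and $\|u\|_{L^2((0,\e);H_h^{s-2})}$ terms you list, a boundary Dirichlet term $Ch^{1/2}\|\tilde X u|_{x^1=0}\|_{H_h^{s+1/2}}$; it is this term (together with the $Ch$-error from G\aa rding) that drives the inductive bootstrap~\eqref{e:plane1}, not a purely interior mechanism.
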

	\begin{proof}
		We first claim that for any $X,\tilde{X}\in \Psi_h^0(\partial\Omega)$ with 
		$
		\WF(X)\subset \Ellip  ( \sigma(R+\Lambda^*\Lambda))\cap \Ellip(\tilde{X}),
		$
		we have
		\begin{equation}
			\label{e:plane1}
			\begin{aligned}
				\|Xu\|_{H_h^{s+1}(\partial \Omega)}^2
				&\leq C\|u\|_{L^2\left((0,\e); H^{s-2}_h(\partial\Omega)\right) }^2+Ch^{-2}\|Pu\|_{L^2\left((0,\e);H_h^s(\partial\Omega)\right)}^2+Ch^{j}\|\tilde{X}u|_{x^1=0}\|_{H_h^{s+1-\frac{j}{2}}(\partial\Omega)}^2\\
				&+\|\tilde{X}(hD_{x^1}-\Lambda )u\|_{H_h^{s}(\partial \Omega)}^2+C_Nh^N\|u\|_{H_h^{-N}(\partial \Omega)}^2+O(h^\infty)\|(hD_{x^1}-\Lambda )u\|_{H_h^{-N}(\partial \Omega)}^2.
			\end{aligned}
		\end{equation}
		
		Since $\sigma( R+\Lambda^*\Lambda)$ is real valued, we may assume without loss of generality that 
		$$\WF(X)\subset \{ \pm \sigma(R+\Lambda^*\Lambda)>0\}$$
		for some choice of $\pm$.

		$$ \langle (A-B)u,(A-B)u\rangle+\langle Bu,Bu\rangle +2\Re \langle (A-B)u,Bu\rangle  = \langle Au,Au\rangle $$
		Let $E_0\in \Psi_h^s(\partial\Omega)$, with $\WF(X)\subset \Ellip(E_0)$ and 
		$$
		\WF(E_0)\subset \{ \pm \sigma(R+\Lambda^*\Lambda)>0\}.
		$$ 
        Also, let $\chi\in C_c^\infty(\mathbb{R})$ with $\chi \equiv 1 $ near $0$ and set $E=\chi(x^1)E^*_0E_0$, and assume that $X_{1,2}\in \Psi_h^0(\partial\Omega)$ with $\WF(E_0)\subset \Ellip(X_1)\subset \WF(X_1)\subset \Ellip(X_2)$. Then, define
		\begin{equation*}
			\label{Q eqn}
			\begin{aligned}
				&Q(u;E):=\langle E_0^*E_0 hD_{x^1}u,hD_{x^1}u\rangle_{L^2(\partial \Omega)}+\langle E_0^*E_0Ru,u \rangle_{L^2(\partial \Omega)}+h\langle [a,E^*_0E_0]hD_{x^1}u,u\rangle_{L^2(\partial\Omega)}\\
				&=\langle E_0 (hD_{x^1}-\Lambda)u,E_0(hD_{x^1}-\Lambda)u\rangle_{L^2(\partial \Omega)}+2\Re \langle E_0 \Lambda u, E_0(hD_{x^1}-\Lambda)u\rangle_{L^2(\partial\Omega)}\\
				&\qquad\langle (E_0^*E_0R+\Lambda^*E_0^*E_0\Lambda +h[a,E_0^*E_0]\Lambda)u,u\rangle_{L^2(\partial\Omega)} +h\langle [a,E^*_0E_0](hD_{x^1}-\Lambda)u,u\rangle_{L^2(\partial\Omega)}\\
				&=\langle E_0 (hD_{x^1}-\Lambda)u,E_0(hD_{x^1}-\Lambda)u\rangle_{L^2(\partial \Omega)}+2\Re \langle ( \Lambda E_0+[E_0,\Lambda])u, E_0(hD_{x^1}-\Lambda)u\rangle_{L^2(\partial\Omega)}\\
				&\qquad\langle (E_0^*E_0R+\Lambda^*E_0^*E_0\Lambda +h[a,E_0^*E_0]\Lambda)u,u\rangle_{L^2(\partial\Omega)} +h\langle [a,E^*_0E_0](hD_{x^1}-\Lambda)u,u\rangle_{L^2(\partial\Omega)}.
			\end{aligned}
		\end{equation*}
		Next, notice that 
		\begin{align*}
			&\langle (E_0^*E_0R+\Lambda^*E_0^*E_0\Lambda)u,u\rangle_{L^2(\partial\Omega)} \\
			&=\langle (E_0^*(RE_0+[E_0,R])+ (E_0^*\Lambda^*+[\Lambda^*,E_0^*])(\Lambda E_0+[E_0,\Lambda]) )u,u\rangle_{L^2(\partial\Omega)} \\
			&=\langle (R+\Lambda^*\Lambda)E_0u,E_0u\rangle + O(h)\|X_1 u\|_{H_h^{s+\frac{1}{2}}(\partial\Omega)}^2+O(h^\infty)\|u\|_{H_h^{-N}(\partial\Omega)}^2. 
		\end{align*}
		Then, using the microlocal G\aa rding inequality, we obtain 
        \begin{equation}
        \label{E_0 eqn 1}
            \begin{aligned}
            \|E_0u\|_{H_h^1(\partial\Omega)}^2& \leq |Q(u;E)| +Ch\|X_1 u\|_{H_h^{s+\frac{1}{2}}(\partial\Omega)}^2+C_Nh^N\|u\|_{H_h^{-N}(\partial\Omega)}^2
            \\
            &\qquad +\|X_1 (hD_{x^1}-\Lambda )u\|_{H_h^{s}(\partial\Omega)}^2+C_Nh^N\|(hD_{x^1}-\Lambda )u\|_{H_h^{-N}(\partial\Omega)}^2.
            \end{aligned}
        \end{equation}
		
		Next, we have

        \begin{align*}
			&|Q(u;E)|
			=\Big|-\frac{\rmi}{h}\langle [P,EhD_{x^1}]u,u \rangle_{L^2(\Omega)}
			-\frac{2}{h} \Im\left(\langle EhD_{x^1}u, Pu \rangle_{L^2(\Omega)}\right)\\
			&\qquad\qquad+\frac{\rmi}{h}\langle Pu, (EhD_{x^1}-(hD_{x^1})^*E)u \rangle_{L^2(\Omega)}\Big|\\
			&\leq \left(\|X_1 u\|_{H_h^2\left((0,\e);H_h^{s-1}(\partial \Omega)\right)}+\|X_1 u\|_{H_h^1\left((0,\e);H_h^{s}(\partial \Omega)\right)}+\|X_1 u\|_{L^2\left((0,\e);H_h^{s+1}(\partial \Omega)\right)}\right)\\
            &\qquad\times \|X_1 u\|_{L^2\left((0,\e);H^{s}(\partial \Omega)\right)}+Ch^{-1}\|X_1 u\|_{H_h^1\left((0,\e);H^s_h(\partial \Omega)\right)}\|X_1 Pu\|_{L^2\left((0,\e);H_h^s(\partial \Omega)\right)}\\
            &\qquad+Ch^N\left(\|u\|_{H_h^2\left((0,\e);H_h^{-N}(\partial \Omega)\right)}+\|u\|_{H_h^1\left((0,\e);H_h^{-N}(\partial \Omega)\right)}+\|u\|_{L^2\left((0,\e);H_h^{-N}(\partial \Omega)\right)}\right).
		\end{align*}
		Now Lemma \ref{l:gainNormal} says
        \begin{equation*}
        \begin{aligned}
        &\|X_1 u\|_{H_h^2\left((0,\e);H_h^{s-1}(\partial \Omega)\right)}+\|X_1 u\|_{H_h^1\left((0,\e);H_h^{s}(\partial \Omega)\right)}+\|X_1 u\|_{L^2\left((0,\e);H_h^{s+1}(\partial \Omega)\right)}
        \\
        &\qquad \le \|\tilde XPu\|_{L^2\left((0,\e);H_h^{s-1}(\partial \Omega)\right)}+C\|\tilde Xu\|_{L^2\left((0,\e);H_h^{s-1}(\partial \Omega)\right)}+Ch^{\frac{1}{2}}\|\tilde{X}u|_{x^1=0}\|_{H^{s+\frac{1}{2}}_h(\partial \Omega)}\\
        &\qquad+Ch^N\|u\|_{H^{1}_h\left((0,\e);H_h^{-N}(\partial \Omega)\right)}+Ch^N\| Pu\|_{L^2\left((0,\e);H_h^{-N}(\partial \Omega)\right)}+Ch^N\|u|_{x^1=0}\|_{H_h^{-N}(\partial \Omega)},
        \end{aligned}
        \end{equation*}
        and
        \begin{equation*}
        \begin{aligned}
        &\|u\|_{H_h^2\left((0,\e);H_h^{-N}(\partial \Omega)\right)}+\|u\|_{H_h^1\left((0,\e);H_h^{-N}(\partial \Omega)\right)}+\|u\|_{L^2\left((0,\e);H_h^{-N}(\partial \Omega)\right)}
        \\
        &\qquad \le\|Pu\|_{L^2\left((0,\e);H_h^{-N}(\partial \Omega)\right)}^2+C\|u\|_{L^2\left((0,\e);H_h^{-N}(\partial \Omega)\right)}+Ch\|u|_{x^1=0}\|_{H^{-N+2}_h(\partial \Omega)}^2\\
        &\qquad+Ch^M\|u|_{x^1=0}\|_{H_h^{-M}(\partial \Omega)}.
        \end{aligned}
        \end{equation*}
		Hence, the estimate for $|Q(u;E)|$ becomes
        \begin{equation}
        \label{E_0 eqn 2}
        \begin{aligned}
			|Q(u;E)|&\leq Ch^{-2}\|\tilde XPu\|_{L^2\left((0,\e);H_h^{s}(\partial \Omega)\right)}^2+C\|X_2 u\|_{L^2\left((0,\e);H_h^{s}(\partial \Omega)\right)}^2+Ch\|\tilde{X}u|_{x^1=0}\|_{H^{s+\frac{1}{2}}_h(\partial \Omega)}^2\\
            &\qquad+Ch^N\|u|_{x^1=0}\|_{H_h^{-N}(\partial \Omega)}.
		\end{aligned}
        \end{equation}
		Now apply Lemma \ref{DtN microlocal Elliptic estimate lemma H1 -} to $P+\omega'$ for sufficiently large $\omega'$, we have
        \begin{align*}
        &\|X_2 u\|_{H^1_h\left((0,\e);H_h^{s-1}(\partial \Omega)\right)}+\|X_2 u\|_{L^2\left((0,\e);H_h^{s}(\partial \Omega)\right)}\\
		&\qquad \le C\|\tilde X Pu\|_{L^2((t_0,t_2);H_h^{s-2}(\partial \Omega))}+C_{\omega'}\|\tilde X u\|_{L^2((0,\e);H_h^{s-2}(\partial \Omega))}+Ch^{\frac{1}{2}}\|\tilde X u|_{x^1=0}\|_{H^{s-\frac{1}{2}}_h(\partial \Omega)}\\
        &\qquad+Ch^N\|u\|_{H^1_h((0,\e);H_h^{-N}(\partial \Omega))}+Ch^N\| Pu\|_{L^2((0,\e);H_h^{-N}(\partial \Omega))}+Ch^N\|u|_{x^1=0}\|_{H_h^{-N}(\partial\Omega)}
        \end{align*}
        and
        \begin{align*}
        &\|u\|_{H^1_h\left((0,\e);H_h^{s-1}(\partial \Omega)\right)}+\|u\|_{L^2\left((0,\e);H_h^{s}(\partial \Omega)\right)}\\
		&\qquad \le C\|Pu\|_{L^2((t_0,t_2);H_h^{s-2}(\partial \Omega))}+C_{\omega'}\|u\|_{L^2((t_0,t_2);H_h^{s-2}(\partial \Omega))}+Ch^{\frac{1}{2}}\|u(t_0)\|_{H^{s-\frac{1}{2}}_h(\partial \Omega)},
        \end{align*}
        which implies
        \begin{equation}
        \label{E_0 eqn 3}
        \begin{aligned}
        &\|X_2 u\|_{L^2\left((0,\e);H_h^{s}(\partial \Omega)\right)}\\
		&\qquad \le C\|Pu\|_{L^2((t_0,t_2);H_h^{s-2}(\partial \Omega))}+C_{\omega'}\|u\|_{L^2((0,\e);H_h^{s-2}(\partial \Omega))}+Ch^{\frac{1}{2}}\|\tilde X u|_{x^1=0}\|_{H^{s-\frac{1}{2}}_h(\partial \Omega)}\\
        &\qquad+Ch^N\|u|_{x^1=0}\|_{H_h^{-N}(\partial\Omega)}.
        \end{aligned}
        \end{equation}
        Plugging \eqref{E_0 eqn 2} and \eqref{E_0 eqn 3} into \eqref{E_0 eqn 1}, we have
		\begin{align*}
			&\|E_0u\|_{H_h^1(\partial \Omega)}^2 \leq 
            Ch^{-2}\|Pu\|_{L^2\left((0,\e);H_h^{s}(\partial \Omega)\right)}^2+C\|u\|_{L^2\left((0,\e);H_h^{s-2}(\partial \Omega)\right)}^2+Ch\|\tilde{X}u|_{x^1=0}\|_{H^{s+\frac{1}{2}}_h(\partial \Omega)}^2\\
            &\quad+\|\tilde X (hD_{x^1}-\Lambda )u\|_{H_h^{s}(\partial\Omega)}^2+Ch^N\|u|_{x^1=0}\|_{H_h^{-N}(\partial \Omega)}+C_Nh^N\|(hD_{x^1}-\Lambda )u\|_{H_h^{-N}(\partial\Omega)}^2,
		\end{align*}
        from which~\eqref{e:plane1} with $j=1$ follows.
		Next, suppose that~\eqref{e:plane1} holds for some $J\geq 1$. Then, let $X'\in\Psi^0_h(\partial\Omega)$ with 
		$$
		\WF(X)\subset \Ellip(X'),\qquad \WF(X')\subset \Ellip(R+\Lambda^*\Lambda).
		$$ 
		Then, using~\eqref{e:plane1} with $j=J$, and $(X,\tilde{X})$ replaced by $(X,X')$
		\begin{equation}
			\label{e:plane2}
			\begin{aligned}
				\|Xu\|_{H_h^{s+1}(\partial \Omega)}^2
				&\leq \|u\|_{L^2\left((0,\e);H^{s-2}_h(\partial \Omega)\right) }^2+Ch^{-2}\|Pu\|_{L^2\left((0,\e);H_h^{s}(\partial\Omega)\right)}^2+Ch^J\|X'u\|_{H_h^{s+1-\frac{J}{2}}(\partial\Omega)}^2\\
				&+\|X'(hD_{x^1}-\Lambda )u\|_{H_h^{s}(\partial\Omega)}^2+C_Nh^N\|u\|_{H_h^{-N}}^2+O(h^\infty)\|(hD_{x^1}-\Lambda )u\|_{H_h^{-N}(\partial \Omega)}^2.
			\end{aligned}
		\end{equation}
		Then, applying~\eqref{e:plane1} with $j=1$, $(X,\tilde{X})$ replaced by $(X',\tilde{X})$, and  $s$ replaced by $s-\frac{J}{2}$, we obtain
		\begin{equation}
			\label{e:plane3}
			\begin{aligned}
				\|X' u\|_{H_h^{s+1-\frac{J}{2}}(\partial \Omega)}^2
				&\leq \|u\|_{L^2\left((0,\e);H^{s-2-\frac{J}{2}}_h(\partial \Omega)\right) }^2+Ch^{-2}\|Pu\|_{L^2\left((0,\e);H_h^{s-\frac{J}{2}}(\partial\Omega)\right)}^2+Ch\|\tilde{X}u\|_{H_h^{s+\frac{1}{2}-\frac{J}{2}}(\partial\Omega)}^2\\
				&+\|\tilde{X}(hD_{x^1}-\Lambda )u\|_{H_h^{s}(\partial\Omega)}^2+C_Nh^N\|u\|_{H_h^{-N}(\partial\Omega)}^2+O(h^\infty)\|(hD_{x^1}-\Lambda )u\|_{H_h^{-N}(\partial\Omega)}^2.
			\end{aligned}
		\end{equation}
		Inserting~\eqref{e:plane3} in~\eqref{e:plane2} then implies~\eqref{e:plane1} with $j=J+1$. The proof of the lemma is then completed by the fact that 
		$$
		\|XhD_{x^1}u\|_{H_h^s(\partial\Omega)}\leq \|\tilde{X}u\|_{H_h^{s+1}(\partial\Omega)}+\|\tilde{X}(hD_{x^1}-\Lambda)u\|_{H_h^s(\partial\Omega)}+C_Nh^N\|u|_{x^1=0}\|_{H_h^{-N}(\partial\Omega)}.
		$$
	\end{proof}

	\noindent{\bf{Estimates for the boundary traces of the transmission problem}}
    We are now in a position to obtain apriori estimates for the problem~\eqref{e:transmission}. We start in the simpler situation when~\eqref{e:noPlasmons} holds.
	\begin{lem}
		\label{thm easy}
		 Suppose that~\eqref{e:noPlasmons} holds. Then for all $M>0$, $s\in \mathbb{R}$, and $\e>0$, there are $C,h_0>0$ such that for all $0<h<h_0$, $|1-z|\leq Ch$ 
        and $u\in L^2\left((0,\e);H_h^{s-2}(\partial \Omega)\right)$ solutions to~\eqref{e:transmission}, we have
		\begin{equation*}
			\label{thm easy eqn}
			\| u\|_{H^{s+1}_h(\partial \Omega)}+\|hD_{x^1} u\|_{H^s_h(\partial \Omega)}\le C\big(\| u\|_{L^2\left((0,\e);H^{s-2}_h(\partial \Omega)\right)}+\|g\|_{H^s_h(\partial \Omega)}\big)
		\end{equation*}
		for $0<h<h_0$.
	\end{lem}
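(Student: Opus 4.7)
The plan is to apply the boundary integration-by-parts estimate Lemma~\ref{l:intByPartsEstimate} to $P = P_{\exterior}-z^2$, which in Fermi normal coordinates on $\Omega_\exterior$ takes the form $(hD_{x^1})^2+ha\cdot hD_{x^1}-R$ as in~\eqref{LaplaceonBoundary}. First I rewrite the Robin-type boundary condition in~\eqref{e:transmission} as $(hD_{x^1}-\Lambda)u|_{x^1=0} = g'$ with
\[
\Lambda := -\rmi \tau \rho_{\exterior}^{-1}\Lambda_{\inner}(z)\in \Psi^1_h(\partial\Omega), \qquad g' := \rmi\rho_\exterior^{-1}g,
\]
using $\nu_\exterior = -\partial_{x^1}$ to convert $h\partial_{\nu_\exterior}$ to $-\rmi hD_{x^1}$; clearly $\|g'\|_{H^s_h(\partial\Omega)}\le C\|g\|_{H^s_h(\partial\Omega)}$.

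The second step is the key ellipticity calculation. By Proposition~\ref{symbol of DtN interior} the symbol $\sigma(\Lambda_\inner(z))=\rho_\inner\sqrt{|\xi'|^2_{g_\inner}+1}$ is real, so $\sigma(\Lambda)$ is purely imaginary and
\[
|\sigma(\Lambda)|^2 \;=\; \bigl(\tau\rho_\inner/\rho_\exterior\bigr)^2\bigl(|\xi'|^2_{g_\inner}+1\bigr).
\]
Combining with $\sigma(R)(0,x',\xi') = 1-|\xi'|^2_{g_\exterior}$ from Section~\ref{Laplace operator around boundary}, at $x^1=0$ we obtain
\[
\sigma(R+\Lambda^*\Lambda) \;=\; 1+\bigl(\tau\rho_\inner/\rho_\exterior\bigr)^2 \;+\; \Bigl[\bigl(\tau\rho_\inner/\rho_\exterior\bigr)^2|\xi'|^2_{g_\inner}-|\xi'|^2_{g_\exterior}\Bigr].
\]
The bracketed expression is the homogeneous degree-$2$ principal part, and assumption~\eqref{e:noPlasmons} is precisely the statement that it is strictly positive for $\xi'\ne 0$; by homogeneity and compactness of the unit sphere on each fibre, it is bounded below by $c\langle\xi'\rangle^2$ (for some $c>0$ depending only on $\tau,\rho_{\inner/\exterior},g_{\inner/\exterior}$), while the positive lower-order constant fills in near $\xi'=0$. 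Hence $R+\Lambda^*\Lambda$ is uniformly elliptic of order~$2$ on all of $T^*\partial\Omega$.

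With this ellipticity in hand, the hypothesis of Lemma~\ref{l:intByPartsEstimate} is satisfied with $X=\tilde X=I$. Since $(P_\exterior-z^2)u=0$ in $\Omega_\exterior$ and $(hD_{x^1}-\Lambda)u|_{x^1=0}=g'$, the $Pu$ and $(hD_{x^1}-\Lambda)u$ terms on the right-hand side of Lemma~\ref{l:intByPartsEstimate} reduce to $C\|g\|_{H^s_h(\partial\Omega)}$, and the lower-order $h^N$ remainders are absorbed by the $L^2((0,\e);H^{s-2}_h)$ term. This yields exactly the claimed estimate. The main thing to watch is the bookkeeping of signs and factors of $\rmi$ in producing $\Lambda$ from the original Robin condition, and the verification that multiplying $\Lambda_\inner$ by the smooth positive weight $\tau\rho_\exterior^{-1}$ preserves membership in $\Psi^1_h$ with principal symbol $-\rmi\tau\rho_\exterior^{-1}\sigma(\Lambda_\inner)$; beyond that the ellipticity reduction is a direct consequence of~\eqref{e:noPlasmons}.
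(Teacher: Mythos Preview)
Your approach is the same as the paper's: apply Lemma~\ref{l:intByPartsEstimate} with $X=\tilde X=I$ after checking that $R+\Lambda^*\Lambda$ is globally elliptic of order~$2$ under~\eqref{e:noPlasmons}. The ellipticity computation you give matches the paper's exactly.

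Two minor corrections. First, the sign of $\Lambda$ should be $+\rmi\tau\rho_\exterior^{-1}\Lambda_\inner(z)$, not $-\rmi$: from $-\rmi\rho_\exterior hD_{x^1}u - \tau\Lambda_\inner u = g$ one gets $hD_{x^1}u - \rmi\tau\rho_\exterior^{-1}\Lambda_\inner u = \rmi\rho_\exterior^{-1}g$. With your sign, $(hD_{x^1}-\Lambda)u|_{x^1=0}$ is not $g'$. This does not affect the ellipticity step since only $\Lambda^*\Lambda$ appears there, but the boundary identity you assert requires the correct sign. Second, the remainder $h^N\|u|_{x^1=0}\|_{H_h^{-N}}$ is not absorbed by the volume term $\|u\|_{L^2((0,\e);H_h^{s-2})}$; rather, since $-N\le s+1$ it is bounded by $Ch^N\|u\|_{H_h^{s+1}(\partial\Omega)}$ and absorbed into the left-hand side for $h$ small, which is what the paper does.
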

	\begin{proof}
        Recall that in Fermi normal coordinates
		$$
		P_{\exterior}-z^2=(hD_{x^1})^2+ha(x)hD_{x^1}-R(x,hD_{x'})
		$$
		with $\sigma(R)=1-|\xi'|_{g_\exterior}^2$.
		
		Let $\Lambda= \rmi \frac{\tau}{\rho_{\exterior}}\Lambda_{\inner}(z)$ and recall that $\sigma(\Lambda_{\inner}(z))=\rho_{\inner}\sqrt{|\xi'|_{g_\inner }^2+1}$. Then,
		$$
		(hD_{x^1}-\Lambda)u|_{x^1=0}=(-\rmi(h\partial_{\nu}+\tfrac{\tau}{\rho_{\exterior}}\Lambda_{\inner}(z))u=\rmi\tfrac{\tau}{\rho_{\exterior}}g,
		$$
		and 
		$$
		\sigma(R+\Lambda^*\Lambda)=1-|\xi'|_{g_{\exterior}}^2+\left(\tau\rho_{\exterior}^{-1}\right)^2\rho_{\inner}^2(|\xi'|_{g_{\inner}}^2+1)=\rho_{\exterior}^{-2}(\rho_{\exterior}^2+\tau^2\rho_{\inner}^2+\tau^2\rho_{\inner}^2|\xi'|_{g_{\inner}}^2-\rho_{\exterior}^2|\xi'|_{g_{\exterior
        }}^2).
		$$
		In particular, $(\tau\rho_\inner)^2|\xi'|_{g_\inner}^2>\rho_\exterior^2|\xi'|_{g_\exterior}^2 \quad \text{for all $\xi'\in T^*\partial \Omega$}$ implies that there exists a positive constant, $c_1$ such that 
        $$
        c_1|\xi'|_{g_\exterior}<(\tau \rho_{\inner})^2|\xi'|^2_{g_{\inner}}-\rho_{\exterior}^2|\xi'|^2_{g_{\exterior}}, \quad \text{for all $\xi'\in T^*\partial \Omega$}.
        $$
        Now we have, for some constant $c_2$,
		$$
		\sigma(R+\Lambda^*\Lambda)>c_2\langle |\xi'|_{g_\exterior}\rangle^2>0.
		$$
		Hence, Lemma~\ref{l:intByPartsEstimate} yields
		\begin{align*}
			\|u|_{x^1=0}\|_{H_h^{s+1}(\partial \Omega)}+\|hD_{x^1}u|_{x^1=0}\|_{H_h^s(\partial\Omega)}&
			\leq C \|u\|_{L^2\left((0,\e);H^{s-2}_h(\partial \Omega)\right)}+C\|g\|_{H_h^{s}(\partial\Omega)}\\
			&\qquad+C_Nh^N\|u|_{x^1=0}\|_{H_h^{-N}(\partial\Omega)}+c_Nh^N\|g\|_{H_h^{-N}(\partial\Omega)}\\
			&\leq C\|u\|_{L^2\left((0,\e);H^{s-2}_h(\partial \Omega)\right)}+C\|g\|_{H_h^s(\partial\Omega)}.
		\end{align*}
	\end{proof}

	Next, we consider the case of~\eqref{e:plasmons}.
	\begin{lem}
		\label{thm harder}
	 Suppose that~\eqref{e:plasmons} holds. Then for all $M>0$, $s\in \mathbb{R}$, $\e>0$, and$X,\tilde{X}\in\Psi^0_h(\partial\Omega)$ satisfying 
		\begin{equation*}
        \label{e:goodWavefront}
		\WF(X)\cap \Big\{\rho_{\exterior}^2|\xi'|_{g_\exterior }^2-\tau^2\rho_{\inner}^2|\xi'|_{g_{\inner}}^2=\rho_{\exterior}^2+\tau^2\rho_{\inner}^2\Big\}=\emptyset
		\end{equation*}
		and $\WF(X)\subset \Ellip(\tilde{X})$, there are $C,h_0>0$ such that for all $0<h<h_0$, $|1-z|\leq Mh$ 
        and  all $u\in L^2\left((0,\e);H_h^{s-2}(\partial \Omega)\right)$ solutions to~\eqref{e:transmission} we have
		\begin{equation*}
		\label{thm harder eqn}
		\begin{aligned}
		    &\| Xu\|_{H^{s+1}_h(\partial \Omega)}+\|XhD_{x^1} u\|_{H^s_h(\partial \Omega)}
            \\
            &\qquad \le C\left(\|u\|_{L^2\left((0,\e);H_h^{s-2}(\partial \Omega)\right)}+\|\tilde{X}g\|_{H^s_h(\partial \Omega)}+C_Nh^N\|g\|_{H_h^{-N}(\partial \Omega)}+C_Nh^N\|u\|_{H_h^{-N}(\partial\Omega)}\right)
		\end{aligned}
		\end{equation*}
		for $0<h<h_0$.
	\end{lem}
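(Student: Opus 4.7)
The plan is to reduce this to a direct application of Lemma~\ref{l:intByPartsEstimate}, mirroring the structure of the proof of Lemma~\ref{thm easy}, with the crucial difference that the microlocal cutoff $X$ is now what buys us ellipticity of the relevant tangential symbol.

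First I would set up exactly as in Lemma~\ref{thm easy}: work in Fermi normal coordinates so that
$$
P_{\exterior}-z^2=(hD_{x^1})^2+hahD_{x^1}-R,\qquad \sigma(R)=(\Re z)^2-|\xi'|^2_{g_{\exterior}},
$$
and take $\Lambda:=\rmi\tfrac{\tau}{\rho_{\exterior}}\Lambda_{\inner}(z)\in\Psi^1_h(\partial\Omega)$. By Proposition~\ref{symbol of DtN interior}, $\sigma(\Lambda)=\rmi\tfrac{\tau\rho_{\inner}}{\rho_{\exterior}}\sqrt{|\xi'|^2_{g_{\inner}}+(\Re z)^2}$. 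Then the transmission condition in~\eqref{e:transmission} rewrites as $(hD_{x^1}-\Lambda)u|_{x^1=0}=\rmi\tfrac{\tau}{\rho_{\exterior}}g$, and a direct computation (as in the easy case) gives
$$
\sigma(R+\Lambda^*\Lambda)=\rho_{\exterior}^{-2}\Big(\rho_{\exterior}^2+\tau^2\rho_{\inner}^2+\tau^2\rho_{\inner}^2|\xi'|^2_{g_{\inner}}-\rho_{\exterior}^2|\xi'|^2_{g_{\exterior}}\Big)+O(h),
$$
so that the zero set of $\sigma(R+\Lambda^*\Lambda)$ is precisely the plasmon set
$\{\rho_{\exterior}^2|\xi'|^2_{g_{\exterior}}-\tau^2\rho_{\inner}^2|\xi'|^2_{g_{\inner}}=\rho_{\exterior}^2+\tau^2\rho_{\inner}^2\}$.

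Next, the hypothesis on $\WF(X)$ is precisely designed so that $\WF(X)\subset\Ellip(R+\Lambda^*\Lambda)\cap\Ellip(\tilde X)$. I would then apply Lemma~\ref{l:intByPartsEstimate} with this choice of $X,\tilde X,\Lambda,P=P_{\exterior}-z^2$ (shrinking $\tilde X$ if necessary so that $\WF(\tilde X)\subset\Ellip(R+\Lambda^*\Lambda)$, which is allowed because the two ellipticity conditions are open). This yields directly
$$
\|Xu|_{x^1=0}\|_{H^{s+1}_h}+\|XhD_{x^1}u|_{x^1=0}\|_{H^s_h}\le C\|u\|_{L^2((0,\e);H^{s-2}_h(\partial\Omega))}+C\big\|\tilde X\tfrac{\rmi\tau}{\rho_{\exterior}}g\big\|_{H^s_h}+\text{remainders},
$$
where the remainders are of the form $C_Nh^N\|u|_{x^1=0}\|_{H^{-N}_h}+C_Nh^N\|g\|_{H^{-N}_h}$.

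To finish, I would dispatch the factor $\tau/\rho_{\exterior}$ by noting that it is a smooth multiplier, so $\tilde X\tfrac{\tau}{\rho_{\exterior}}=\tfrac{\tau}{\rho_{\exterior}}\tilde X+[\tilde X,\tfrac{\tau}{\rho_{\exterior}}]$ with $[\tilde X,\tfrac{\tau}{\rho_{\exterior}}]\in h\Psi^{-1}_h$; hence
$$
\big\|\tilde X\tfrac{\rmi\tau}{\rho_{\exterior}}g\big\|_{H^s_h}\le C\|\tilde Xg\|_{H^s_h}+Ch\|g\|_{H^{s-1}_h}.
$$
The remaining $Ch\|g\|_{H^{s-1}_h}$ term is absorbed by iterating the whole estimate on a nested family of cutoffs $\tilde X\subset\tilde X'\subset\cdots$, as in the bootstrap in Lemma~\ref{l:intByPartsEstimate}, eventually giving a contribution of the required form $C_Nh^N\|g\|_{H^{-N}_h}$.

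I do not anticipate a genuine obstacle: the novelty relative to Lemma~\ref{thm easy} is confined to the microlocal cutoff, and the hypothesis on $\WF(X)$ was tailor-made so that the ellipticity input of Lemma~\ref{l:intByPartsEstimate} holds. The only mildly delicate point is arranging the bootstrap in the boundary term involving $g$ so that one ends up with $\|\tilde Xg\|_{H^s_h}$ (rather than something like $\|\tilde X'g\|_{H^s_h}$ for a strictly larger cutoff), but this is the same standard iteration argument already used several times in Section~\ref{Microlocal description of the Dirichlet-to-Neumann maps}.
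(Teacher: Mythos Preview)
Your proposal is correct and follows essentially the same approach as the paper: set $\Lambda=\rmi\tau\rho_{\exterior}^{-1}\Lambda_{\inner}(z)$, identify the zero set of $\sigma(R+\Lambda^*\Lambda)$ with the plasmon set, and invoke Lemma~\ref{l:intByPartsEstimate}. The only difference is that your worry about the factor $\tau/\rho_{\exterior}$ and the proposed bootstrap is unnecessary: since $\tau/\rho_{\exterior}$ is a smooth positive function on $\partial\Omega$, one may simply apply Lemma~\ref{l:intByPartsEstimate} with the cutoff $(\rho_{\exterior}/\tau)\tilde X\in\Psi^0_h(\partial\Omega)$ in place of $\tilde X$ (same wavefront and elliptic set), which turns the term $\|\tilde X(hD_{x^1}-\Lambda)u\|_{H^s_h}$ directly into $\|\tilde X g\|_{H^s_h}$; the paper simply elides this point.
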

	\begin{proof}
		As before, we need only consider $R+\Lambda^*\Lambda$ with $\Lambda=
    \rmi\tau\rho_{\exterior}^{-1} \Lambda_{\inner}(z)$. 
		Observe that if
		$$
		\sigma(R+\Lambda^*\Lambda)=\rho_{\exterior}^{-2}(\rho_{\exterior}^2+\tau^2\rho_{\inner}^2+\tau^2\rho_{\inner}^2|\xi'|_{g_{\inner}}^2-\rho_{\exterior}^2|\xi'|_{g_{\exterior
        }}^2)=0,
		$$
		then
		$$
		\rho_{\exterior}^2|\xi'|_{g_\exterior }^2-\tau^2\rho_{\inner}^2|\xi'|_{g_{\inner}}^2=\rho_{\exterior}^2+\tau^2\rho_{\inner}^2.
		$$
		Hence, Lemma~\ref{l:intByPartsEstimate} yields
		\begin{align*}
			\|Xu|_{x^1=0}\|_{H_h^{s+1}(\partial \Omega)}+\|XhD_{x^1}u|_{x^1=0}\|_{H_h^s(\partial\Omega)}&
			\leq C \|u\|_{L^2\left((0,\e);H_h^{s-2}(\partial \Omega)\right)}+C\|\tilde{X}g\|_{H_h^{s}(\partial\Omega)}\\
			&\qquad+C_Nh^N\|u|_{x^1=0}\|_{H_h^{-N}(\partial\Omega)}+c_Nh^N\|g\|_{H_h^{-N}(\partial\Omega)}.
		\end{align*}
	\end{proof}

Finally, we need an estimate on the high frequencies of a solution to $(P_\exterior-z^2)u=0$ in terms of the traces of $u$ on the boundary.
\begin{lem}
\label{l:highFreq}
Let $M>0$, $N>0$,  $\chi_0,\chi_1\in C_c^\infty(\mathbb{R}^d)$ with $\chi_0\equiv 1$ near $\partial\Omega$, $\supp \chi_0\cap \supp (1-\chi_1)=\emptyset$, $\phi \in C_{c}^\infty(\mathbb{R})$ with 
    \begin{equation}
    \label{e:ellipticCut}
    \supp (1-\phi)\cap \{ |\xi|_{g_{\exterior}}\,:\, \exists x\in \Omega_{\exterior}\text{ such that }|\xi|_{g_{\exterior}}^2\leq 2\} = \emptyset,
    \end{equation}
    and define $\Phi:=\Op(\phi(|\xi|_{g_{\exterior}}))$. Then there are $C,h_0>0$ such that for all $0<h<h_0$, $|1-z|<Mh$ 
    and $u\in L^2_{\loc}(\Omega)$ satisfying
$$
(P_{\exterior}-z^2)u=0,
$$
we have
$$
\|(1-\Phi)\chi_0 u\|_{L^2}\leq C\left(h^{\frac{1}{2}}(\|u\|_{L^2(\partial\Omega)}+\|hD_{\nu_{\exterior}}u\|_{L^2(\partial\Omega)})+h^N\|\chi_1 u\|_{L^2(\Omega)}\right).
$$
\end{lem}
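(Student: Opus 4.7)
The plan is to build a semiclassical elliptic parametrix for $P_\exterior-z^2$ microlocalized to $\WF(1-\Phi)$, apply it to $\chi_0 u$ extended by zero across $\partial\Omega$, and separately bound the boundary distributions so produced and the interior commutator. By~\eqref{e:ellipticCut} we have $\WF(1-\Phi)\subset\{|\xi|_{g_\exterior}^2>2\}$, and since $|z|^2\le (1+Mh)^2<2$ for $h$ sufficiently small, $P_\exterior-z^2$ is elliptic there (after any smooth extension of its coefficients to a neighborhood of $\overline{\Omega}_\exterior$). The standard elliptic parametrix construction will then yield $\tilde E\in\Psi_h^{-2}$ with $\WF(\tilde E)\subset\WF(1-\Phi)$ such that
$$
\tilde E(P_\exterior-z^2)=(1-\Phi)+O(h^\infty)_{\Psi_h^{-\infty}}.
$$

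Letting $(\chi_0 u)_e$ denote the extension of $\chi_0 u$ to $\mathbb{R}^d$ by zero, I would use $(P_\exterior-z^2)u=0$ in $\Omega_\exterior$ together with $\chi_0\equiv 1$ near $\partial\Omega$ to compute distributionally, in Fermi normal coordinates,
$$
(P_\exterior-z^2)(\chi_0 u)_e=([P_\exterior,\chi_0]u)_e-h(hD_{\nu_\exterior}u|_{\partial\Omega})\delta_{\partial\Omega}-h^2(u|_{\partial\Omega})\partial_{\nu_\exterior}\delta_{\partial\Omega}+h^2 B(u|_{\partial\Omega})\delta_{\partial\Omega},
$$
for some bounded tangential operator $B$ whose precise form is inessential. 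Applying $\tilde E$ to both sides expresses $(1-\Phi)(\chi_0 u)_e$ modulo an $O(h^\infty)\|\chi_1 u\|_{L^2}$ error, reducing the proof to estimating $\tilde E$ of each term on the right. The three boundary distributions would be handled using the standard bounds $\|g\delta_{\partial\Omega}\|_{H_h^s(\mathbb{R}^d)}\lesssim h^{-1/2}\|g\|_{H_h^{s+1/2}(\partial\Omega)}$ for $s<-1/2$ and $\|g\partial_{\nu_\exterior}\delta_{\partial\Omega}\|_{H_h^s(\mathbb{R}^d)}\lesssim h^{-3/2}\|g\|_{H_h^{s+3/2}(\partial\Omega)}$ for $s<-3/2$, together with the uniform boundedness of $\tilde E:H_h^{-2}\to L^2$. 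Taking $s=-2$ yields contributions of order $h^{1/2}\bigl(\|u|_{\partial\Omega}\|_{L^2}+\|hD_{\nu_\exterior}u|_{\partial\Omega}\|_{L^2}\bigr)$, which matches the boundary part of the claimed estimate.

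The main obstacle I expect lies in upgrading the interior commutator bound from the cheap estimate $\|\tilde E([P_\exterior,\chi_0]u)\|_{L^2}\lesssim h\|\chi_1 u\|_{L^2}$ to the required $h^N$. Writing $[P_\exterior,\chi_0]=hQ$ for a first order differential operator $Q$ supported in $\supp\nabla\chi_0$, I would exploit the strict inclusion $\supp\chi_0\subset\{\chi_1\equiv 1\}$ (implied by $\supp\chi_0\cap\supp(1-\chi_1)=\emptyset$) to construct a nested sequence of cutoffs $\eta_0\prec\eta_1\prec\cdots\prec\eta_K$, all compactly supported in $\Omega_\exterior\cap\{\chi_1\equiv 1\}$, with $\eta_0\equiv 1$ on $\supp\nabla\chi_0$ and $\eta_{j+1}\equiv 1$ on $\supp\eta_j$. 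Since $\WF(\tilde E)$ lies in the semiclassical elliptic set of $P_\exterior-z^2$ and $u$ is a homogeneous solution in $\Omega_\exterior$, iterating semiclassical interior elliptic regularity using $(P_\exterior-z^2)(\eta_j u)=[P_\exterior,\eta_j]u=hQ_j u$ gains a factor of $h$ at each step, so after $K$ iterations $\|\tilde E([P_\exterior,\chi_0]u)\|_{L^2}\lesssim h^{K+1}\|\chi_1 u\|_{L^2}$. Choosing $K\ge N-1$ and summing with the boundary contribution completes the proof.
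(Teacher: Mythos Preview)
Your proposal is correct and follows essentially the same route as the paper: extend by zero across $\partial\Omega$, apply a high-frequency elliptic parametrix for $P_\exterior-z^2$, and bound the resulting boundary layers in $H_h^{-2}$ to produce the $h^{1/2}$ boundary contribution. The only difference is in handling the interior commutator: the paper inserts an intermediate cutoff $\tilde\chi$ with $\chi_0\prec\tilde\chi\prec\chi_1$ and builds the parametrix $E$ so that $\WF(E)$ projects spatially into $\supp\chi_0$; since $\supp\nabla\tilde\chi\cap\supp\chi_0=\emptyset$, the composition $E[P_\exterior,\tilde\chi]$ is $O(h^\infty)_{\Psi_h^{-\infty}}$ by disjoint spatial supports, which yields the $h^N\|\chi_1 u\|_{L^2}$ term in one stroke and replaces your iteration.
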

\begin{proof}
Let $\tilde u\in L^2_{\loc}(\mathbb{R}^d)$ $\tilde{u}:=1_{\Omega}u$. Then gives
	\begin{equation}
		\label{extension eqn}
		(P_{\exterior}-z^2) \tilde u= h^2\partial_{\nu_{\exterior}}^* \delta_{\partial \Omega} \otimes \left(\rho_{\exterior}u|_{\partial \Omega}\right)-h \delta_{\partial \Omega} \otimes \left(\rho_{\exterior}h\partial_{\nu_{\exterior}} u|_{\partial \Omega}\right),
	\end{equation}
	where $\langle \partial_{\nu_{\exterior}}^* \delta_{\partial \Omega}\otimes \left(u|_{\partial \Omega}\right), \varphi\rangle=\int_{\partial \Omega} u\partial_{\nu_{\exterior}}\varphi \der S$ and $\langle \delta_{\partial \Omega}\otimes \left(h\partial_{\nu_{\exterior}} u|_{\partial \Omega}\right),\varphi\rangle=\int_{\partial \Omega} \left(h\partial_{\nu_{\exterior}} u\right) \varphi \der S$ for $\varphi \in C_{c}^\infty(\R^d)$. 

    Let $\tilde{\chi}\in C_c^\infty(\mathbb{R}^d)$ with $\supp (1-\tilde{\chi})\cap \supp \chi_0=\supp \tilde{\chi}\cap \supp (1-\chi_1)=\emptyset$.
      Since $\WF(I-\Phi)\subset \Ellip(P_{\exterior}-z^2)$, there is $E\in \Psi^{-2}_h(\mathbb{R}^d)$ with $\WF(E)\cap \supp (1-\chi_1)=\emptyset$ such that 
	\begin{align*}
	(I-\Phi) \chi_0 \tilde u&=E(P_{\exterior}-z^2)\tilde{\chi}\tilde u+O(h^\infty)_{\Psi^{-\infty}}\tilde{\chi}\tilde u\\
    &= E\tilde{\chi}(P_{\exterior}-z^2)\tilde{u} +E[P,\tilde{\chi}]\chi_1u +O(h^\infty)_{\Psi^{-\infty}}\chi_1\tilde{u}\\
    &=E\tilde{\chi}(P_{\exterior}-z^2)\tilde{u}+O(h^\infty)_{\Psi^{-\infty}}\chi_1\tilde{u}.
	\end{align*}
 Since $ E\in \Psi^{-2}_h(\R^d)$, one has
	\begin{equation}
		\label{boundary defect measure thm eqn 1}
		\begin{aligned}\|(1-\Phi)\chi_0\tilde u\|_{L^2(\R^d)}
        &\leq \| (P_{\exterior}-z^2)\tilde u\|_{H_h^{-2}(\R^d)}+C_Nh^N\|\chi_1 u\|_{L^2(\Omega)}.
        \end{aligned}
	\end{equation}
	Using \eqref{extension eqn}, we know that
	\begin{equation}
		\label{boundary defect measure thm eqn 2}
		\begin{aligned}\|(P_{\exterior}-z^2)\tilde u\|_{H^{-2}_h(\R^d)}&\le Ch\left(\|h\partial_{\nu_\exterior}^* \delta_{\partial \Omega} \otimes \left(u|_{\partial \Omega}\right)\|_{H^{-2}_h(\R^d)}+\|\delta_{\partial \Omega} \otimes \left(h\partial_{\nu_{\exterior}} u|_{\partial \Omega}\right)\|_{H^{-2}_h(\R^d)}\right)
		\\
		&\le C h^{\frac{1}{2}}\left(\|u\|_{L^2(\partial \Omega)}+\|h\partial_{\nu_{\exterior}} u\|_{L^2(\partial \Omega)}\right).
        \end{aligned}
	\end{equation}
	Combining with \eqref{boundary defect measure thm eqn 1} and \eqref{boundary defect measure thm eqn 2}, one has
	\begin{equation*}
		\label{boundary defect measure thm eqn 3}
		\|(1-\Phi)\chi_0 \tilde u\|_{L^2(\R^d)}\le Ch^{\frac{1}{2}}\left(\|u\|_{L^2(\partial \Omega)}+\|hD_{\nu_{\exterior}} u\|_{L^2(\partial \Omega)}\right)+C_Nh^N\|\chi_1 u\|_{L^2(\Omega)},
	\end{equation*}
    which completes the proof.
\end{proof}
\subsection{Resolvent estimates - the absence of plasmon resonances}
\label{Resolvent estimates no plasmon}
This section will prove Theorem~\ref{t:states}. In particular, we obtain the desired resolvent estimates under the condition \eqref{e:noPlasmons}.

We start with a lemma that we use repeatedly to prove our estimates. It applies the relevant propagation of defect measures results to obtain estimates.
\begin{lem}
\label{l:basicPropagation}
Let $X,\tilde{X}\in \Psi^{0}_h(\partial\Omega)$ with
$$
\{\rho_{\exterior}^2(|\xi'|_{g_{\exterior}}^2-1)-\tau^2(|\xi'|_{g_{\inner}}^2+1)=0\} \cap \WF(X)=\emptyset,
$$
and $\WF(X)\cap (\WF(I-\tilde{X}))=\emptyset$. Then, for any $M>0$, $N>0$ and $\chi\in C_c^\infty(\overline{\Omega}_{\exterior})$, there are $h_0>0$ and $C>0$ such that for all $0<h<h_0$, $|z-1|<Mh$, and $u\in L^2_{\loc}(\Omega)$ satisfying
\begin{equation}
\label{e:uSolve}
		\left\{
		\begin{aligned}
			&(P_{\exterior}-z^2)u=0  &\text{in} \quad \Omega,\\
			&\rho_{\exterior}hD_{\nu_{\exterior}}u -\tau \Lambda_{\inner}(z) u=g  &\text{on} \quad \partial \Omega,\\
			&\text{$u$ is $z/h$ outgoing},
		\end{aligned}
		\right.
	\end{equation}
	 we have 
	\begin{equation*}
		\|hD_{\nu_{\exterior}}u\|_{H_h^{\frac{1}{2}}(\partial\Omega)}+\|u\|_{H_h^{\frac{3}{2}}(\partial\Omega)}+\|\chi u\|_{H_h^2(\Omega)}\le C (\|\tilde{X}g\|_{H^\frac{1}{2}_h(\partial \Omega)}+\|(I-X)u\|_{H_h^{3/2}(\partial\Omega)}+h^N\|g\|_{H_h^{-N}(\partial\Omega)}).
	\end{equation*}
\end{lem}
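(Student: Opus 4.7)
The strategy combines three ingredients: the non-trapping exterior resolvent bound applied via $G_\exterior$, the microlocal boundary estimate of Lemma~\ref{thm harder}, and a semiclassical defect measure argument to eliminate a residual interior term.

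First I would reduce the claim to a pure boundary estimate. Since $u$ is outgoing and solves $(P_\exterior-z^2)u=0$, one has $u=G_\exterior(z)(u|_{\partial\Omega})$, and Proposition~\ref{G bound thm} gives
\[
\|\chi u\|_{H_h^2(\Omega_\exterior)}+\|hD_{\nu_\exterior}u\|_{H_h^{1/2}(\partial\Omega)}\le C\|u\|_{H_h^{3/2}(\partial\Omega)},
\]
so it suffices to bound $\|u\|_{H_h^{3/2}(\partial\Omega)}$ by the right-hand side of the lemma. I then apply Lemma~\ref{thm harder} at $s=1/2$ with $\Lambda=\rmi\rho_\exterior^{-1}\tau\Lambda_\inner(z)$; the principal symbol of $R+\Lambda^*\Lambda$ is, up to a positive factor, $\rho_\exterior^2+\tau^2\rho_\inner^2+\tau^2\rho_\inner^2|\xi'|^2_{g_\inner}-\rho_\exterior^2|\xi'|^2_{g_\exterior}$, which vanishes exactly on the set excluded from $\WF(X)$ by hypothesis. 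Combining with the given bound on $\|(I-X)u\|_{H_h^{3/2}}$ and absorbing the $h^N\|u\|_{H_h^{-N}(\partial\Omega)}$ error into the left-hand side yields
\[
\|u\|_{H_h^{3/2}(\partial\Omega)}\le C\bigl(\|u\|_{L^2((0,\e);H_h^{-3/2}(\partial\Omega))}+\|\tilde X g\|_{H_h^{1/2}}+\|(I-X)u\|_{H_h^{3/2}}+h^N\|g\|_{H_h^{-N}}\bigr).
\]

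The main obstacle is to eliminate the bulk remainder $\|u\|_{L^2((0,\e);H_h^{-3/2})}$, which a priori is of the same order as the left-hand side. I would argue by contradiction: were the lemma false, one could select a sequence $(u_j,g_j,z_j,h_j)$ with $h_j\to 0$, $|1-z_j|\le Mh_j$, normalised so that $\|u_j\|_{H_h^{3/2}(\partial\Omega)}=1$ while $\|\tilde X g_j\|_{H_h^{1/2}}+\|(I-X)u_j\|_{H_h^{3/2}}+h_j^N\|g_j\|_{H_h^{-N}}\to 0$. The uniform $G_\exterior$ bound then gives a uniform $H_h^2$ bound on $\chi u_j$, so after passing to a subsequence one extracts semiclassical defect measures $\mu$ on $T^*\Omega_\exterior$ and $\upsilon_D,\upsilon_N,\upsilon_{DN}$ on $T^*\partial\Omega$. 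The assumption $\|(I-X)u_j\|_{H_h^{3/2}}\to 0$ forces the boundary measures to be supported in $\WF(X)$, which by hypothesis avoids the resonant set.

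The plan is then to derive a contradiction by showing $\mu$, $\upsilon_D$, and $\upsilon_N$ all vanish. In the interior, Theorem~\ref{measure invariant thm} with $\mu_f=0$, the non-trapping hypothesis on $(\Omega_\exterior,g_\exterior)$, and the outgoing condition (which gives $\mu=0$ near infinity) propagate $\mu=0$ throughout $T^*\Omega_\exterior$. On the boundary, the relation \eqref{boundary measures relation} together with $\tilde X g_j\to 0$ expresses $\upsilon_{DN}$ in terms of $\upsilon_D$ via the symbol of $\rho_\exterior^{-1}\tau\Lambda_\inner$, and combining this with the microlocal description of $\Lambda_\exterior$ from Proposition~\ref{symbol of DtN exterior} on $\{|\xi'|^2_{g_\exterior}>1\}$ and the ellipticity of $R+\Lambda^*\Lambda$ on $\WF(X)$ forces $\upsilon_D=0$ on the elliptic part. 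The delicate step is handling the hyperbolic and glancing parts of $\WF(X)\cap T^*\partial\Omega$, where one must invoke the measure-invariance identity \eqref{measure invariant eqn} together with non-trapping to propagate the vanishing to all of $\WF(X)$. Once $\upsilon_D=0$, decomposing $u_j$ into low and high tangential frequencies (handled respectively by the defect measure and by elliptic regularity at high frequency via Lemma~\ref{l:highFreq}) contradicts the normalisation $\|u_j\|_{H_h^{3/2}(\partial\Omega)}=1$, completing the proof.
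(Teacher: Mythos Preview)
Your approach and the paper's use the same ingredients: reduce via Lemma~\ref{thm harder} and Proposition~\ref{G bound thm}, then run a defect-measure contradiction using non-trapping propagation (Theorem~\ref{measure invariant thm}) together with high-frequency control (Lemma~\ref{l:highFreq}). The paper organizes the contradiction differently, and its choice is cleaner. Rather than normalising the boundary trace, it first shows (via~\eqref{e:bTrace1}) that the full estimate follows from the single interior bound $\|\chi u\|_{L^2(\Omega)}\le C(\text{RHS})$, and then normalises $\|\chi u_j\|_{L^2(\Omega)}=1$ in the contradiction. With that choice, once outgoing plus non-trapping give $\mu(\chi^2)=0$, Lemma~\ref{l:highFreq} finishes immediately: $1=\lim\|\chi u_j\|_{L^2}^2\le\mu(\chi^2)=0$. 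No separate analysis of $\upsilon_D$ on the elliptic, hyperbolic, or glancing regions is ever needed.

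Your final paragraph aims at $\upsilon_D=0$ via such a region-by-region argument, but that detour is both the hardest part of your sketch and avoidable. In your own setup, once $\mu=0$ in the interior (which you obtain exactly as the paper does), Lemma~\ref{l:highFreq} plus the uniform boundary-trace bounds give $\|\chi u_j\|_{L^2(\Omega)}\to 0$, hence the bulk remainder $\|u_j\|_{L^2((0,\e);H_h^{-3/2})}\to 0$; feeding this back into your displayed inequality contradicts $\|u_j\|_{H_h^{3/2}(\partial\Omega)}=1$ directly. So the ``delicate step'' you flag is not actually required.
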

\begin{rem}
\label{r:rescale}
Notice that for $a>0$, we have $\Lambda_{\inner,h}(az)=a^{-1}\Lambda_i(z,ah)$ and hence, by rescaling $h$, we see that, provided 
$$
\{\rho_{\exterior}^2(|\xi'|_{g_{\exterior}}^2-z^2)-\tau^2(|\xi'|_{g_{\inner}}^2+z^2)=0\} \cap \WF(X)=\emptyset,
$$
for $z\in[1-\e_0,1+\e_0]$, Lemma~\ref{l:basicPropagation} continues to hold for $z\in[1-\e_0,1+\e_0]+\rmi [-Mh,Mh]$.
\end{rem}
\begin{proof}
	 We first claim it is enough to show that for any $\chi\in C_c^\infty(\overline{\Omega}_{\exterior})$,  
    \begin{equation}
    \label{e:toContradict1}
    \|\chi u\|_{L^2(\Omega)}\leq C(\|\tilde{X}g\|_{H^{\frac{1}{2}}_h(\partial\Omega)}+\|(I-X)u\|_{L^2(\partial\Omega)})+h^N\|g\|_{H_h^{-N}(\partial\Omega)}.
    \end{equation}

    Indeed, let $\chi_1\in C_c^\infty(\overline{\Omega})$ with $\supp \chi\cap \supp (1-\chi_1)=\emptyset$. 
     By Lemmas~\ref{thm easy} and Lemma~\ref{thm harder}, we have
	\begin{equation*}
        \begin{aligned}
		&\|XhD_{\nu_{\exterior}} u\|_{H_h^{\frac{1}{2}}(\partial\Omega)}+\|Xu\|_{H_h^{\frac{3}{2}}(\partial\Omega)}\\
        &\leq C(\|\tilde{X}g\|_{H_h^{\frac{1}{2}}(\partial\Omega)}+\|\chi_1 u\|_{L^2(\Omega)}+C_Nh^N\|g\|_{H_h^{-N}(\partial\Omega)}+C_Nh^N\|u\|_{H_h^{-N}(\partial\Omega)}),
        \end{aligned}
	\end{equation*}
    Hence, using Lemma~\ref{l:neumannBounds} and Proposition~\ref{G bound thm} to control the normal derivative and $\chi u$, we have
    \begin{equation}
		\label{e:bTrace1}
        \begin{aligned}
		&\|hD_{\nu_{\exterior}} u\|_{H_h^{\frac{1}{2}}(\partial\Omega)}+\|u\|_{H_h^{\frac{3}{2}}(\partial\Omega)}+\|\chi u\|_{H_h^2(\Omega)}\\
        &\leq C(\|\tilde{X}g\|_{H_h^{\frac{1}{2}}(\partial\Omega)}+C\|(I-X)u\|_{H_h^{3/2}}+\|\chi_1 u\|_{L^2(\Omega)}+C_Nh^N\|g\|_{H_h^{-N}(\partial\Omega)}).
        \end{aligned}
	\end{equation}
    
    We will prove~\eqref{e:toContradict1} by contradiction.
    Suppose that inequality \eqref{e:toContradict1} is false. That is, there exist sequences of solutions $u_j=u(h_j)$ and $z_j$ such that
	\begin{equation}
		\label{contradiction eqn n<1}
        \begin{gathered}
\|\chi u_j\|_{L^2(\Omega)}=1,\quad
			, \text{ and }|1-z_j|<Mh_j,\\
            \|\tilde{X}g_j\|_{H^\frac{1}{2}_h(\partial \Omega)}+\|(I-X)u_j\|_{H_h^{3/2}(\partial\Omega)}+h_j^{-N}\|(I-\tilde{X})g_j\|_{H_h^{-N}(\partial\Omega)}=o(1).
            \end{gathered}
	\end{equation}

    Let $\chi_0,\chi_1\in C_c^\infty(\overline{\Omega})$ with $\chi_0\equiv 1$ in a neighborhood of $\partial\Omega$ and $\supp \chi_0\cap \supp (1-\chi)=\supp \chi\cap \supp (1-\chi_1)=\emptyset$.  Observe that 
    $$
    (P_\exterior -z^2)(1-\chi_0)u_j=[\chi_0,P_{\exterior}]\chi u_j,
    $$
    and hence 
    $$
    (1-\chi_0)u_j=R_{\exterior}[\chi_0,P_{\exterior}] \chi u_j.
    $$
    So that, by~\eqref{e:exteriorResolve}
    $$
    \|(1-\chi_0)u_j\|_{L^2(\Omega)}\leq C\|[\chi_0,P_{\exterior}]\chi u_j\|_{L^2(\Omega)}\leq C\|\chi u_j\|_{L^2(\Omega)}\leq C,
    $$
    where in the second-to-last inequality, we have used that by elliptic regularity,
    $$
    \|u_j\|_{H_h^1(\supp \partial \chi_0)}\leq \|\chi u_j\|_{L^2(\Omega)}.
    $$
    In particular, 
    \begin{equation}
    \label{e:slightlyBigger}
    \|\chi_1 u_j\|_{L^2(\Omega)}\leq C.
    \end{equation}

	From Section \ref{Semiclassical defect measures} and the first condition in \eqref{contradiction eqn n<1}, up to extracting a subsequence, we may assume that there is a defect measure $\mu$ associated with $u_j$ (See \eqref{interior measure}). Furthermore, by~\eqref{e:bTrace1}, the boundary measures also exist. Since $u_j$ is outgoing, we have
	\begin{equation*}
    \label{e:outgoing}
		\WF(u_j)\cap\{(x,\xi):|x|\ge r_0\}\subset S_+:=\{(x,\xi):|x|\ge r_0,\langle x, \xi \rangle> 0\}
	\end{equation*}
	for some $r_0>0$.

	That is
	\begin{equation}
		\label{contradiction eqn -}
		\mu(\chi^2 \Psi_-)=0,
	\end{equation}
	where $\supp(\Psi_-)\subset S_-:=\{(x,\xi):|x|\ge r_0,\langle x, \xi \rangle< 0\}$.

	By the second condition of \eqref{contradiction eqn n<1}  and Theorem \ref{measure invariant thm}, we have
	\begin{equation*}
		\pi_*\mu(q_{(x_0,\xi_0)}\circ \varphi_t)=\pi_*\mu(q_{(x_0,\xi_0)}),
	\end{equation*}
	where $q\in C^\infty_{c}({}^bT^*\Omega;\R)$ and $\WF(q_{(x_0,\xi_0)})\subset B_{(x_0,\xi_0)}(\delta)\cap S_-$ with $B_{(x_0,\xi_0)}(\delta)$ being the ball centered at $(x_0,\xi_0)$ with radius $\delta$. Since $\partial \Omega$ is non-trapping, there exists $t\ge0$ and $\varphi_t(x_0,\xi_0)=(x_1,\xi_1)\in S_+$. This shows
	\begin{equation*}
		\label{contradiction eqn +}
		\mu(\chi^2 \Psi_+)=0.
	\end{equation*}
	Together with \eqref{contradiction eqn -}, we have
	\begin{equation}
		\label{contradiction eqn -+}
		\mu(\chi^2)=0.
	\end{equation}

    Let $\phi\in C_c^\infty(\mathbb{R}^d)$ satisfy~\eqref{e:ellipticCut} and $\Phi=\phi(hD)$. Then, by~\eqref{e:slightlyBigger} and~\eqref{e:bTrace1} together with Lemma~\ref{l:highFreq}, we have
	\begin{equation*}
		\|(1-\Phi) \chi(u1_{\Omega})\|_{L^2(\R^d)}\le Ch^{\frac{1}{2}}+C_Nh^N.
	\end{equation*}
	In particular,
    using~\eqref{contradiction eqn -+}, 
	\begin{equation*}
    \begin{aligned}
		1=\lim_{j\to \infty}\|\chi (u_j 1_{\Omega})\|_{L^2(\R^d)}&\le\lim_{j\to \infty}\|\Phi\chi (u_j 1_{\Omega})\|_{L^2(\R^d)}+\lim_{j\to \infty}\|(I-\Phi)\chi (u_j 1_{\Omega})\|_{L^2(\R^d)}\\
        &=\mu(\chi^2\phi^2)\leq \mu(\chi^2)=0,
        \end{aligned}
	\end{equation*}
	which is a contradiction.
    \end{proof}

The next theorem gives the estimates~\eqref{e:noStatesToProve} and hence proves Theorem~\ref{t:noStates}.
\begin{theorem}
	\label{boundary defect measure thm n<1}
	Suppose that~\eqref{e:noPlasmons} holds. Then for any $M>0$ and $\chi \in C_c^\infty(\overline{\Omega_{\exterior}})$, there are $h_0, C>0$ such that for all $0<h<h_0$, $|z-1|<Mh$,  and $u\in L^2_\loc(\Omega)$ satisfying~\eqref{e:uSolve}
	 we have 
	\begin{equation*}
		\label{L2 control of u n<1}
		\|hD_{\nu_{\exterior}}u\|_{H_h^{\frac{1}{2}}(\partial\Omega)}+\|u\|_{H_h^{\frac{3}{2}}(\partial\Omega)}+\|\chi u\|_{H_h^2(\Omega)}\le C \|g\|_{H^\frac{1}{2}_h(\partial \Omega)}.
	\end{equation*}
\end{theorem}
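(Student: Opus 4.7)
The plan is to derive this theorem as an immediate specialization of Lemma~\ref{l:basicPropagation}, applied with $X=\tilde X=I$, after verifying that the hypothesis on $\WF(X)$ is vacuous under~\eqref{e:noPlasmons}.

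The key observation is that the symbol $\sigma(R+\Lambda^*\Lambda)$ arising in the boundary energy identity, with $\Lambda=\rmi\tau\rho_{\exterior}^{-1}\Lambda_{\inner}(z)$, is
$$\rho_{\exterior}^{-2}\bigl(\rho_{\exterior}^2+\tau^2\rho_{\inner}^2+\tau^2\rho_{\inner}^2|\xi'|_{g_{\inner}}^2-\rho_{\exterior}^2|\xi'|_{g_{\exterior}}^2\bigr),$$
exactly as computed in the proof of Lemma~\ref{thm easy}. Hypothesis~\eqref{e:noPlasmons} yields $\tau^2\rho_{\inner}^2|\xi'|_{g_{\inner}}^2-\rho_{\exterior}^2|\xi'|_{g_{\exterior}}^2\geq c|\xi'|^2$ for some $c>0$, so the full expression is bounded below by $c'\langle\xi'\rangle^2$. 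In particular the exclusion set for $\WF(X)$ in Lemma~\ref{l:basicPropagation} is empty, and $X=\tilde X=I$ is a permissible choice.

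With this choice, the $\|(I-X)u\|_{H_h^{3/2}}$ term in the conclusion of Lemma~\ref{l:basicPropagation} drops out and we obtain
$$\|hD_{\nu_{\exterior}}u\|_{H_h^{1/2}(\partial\Omega)}+\|u\|_{H_h^{3/2}(\partial\Omega)}+\|\chi u\|_{H_h^2(\Omega)}\leq C\bigl(\|g\|_{H_h^{1/2}(\partial\Omega)}+h^N\|g\|_{H_h^{-N}(\partial\Omega)}\bigr).$$
The residual term is absorbed into $C\|g\|_{H_h^{1/2}(\partial\Omega)}$ for $h$ small, since $\|\cdot\|_{H_h^{-N}}\leq\|\cdot\|_{H_h^{1/2}}$ whenever $N\geq-1/2$.

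The real content is hidden in Lemma~\ref{l:basicPropagation}, proved by contradiction using semiclassical defect measures: one assumes a normalized sequence of counterexamples $u_j$ with $\|\chi u_j\|_{L^2}=1$ and $\|g_j\|_{H_h^{1/2}}=o(1)$, invokes Lemma~\ref{thm easy} to bound the boundary traces in terms of $\|g_j\|_{H_h^{1/2}}$ plus a small amount of $\|\chi u_j\|_{L^2}$, extracts an interior defect measure $\mu$ and boundary measures $\upsilon_D,\upsilon_N,\upsilon_g$, uses the $z/h$-outgoing condition to force $\mu=0$ on the outgoing cone at infinity, propagates $\supp\mu$ along the billiard flow via Theorem~\ref{measure invariant thm} and the non-trapping hypothesis to conclude $\mu(\chi^2)=0$, and finally absorbs the high-frequency part of $\chi u_j$ via Lemma~\ref{l:highFreq}; the resulting contradiction $1=\mu(\chi^2)=0$ closes the argument. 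The main obstacle, already handled within Lemma~\ref{l:basicPropagation}, is ensuring that $g_j\to 0$ forces the boundary defect measures to vanish in the propagation identity; under~\eqref{e:noPlasmons} this is precisely what global ellipticity of $R+\Lambda^*\Lambda$ delivers.
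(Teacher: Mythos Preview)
Your proposal is correct and matches the paper's own proof, which is the single line ``The theorem follows from Lemma~\ref{l:basicPropagation} with $X,\tilde{X}=I$.'' Your verification that the exclusion set in Lemma~\ref{l:basicPropagation} is empty under~\eqref{e:noPlasmons}, together with the absorption of the $h^N\|g\|_{H_h^{-N}}$ term, spells out exactly what the paper leaves implicit; the final paragraph summarizing the internals of Lemma~\ref{l:basicPropagation} is accurate but not needed here since that lemma is already established.
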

\begin{proof}
The theorem follows from Lemma~\ref{l:basicPropagation} with $X,\tilde{X}=I$. 
\end{proof}

 \subsection{Resolvent estimates and plasmonic resonances}
\label{Resolvent estimates}
In this subsection, we prove Theorem~\ref{t:states}. In particular, we obtain the estimates~\eqref{e:statesToProve} under the condition \eqref{e:plasmons} and hence prove Theorem~\ref{t:states}.
\begin{theorem}
	\label{boundary defect measure thm n>1}
	Suppose that~\eqref{e:plasmons} holds. For all $M,N>0$, $\chi\in C_c^\infty(\overline{\Omega})$, $X\in \Psi^{\comp}_h(\partial \Omega)$ with 
	\begin{equation}
    \label{e:nAway}
	\WF(I-X)\cap \Big\{\rho_{\exterior}^2|\xi'|_{g_\exterior }^2-\tau^2\rho_{\inner}^2|\xi'|_{g_{\inner}}^2=\rho_{\exterior}^2+\tau^2\rho_{\inner}^2\big\}=\emptyset,\qquad \WF(X)\subset \{|\xi'|_{g_\exterior }>1\}
	\end{equation}
	there are $C, h_0>0$ such that for all $0<h<h_0$, $|1-z|<Mh$, $\Im z<-h^N$, and $u\in L^2_{\loc}(\Omega)$ satisfying~\eqref{e:uSolve}
	we have
	\begin{equation*}
		\label{invertible eqn n>1}
		\|\chi u\|_{H^2_{h}(\Omega)}\le C|\Im z|^{-1}\|X g\|_{L^2(\partial \Omega)}
		+C\|(I- X)g\|_{H^\frac{1}{2}_h(\partial \Omega)}.
	\end{equation*}
\end{theorem}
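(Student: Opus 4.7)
\textbf{Proof plan for Theorem \ref{boundary defect measure thm n>1}.}

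The strategy is to combine two ingredients: the defect-measure/propagation estimate of Lemma \ref{l:basicPropagation}, which handles the microlocal region whose wavefront is disjoint from the characteristic set of $\Lambda_\exterior-\tau\Lambda_\inner$, and the quasi-selfadjointness estimate of Proposition \ref{p:imagPart}, which converts the lack of ellipticity near the bad set $\{\rho_\exterior^2|\xi'|_{g_\exterior}^2-\tau^2\rho_\inner^2|\xi'|_{g_\inner}^2=\rho_\exterior^2+\tau^2\rho_\inner^2\}$ into the factor $|\Im z|^{-1}$. The plan is to reduce the $H^2_h$-bound on $u$ to an $L^2(\partial\Omega)$ bound on $Xu$ using Lemma \ref{l:basicPropagation}, and then to bound $\|Xu\|_{L^2(\partial\Omega)}$ by $|\Im z|^{-1}\|Xg\|_{L^2}$ using the boundary condition and Proposition \ref{p:imagPart}.

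\emph{Step 1 (reduction via Lemma \ref{l:basicPropagation}).} Since $\WF(I-X)$ avoids the bad set by hypothesis \eqref{e:nAway}, we apply Lemma \ref{l:basicPropagation} (extended to $z\in[1-\e_0,1+\e_0]+\rmi[-Mh,Mh]$ via Remark \ref{r:rescale}) with its cutoff $X_L:=I-X$ and $\tilde X_L:=I$. This yields
\begin{equation*}
\|\chi u\|_{H^2_h(\Omega)}+\|u\|_{H^{3/2}_h(\partial\Omega)}+\|hD_{\nu_\exterior}u\|_{H^{1/2}_h(\partial\Omega)}\le C\|g\|_{H^{1/2}_h(\partial\Omega)}+C\|Xu\|_{H^{3/2}_h(\partial\Omega)}+C_Nh^N\|g\|_{H^{-N}_h(\partial\Omega)}.
\end{equation*}
Since $X\in\Psi^{\comp}_h$, Sobolev norms are equivalent to $L^2$ on $\WF(X)$, so $\|Xu\|_{H^{3/2}_h}\le C\|Xu\|_{L^2(\partial\Omega)}$ and $\|Xg\|_{H^{1/2}_h}\le C\|Xg\|_{L^2}$. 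Splitting $g=Xg+(I-X)g$ and using $|\Im z|\le Mh\le 1$ (so $\|Xg\|_{L^2}\le |\Im z|^{-1}\|Xg\|_{L^2}$ trivially), the right-hand side is controlled by $C\|(I-X)g\|_{H^{1/2}_h}+C|\Im z|^{-1}\|Xg\|_{L^2}+C\|Xu\|_{L^2(\partial\Omega)}$.

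\emph{Step 2 (boundary estimate via Proposition \ref{p:imagPart}).} The Robin-type boundary condition reads $(\Lambda_\exterior(z)-\tau\Lambda_\inner(z))u|_{\partial\Omega}=g$. Pairing against $X^*Xu$ in $L^2(\partial\Omega,\der\vol_{g_\exterior,\partial\Omega})$ and commuting $X$ through,
\begin{equation*}
\langle(\Lambda_\exterior-\tau\Lambda_\inner)Xu,Xu\rangle=\langle Xg,Xu\rangle+\langle[X,\Lambda_\exterior-\tau\Lambda_\inner]u,Xu\rangle.
\end{equation*}
By Proposition \ref{p:imagPart}, the left-hand side controls $c|\Im z^2|\|Xu\|^2-C_Nh^N\|u\|^2$. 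Since $\WF(X)\subset\{|\xi'|_{g_\exterior}>1+\e_0\}$ for some $\e_0>0$ (by compactness of $\WF(X)$), Proposition \ref{symbol of DtN exterior} ensures that both $X\Lambda_\exterior$ and $\Lambda_\exterior X$ lie in $\Psi^1_h$, so $[X,\Lambda_\exterior-\tau\Lambda_\inner]$ is bounded on $L^2(\partial\Omega)$ with norm $O(h)$. This gives
\begin{equation*}
c|\Im z|\|Xu\|^2_{L^2(\partial\Omega)}\le C\|Xg\|_{L^2}\|Xu\|_{L^2}+Ch\|u\|_{L^2(\partial\Omega)}\|Xu\|_{L^2}+C_Nh^N\|u\|^2_{L^2(\partial\Omega)}.
\end{equation*}

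\emph{Step 3 (combine via contradiction).} Suppose the theorem fails: there exist $u_j$, $z_j$, $g_j$ with $\|\chi u_j\|_{H^2_h}=1$, $|\Im z_j|^{-1}\|Xg_j\|_{L^2}\to 0$, and $\|(I-X)g_j\|_{H^{1/2}_h}\to 0$. By trace and Step 1, $\|u_j\|_{L^2(\partial\Omega)}\le C$, so in the Step 2 inequality the commutator term is $O(h)$ and the $h^N$-term is $O(h^N)$, both $o(1)$. Young's inequality then yields $|\Im z_j|\|Xu_j\|^2\le o(|\Im z_j|)\|Xu_j\|+o(1)$, so passing to semiclassical defect measures (as in Lemma \ref{l:basicPropagation}), the boundary measure $\upsilon_D$ vanishes on $\WF(X)$. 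Combined with the propagation identity \eqref{measure invariant eqn} and the non-trapping hypothesis, the interior defect measure $\mu$ vanishes, and the elliptic/high-frequency argument from Lemma \ref{l:highFreq} (together with the bounds on the traces) yields $\|\chi u_j\|_{L^2}\to 0$, contradicting $\|\chi u_j\|_{H^2_h}=1$. The main obstacle is a careful treatment of the $O(h)$ commutator together with the $h^N\|u\|^2$ error in Proposition \ref{p:imagPart}: in a direct absorption argument these would multiply $|\Im z|^{-1}\ge h^{-N}$ and blow up, but in the contradiction setting the a priori bound $\|u_j\|_{L^2(\partial\Omega)}\le C$ makes them harmless, and the lower bound $|\Im z_j|\ge h^N$ ensures the $C_Nh^N$ error is truly sub-leading provided $N$ in Proposition \ref{p:imagPart} is chosen sufficiently large.
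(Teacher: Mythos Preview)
There is a genuine gap in Step~3. From Step~2 you obtain, under the contradiction hypothesis,
\[
|\Im z_j|\,\|Xu_j\|_{L^2}^2 \;\le\; o(|\Im z_j|)\,\|Xu_j\|_{L^2} \;+\; O(h_j),
\]
the $O(h_j)$ coming from the commutator term $Ch_j\|u_j\|_{L^2(\partial\Omega)}\|Xu_j\|_{L^2}$ together with $\|u_j\|_{L^2(\partial\Omega)}\le C$. You then assert that this forces $\upsilon_D$ to vanish on $\WF(X)$, i.e.\ $\|Xu_j\|_{L^2}\to 0$. But this does not follow: dividing by $|\Im z_j|$ gives $\|Xu_j\|^2\le o(1)\|Xu_j\|+O(h_j/|\Im z_j|)$, and since $|\Im z_j|$ may be as small as $h_j^N$, the last term is $O(h_j^{1-N})$ and blows up. The inequality is perfectly consistent with $\|Xu_j\|_{L^2}$ staying bounded away from zero (take $\|Xu_j\|=1$, $|\Im z_j|=h_j^N$). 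No defect-measure passage rescues this, because the $O(h_j)$ error is not $o(|\Im z_j|)$.

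The paper closes this gap by a different, \emph{direct} argument (no contradiction). The key observation is that the commutator $[\Lambda_\exterior-\tau\Lambda_\inner,X_1]$ has wavefront set contained in $\WF(X_1)\cap\WF(I-X_1)$; since $X_1\equiv 1$ microlocally near the characteristic set $\{\rho_\exterior^2(|\xi'|_{g_\exterior}^2-1)=\tau^2\rho_\inner^2(|\xi'|_{g_\inner}^2+1)\}$, this commutator is microsupported in the \emph{elliptic} region of $\Lambda_\exterior-\tau\Lambda_\inner$. An elliptic parametrix then gives
\[
[\Lambda_\exterior-\tau\Lambda_\inner,X_1]\;=\;E\,X_2(\Lambda_\exterior-\tau\Lambda_\inner)\;+\;O(h^\infty)_{\Psi^{-\infty}},\qquad E\in h\Psi^{\comp},
\]
so that $[\Lambda_\exterior-\tau\Lambda_\inner,X_1]u=EX_2 g+O(h^\infty)u$. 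The commutator contribution in your Step~2 is thereby converted from $O(h)\|u\|$ into $O(h)\|X_2 g\|+O(h^\infty)\|u\|$, and one obtains $|\Im z|\,\|X_1 u\|_{L^2}\le C\|X_2 g\|_{L^2}+C_Nh^N(\|u\|+\|g\|)$ directly; the $h^N$ remainder is then harmless against $|\Im z|\ge h^N$. Lemma~\ref{l:basicPropagation} finishes the proof exactly as in your Step~1. Your Steps~1 and~2 are correct in outline; the missing idea is this elliptic-parametrix handling of the commutator, without which the $O(h)$ loss cannot be beaten by $|\Im z|$.
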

\begin{proof}


    Let $X_i\in \Psi^{\comp}(\partial\Omega)$, $i=0,1,2$ with $\WF(X_i)\cap \WF(I-X_{i+1})=\emptyset$, $i=0,1$ and
	\begin{equation*}
	\WF(I-X_0)\cap \Big\{\rho_{\exterior}^2|\xi'|_{g_\exterior }^2-\tau^2\rho_{\inner}^2|\xi'|_{g_{\inner}}^2=\rho_{\exterior}^2+\tau^2\rho_{\inner}^2\Big\}=\emptyset,\qquad \WF(X_2)\subset \Ellip(X).
	\end{equation*}

    Using that $X_2\Lambda_{\exterior},\Lambda_{\exterior}X_2\in \Psi^{\comp}(\partial\Omega)$, and the wavefront set properties of $X_1$, 
	$$
	\WF([\Lambda_{\exterior}-\tau\Lambda_{\inner},X_1])\subset \Ellip(X(\Lambda_\exterior -\tau \Lambda_{\inner}))
	$$
	and hence there is $E\in h\Psi^\comp$ such that 
	$$
	[\Lambda_{\exterior}-\tau\Lambda_{\inner},X_1]=EX_2(\Lambda_\exterior -\tau\Lambda_{\inner}))+O(h^\infty)_{\Psi_h^{-\infty}}. 
	$$

    	Thus, by Proposition~\ref{p:imagPart}
	\begin{align*}
		&|\Im z|\|X_1u\|_{L^2(\partial \Omega)}^2\\
        &\leq |\langle( \Lambda_{o}-\tau\Lambda_{\inner})X_1u,X_1u\rangle|+C_Nh^N\|u\|_{L^2(\partial\Omega)}^2\\
		&\leq|\langle X_1g,X_1u\rangle |+|\langle [\Lambda_{o}-\tau\Lambda_{\inner},X_1]u,X_1nu\rangle|+C_Nh^N\|u\|_{L^2(\partial\Omega)}^2\\
		&=|\langle X_1 g,X_1u\rangle| +|\langle EX_2 g+O(h^\infty)_{\Psi^{-\infty}}u,X' u\rangle|+C_Nh^N\|u\|_{L^2(\partial\Omega)}^2\\
		&\leq \left(\|X_2g\|_{L^2(\partial \Omega)}+O(h^\infty)(\|g\|_{H_h^{-N}(\partial\Omega)}+\|u\|_{H_{h}^{-N}(\partial\Omega)})\right)\|X_1u\|_{L^2(\partial \Omega)}+C_Nh^N\|u\|_{L^2(\partial\Omega)}^2.
	\end{align*}
	Hence, using that $X_1\in\Psi_h^{\comp}(\partial\Omega)$ for the first inequality
	\begin{equation}
		\label{e:here}
        \begin{aligned}
		|\Im z|\|X_1u\|_{H_h^{3/2}(\partial\Omega)}&\leq C|\Im z|\|X_1u\|_{L^2(\partial\Omega)}+C_Nh^N\|u\|_{L^2(\partial\Omega)}\\
        &\leq C\|X_2g\|_{L^2(\partial\Omega)} +C_Nh^N(\|u\|_{L^2(\partial\Omega)}+\|g\|_{H_h^{-N}(\partial\Omega)}).
        \end{aligned}
	\end{equation}

    Now, by Lemma~\ref{l:basicPropagation} with $X=I-X_1$ and $\tilde{X}=I-X_0$, 
\begin{align*}
\|hD_{\nu_{\exterior}}u\|_{H_h^{\frac{1}{2}}(\partial\Omega)}&+\|u\|_{H_h^{\frac{3}{2}}(\partial\Omega)}+\|\chi u\|_{H_h^2(\Omega)}\\
&\le C (\|(I-X_0)g\|_{H^\frac{1}{2}_h(\partial \Omega)}+\|X_1 u\|_{H_h^{3/2}(\partial\Omega)}+h^N\|g\|_{H_h^{-N}(\partial\Omega)})\\
&\leq C (\|(I-X_0)g\|_{H^\frac{1}{2}_h(\partial \Omega)}+|\Im z|^{-1}\|X_2g\|_{H_h^{\frac{1}{2}}(\partial\Omega)})\\
&\leq C(\|(I-X)g\|_{H_h^{\frac{1}{2}}(\partial\Omega)}+|\Im z|^{-1}\|Xg\|_{H_h^{\frac{1}{2}}(\partial\Omega)},
\end{align*}
which completes the proof.
\end{proof}

\subsection{The plasmonic nature of resonances}. 
In this subsection, we show that all resonances close to the real axis are plasmonic. In particular, we prove Theorem~\ref{t:plasmonic}.
\begin{lem}
	\label{l:plasmonic}
	Suppose that~\eqref{e:plasmons} holds. Then for all $M>0$, $\chi \in C_c^\infty(\overline{\Omega_{\exterior}})$ with $\chi=1$ in a neighborhood of $\partial \Omega$ and $\psi \in C_c^\infty(\Omega_{\exterior})$ (i.e.  $\supp \psi\cap \partial\Omega=\emptyset$) the following holds. There is $c>0$ such that for all $|1-z(h)|\leq Mh$ and $u=u(h)\in L^2_\loc(\Omega)$ satisfies
	\begin{equation*}
		\begin{cases}
			(P_{\exterior}-z^2)u=0  &\text{in } \Omega,\\			
            (\rho_{\exterior}h\partial_{\nu_{\exterior}} -\tau \Lambda_{\inner}(z)) u=0  &\text{on } \partial \Omega,\\
            \| u\|_{L^2(\partial \Omega)}=1,\\
            \text{$u$ is $z/h$ outgoing}.
		\end{cases}
	\end{equation*}
then 
$$
ch^{\frac{1}{2}}\leq\|\chi u\|_{H_h^2(\Omega)}=O(h^{\frac{1}{2}}), \quad \text{and} \quad \|\psi u\|_{H_h^2(\Omega)}=O(h^\infty).
$$
\end{lem}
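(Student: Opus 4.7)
The plan is to show that $u|_{\partial\Omega}$ is microlocally concentrated, modulo $O(h^\infty)$, in an arbitrary fixed neighborhood of the critical set
\begin{equation*}
\Sigma:=\{(x',\xi')\in T^*\partial\Omega\,:\,\rho_{\exterior}^2|\xi'|^2_{g_{\exterior}}-\tau^2\rho_{\inner}^2|\xi'|^2_{g_{\inner}}=\rho_{\exterior}^2+\tau^2\rho_{\inner}^2\},
\end{equation*}
which by \eqref{e:plasmons} lies in $\{|\xi'|_{g_{\exterior}}>1+c\}$ for some $c>0$, and then to invoke Lemma~\ref{l:dtNDescriptionExterior} to represent $u$ in a collar of $\partial\Omega$ as an exponentially decaying evanescent wave. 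The three conclusions will then follow.

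The microlocalization is the step I expect to be the main obstacle. I split it into two parts. On the elliptic region $R_e:=\{|\xi'|_{g_{\exterior}}>1+c/4\}\setminus U$, where $U$ is a small open neighborhood of $\Sigma$, the operator $\Lambda_{\exterior}-\tau\Lambda_{\inner}$ is elliptic of order $1$ by Propositions~\ref{symbol of DtN exterior} and~\ref{symbol of DtN interior}; since the resonance equation reads $(\Lambda_{\exterior}-\tau\Lambda_{\inner})u|_{\partial\Omega}=0$, an elliptic parametrix construction immediately yields $\|Xu\|_{H^{3/2}_h(\partial\Omega)}=O(h^\infty)$ for every $X\in\Psi^{\comp}_h(\partial\Omega)$ with $\WF(X)\subset R_e$. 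On the remaining non-elliptic region $\{|\xi'|_{g_{\exterior}}\leq 1+c/4\}$, I would apply Lemma~\ref{l:basicPropagation} with $g=0$, which furnishes $\|u\|_{H^{3/2}_h(\partial\Omega)}\leq C\|(I-X)u\|_{H^{3/2}_h(\partial\Omega)}$ whenever $\WF(X)\cap\Sigma=\emptyset$. Taking the essential support of $I-X$ to shrink onto $U$ and combining with the elliptic bound above should force the non-elliptic contribution to vanish to infinite order in $h$; I would handle the delicate $h$-dependence in the bootstrap through a contradiction argument of the type used in the proof of Lemma~\ref{l:basicPropagation}, or equivalently by combining the defect measure support statements from Theorem~\ref{measure invariant thm} with the elliptic bound just obtained.

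Once the microlocalization is in hand, I pick $X\in\Psi^{\comp}_h(\partial\Omega)$ equal to the identity microlocally on $U$ and with $\WF(X)\subset\{|\xi'|_{g_{\exterior}}>1+c/2\}$, so that $u|_{\partial\Omega}=Xu|_{\partial\Omega}+O(h^\infty)_{H^{3/2}_h}$. By Lemma~\ref{l:dtNDescriptionExterior}, in Fermi normal coordinates near $\partial\Omega$,
\begin{equation*}
u=\chi(\epsilon^{-1}x^1)\,v+O(h^\infty)_{H^2_h(\Omega_{\exterior})}\|u|_{\partial\Omega}\|_{L^2(\partial\Omega)},
\end{equation*}
where $v$ solves $(hD_{x^1}-\Lambda)v=0$, $v|_{x^1=0}=Xu|_{\partial\Omega}$, with $\Im\sigma(\Lambda)\geq c_0\langle\xi'\rangle$ on $\WF(X)$. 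Applying \eqref{e:basicL2} and \eqref{e:microlocalK} to $\tilde v(x^1,x'):=e^{c_0x^1/(2h)}v(x^1,x')$, and then trading normal derivatives of $v$ for tangential ones through the ODE $hD_{x^1}v=\Lambda v$, yields $\|\chi(\epsilon^{-1}x^1)v\|_{H^2_h(\Omega_{\exterior})}\leq Ch^{1/2}$, which gives the upper bound $\|\chi u\|_{H^2_h(\Omega_{\exterior})}=O(h^{1/2})$.

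For the two remaining claims, I choose $\epsilon<\dist(\supp\psi,\partial\Omega)$, so that $\psi\,\chi(\epsilon^{-1}x^1)v\equiv 0$ identically and the representation forces $\|\psi u\|_{H^2_h(\Omega_{\exterior})}=O(h^\infty)$. For the lower bound, the semiclassical trace inequality
\begin{equation*}
h^{1/2}\|u|_{\partial\Omega}\|_{L^2(\partial\Omega)}\leq C\,\|\chi u\|_{L^2(\Omega_{\exterior})}^{1/2}\,\|\chi u\|_{H^1_h(\Omega_{\exterior})}^{1/2},
\end{equation*}
combined with $\|u|_{\partial\Omega}\|_{L^2(\partial\Omega)}=1$, forces $\|\chi u\|_{H^2_h(\Omega_{\exterior})}\geq ch^{1/2}$.
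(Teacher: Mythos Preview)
Your overall strategy---microlocalize $u|_{\partial\Omega}$ near the critical set $\Sigma$, then feed the result into Lemma~\ref{l:dtNDescriptionExterior} to obtain the evanescent representation---is the right one, and once the microlocalization $u|_{\partial\Omega}=Xu|_{\partial\Omega}+O(h^\infty)_{H^{3/2}_h}$ is in hand your remaining steps are fine. Your lower bound via the semiclassical trace inequality is a clean touch the paper does not spell out.

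The gap is in the microlocalization itself, specifically on the low-frequency region $\{|\xi'|_{g_{\exterior}}\leq 1+c/4\}$. There $\Lambda_{\exterior}$ is not pseudodifferential, so the elliptic parametrix for $\Lambda_{\exterior}-\tau\Lambda_{\inner}$ is unavailable; the only tool you invoke is Lemma~\ref{l:basicPropagation} with $g=0$, which gives $\|u\|_{H^{3/2}_h}\leq C\|(I-X)u\|_{H^{3/2}_h}$. That is an $O(1)$ bound, not $O(h^\infty)$, and a defect-measure contradiction of the type in the proof of Lemma~\ref{l:basicPropagation} only upgrades $O(1)$ to $o(1)$. But you need genuinely $O(h^\infty)$: any merely $o(1)$ low-frequency boundary data propagates under $G_{\exterior}$ into the interior (it is hyperbolic there, not evanescent), destroying both the $O(h^{1/2})$ upper bound and the $O(h^\infty)$ claim for $\psi u$.

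The paper closes this gap not by proving microlocalization of $u|_{\partial\Omega}$ directly, but by setting $w:=u-G_{\exterior}X_1u$ with $X_1$ concentrated near $\Sigma$ and showing $\|\chi w\|_{H^2_h}=O(h^\infty)$. The point is that $w$ solves the same exterior problem with boundary forcing
\[
g_w=(\Lambda_{\exterior}-\tau\Lambda_{\inner})X_1u=[\Lambda_{\exterior}-\tau\Lambda_{\inner},X_1]u,
\]
and the commutator has wavefront set in the annulus $\WF(X_1)\cap\WF(I-X_1)$, which sits at high frequency \emph{away from} $\Sigma$. There your own elliptic parametrix applies and gives $g_w=O(h^\infty)$. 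Since also $X_0w|_{\partial\Omega}=X_0(I-X_1)u=O(h^\infty)$ for nested $X_0$, Lemma~\ref{l:basicPropagation} applied to $w$ (with $X=I-X_0$) now has both right-hand terms $O(h^\infty)$, yielding $\|\chi w\|_{H^2_h}=O(h^\infty)$. This is exactly the missing statement $u=G_{\exterior}X_1u+O(h^\infty)$, after which your argument and the paper's coincide.
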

\begin{proof}
Let $X_j\in\Psi_h^{\comp}(\partial\Omega)$, $j=0,1,2,3$ satisfy~\eqref{e:nAway} with $\WF(I-X_{j+1})\cap \WF(X_j)=\emptyset.$
Then from Proposition \ref{symbol of DtN exterior}, and the elliptic parametrix construction, there is $E\in \Psi^{\comp}_h(\partial\Omega)$ such that 
$$
(X_2-X_0)=E(\Lambda_0-\tau\Lambda_i)+O(h^\infty)_{\Psi_h^{-\infty}}.
$$
Hence, using that 
$$
\WF([\Lambda_{\exterior}-\tau\Lambda_\inner,X_1])\subset \WF(X_1)\cap \WF(I-X_1)\subset \Ellip(X_2-X_0),
$$
we have
\begin{equation*}
\begin{aligned}
\|(\rho_{\exterior}h\partial_{\nu_{\exterior}}-\tau \Lambda_{\inner})&G_\exterior X_1u\|_{H_h^N(\partial \Omega)}=\|(\Lambda_\exterior-\tau \Lambda_{\inner})X_1u\|_{H_h^N(\partial \Omega)}\\
& =\|[(\Lambda_\exterior-\tau\Lambda_{\inner}),X_1]u\|_{H_h^N(\partial \Omega)}\leq Ch\|(X_2-X_0)u\|_{H_h^N(\partial \Omega)}+O(h^\infty)\\
&=Ch\|E(\Lambda_{\exterior}-\tau\Lambda_{\inner})u\|_{H_h^N(\partial \Omega)}+O(h^\infty)=O(h^\infty).
\end{aligned}
\end{equation*}

Define $w:=u-G_{\exterior}X_1 u$.
Then, we have
\begin{equation*}
		\begin{cases}
			(P_{\exterior}-z^2)
            w=0  &\text{in } \Omega,\\			
            g:=\rho_{\exterior}h\partial_{\nu_{\exterior}}w -\tau\Lambda_{\inner}(z) w=O(h^\infty) &\text{on } \partial \Omega,\\
            w=(I-X_1)u&\text{on } \partial\Omega,\\
            \text{$w$ is $z/h$ outgoing}.
		\end{cases}
	\end{equation*}
    Now, by Lemma~\ref{l:basicPropagation} with $X=I-X_0$ and $\tilde{X}=I$, 
    \begin{align*}
    		\|hD_{\nu_{\exterior}}w\|_{H_h^{\frac{1}{2}}(\partial\Omega)}+\|w\|_{H_h^{\frac{3}{2}}(\partial\Omega)}+\|\chi w\|_{H_h^2(\Omega)}&\le C (\|g\|_{H^\frac{1}{2}_h(\partial \Omega)}+\|X_0w\|_{H_h^{3/2}(\partial\Omega)})\\
            &=C (\|g\|_{H^\frac{1}{2}_h(\partial \Omega)}+\|X_0(I-X_1)u\|_{H_h^{3/2}(\partial\Omega)})\\
            &= O(h^\infty).
            \end{align*}
    
    Using Lemma~\ref{l:dtNDescriptionExterior} to bound $\|\chi G_{\exterior}X_1u\|_{H_h^2(\Omega)}$, we obtain
    $$
    \|\chi u\|_{L^2(\Omega)}\leq C\|\chi w\|_{H_h^2(\Omega)}+\|\chi G_{\exterior}X_1u\|_{H_h^2(\Omega)}\leq Ch^{\frac{1}{2}}.
    $$
    Next, using Lemma~\ref{l:dtNDescriptionExterior} again, observe that $\psi\in C_c^\infty(\Omega_{\exterior})$, one has
    $$
    \|\psi G_{\exterior}X_1u\|_{H_h^2}=O(h^\infty).
    $$
    Finally, observe that
    $$
    \|\psi u\|_{H_h^2(\Omega)}\leq \|\psi w\|_{H_h^2(\Omega)}+\|\psi G_{\exterior}X_1u\|_{H_h^2(\Omega)}=O(h^\infty),
    $$
    which completes the proof.
\end{proof}

We can now complete the proof of Theorem~\ref{t:plasmonic}
\begin{proof}[Proof of Theorem~\ref{t:plasmonic}]
Let $\psi\in C_c^\infty(\mathbb{R}^d)$ with $\supp \psi\cap \partial\Omega=\emptyset$. 
Suppose that $\lambda_j\in\mathcal{R}(P)$ with $\Re \lambda_j\to \infty$ and $|\Im\lambda_j|\leq C$ and $u_{\lambda_j}$ satisfies~\eqref{e:mainProblem} with $f_{\inner}=f_{\exterior}=0$, and $\|u_{\lambda_j}\|_{L^2(\partial\Omega)}=1$.

Set $h_j=\Re \lambda_j^{-1}$. Then Lemma~\ref{l:plasmonic} applies to $u_{\lambda_j,\exterior}$ and hence
$$
\|\psi u_{\lambda_j,\exterior}\|_{H_h^2(\Omega_{\exterior})}=O(h^\infty),\qquad \|u_{\lambda_j}\|_{H_h^{3/2}(\partial\Omega)}\leq C. 
$$
To finish the proof of Theorem~\ref{t:plasmonic} it suffices apply Lemma~\ref{l:dtNDescriptionInner} to see that
$$
\|\psi G_{\inner}u_{\lambda_j,\inner}\|_{H_h^2(\Omega_{\inner})}=O(h^\infty).
$$
\end{proof}

\section{Counting of Plasmon Resonances}
\label{s:counting}

In this section, we prove Theorem~\ref{t:count}. We start by finding an operator that is uniformly invertible near the real axis and approximates $(\Lambda_{\exterior}(z)-\tau\Lambda_{\inner}(z))^{-1}$ well.
\begin{lem}
\label{l:approxInverse}
Suppose that $Q\in\Psi^{\comp}(\partial\Omega)$ satisfy
\begin{equation}
\label{e:qRequirement}
\Big\{\rho_{\exterior}^2|\xi'|_{g_\exterior }^2-\tau^2\rho_{\inner}^2|\xi'|_{g_{\inner}}^2=\rho_{\exterior}^2+\tau^2\rho_{\inner}^2\big\}\subset \Ellip(Q),\qquad \WF(Q)\subset \{|\xi'|_{g_{\exterior}}>1\}.
\end{equation}
Then, there is $\e>0$ such that for all $M>0$ there are $h_0,C>0$ such that for $0<h<h_0$, $z\in [1-2\e_0,1+2\e_0]+\rmi [-Mh,Mh],$
$$
R_Q(z):=(\Lambda_{\exterior}(z)-\tau\Lambda_{\inner}(z)-\rmi Q)^{-1}
$$
exists and satisfies
$$
\|R_Q(z)\|_{H_h^{\frac{1}{2}}(\partial\Omega)\to H_h^{\frac{3}{2}}(\partial\Omega)}\leq C.
$$
\end{lem}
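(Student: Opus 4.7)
The plan is to verify that $A(z) := \Lambda_\exterior(z) - \tau\Lambda_\inner(z) - \rmi Q$ is microlocally invertible on the entire cotangent bundle $T^*\partial\Omega$, to extract a uniform a priori estimate of the form $\|u\|_{H_h^{3/2}(\partial\Omega)}\le C\|A(z)u\|_{H_h^{1/2}(\partial\Omega)}$, and then to deduce existence of $R_Q(z)$ by a Fredholm argument. The critical set on which the principal symbol of $\Lambda_\exterior - \tau\Lambda_\inner$ vanishes is, by~\eqref{e:qRequirement}, contained in $\Ellip(Q)$, so adding $-\rmi Q$ restores the microlocal ellipticity that failed to hold for real $z$ in Theorem~\ref{boundary defect measure thm n>1}.

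Before deriving the estimate, I would reformulate $A(z)u=g$ as an exterior scattering problem as in Section~\ref{Reformulation of the problem as an exterior problem}. Setting $v=G_\exterior(z)u$ gives an outgoing solution of $(P_\exterior-z^2)v=0$ in $\Omega_\exterior$ with Robin-type boundary data $\rho_\exterior h\partial_{\nu_\exterior}v - \tau\Lambda_\inner v|_{\partial\Omega}=g + \rmi Q u$. Once $\|u\|_{H_h^{3/2}(\partial\Omega)}$ is under control, Lemma~\ref{l:neumannBounds} combined with Proposition~\ref{G bound thm} supplies the corresponding bounds on $h\partial_{\nu_\exterior}v$ and $\|\chi v\|_{H_h^2}$, reducing everything to the boundary estimate for $u$.

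For the a priori estimate I would split the analysis into three microlocal regions using a partition in $\Psi^0_h(\partial\Omega)$. On a neighborhood of $\Ellip(Q)$, the symbol of $A(z)$ has non-vanishing imaginary part $-\sigma(Q)$, so a standard semiclassical elliptic parametrix bounds that piece of $u$ by $A(z)u$ plus $O(h^\infty)\|u\|_{H_h^{-N}(\partial\Omega)}$. On $\{|\xi'|_{g_\exterior}>1\}\setminus\Ellip(Q)$, assumption~\eqref{e:qRequirement} forces $\sigma(\Lambda_\exterior-\tau\Lambda_\inner)$ to be real and bounded away from zero, so $A(z)$ is again elliptic and the same parametrix applies. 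On the remaining glancing/propagating region $\{|\xi'|_{g_\exterior}\le 1+\e_0\}$, where $\Lambda_\exterior$ has no pseudodifferential description, I would adapt the contradiction-and-defect-measure argument of Lemma~\ref{l:basicPropagation} verbatim, using that $\WF(Q)\cap\{|\xi'|_{g_\exterior}\le 1\}=\emptyset$ so that the $\rmi Qu$ term does not enter the transport identity to leading order. The non-trapping assumption on $(\Omega_\exterior,g_\exterior)$, together with positivity of the outgoing flux and the measure-propagation Theorem~\ref{measure invariant thm}, then propagates the already-established null information from the two elliptic regions through the bicharacteristic flow into the glancing region.

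The main obstacle will be reconciling the clean microlocal ellipticity arguments on $\Ellip(Q)\cup\{|\xi'|_{g_\exterior}>1\}$ with the defect-measure analysis on $\{|\xi'|_{g_\exterior}\le 1\}$, since $\Lambda_\exterior$ is only microlocally pseudodifferential; the half-space parametrix representation of $G_\exterior u$ provided by Lemma~\ref{l:dtNDescriptionExterior} should be crucial for making the error terms compatible across $|\xi'|_{g_\exterior}=1$. Once the a priori estimate is in hand, existence of $R_Q(z)$ follows from analytic Fredholm theory applied to the holomorphic family $A(z):H_h^{3/2}(\partial\Omega)\to H_h^{1/2}(\partial\Omega)$, using Theorem~\ref{boundary defect measure thm n>1} to supply an invertible reference point at some $z$ with $\Im z<-h^N$ (where $-\rmi Q$ is a bounded perturbation of an invertible operator of comparable norm); the a priori estimate then upgrades this to invertibility throughout the strip and yields the stated bound on $\|R_Q(z)\|_{H_h^{1/2}\to H_h^{3/2}}$.
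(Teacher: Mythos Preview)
Your proposal is correct and follows essentially the same strategy as the paper: the paper also chooses a compactly microlocalized cutoff $X_0$ on which $\Lambda_\exterior-\tau\Lambda_\inner-\rmi Q$ is elliptic (covering a neighborhood of the plasmonic set), applies the elliptic parametrix there, and then invokes Lemma~\ref{l:basicPropagation} directly with $X=I-X_0$ to control the complementary region. Two small points of economy: the paper does not separate your regions one and two, since both are contained in $\Ellip(X_1(\Lambda_\exterior-\tau\Lambda_\inner-\rmi Q))$ and a single parametrix handles them; and rather than re-running the contradiction/defect-measure argument, the paper applies Lemma~\ref{l:basicPropagation} as a black box (that lemma already packages together the elliptic boundary estimate of Lemma~\ref{thm harder} away from the plasmonic set with the propagation through $\{|\xi'|_{g_\exterior}\le 1\}$), so no additional compatibility analysis via Lemma~\ref{l:dtNDescriptionExterior} is needed. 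Your explicit Fredholm/existence discussion is more careful than the paper, which simply records the a~priori estimate and stops.
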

\begin{proof}
Let $X_0, X_1 \in \Psi^{\comp}(\partial\Omega)$ such that $\WF(X_0)\subset \WF(I-X_1)$, $\WF(X_1)\subset\{ |\xi'|_{g_\exterior}>1+2\e\}$, $\WF(Q)\cap \WF(I-X_0)=\emptyset,$ and
\begin{gather*}
\WF( X_0) \subset \Ellip(X_1(\Lambda_{\exterior}-\tau \Lambda_{\inner}-\rmi Q)),\\ 
\WF(I-X_0)\cap \Big\{\rho_{\exterior}^2|\xi'|_{g_\exterior }^2-\tau^2\rho_{\inner}^2|\xi'|_{g_{\inner}}^2=\rho_{\exterior}^2z^2+\tau^2\rho_{\inner}^2z^2\big\}=\emptyset,\quad z\in[1-\e,1+\e].
\end{gather*}

Then, the elliptic parametrix construction implies
\begin{align*}
&\| X_0u\|_{H_h^{s}(\partial\Omega)}\le C \| (\Lambda_{\exterior}(z)-\tau\Lambda_{\inner}(z)-\rmi Q) u\|_{H_h^{s-1}(\partial \Omega)}+Ch^N\|u\|_{H^{-N}_h(\partial \Omega)}.
\end{align*}
Hence, by Lemma~\ref{l:basicPropagation} (together with Remark~\ref{r:rescale}) with $X=I-X_0$ and $\tilde{X}=I$, we have
\begin{align*}
\|hD_{\nu_{\exterior}}u\|_{H_h^{\frac{1}{2}}(\partial\Omega)}&+\|u\|_{H_h^{\frac{3}{2}}(\partial\Omega)}+\|\chi u\|_{H_h^2(\Omega)}\le C (\|Qu\|_{H^\frac{1}{2}_h(\partial \Omega)}+\|X_0u\|_{H_h^{3/2}(\partial\Omega)})\\
&\le C(\|X_0 u\|_{H_h^{3/2}(\partial\Omega)}+h^N\|u\|_{H_h^{-N}(\partial\Omega)})\\
&\le C (\| (\Lambda_{\exterior}(z)-\tau\Lambda_{\inner}(z)-\rmi Q) u\|_{H_h^{\frac{1}{2}}(\partial \Omega)} +h^N\|u\|_{H_h^{-N}(\partial\Omega)}) ,
\end{align*}
which completes the proof after absorbing the last term on the right-hand side.

\end{proof}

We now fix $Q_0$ satisfying~\eqref{e:qRequirement}, let $\e>0$ as in Lemma~\ref{l:approxInverse} and are interested in the number of resonances in 
$$
V_{\e}(h):=[1-\e,1+\e]+\rmi [-h,h].
$$
Define 
$$\mathcal{Z}_{\e}(h):=\{z\in V_{\e}(h)\,:\, (\Lambda_{\exterior}(z)-\tau\Lambda_{\inner}(z))\text{ is not invertible}\}.$$

The next lemma reduces counting the number of resonances in $V_\e(h)$ to counting the number of zeros of an analytic function and gives a crude upper bound on how many zeros there may be. 
\begin{lem}
\label{l:basicUpper}
There is $h_0>0$ such that for $0<h<h_0$, 
$$
\mathcal{Z}_{\e}=\{ z\in V_{\e}(h)\,: F(z)=0\},
$$
where 
$$
F(z):=\det( I+\rmi R_Q(z)Q).
$$
Moreover,there is $C>0$ such that 
\begin{equation}
\label{e:exponentialEstimates}
|F(z)|\leq \exp(Ch^{-d+1}),\qquad z\in V_{2\e}
\end{equation}
and
$$
N(h):=\#\mathcal{Z}_{\e}(h)\leq Ch^{-d-1}.
$$
\end{lem}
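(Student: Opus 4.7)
\textbf{Plan for Lemma~\ref{l:basicUpper}.} My approach is to treat $F(z)$ as a Fredholm determinant of $I$ plus a trace-class perturbation, reduce non-invertibility to vanishing of $F$, then use standard upper bounds on determinants and Jensen's formula for the count.

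First, I would set up the determinant. Since $Q\in\Psi^{\comp}(\partial\Omega)$ is compactly microlocalized on the $(d-1)$-dimensional compact manifold $\partial\Omega$, it has smooth Schwartz kernel and is of trace class on $L^2(\partial\Omega)$ with $\|Q\|_{\tr}\leq Ch^{-(d-1)}$ (standard Weyl-type estimate: $\tr Q=(2\pi h)^{-(d-1)}\int q\,dx\,d\xi+O(h^{2-d})$). By Lemma~\ref{l:approxInverse}, $R_Q(z):H_h^{1/2}\to H_h^{3/2}\hookrightarrow H_h^{1/2}$ is uniformly bounded on $V_{2\e}(h)$, so $R_Q(z)Q$ is trace class with $\|R_Q(z)Q\|_{\tr}\leq Ch^{-d+1}$, and $F(z):=\det(I+\rmi R_Q(z)Q)$ is holomorphic on $V_{2\e}(h)$. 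The exponential bound $|F(z)|\leq \exp(\|R_Q(z)Q\|_{\tr})\leq \exp(Ch^{-d+1})$ follows from the standard determinant inequality.

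Next, for the identification $\mathcal{Z}_\e(h)=\{F=0\}$, I would factor
\[
\Lambda_{\exterior}(z)-\tau\Lambda_{\inner}(z) \;=\; R_Q(z)^{-1}\bigl(I+\rmi R_Q(z)Q\bigr).
\]
Since $R_Q(z)^{-1}=\Lambda_{\exterior}(z)-\tau\Lambda_{\inner}(z)-\rmi Q$ is always invertible on $V_{2\e}(h)$, the left-hand side fails to be invertible iff $I+\rmi R_Q(z)Q$ does; and by the Fredholm alternative for $I+$(trace class), this is equivalent to $F(z)=0$.

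For the counting bound, the main step is producing a lower bound on $F$ somewhere in $V_{2\e}(h)$, then invoking Jensen's formula. Fixing a large constant $M$, the rescaled version of Theorem~\ref{t:states} gives that on the line $\Im z=-Mh$ the operator $\Lambda_{\exterior}(z)-\tau\Lambda_{\inner}(z)$ is invertible with $\|(\Lambda_{\exterior}-\tau\Lambda_{\inner})^{-1}\|_{H_h^{1/2}\to H_h^{3/2}}\leq C|\Im z|^{-1}\leq CM^{-1}h^{-1}$. Then
\[
(I+\rmi R_Q(z)Q)^{-1}-I \;=\; -\rmi R_QQ(I+\rmi R_QQ)^{-1} \;=\; -\rmi R_QQ\,(\Lambda_\exterior-\tau\Lambda_\inner)^{-1}R_Q^{-1},
\]
which is trace class with norm $\leq Ch^{-d+1}\cdot h^{-1}=Ch^{-d}$, so that $|F(z_0)|\geq \exp(-Ch^{-d})$ for any such $z_0$. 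Enlarging $M$ in Lemma~\ref{l:approxInverse} to obtain holomorphy in, say, $[1-2\e,1+2\e]+\rmi[-100h,100h]$, I cover $V_\e(h)$ by $O(\e/h)$ disks of radius $\sim h$ each centered at a point with $\Im z_0=-10h$ (so the disk of double radius lies inside the region of analyticity and contains its center lower bound point). Jensen's formula on each disk gives the number of zeros bounded by $C(\log M-\log m)\leq C(h^{-d+1}+h^{-d})\leq Ch^{-d}$, and summing over $O(h^{-1})$ disks yields $N(h)\leq Ch^{-d-1}$.

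The main obstacle, as usual in such counting arguments, is extracting the quantitative lower bound $|F(z_0)|\geq e^{-Ch^{-d}}$ from the qualitative invertibility statement of Theorem~\ref{t:states}; everything else is bookkeeping with the Fredholm determinant and Jensen. The bound $Ch^{-d-1}$ is a crude preliminary upper bound (the optimal $Ch^{-(d-1)}$ must wait for the full Weyl-type contour argument in Theorem~\ref{t:count}), but it is already enough for the downstream contour-integration step.
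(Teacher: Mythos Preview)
Your proposal is correct and follows essentially the same approach as the paper: factor $\Lambda_\exterior-\tau\Lambda_\inner=(\Lambda_\exterior-\tau\Lambda_\inner-\rmi Q)(I+\rmi R_Q Q)$, use the trace-class bound $\|R_Q Q\|_{\tr}\leq Ch^{-d+1}$ for the upper bound on $|F|$, obtain a lower bound $|F(z_0)|\geq e^{-Ch^{-d}}$ at a point where $(\Lambda_\exterior-\tau\Lambda_\inner)^{-1}$ is $O(h^{-1})$, and then cover by $O(h^{-1})$ disks and apply Jensen's inequality. The only cosmetic difference is that the paper places the reference point on the line $\Im z=+h$ (where the bound follows directly from self-adjointness of $P$) and writes the inverse as $I-\rmi(\Lambda_\exterior-\tau\Lambda_\inner)^{-1}Q$, whereas you use $\Im z=-Mh$ via Theorem~\ref{t:states}; both give the same $e^{-Ch^{-d}}$ lower bound and the same $Ch^{-d-1}$ count.
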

\begin{proof}
Observe that 
$$
\Lambda_{\exterior}-\tau\Lambda_{\inner}=(\Lambda_{\exterior}-\tau\Lambda_{\inner}-\rmi Q)(I+\rmi R_Q(z)Q).
$$
Therefore, since $(\Lambda_{\exterior}-\tau\Lambda_{\inner}-\rmi Q)^{-1}$ exists for all $z\in V_\e(h)$, $\Lambda_{\exterior}-\tau\Lambda_{\inner}$ is invertible if and only if $I+\rmi R_Q(z)Q$ is invertible. Since $Q\in \Psi^{\comp}$, $R_QQ$ is trace class and hence $I+\rmi R_Q(z)Q$ is invertible if and only if $F(z)\neq 0$. 

Now, observe that 
\begin{equation}
\label{e:upper}
|F(z)|\leq \exp(\|R_Q(z)Q\|_{\tr})\leq \exp (Ch^{-d+1}),\quad z\in V_{2\e} (h).
\end{equation}
On the other hand, set $z_s=s+ih$, then
$$
(I+\rmi R_Q(z_s)Q)^{-1}=(\Lambda_{\exterior}(z_s)-\tau\Lambda_{\inner}(z_s))^{-1}(\Lambda_{\exterior}(z_s)-\tau\Lambda_{\inner}(z_s)-\rmi Q)=I-\rmi (\Lambda_{\exterior}(z_s)-\tau\Lambda_{\inner}(z_s))^{-1}Q.
$$
Therefore, for $s\in[1-\e,1+\e]$, using that $\|(\Lambda_{\exterior}(z_s)-\tau\Lambda_{\inner}(z_s)\|\leq Ch^{-1}$, we have
\begin{equation}
\label{e:lower}
|F(z_s)|^{-1}\leq \exp(\|(\Lambda_\exterior(z_s)-\tau\Lambda_{\inner}(z_s))^{-1}Q\|_{\tr})\leq \exp (Ch^{-d}).
\end{equation}
Using~\eqref{e:upper} and~\eqref{e:lower} together with~\cite[(D1.11)]{DyZw:19}
$$
\#\{z\in [s-h,s+h]+\rmi[-h,h]\,:F(z)=0\}\leq Ch^{-d},\qquad s\in[1-\e,1+\e].
$$
Hence, 
$$
N(h)\leq Ch^{-d-1},
$$
as claimed.
\end{proof}

With the crude estimate on the number of resonances, in hand, we can now write an effective formula for counting zeros. 
\begin{lem}
\label{l:formula}
Let $\chi \in C_c^\infty((1-\e,1+\e))$. Then, 
\begin{equation}
\label{e:integralFormula}
\sum_{z_j\in \mathcal{Z}_\e}\chi(\Re z_j)=\frac{1}{2\pi \rmi}\int_{\partial V_{\e,N}} \tilde{\chi}(z)\frac{\partial_zF(z)}{F(z)}dz+O(h^\infty),
\end{equation}
where $\tilde{\chi}$ is an almost analytic extension of $\chi$.
\end{lem}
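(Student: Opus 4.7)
The plan is to apply Stokes' theorem in its $\bar\partial$ form (the complex Green's identity) to the smooth 1-form $\omega=\tilde{\chi}(z)\bigl(\partial_zF(z)/F(z)\bigr)\,dz$ on $V_{\e,N}$ with small discs excised around each zero of $F$; letting the disc radii shrink and using that $F$ is holomorphic, this yields
\begin{equation*}
\frac{1}{2\pi\rmi}\int_{\partial V_{\e,N}}\tilde{\chi}(z)\,\frac{\partial_zF(z)}{F(z)}\,dz
=\sum_{\substack{z_j\in V_{\e,N}\\F(z_j)=0}}m_j\,\tilde{\chi}(z_j)
+\frac{1}{\pi}\iint_{V_{\e,N}}\partial_{\bar z}\tilde{\chi}(z)\,\frac{\partial_zF(z)}{F(z)}\,dA(z),
\end{equation*}
where $m_j$ is the order of $z_j$ as a zero of $F$. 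The claim then reduces to two things: matching the residue sum with $\sum_{z_j\in\mathcal{Z}_\e}\chi(\Re z_j)$ up to $O(h^\infty)$, and showing the area integral is $O(h^\infty)$.

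For the residue sum, I would invoke Theorem~\ref{t:states} rescaled via $\lambda=z/h$ to conclude that every $z_j\in\mathcal{Z}_\e$ enclosed by $\partial V_{\e,N}$ satisfies $|\Im z_j|\leq h^{N+1}$ (the resonances with $\Im z\leq-Mh$ are excluded by placing the lower edge of $V_{\e,N}$ strictly above that range). Almost analyticity of $\tilde{\chi}$ gives $|\tilde{\chi}(z_j)-\chi(\Re z_j)|\leq C_K|\Im z_j|^K$ for every $K$; combined with the crude bound $N(h)\leq Ch^{-d-1}$ from Lemma~\ref{l:basicUpper}, the residue sum equals $\sum_{z_j\in\mathcal{Z}_\e}\chi(\Re z_j)+O(h^\infty)$, with zeros counted according to $m_j$.

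The main work is to show that the area integral is $O(h^\infty)$. I would split $V_{\e,N}=S_1\cup S_2$ with $S_1=\{|\Im z|\leq h^{N+1}\}\cap V_{\e,N}$. On $S_1$, almost analyticity gives $|\partial_{\bar z}\tilde{\chi}|\leq C_Kh^{K(N+1)}$, and the $L^1$ norm of $F'/F$ over $S_1$ is controlled by the total residue content, which by Lemma~\ref{l:basicUpper} is at most $Ch^{-d-1}$; choosing $K$ large, this contribution is $O(h^\infty)$. On $S_2$, the resonance-free region supplied by Theorem~\ref{t:states} guarantees that $F$ has no zeros, so $F'/F$ is holomorphic on $S_2$. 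Feeding the bounds $|F|\leq e^{Ch^{-d+1}}$ on $V_{2\e}$ and $|F(s+\rmi h)|\geq e^{-Ch^{-d}}$ (both from the proof of Lemma~\ref{l:basicUpper}) into the Borel--Caratheodory lemma combined with a Cauchy integral formula yields a polynomial bound $|F'/F|\leq Ch^{-L}$ pointwise on $S_2$; the bound $|\partial_{\bar z}\tilde{\chi}|\leq C_K|\Im z|^K$ with $K\gg L$ then makes the integrand $O(h^\infty)$.

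The main obstacle is the passage from the exponential pointwise bounds on $|F|$ to a polynomial pointwise bound on $|F'/F|$ on $S_2$. This is exactly the content of the Borel--Caratheodory lemma (or a Jensen-type factorization of $F$): since $\log|F|$ varies by at most $Ch^{-d}$ across $V_{2\e}$, its complex derivative $F'/F$ is similarly controlled on any subregion uniformly bounded away from the zeros, and $S_2$ provides exactly such a region by construction.
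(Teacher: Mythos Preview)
Your approach is the paper's: apply Stokes on $V_\e$ with small discs around the $z_j$ excised, then show the area integral is $O(h^\infty)$ by playing the rapid decay of $\partial_{\bar z}\tilde\chi$ against polynomial control on $F'/F$; you also correctly name the key input as a Borel--Carath\'eodory/Jensen-type factorization of $F$.

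The paper organizes the estimate more directly. Rather than splitting into $S_1$ and $S_2$, it invokes \cite[Lemma~$\alpha$, Section~3.9]{Ti:86} on $h$-sized squares covering $V_\e(h)\cap\supp\tilde\chi$ to obtain the global decomposition $F'/F=\sum_j(z-z_j)^{-1}+G$ with $|G|\leq Ch^{-N_0}$. The area integral then splits cleanly: the polar part uses $\int_{V_\e}|z-z_j|^{-1}\,dA=O(1)$ summed against $N(h)\leq Ch^{-d-1}$ and multiplied by $\sup|\partial_{\bar z}\tilde\chi|\leq C_Kh^K$; the $G$-part pairs a fixed polynomial bound against $O(|\Im z|^\infty)$. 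Your $S_1$ step as written has a gap: the claim that $\|F'/F\|_{L^1(S_1)}$ is ``controlled by the total residue content'' presupposes exactly this factorization, since you need it to bound the regular part of $F'/F$ on $S_1$, not only on $S_2$. Once you invoke the factorization there as well, the $S_1/S_2$ split becomes redundant and you recover the paper's argument.
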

\begin{proof}
 First, as in~\cite[page 375]{Dy:15} by~\cite[Lemma $\alpha$, Section 3.9]{Ti:86} and the estimate~\eqref{e:exponentialEstimates}
(splitting the region $V_\e(h)$ into  $h$ by $h$ squares and applying Lemma $\alpha$ to each square, transformed into the unit disk using the Riemann Mapping Theorem), we have
$$
\frac{\partial_z F(z)}{F(z)}=\sum_{z_j\in\mathcal{Z}_\e} \frac{1}{z-z_j}+G(z),\qquad |G(z)|\leq Ch^{-N},\qquad z\in V_\e(h)\cap \supp \tilde{\chi}.
$$
 Hence, applying Stokes formula in
$$
V_{\e}(h)  \setminus \cup_{z_j\in\mathcal{Z}(h)}B(z_j,r),
$$
sending $r\to 0$, and using that $\supp \tilde{\chi}\subset \{ \Re z\in(1-\e,1+\e)\}$, we obtain 
\begin{align*}
\frac{1}{2\pi \rmi}\int_{\partial V_\e} \tilde{\chi}(z)\frac{\partial_zF(z)}{F(z)}dz&= \sum_{z_j\in \mathcal{Z}_\e(h)}\tilde{\chi}(z_j)+\frac{1}{2\pi \rmi}\int_{V_\e(h)} \partial_{\bar{z}}\tilde{\chi}\frac{\partial_z F(z)}{F(z)}d\bar{z}\wedge dz\\
&=\sum_{z_j\in \mathcal{Z}_\e(h)}\tilde{\chi}(z_j)+O(h^\infty),
\end{align*}
where the last equality follows from the bound $N(h)\leq Ch^{-d-1}$, $|G(z)|\leq Ch^{-N}$, and $\partial_{\bar{z}}\tilde{\chi}=O(|\Im z|^{\infty}).$

Finally, since by Theorem~\ref{boundary defect measure thm n>1}, $|\Im z_j|=O(h^\infty)$, the lemma follows.
\end{proof}

In order to obtain an asymptotic formula for the integral in~\eqref{e:integralFormula}, we will need to have an accurate description of $(\Lambda_{\exterior}(z)-\tau\Lambda_{\inner}(z))^{-1}X$, for $z \in\Gamma^{\pm}_{\e}(h):=[1-\e,1+\e]\pm \rmi h$, for any $X\in \Psi^{\comp}(\partial\Omega)$ with $\WF(X)\subset \{|\xi'|_{g_{\exterior}}>(1+\e)^2\}$. 
\begin{lem}
\label{l:inverse}
Let $X\in \Psi^{\comp}(\partial\Omega)$ with $\WF(X)\subset \{|\xi'|_{g_{\exterior}}>1+\e\}$. Then,
$$
(\Lambda_{\exterior}(z)-\tau\Lambda_{\inner}(z))^{-1}X= -\frac{\rmi}{h}\int_0^{\pm \infty} W^*U(t,z)WXe^{-\rmi tz^2/h}dt,\qquad \pm \Im z^2\leq -h^N, 
$$
where, $W\in \Psi^{\comp}(\partial \Omega)$ and for some $\tilde \chi\in C_\comp^\infty(\{|\xi'|_{g_{\exterior}}>(1+\e)^2\};[0,1])$ with $\supp (1-\tilde \chi)\cap \WF(X)=\emptyset$, 
$$
\sigma(W)= \sqrt{\frac{\rho_{\exterior}\sqrt{|\xi'|_{g_{\exterior}}^2-z^2}+\tau\rho_{\inner}\sqrt{|\xi'|_{g_{\inner}}^2+z^2}}{\rho_\exterior^2+\tau^2 \rho_\inner^2}}\tilde{\chi},
$$
there is $c>0$ such that for any $\alpha$
$$
\|D_z^\alpha U(\pm t,z)e^{-\rmi tz^2/h}\|_{L^2(\partial\Omega)\to L^2(\partial\Omega)}\leq C_\alpha e^{-c|\Im z|^2t/h},\qquad \pm \Im z^2\leq-h^N, t\geq 0,
$$
and 
$$
(hD_t-B(z))U(t,z)=0,\qquad U(0,z)=I
$$
for $B\in \Psi^2$ satisfying 
$$
\sigma(B)=\frac{\rho_{\exterior}^2|\xi'|^2_{g_{\exterior}}-\tau^2\rho_{\inner}^2|\xi'|_{g_{\inner}}^2}{\rho_\exterior^2+\tau^2 \rho_\inner^2}\chi^2,
$$
where $ \chi\in C_\comp^\infty(\{|\xi'|_{g_{\exterior}}>(1+\e)^2\};[0,1])$ with $\supp (1-\tilde \chi)\cap \supp(\chi)=\emptyset$.
\end{lem}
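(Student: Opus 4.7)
The plan is to exploit the fact that on the microlocal region $|\xi'|_{g_\exterior} > 1 + \e$ where $X$ lives, both Dirichlet-to-Neumann maps are genuine pseudodifferential operators (Propositions \ref{symbol of DtN exterior} and \ref{symbol of DtN interior}), so $L(z) := \Lambda_\exterior - \tau\Lambda_\inner$ has principal symbol $a - b$ with $a = \rho_\exterior\sqrt{|\xi'|^2_{g_\exterior} - z^2}$ and $b = \tau\rho_\inner\sqrt{|\xi'|^2_{g_\inner} + z^2}$. Rationalizing $a - b = (a^2 - b^2)/(a+b)$ gives the symbolic factorization $\sigma(L)\sigma(W)^2 = \sigma(B) - z^2$ with $\sigma(W)$, $\sigma(B)$ precisely as in the statement. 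Crucially, $\sigma(W)$ is real, positive, and smooth on $\WF(X)$, so the corresponding quantization $W$ will be microlocally elliptic there.

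I would then promote this to an operator-level identity. Define $W := \Op(\sigma(W)\tilde\chi)$ and $B := \Op(\sigma(B)\chi^2)$ at principal level, and inductively correct their subprincipal symbols to arrange
\begin{equation*}
WLW^* = B - z^2 + O(h^\infty)_{\Psi^{-\infty}} \text{ microlocally on } \WF(X),
\end{equation*}
while also securing $W = W^* + O(h^\infty)$ and $B = B^* + O(h^\infty)$ by standard symmetrization. At each order the correction is determined by inverting $W$ (elliptic on $\WF(X)$) against a known symbolic discrepancy; the entire non-invertible piece of $L$ is thus forced into the factor $B - z^2$. Next, define $U(t,z)$ to solve $hD_tU = B(z)U$, $U(0,z) = I$. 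Since $B \in \Psi^{\comp}$ is bounded on $L^2$, this is a Banach-space ODE solved by $U(t,z) = e^{itB(z)/h}$, and the self-adjointness of $B$ gives $\|U(t,z)\|_{L^2 \to L^2} = 1 + O(h^\infty)$. Setting $\tilde U(t,z) := U(t,z) e^{-itz^2/h}$ yields $hD_t \tilde U = (B - z^2)\tilde U$, and hence $\|\tilde U(t,z)\|_{L^2 \to L^2} \leq C e^{t\Im z^2/h}$, which decays for $\pm\Im z^2 \leq -h^N$. The $z$-derivative bounds follow by differentiating the evolution equation and applying Duhamel's formula against the homogeneous bound, the resulting powers of $t$ being absorbed into the exponential weight with a slightly weaker decay constant.

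To verify the integral formula, set $R(z) := -\frac{i}{h}\int_0^{+\infty} W^* \tilde U(t,z) WX \, dt$ for $\Im z^2 \leq -h^N$ (the opposite-sign case is symmetric with integration over $(0, -\infty)$) and compute $LR(z)$. Using $LW^* = W^{-1}(B - z^2) + O(h^\infty)$ on $\WF(X)$ (obtained by microlocally inverting $W$ in the factorization identity from the previous paragraph) together with $(B - z^2)\tilde U(t,z) = hD_t \tilde U(t,z)$ and the fundamental theorem of calculus,
\begin{equation*}
LR(z) = -\frac{i}{h} \cdot W^{-1}\left(\int_0^{+\infty} hD_t \tilde U(t,z)\, dt\right) W X + O(h^\infty) = W^{-1}\cdot W X + O(h^\infty) = X + O(h^\infty),
\end{equation*}
where the boundary term at $t = +\infty$ vanishes by exponential decay. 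Combining this with the polynomial-in-$h^{-1}$ bound on $L^{-1}$ from Theorem \ref{boundary defect measure thm n>1} converts the $O(h^\infty)$ error into the claimed identity modulo $O(h^\infty)$, which is sufficient for the contour-integral application in the remainder of this section.

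The principal technical obstacle is the inductive construction of $W$ and $B$: arranging that $WLW^* - (B - z^2)$ vanishes to infinite order on $\WF(X)$, while also keeping $B$ self-adjoint with the prescribed real, $z$-independent principal symbol. The key point that makes the induction work is that although $L$ itself is non-elliptic exactly on the plasmon characteristic variety $\{\sigma(B) = z^2\}$, the correction at each subprincipal order requires inverting only $W$, which is uniformly elliptic on $\WF(X)$, so the non-invertibility is entirely absorbed into $B - z^2$. A secondary subtlety is to control the $z$-dependence of $B$ through its lower-order symbols carefully enough that the $z$-derivative estimates on $\tilde U$ remain uniform for $z$ in the strip $[1 - 2\e, 1 + 2\e] + i[-Mh, Mh]$.
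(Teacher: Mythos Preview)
Your overall architecture---factorize $L=\Lambda_\exterior-\tau\Lambda_\inner$ symbolically via $a-b=(a^2-b^2)/(a+b)$, conjugate by $W$ to reduce to $(B-z^2)^{-1}$, realize this inverse as a time integral of a propagator, and feed the result back through the microlocal ellipticity of $W$---is exactly what the paper does. The discrepancy is in how you obtain the decay of $U(t,z)e^{-\rmi tz^2/h}$, and here your argument has a genuine gap.

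You propose to arrange, by an inductive subprincipal correction, that \emph{both} $W=W^*+O(h^\infty)$ and $B=B^*+O(h^\infty)$ hold while maintaining $WLW^*=B-z^2+O(h^\infty)$. But taking adjoints of the last identity and subtracting forces
\[
W(L-L^*)W^* \;=\; (B-B^*) - (z^2-\bar z^2) \;=\; -2\rmi\,\Im z^2 + O(h^\infty),
\]
i.e.\ $W(L-L^*)W^*$ would have to be a \emph{scalar} modulo $O(h^\infty)$. Since $L$ is genuinely non-self-adjoint (Lemmas~\ref{l:outDtNSign} and~\ref{l:inDtNSign} show its imaginary part is a nontrivial operator with definite sign), and the principal symbol of $W$ is already prescribed, this constraint is overdetermined and cannot in general be met. ``Standard symmetrization'' replaces $B$ by $\tfrac12(B+B^*)$, which destroys the factorization by an $O(h)$ error (the anti-self-adjoint part of $WLW^*$), not an $O(h^\infty)$ one. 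So your route to the decay estimate via unitarity of $U$ does not go through.

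The paper sidesteps this entirely. It simply \emph{defines}
\[
B(z):=\Op(\chi)^*\bigl(W(\Lambda_\exterior-\tau\Lambda_\inner)W^*+z^2\bigr)\Op(\chi),
\]
so the factorization $X_1WLW^*=X_1(B-z^2)+O(h^\infty)$ holds by construction with no inductive correction whatsoever---your ``principal technical obstacle'' is absent. The decay of the propagator then comes not from self-adjointness of $B$ but directly from Proposition~\ref{p:imagPart}: the sign estimate $-\sgn(\Im z^2)\Im\langle(B-z^2)u,u\rangle\geq (c|\Im z^2|-C_Nh^N)\|u\|^2$ is inherited from the corresponding estimate on $\Lambda_\exterior-\tau\Lambda_\inner$ through the conjugation, and this drives $\|U(\pm t,z)e^{-\rmi tz^2/h}\|\leq Ce^{-c|\Im z^2|t/h}$ via the energy identity $h\partial_t\|Ue^{-\rmi tz^2/h}u_0\|^2=2\Im\langle(B-z^2)Ue^{-\rmi tz^2/h}u_0,Ue^{-\rmi tz^2/h}u_0\rangle$. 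The remaining steps (Duhamel for $D_z^\alpha U$, the integral formula, absorbing the $O(h^\infty)$ error using the polynomial bound on $L^{-1}$) are as you describe.
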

\begin{proof}
Since the analysis only happens at the boundary, we will omit the argument of Sobolev spaces. Define
$$
W:=\Op\left( \sqrt{\frac{\rho_{\exterior}\sqrt{|\xi'|_{g_{\exterior}}^2-z^2}+\tau\rho_{\inner}\sqrt{|\xi'|_{g_{\inner}}^2+z^2}}{\rho_\exterior^2+\tau^2 \rho_\inner^2}}\tilde{\chi}\right).
$$
Now, let $X_1\in \Psi^{\comp}(\partial\Omega)$ with $\WF(X_1)\cap \supp (1-\chi)=\emptyset$ and $\WF(X)\cap \WF(I-X_1)=\emptyset$.   

Set 
$$
B(z):=\Op(\chi)^*(W(\Lambda_{\exterior}-\tau\Lambda_{\inner})W^*+z^2)\Op(\chi).
$$
Then,
$$
X_1W(\Lambda_{\exterior}-\tau\Lambda_{\inner})W^*= X_1(B(z)-z^2)+O(h^\infty)_{\Psi^{-\infty}}, 
$$
and, by~\eqref{e:derivativesIn} and~\eqref{e:derivativesOut},
$$
\sigma(B(z))=\frac{\rho_{\exterior}^2|\xi'|^2_{g_{\exterior}}-\tau^2\rho_{\inner}^2|\xi'|_{g_{\inner}}^2}{\rho_\exterior^2+\tau^2 \rho_\inner^2}\chi^2,\qquad \partial_z^\alpha B(z)\in h\Psi^{\comp}.
$$

Moreover, using Proposition~\ref{p:imagPart},
$$
\begin{aligned}
&-\sgn(\Im z^2)\Im \langle (B(z)-z^2)u,u\rangle\\
&=-\sgn(\Im z^2)\Im \langle (\Op(\chi)^*W(\Lambda_{\exterior}-\tau\Lambda_{\inner})W^*\Op(\chi)-z^2(1-\Op(\chi)^*\Op(\chi)))u,u\rangle\\
&\geq c_0|\Im z^2|\|W^*\Op(\chi)u\|_{L^2}^2+|\Im z^2|\langle (1-\Op(\chi)^*\Op(\chi))u,u\rangle-C_Nh^N\|u\|_{L^2}^2\\
&= |\Im z^2|\langle (1-\Op(\chi)^*\Op(\chi)+c\Op(\chi)^*W W^*\Op(\chi))u,u\rangle-C_Nh^N\|u\|_{L^2}^2\\
&\geq (c|\Im z^2|-C_Nh^N)\|u\|_{L^2}^2,
\end{aligned}
$$
where the last line follows from G\aa rding's inequality and they fact that 
$$
\sigma(1-\Op(\chi)^*\Op(\chi)+c_0\Op(\chi)^*W W^*\Op(\chi))=1-\chi^2(1-c_0 W^2)\geq 2c>0.
$$

Next, observe that, if $(B(z)-z^2 )^{-1}$ exists (and is polynomially bounded in $h$), then for any $A\in \Psi^0$, with $\WF(I-A)\cap \WF(X)=\emptyset$, 
$$
(B(z)-z^2)^{-1}X=A(B(z)-z^2)^{-1}X+O(h^\infty)_{\Psi^{-\infty}},
$$
and hence, since $W$ is elliptic on $\WF(X_1)$, 
\begin{align*}
(\Lambda_\exterior-\tau \Lambda_{\inner})W^*(B(z)-z^2)^{-1}WX&=X_1W(\Lambda_\exterior-\tau \Lambda_{\inner})W^*(B(z)-z^2)^{-1}WX+O(h^\infty)_{\Psi^{-\infty}}\\
&=X+O(h^\infty)_{\Psi^{-\infty}}.
\end{align*}
In particular, since $(\Lambda_{\exterior}-\tau\Lambda_{\inner})^{-1}$ is tempered and bounded in $h$, 
$$
(\Lambda_{\exterior}-\tau\Lambda_{\inner})^{-1}X=W^*(B(z)-z^2)^{-1}WX+O(h^\infty)_{\Psi^{-\infty}}, \qquad |\Im z|\geq h^N.
$$

Therefore, we only need to invert $B(z)-z^2$. For this, define $U(t,z)$ by
$$
(hD_t-B(z))(U(t,z))=0,\qquad U(0,z)=I. 
$$
Then, observe that on 
\begin{align*}
&-\sgn(\Im z^2)h\partial_t\|U(t,z)e^{-\rmi tz^2/h}u_0\|_{L^2}^2\\
&=-2\sgn(\Im z^2)\Re\langle h\partial_t (Ue^{-\rmi tz^2/h} u_0),Ue^{-\rmi tz^2/h}u_0\rangle \\
&= 2\sgn(\Im z^2)\Im \langle (B(z)-z^2)Ue^{-\rmi tz^2/h} u_0,Ue^{-\rmi tz^2/h} u_0\rangle \\
&\leq -(c|\Im z^2|-C_Nh^N)\|U(t,z)e^{-\rmi tz^2/h} u_0\|_{L^2}^2.
\end{align*}
In particular, 
$$
\|U(\pm t,z)e^{-\rmi tz^2/h}u_0\|_{L^2}^2\leq Ce^{-c|\Im z^2|t/h}\|u_0\|_{L^2}^2,\qquad \pm\Im z^2\leq -h^N,\, t\geq 0.
$$

Thus, we have
$$
(B(z)-z^2)^{-1}= -\frac{\rmi}{h}\int_0^{\pm \infty} U(t,z)e^{-\rmi tz^2/h}\der t,\qquad \pm \Im z^2\leq -h^N.
$$

Finally, observe that 
$$
(hD_t-B)D_zU=(D_zB)U,\qquad D_zU(0)=0.
$$
So that for $\Im z^2\leq -h^N$, and $t\geq 0$, 
\begin{align*}
\|D_zU(t,z)e^{-\rmi tz^2/h}\|_{L^2\to L^2}&\leq\frac{1}{h}\int_0^t\|U(t-s,z)e^{-\rmi (t-s)z^2/h}(D_zB)U(s,z)e^{-\rmi sz^2/h}\|_{L^2\to L^2}\der s\\
&\leq \frac{t}{h}e^{-ct|\Im z^2|/h}\leq C_1 e^{-ct|\Im z^2|/h}.
\end{align*}
Now, suppose 
\begin{equation*}
\|D_z^kU(t,z)e^{-\rmi tz^2/h}\|_{L^2\to L^2}\le C_k e^{-ct|\Im z^2|/h}
\end{equation*}
is true for some $C_k$ and $k\le J$. Now, we have
$$
(hD_t-B)D_z^{J+1}U=\sum_{k=1}^{J+1}\binom{J+1}{k}D_z^k B D_z^{J+1-k}U,\qquad D_z^{J+1}U(0)=0.
$$
Then
\begin{align*}
&\|D_z^{J+1}U(t,z)e^{-\rmi tz^2/h}\|_{L^2\to L^2}
\\
&\quad \leq\frac{1}{h}\sum_{k=1}^{J+1}\binom{J+1}{k}\int_0^t\|U(t-s,z)e^{-\rmi (t-s)z^2/h}(D_z^k B) D_z^{J+1-k}U(s,z)e^{-\rmi sz^2/h}\|_{L^2\to L^2}\der s\\
&\quad  \leq \frac{C_B}{h}\sum_{k=1}^{J+1}\binom{J+1}{k} C_{J+1-k}e^{-ct|\Im z^2|/h}\leq C_{J+1} e^{-ct|\Im z^2|/h},
\end{align*}
which proves the induction step and hence the Lemma.

\end{proof}

We can now prove Theorem~\ref{t:count}.
\begin{proof}[Proof of Theorem~\ref{t:count}]
We start by computing, 
$$
N_\chi:=\frac{1}{2\pi \rmi}\int_{\Gamma_{\pm}} \tilde{\chi}(z)\frac{\partial_zF(z)}{F(z)}dz=\sum_{z_j\in\mathcal{Z}_\e}\chi(\Re z_j)+O(h^\infty),
$$
where 
$$
\Gamma_{\pm}:=[1-\e,1+\e]\pm \rmi h,
$$
oriented to the left and right respectively. Here we have used Lemma~\ref{l:formula} to obtain the second equality.

Observe first that, using cyclicity of the trace and the fact that $R_QQ\in \Psi^{\comp}$, we obtain
\begin{align*}
\frac{\partial_zF(z)}{F(z)}&=\tr (I+\rmi R_Q(z)Q)^{-1}\rmi \partial_z R_Q(z)Q\\
&=-\rmi\tr (\Lambda_{\exterior}-\tau \Lambda_{\inner})^{-1}\partial_z(\Lambda_{\exterior}-\tau\Lambda_{\inner})R_Q(z)Q\\
&=-\rmi\tr X(\Lambda_{\exterior}-\tau \Lambda_{\inner})^{-1}X\partial_z(\Lambda_{\exterior}-\tau\Lambda_{\inner})XR_Q(z)Q +O(h^\infty),
\end{align*}
where $X\in \Psi^{\comp}$ with $\WF(Q)\cap \WF(I-X)=\emptyset. $

We can now use Lemma~\ref{l:inverse} to write 
\begin{align*}
N_\chi:=\sum_{\pm}\frac{\rmi}{2\pi h} \int_{\Gamma_{\mp}}\tilde{\chi}(z)\int_0^{\pm\infty}\tr XW^*U(t,z)WXe^{-\rmi tz^2/h}\partial_z(\Lambda_{\exterior}-\tau\Lambda_{\inner})XR_Q(z)Q\der t\der z+O(h^\infty).
\end{align*}
We may now integrate by parts using $-hD_z/(2z)e^{-\rmi tz^2/h}=e^{-\rmi tz^2/h}$ to see that for $\rho\in C_c^\infty$ with $1\notin \supp (1-\rho)$, 
$$
N_\chi=\sum_{\pm}\frac{\rmi}{2\pi h} \int_{\Gamma_{\mp}}\tilde{\chi}(z)\int_0^{\pm\infty}\rho(t)\tr XW^*U(t,z)WXe^{-\rmi tz^2/h}\partial_z(\Lambda_{\exterior}-\tau\Lambda_{\inner})XR_Q(z)Q\der t \der z+O(h^\infty).
$$
Applying Stokes theorem on $[1-\e,1+\e]\times \rmi [0,h]$for the integral over $\Gamma_+$ and on $[1-\e,1+\e]\times -\rmi[0,h]$, we obtain
\begin{equation}
\label{e:finalForm}
\begin{aligned}
N_\chi&=\sum_{\pm}\mp\frac{\rmi}{2\pi h} \int_{\mathbb{R}}\chi(z)\int_0^{\pm\infty}\rho(t)\tr XW^*U(t,z)WXe^{-\rmi tz^2/h}\partial_z(\Lambda_{\exterior}-\tau\Lambda_{\inner})XR_Q(z)Q \der t \der z+O(h^\infty)\\
&=\frac{\rmi}{2\pi h} \int_{\mathbb{R}}\chi (z)\int\rho(t)\tr XW^*U(t,z)WXe^{-\rmi tz^2/h}\partial_z(\Lambda_{\exterior}-\tau\Lambda_{\inner})XR_Q(z)Q \der t \der z+O(h^\infty).
\end{aligned}
\end{equation}

We now use~\cite[Theorem 10.4]{Zw:12} to write in local coordinates
$$
U(t,z)e^{-\rmi tz^2/h}Xu=\frac{1}{(2\pi h)^{d-1}}\int e^{\frac{\rmi}{h}(\varphi(t,x,\eta)-\langle y,\eta\rangle -tz^2)}a(t,x,\eta,z)\der \eta (Xu)(y)\der y +O\left(h^\infty\|u\|_{H_h^{-N}(\partial \Omega)}\right), 
$$
where, with $b=\sigma(B)$,
$$
\partial_t\varphi(x,\eta)=b(x,\partial_x \varphi),\qquad \varphi(0,x,\eta)=\langle x,\eta\rangle,\qquad a(0,x,\eta,z)=1.
$$
We can then perform stationary phase in $(t,z)$~\eqref{e:finalForm} with critical point $t=0$, and $z_c(x,\eta)=\sqrt{b(x,\eta)}$. Or equivalently,
$$
z_c^2(x,\eta)=\frac{\rho_{\exterior}^2|\eta|^2_{g_{\exterior}}-\tau^2\rho_{\inner}^2|\eta|_{g_{\inner}}^2}{\rho_\exterior^2+\tau^2 \rho_\inner^2}.
$$
Moreover, $\sigma\left(\partial_z(\Lambda_{\exterior}-\tau\Lambda_{\inner})\right)=-z\left(\frac{\rho_\exterior}{\sqrt{|\xi'|_{g_\exterior}^2-z^2}}+\frac{\tau \rho_\inner}{\sqrt{|\xi'|_{g_\inner}^2+z^2}}\right)$
We then compute the trace by restricting to the diagonal and integrating. This calculations yields
$$
N_{\chi}=\frac{1}{(2\pi h)^{d-1}}\int \chi(\sqrt{b(x,\eta)})\der x \der\eta.
$$
Taking a sequence of $\chi$ approximating $1_{[1-\e,1+\e]}$ shows that 
$$
\#\{z_j\in\mathcal{Z}_\e\}=(2\pi h)^{1-d}\vol_{T^*\partial\Omega}\Big\{(x,\xi)\,:\, (1-\e)^2\leq\frac{\rho_{\exterior}^2|\xi'|_{g_{\exterior}}^2-\tau^2\rho_{\inner}^2|\xi'|_{g_{\inner}}^2}{ \rho_{\exterior}^2+\tau^2\rho_{\inner}^2}\leq (1+\e)^2\Big\} +o(h^{1-d}).
$$

Now, set $\alpha=\frac{1+\e}{1-\e}$, $h_j:=(1+\e)\lambda^{-1}\alpha^{j}$ and observe that 
\begin{align*}
&\#\{ \lambda_j\in \calR(P)\,:\, 0< \Re \lambda_j \leq \lambda\,:\Im \lambda_j\geq -M\}\\
&=\sum_{j=0}^{\lfloor\log_{\alpha}\lambda\rfloor}\#\{ \lambda_j\in \calR(P)\,:\, \alpha^{-j-1}\lambda< \Re \lambda_j \leq \alpha^{-j}\lambda,:\Im \lambda_j\geq -M\}+O(1)\\
&=\sum_{j=0}^{\lfloor\log_{\alpha}\lambda\rfloor}(2\pi h_j)^{1-d}\vol_{T^*\partial\Omega}\Big\{(x,\xi)\,:\, (1-\e)^2\leq\frac{\rho_{\exterior}^2|\xi'|_{g_{\exterior}}^2-\tau^2\rho_{\inner}^2|\xi'|_{g_{\inner}}^2}{ \rho_{\exterior}^2+\tau^2\rho_{\inner}^2}\leq (1+\e)^2\Big\}+o(h_j^{1-d})\\
&=o(\lambda^{d-1})+\vol_{T^*\partial\Omega}(\mathcal{V})\big((1+\e)^{d-1}-(1-\e)^{d-1}\big)\lambda^{d-1}(1+\e)^{1-d}(2\pi)^{1-d}\sum_{j=0}^{\lfloor\log_{\alpha}\lambda\rfloor} \alpha^{j(1-d)} \\
&= (2\pi)^{1-d}\vol_{T^*\partial\Omega}(\mathcal{V})\lambda^{d-1}+o(\lambda^{d-1}),
\end{align*}
which completes the proof.
\end{proof}

\appendix
\section{Properties of the operator $P$}
\label{a:blackBox}

In this section, we show that $P$ with domain~\eqref{e:domain1} is a black-box Hamiltonian.

We begin with a technical lemma
\begin{lem}
\label{l:index0}
Suppose that $M$ is a smooth, closed manifold (compact without boundary), $m\in\mathbb{R}$, and $A\in \Psi_1^m(M)$ (i.e. a classical pseudodifferential operator of order $m$) with 
\begin{equation}
\label{e:index0condition}
|\sigma(A)(x,\xi)|\geq c| \xi|^m,\qquad -[0,\infty)\cap \{ \sigma(A)(x,\xi)\,:\, (x,\xi)\in S^*M\}=\emptyset.
\end{equation}
Then $A$ is a Fredholm operator with index $0$.
\end{lem}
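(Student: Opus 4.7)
The plan is to split the statement into two parts: (i) $A$ is Fredholm, and (ii) $\mathrm{ind}(A)=0$. Part (i) is immediate from the ellipticity hypothesis $|\sigma(A)(x,\xi)|\geq c|\xi|^m$ on the closed manifold $M$: the standard elliptic parametrix construction produces $E\in\Psi^{-m}_1(M)$ with $AE-I, EA-I\in\Psi^{-\infty}_1(M)$, which are compact on Sobolev spaces, so $A:H^s(M)\to H^{s-m}(M)$ is Fredholm for every $s\in\mathbb{R}$ and its index is independent of $s$. So the real content is part (ii).

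To compute the index, I would first reduce to the order-zero case. Fix a Riemannian metric on $M$ and let $\Lambda:=(1-\Delta)^{m/2}\in\Psi^m_1(M)$. This is elliptic and invertible (with inverse $(1-\Delta)^{-m/2}$) on $L^2(M)$, so $\mathrm{ind}(\Lambda)=0$, and therefore $\mathrm{ind}(A)=\mathrm{ind}(A\Lambda^{-1})$ where now $A\Lambda^{-1}\in\Psi^0_1(M)$ is elliptic with principal symbol $\sigma(A)(x,\xi)\cdot |\xi|_g^{-m}$, and crucially this symbol still avoids $(-\infty,0]$ on $S^*M$ (homogeneity in $\xi$ just rescales by a positive factor). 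Thus we may as well assume $m=0$ from the start.

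For $m=0$, the key observation is that the principal symbol $\sigma(A):S^*M\to\mathbb{C}\setminus(-\infty,0]$ takes values in a simply connected (in fact star-shaped, hence contractible) open subset of $\mathbb{C}^*$. Let $\log$ denote the principal branch of the logarithm on $\mathbb{C}\setminus(-\infty,0]$ and define the symbol family
\[
a_t(x,\xi):=\exp\bigl((1-t)\log\sigma(A)(x,\xi)\bigr),\qquad t\in[0,1].
\]
Then $a_t$ is a continuous family of classical symbols of order $0$ with $a_0=\sigma(A)$ and $a_1\equiv 1$, and each $a_t$ takes values in $\mathbb{C}\setminus(-\infty,0]$, hence is nonvanishing (in fact bounded below in modulus by a uniform constant since $S^*M$ is compact). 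Quantizing yields a norm-continuous family $A_t\in\Psi^0_1(M)$ of elliptic operators with $A_0=A$ and $A_1=I+R$ for some $R\in\Psi^{-1}_1(M)$, which is compact.

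The conclusion follows from homotopy invariance of the Fredholm index: since $t\mapsto A_t$ is continuous in the operator norm on $L^2(M)$ and each $A_t$ is Fredholm (elliptic), we get $\mathrm{ind}(A)=\mathrm{ind}(A_0)=\mathrm{ind}(A_1)=\mathrm{ind}(I+R)=0$, the last equality because $I+R$ is a compact perturbation of the identity. I expect the main (minor) obstacle to be verifying that the quantized family $t\mapsto\Op(a_t)$ is indeed continuous in a suitable operator topology and uniformly elliptic on $[0,1]$; this is routine given smoothness of $t\mapsto a_t$ and the uniform lower bound on $|a_t|$, but it is the one step where one has to be careful about what function space to work in.
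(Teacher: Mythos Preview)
Your argument is correct (modulo the harmless imprecision that $A_0=\Op(a_0)$ is not literally $A$ but differs from it by an operator in $\Psi^{-1}_1$, hence compact). It is, however, a genuinely different route from the paper's.

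The paper does not reduce to order zero or invoke homotopy invariance of the Fredholm index. Instead it works directly in the semiclassical framework already set up: for small $h$ it forms the modified symbol $a(x,h\xi)(1-\chi_1(h|\xi|))+\chi_2(h|\xi|)$, which glues the homogeneous principal symbol at high frequency to the constant $1$ at low frequency. The avoidance of $(-\infty,0]$ is exactly what guarantees that this glued symbol never vanishes (a convex combination of a value outside $(-\infty,0]$ and $1$ cannot hit $0$), so the resulting operator $A_h$ is elliptic in $\Psi_h^m$ and hence invertible for $h$ small. One then checks that $A_h^{-1}h^mA=I+K$ with $K$ compact, so $A$ is a compact perturbation of the invertible operator $h^{-m}A_h$ and has index $0$.

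Conceptually the two proofs exploit the same fact---that $\mathbb{C}\setminus(-\infty,0]$ is star-shaped about $1$---but in different guises: you deform the symbol continuously through elliptic symbols to $1$, while the paper performs the interpolation in frequency space in a single shot. Your approach is the more classical, topological one and makes explicit the role of homotopy invariance; the paper's approach stays entirely within its semiclassical toolkit and avoids the preliminary reduction to order zero.
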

\begin{proof}
Let $a=\sigma(A)$. (Recall that $a$ is homogeneous degree $m$ in $\xi$.)
$$
A-\operatorname{Op}_1(a)=:R\in \Psi^{m-1}(M).
$$
Let $\chi_i\in C_c^\infty(\mathbb{R};[0,1])$, $i=1,2$ with $\chi_i\equiv 1$ near $0$ and $\supp \chi_1\cap \supp(1-\chi_2)=\emptyset$. 
For $h>0$, let 
$$
A_h:=\operatorname{Op}_1(a(x,h\xi)(1-\chi_1(h|\xi|))+\chi_2(h|\xi|)).
$$
Observe that by~\eqref{e:index0condition}
$$
|a(x,h\xi)(1-\chi_1(h|\xi|))+\chi_2(h|\xi|)|\geq c\langle h\xi\rangle^m.
$$
Moreover, 
$$
A_h=\Op_h(a(x,\xi)(1-\chi_1(|\xi|))+\chi_2(|\xi|))\in \Psi_h^m(M).
$$
Thus, there is $E\in \Psi_h^{-m}$ such that 
$$
EA_h=I+hR_{-1},\qquad R_{-1}\in \Psi_h^{-1}(M).
$$
In particular, since
$$
\|R_{-1}\|_{H_h^s(M)\to H_h^s(M)}\leq C_sh,
$$
for $h$ small enough $A_h$ is invertible and $A_h^{-1}\in \Psi_h^{-m}(M)$.

Now, 
\begin{align*}
A_h^{-1}h^mA&=A_h^{-1}(\operatorname{Op}_{1}(a(x,h\xi))+h^mR)\\
&=A_h^{-1}((A_h+\operatorname{Op}_{1}(\chi_1(h|\xi|)a(x,h|\xi|)-\chi_2(h|\xi|))+h^mR)=I+K,
\end{align*}
where $K:H^s(M)\to H^{s+1}(M)$ and hence is compact. 
In particular, 
$$
A=h^{-m}A_h +\tilde{K},
$$
where $\tilde{K}:H^{s+m}(M)\to H^{s+m+1}(M)\hookrightarrow H^{s}(M)$ is compact and hence, since $A_h:H^{s+m}(M)\to H^s(M)$ is invertible, $A$ is Fredholm with index 0.
\end{proof}

We now use Lemma~\ref{l:index0} to study the operator $P$. 
\begin{lem}
The operator $P$ is self-adjoint.
\end{lem}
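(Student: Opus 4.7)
The plan is to prove symmetry by integration by parts and then establish self-adjointness via the standard criterion that the range of $P-\lambda$ equals $L^2(\Omega_\inner)\oplus L^2(\Omega_\exterior)$ for some non-real $\lambda$. The surjectivity will be obtained by reducing the transmission problem to a boundary pseudodifferential equation and invoking Lemma~\ref{l:index0}.

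First I would verify symmetry. For $u, v \in \mathcal{D}(P)$, integrating by parts on $\Omega_\inner$ and $\Omega_\exterior$ separately (the opposite signs in $P$ align so that the kinetic quadratic forms have matching signs after the integration) produces a bulk symmetric sesquilinear expression plus the boundary contributions
\[
\int_{\partial\Omega} (\rho_\inner \partial_{\nu_\inner} u_\inner)\,\bar{v}_\inner\, \der\vol_{g_\inner,\partial\Omega} - \int_{\partial\Omega} (\rho_\exterior \partial_{\nu_\exterior} u_\exterior)\,\bar{v}_\exterior\, \der\vol_{g_\exterior,\partial\Omega}.
\]
Since $u_\inner = u_\exterior$ and $v_\inner = v_\exterior$ on $\partial\Omega$ (as encoded by $H^1(\mathbb{R}^d)$) and the flux condition in~\eqref{e:domain1} equates the densities $\rho_\exterior \partial_{\nu_\exterior} u_\exterior\,\der\vol_{g_\exterior,\partial\Omega} = \rho_\inner \partial_{\nu_\inner} u_\inner\,\der\vol_{g_\inner,\partial\Omega}$, these two boundary integrals cancel, giving $\langle Pu,v\rangle = \langle u, Pv\rangle$.

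Next I would show $\mathrm{Ran}(P-\mu) = L^2\oplus L^2$ for $\mu \in i\mathbb{R}\setminus\{0\}$. Since $\mu$ avoids the Dirichlet spectra of $-\Delta_{g_\inner,\rho_\inner}$ on $\Omega_\inner$ and of $\Delta_{g_\exterior,\rho_\exterior}$ on $\Omega_\exterior$, the Dirichlet resolvents and Dirichlet solution maps $G_{\inner/\exterior}(\mu):H^{3/2}(\partial\Omega)\to H^2$ are well-defined (compare Sections~\ref{s:exeteriorResolve}, \ref{s:interiorResolve}). Given $f=(f_\inner,f_\exterior)$, the ansatz $u_\inner = R^D_\inner(\mu)f_\inner + G_\inner(\mu)\psi$, $u_\exterior = R^D_\exterior(\mu)f_\exterior + G_\exterior(\mu)\psi$ automatically solves the interior PDEs and the continuity condition at $\partial\Omega$; the remaining flux condition becomes the boundary equation
\[
A(\mu)\psi := \bigl(\Lambda_\exterior(\mu) - \tau\,\Lambda_\inner(\mu)\bigr)\psi = g,
\]
where $g\in H^{1/2}(\partial\Omega)$ is determined by $f$ and the Dirichlet data of the particular solutions.

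By the classical (non-semiclassical) analog of Propositions~\ref{symbol of DtN exterior} and~\ref{symbol of DtN interior}, the operator $A(\mu)$ is a classical pseudodifferential operator of order one on the closed manifold $\partial\Omega$ whose principal symbol is the homogeneous degree-one real function $\rho_\exterior |\xi'|_{g_\exterior} - \tau\rho_\inner |\xi'|_{g_\inner}$ (the $\mu$-dependence sits in lower-order terms). By hypothesis~\eqref{e:thereIsAJump} this principal symbol is nonvanishing on $S^*\partial\Omega$, so by continuity and connectedness of each fiber of $S^*\partial\Omega$ it has constant sign; replacing $A(\mu)$ by $-A(\mu)$ if needed, its symbol avoids $-[0,\infty)$. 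Lemma~\ref{l:index0} therefore yields that $A(\mu):H^{3/2}(\partial\Omega)\to H^{1/2}(\partial\Omega)$ is Fredholm of index zero. For injectivity, if $A(\mu)\psi_0=0$ then $u := G_\inner(\mu)\psi_0\mathbf{1}_{\Omega_\inner} + G_\exterior(\mu)\psi_0\mathbf{1}_{\Omega_\exterior}$ lies in $\mathcal{D}(P)$ and satisfies $(P-\mu)u=0$; symmetry forces $\mu\|u\|^2 = \langle Pu,u\rangle \in \mathbb{R}$, and since $\mu\notin\mathbb{R}$, $u\equiv 0$, hence $\psi_0 = 0$. Thus $A(\mu)$ is invertible, $P\pm i$ are both surjective, and $P$ is self-adjoint.

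The main obstacle is the pseudodifferential analysis underpinning Step~3: one must verify that $\Lambda_{\inner/\exterior}(\mu)$ are genuinely classical pseudodifferential operators of order one on $\partial\Omega$ (with the claimed principal symbols) and that the ansatz delivers $u \in H^1(\mathbb{R}^d)\cap (H^2(\Omega_\inner)\oplus H^2(\Omega_\exterior))$ rather than just a weak solution. Both facts rest on standard elliptic regularity for the transmission problem together with the non-degeneracy~\eqref{e:thereIsAJump}; the semiclassical machinery developed in Section~\ref{Microlocal description of the Dirichlet-to-Neumann maps} can be specialized to fixed $h$ to supply the former, and Lemma~\ref{l:index0} then closes the Fredholm argument.
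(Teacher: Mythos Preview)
Your proof is correct and follows the same overall strategy as the paper: establish symmetry by integration by parts, reduce surjectivity of $P-\mu$ to invertibility of the boundary operator $\Lambda_\exterior(\mu)-\tau\Lambda_\inner(\mu)$, and invoke Lemma~\ref{l:index0} to obtain Fredholm index zero.

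The one substantive difference is how you pass from index zero to invertibility. The paper observes that $\Lambda_\exterior(z)-\tau\Lambda_\inner(z)$ is an analytic family of Fredholm operators in $\{\Im z>0\}$ and applies analytic Fredholm theory, so that the inverse is meromorphic and one may simply choose $z$ off the discrete set of poles. You instead fix $\mu\in i\mathbb{R}\setminus\{0\}$ and prove injectivity directly: a null vector $\psi_0$ yields $u\in\mathcal{D}(P)$ with $(P-\mu)u=0$, and symmetry forces $u=0$ since $\mu\notin\mathbb{R}$. Your route is more elementary (no meromorphic continuation needed) and handles both $\mu$ and $\bar\mu$ at once; the paper's route has the advantage of establishing meromorphy of the boundary inverse as a byproduct, which feeds into the black-box framework. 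Both are valid.
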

\begin{proof}
We start by showing that $P$ is symmetric..Notice that, integration by parts implies that for $u,v\in H^2(\Omega_{\exterior})$,
    \begin{align*}
    &\langle \Delta_{g_\exterior,\rho_\exterior}u,v\rangle_{L^2(\Omega_{\exterior}, \rho_\exterior \der \vol_{g_\exterior})}\\
    &\qquad= \langle u,\Delta_{g_\exterior,\rho_\exterior}v\rangle_{L^2(\Omega_{\exterior}, \rho_\exterior \der \vol_{g_\exterior})} + \langle \rho_\exterior\partial_{\nu_{g_\exterior}}u,v\rangle_{L^2(\partial\Omega, \der \vol_{g_\exterior,\partial\Omega})}-\langle  u,\rho_\exterior\partial_{\nu_{g_\exterior}}v\rangle_{L^2(\partial\Omega, \der \vol_{g_\exterior,\partial\Omega})}.
    \end{align*}
 In addition, for $u,v\in H^2(\Omega_{\inner})$, 
    \begin{align*}
    &\langle \Delta_{g_\inner,\rho_\inner}u,v\rangle_{L^2(\Omega_{\inner}, \rho_\inner \der \vol_{g_\inner})}\\
    &\qquad= \langle u,\Delta_{g_\inner,\rho_\inner}v\rangle_{L^2(\Omega_{\inner}, \rho_\inner \der \vol_{g_\inner})} + \langle \tau \rho_\inner\partial_{\nu_{g_\inner}}u,v\rangle_{L^2(\partial\Omega, \der \vol_{g_\inner,\partial\Omega})}-\langle  u,\tau \rho_\inner\partial_{\nu_{g_\inner}}v\rangle_{L^2(\partial\Omega, \der \vol_{g_\exterior,\partial\Omega})}.
    \end{align*}
    In particular, the operator $P$ is symmetric. 

    We next show that there is $z$ with $\Im z>0$ such that $(P-z^2):\mathcal{D}(P)\to L^2$  and $(P-\overline{z^2}):\mathcal{D}(P)\to L^2$ are surjective. This then implies that $P-\Re (z^2)$ and hence also $P$ is self-adjoint. 
    
    To do this, recall the definitions of $R_{\exterior}(z)$, $G_{\exterior}(z)$, $R_{\inner}(z)$ and $G_{\inner}(z)$ from Sections~\ref{Reformulation of the problem as an exterior problem} and~\ref{s:interiorResolve}, and note that $R_{\exterior}:L^2(\Omega_{\exterior})\to H^2(\Omega_{\exterior})$, $G_{\exterior}:H^{3/2}(\partial\Omega)\to H^2(\Omega_{\exterior})$, $R_{\inner}:L^2(\Omega_{\inner})\to H^2(\Omega_{\inner})$, and $G_{\inner}:H^{3/2}(\partial\Omega)\to H^2(\Omega_{\inner})$ are analytic families of operators in $\Im z>0$. 
    
    In particular, $\Lambda_{\exterior}(z)-\tau\Lambda_{\inner}(z):H^{3/2}(\partial\Omega)\to H^{1/2}(\partial\Omega)$ is an analytic family of operators in $\Im z>0$. Moreover, since $\Lambda_{\exterior}-\tau\Lambda_{\inner}\in \Psi^1(\partial \Omega)$ (i.e. is a non-semiclassical pseudodifferential operator of order 1) has real principal symbol is elliptic in this class Lemma~\ref{l:index0} implies that $\Lambda_{\exterior}-\tau\Lambda_{\inner}$, is an analytic family of Fredholm operators with index 0 in $\Im z>0$. Thus, by the analytic Fredholm theorem, $(\Lambda_{\exterior}(z)-\tau\Lambda_{\inner}(z))^{-1}$ is a meromorphic family of Fredholm operators with index 0 in $\Im z>0$.  

    Using this, we have, in $\Im z>0$
    $$
    (P-z^2)\Bigg(I+\begin{pmatrix} G_{\inner}(z)\\G_{\exterior}(z)\end{pmatrix}(\Lambda_{\exterior}(z)-\tau\Lambda_{\inner}(z))^{-1}\begin{pmatrix}\tau\rho_{\inner}\partial_{\nu_{\inner}}&-\rho_{\exterior}\partial_{\nu_{\exterior}}\end{pmatrix}\Bigg)\begin{pmatrix} R_{\inner}(z)&0\\0&R_{\exterior}(z)\end{pmatrix}=I_{L^2\to L^2}.
    $$
    In particular, since the poles of $(\Lambda_{\exterior}(z)-\tau \Lambda_{\inner}(z) )^{-1}$ form a discrete set in $\Im z>0$, one can find $z$ such that $P-z^2:\mathcal{D}(P)\to L^2$ and $\tilde{P}-(-\bar{z})^2:\mathcal{D}(P)\to L^2$ are surjective. 
    \end{proof}

    \begin{lem}
    The operator $P$ is a black box Hamiltoniain in the sense of~\cite[Definition 4.1]{DyZw:19}.
    \end{lem}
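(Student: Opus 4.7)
The plan is to verify each of the conditions of~\cite[Definition 4.1]{DyZw:19} in turn. First, fix $R_0>0$ large enough that $\overline{\Omega_{\inner}}\subset B(0,R_0)$ and that the exterior data is Euclidean on $\mathbb{R}^d\setminus B(0,R_0)$, i.e. $g_{\exterior}^{ij}=\delta^{ij}$ and $\rho_{\exterior}\equiv 1$ there. Define the compact-piece Hilbert space
\[
\mathcal{H}_{R_0}:=L^2(\Omega_{\inner},\rho_{\inner}\der \vol_{g_{\inner}})\oplus L^2(\Omega_{\exterior}\cap B(0,R_0),\rho_{\exterior}\der \vol_{g_{\exterior}}),
\]
so that the full Hilbert space on which $P$ acts decomposes orthogonally as $\mathcal{H}=\mathcal{H}_{R_0}\oplus L^2(\mathbb{R}^d\setminus B(0,R_0))$. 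This gives the required black-box Hilbert space structure. The condition that $P$ agrees with the free Euclidean Laplacian outside $B(0,R_0)$ follows immediately from the definition of $\Delta_{g_{\exterior},\rho_{\exterior}}$ together with $g_{\exterior}^{ij}=\delta^{ij}$, $\rho_{\exterior}\equiv 1$ on $\mathbb{R}^d\setminus B(0,R_0)$: for any $u\in\mathcal{D}(P)$, we have
\[
\mathbf{1}_{\mathbb{R}^d\setminus B(0,R_0)}Pu=-\Delta(\mathbf{1}_{\mathbb{R}^d\setminus B(0,R_0)}u).
\]
The compatibility of the domain with the orthogonal decomposition is built into the definition~\eqref{e:domain1}: since $\mathcal{D}(P)\subset H^1(\mathbb{R}^d)\cap(H^2(\Omega_{\inner})\oplus H^2(\Omega_{\exterior}))$, the exterior restriction $\mathbf{1}_{\mathbb{R}^d\setminus B(0,R_0)}\mathcal{D}(P)$ lies in $H^2(\mathbb{R}^d\setminus B(0,R_0))$ as required.

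The self-adjointness of $P$ has already been established in the preceding lemma, so $(P+\rmi)^{-1}:\mathcal{H}\to \mathcal{H}$ exists and is bounded. The remaining condition to verify is the compactness of $\mathbf{1}_{B(0,R_0)}(P+\rmi)^{-1}$ on $\mathcal{H}$. The idea is that $(P+\rmi)^{-1}$ maps $\mathcal{H}$ continuously into $\mathcal{D}(P)\subset H^1(\mathbb{R}^d)$; hence for any $\chi\in C_c^\infty(\mathbb{R}^d)$ with $\chi\equiv 1$ on $B(0,R_0)$ and $\supp\chi\subset B(0,R_0+1)$, the operator $\chi(P+\rmi)^{-1}$ is bounded from $\mathcal{H}$ into $H^1(B(0,R_0+1))$, which embeds compactly into $L^2(B(0,R_0+1))$ by the Rellich--Kondrachov theorem. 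Composing with the bounded restriction to $B(0,R_0)$ then yields compactness of $\mathbf{1}_{B(0,R_0)}(P+\rmi)^{-1}$.

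The main (and essentially only) subtlety is confirming that $(P+\rmi)^{-1}$ actually maps into $H^1(\mathbb{R}^d)$: this uses the transmission boundary condition built into $\mathcal{D}(P)$, namely $u_{\inner}=u_{\exterior}$ on $\partial\Omega$, which is exactly what ensures continuity across $\partial\Omega$ in the sense of traces and hence $\mathcal{D}(P)\subset H^1(\mathbb{R}^d)$. No further regularity estimate is needed beyond what is already embedded in the definition of $\mathcal{D}(P)$. Putting these pieces together verifies all clauses of~\cite[Definition 4.1]{DyZw:19}, completing the proof.
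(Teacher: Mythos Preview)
Your proof is correct and follows essentially the same approach as the paper: both verify the structural conditions of the black-box decomposition directly and obtain compactness of $\mathbf{1}_{B(0,R_0)}(P+\rmi)^{-1}$ from the fact that $(P+\rmi)^{-1}$ maps into $\mathcal{D}(P)\subset H^1(\mathbb{R}^d)$ together with Rellich--Kondrachov. The paper is simply terser, dismissing the structural conditions as ``obviously satisfied'' and noting the slightly stronger inclusion $\mathcal{D}(P)\subset (H^2(\Omega_{\inner})\oplus H^2(\Omega_{\exterior}))\cap H^1(\mathbb{R}^d)$, whereas you spell out the verification and use only the $H^1$ regularity (which already suffices for compactness).
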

    \begin{proof}
    The conditions~\cite[(4.1.4) , (4.1.5), and (4.1.6)]{DyZw:19} are obviously satisfied. It remains to check that for $1_{B(0,R_0)}(P+\rmi)^{-1}$ is compact, but this follows from the fact that $(P+\rmi)^{-1}:L^2\to \mathcal{D}(P)\subset (H^2(\Omega_{\inner})\oplus H^2(\Omega_{\exterior}))\cap H^1(\mathbb{R}^d)$ and the Rellich--Kondrachov embedding theorem.
    \end{proof}

\bibliographystyle{amsalpha}
\bibliography{biblio}

\end{document}